\documentclass[reqno]{amsart}

\usepackage[colorlinks,citecolor=green,linkcolor=red]{hyperref}
\usepackage{a4wide}
\usepackage{color}
\usepackage{mathrsfs}
\usepackage{mathtools}
\usepackage{amsmath}
\usepackage{cleveref}
\usepackage{amsthm}
\usepackage{amssymb}
\usepackage{esint}
\usepackage{nicefrac}
\numberwithin{equation}{section}

\usepackage[latin1]{inputenc}

\newcommand{\N}{\mathbb{N}}
\newcommand{\R}{\mathbb{R}}
\newcommand{\sfd}{{\sf d}}
\renewcommand{\d}{{\mathrm d}}

\newcommand{\restr}[1]{\lower3pt\hbox{\(|_{#1}\)}}
\newcommand{\eps}{\varepsilon}
\newcommand{\nchi}{{\raise.3ex\hbox{\(\chi\)}}}

\newcommand{\mm}{{\mathfrak m}}
\newcommand{\haus}{{\mathscr H}}
\newcommand{\Tan}{{\rm Tan}}
\newcommand{\Leb}{\mathcal L}
\newcommand{\supp}{{\rm supp}}
\renewcommand{\P}{{\rm P}}
\renewcommand{\Cap}{{\rm Cap}}

\newcommand{\Lip}{{\rm Lip}}
\newcommand{\lip}{{\rm lip}}
\newcommand{\diam}{{\rm diam}}
\newcommand{\Test}{{\rm Test}}
\newcommand{\tr}{{\rm tr}}
\newcommand{\TestV}{{\rm TestV}}
\newcommand{\dive}{{\rm div}}
\newcommand{\Per}{{\rm Per}}
\newcommand{\abs}[1]{\left\lvert#1\right\rvert}
\newcommand{\norm}[1]{\left\lVert#1\right\rVert}
\newcommand{\Ch}{{\rm Ch}}
\newcommand{\res}{\mathop{\hbox{\vrule height 7pt width .5pt depth 0pt
			\vrule height .5pt width 6pt depth 0pt}}\nolimits}
\DeclareMathOperator{\Cbs}{C_{\rm bs}}

\DeclareMathOperator{\Lipb}{Lip_b}

\DeclareMathOperator{\CD}{CD}
\DeclareMathOperator{\RCD}{RCD}
\DeclareMathOperator{\ncRCD}{ncRCD}
\DeclareMathOperator{\Hess}{Hess}

\newtheorem{theorem}{Theorem}[section]
\newtheorem{corollary}[theorem]{Corollary}
\newtheorem{lemma}[theorem]{Lemma}
\newtheorem{proposition}[theorem]{Proposition}
\newtheorem{definition}[theorem]{Definition}

\newtheorem{remark}[theorem]{Remark}

\linespread{1.15}
\setcounter{tocdepth}{2}

\title[Rectifiability of the reduced boundary over \(\RCD\) spaces]{Rectifiability of the reduced boundary for sets of finite perimeter over \(\RCD(K,N)\) spaces}
\author{Elia Bru\`{e}, Enrico Pasqualetto, and Daniele Semola}

\address{Scuola Normale Superiore\\
         Piazza dei Cavalieri 7 \\
         56126 Pisa \\
         Italy}
\email{elia.brue@sns.it}

\address{University of Jyvaskyla\\
         Department of Mathematics and Statistics \\
         P.O. Box 35 (MaD) \\
         FI-40014 University of Jyvaskyla \\
         Finland}
\email{enrico.e.pasqualetto@jyu.fi}

\address{Scuola Normale Superiore\\
         Piazza dei Cavalieri 7 \\
         56126 Pisa \\
         Italy}
\email{daniele.semola@sns.it}

\begin{document}
\subjclass[2010]{26B30, 26B20, 53C23}
\keywords{Set of finite perimeter, rectifiability, reduced boundary, $\rm RCD$ space, tangent cone, Gauss--Green formula}
\date{\today} 
\begin{abstract}
This note is devoted to the study of sets of finite perimeter over $\RCD(K,N)$ metric measure spaces. Its aim is to complete the picture about the generalization of De Giorgi's theorem within this framework. Starting from the results of \cite{ABS18} we obtain uniqueness of tangents and rectifiability for the reduced boundary of sets of finite perimeter. As an intermediate tool, of independent interest, we develop a Gauss--Green integration-by-parts formula tailored to this setting.	
These results are new and non-trivial even in the setting of Ricci limits.
\end{abstract}
\maketitle
\tableofcontents
\section*{Introduction}
In the last years the theory of $\RCD(K,N)$ metric measure spaces has undergone a fast and remarkable development. After the introduction of the so-called \textit{curvature-dimension} condition $\CD(K,N)$ in the seminal and independent works \cite{Sturm06I,Sturm06II} and \cite{Lott-Villani09}, the notion of $\RCD(K,N)$ space was proposed in \cite{Gigli12} after the study of its infinite-dimensional counterpart $\RCD(K,\infty)$ in \cite{AmbrosioGigliSavare11-2} (see also \cite{AmbrosioGigliMondinoRajala12} for the case of $\sigma$-finite reference measure). In the infinite-dimensional case the equivalence with the Bochner inequality was studied in \cite{AmbrosioGigliSavare12}, then \cite{Erbar-Kuwada-Sturm13} established equivalence with the dimensional Bochner inequality for the so-called class $\RCD^*(K,N)$ (see also \cite{AmbrosioMondinoSavare13}). Equivalence between $\RCD^*(K,N)$ and $\RCD(K,N)$ has been eventually achieved in \cite{CavallettiMilman16}.

We do know nowadays that, apart from smooth weighted Riemannian manifolds (with generalized Ricci tensor bounded from below), this class includes Ricci limits (see \cite{Cheeger-Colding97I,Cheeger-Colding97II,Cheeger-Colding97III}) and Alexandrov spaces \cite{Petrunin11}.  

One of the main research lines within this theory in recent times has been aimed at understanding the structure of $\RCD(K,N)$ spaces. After \cite{Mondino-Naber14,MK16,DPMR16,GP16-2} we know that they are rectifiable as metric measure spaces. Moreover in \cite{BS18} the first and the third named authors proved that they have constant dimension, in the almost everywhere sense.

This being the state of the art, we have reached a good understanding of the structure of $\RCD(K,N)$ spaces \textit{up to measure zero}. It sounds therefore quite natural to try to push the study further, investigating their structure, both from the analytic and from the geometric points of view, up to sets of positive codimension.\\ 
In this perspective in the last two years there have been some independent and remarkable developments. We wish to mention a few of them below, without the aim of being complete in this list.
\begin{itemize}
\item In the setting of non collapsed Ricci limit spaces, Cheeger-Jiang-Naber have obtained in \cite{CheegerJiangNaber18} rectifiability for the singular sets of any codimension. Let us also mention \cite{CheegerNaber15}, where some of the ideas developed in \cite{CheegerJiangNaber18} where already present, and \cite{ABS19}, where some estimates (actually much weaker than those in \cite{CheegerJiangNaber18}) are proved for the singular strata of non collapsed $\RCD$ spaces.
\item There have been some efforts aimed at defining a notion of boundary for metric measure spaces and relating it with the singular set of codimension $1$. See \cite{LKP17} and the very recent \cite{KM19}.
\item One of the main contributions of \cite{Gigli14} was the development of the language of tensor fields defined almost everywhere (with respect to the reference measure) on $\RCD$ spaces. In \cite{DGP19} the notion of tensor field defined ``$2$-capacity-almost everywhere'' is defined and it is proved that Sobolev vector fields on $\RCD$ spaces have a representative in this class.
\item In \cite{ABS18}, the first and third named authors together with Ambrosio initiated a fine study of sets of finite perimeter over $\RCD(K,N)$ spaces, with the project of generalizing the Euclidean De Giorgi theorem to this framework. 
\end{itemize} 

One of the main results in \cite{ABS18} was the existence of a Euclidean half-space as tangent space to a set of finite perimeter at almost every point (with respect to the perimeter measure). This conclusion could be improved to a uniqueness statement (up to negligible sets) only in the case of a non collapsed ambient space. 
The state of the theory of sets of finite perimeter was at that stage comparable to that of the structure theory after \cite{Gigli-Mondino-Rajala15}, where existence of Euclidean tangent spaces almost everywhere with respect to the reference measure was proved. Uniqueness of tangents in the possibly collapsed case and rectifiability for the boundary were conjectured by analogy with the Euclidean theory, but left as open questions in \cite{ABS18}. Let us point out that, up to our knowledge, no general rectifiability criterion is known at this stage for (subsets of) metric measure spaces.\\ 
Aim of this note is to provide a positive answer to these questions, providing a counterpart in codimension 1 of \cite{Mondino-Naber14} and of De Giorgi's theorem in this setting.\\ 
Together with uniqueness of tangents (cf.\ \Cref{th:uniqueness}) and rectifiability (cf.\ \Cref{th: rectifiability}) we also establish a representation formula for the perimeter measure in terms of the codimension 1 Hausdorff measure (cf.\ \Cref{cor:reprper}). As an intermediate tool which, however, we find to have independent interest we prove in \Cref{thm:Gauss-Green} a Gauss--Green integration-by-parts formula for Sobolev vector fields. 

The proof of uniqueness for blow-ups of sets of finite perimeter follows a strategy quite similar to that of the uniqueness theorem for tangents to $\RCD(K,N)$ spaces adopted in \cite{Mondino-Naber14}. As in that case, closeness to a rigid configuration (half-space in Euclidean space) at a certain location and along a certain scale, which is what we learn from \cite{ABS18}, can be turned into closeness to the same configuration at almost any location and at any scale, yielding uniqueness.\\
To encode the ``closeness information'' in analytic terms we rely on the use of harmonic $\delta$-splitting maps, which were introduced in \cite{Cheeger-Colding96} and turned to be an extremely powerful tool in the study of Ricci limits (see \cite{Cheeger-Colding97I,Cheeger-Colding97II,Cheeger-Colding97III} and the more recent \cite{CheegerNaber15,CheegerJiangNaber18}). To the best of our knowledge this is the first time they are explicitly used in the $\RCD$-theory, even though their use is implicit in \cite{AHPT18}, and we establish some of their properties within this framework.\\
Propagation of regularity almost at every location and at any scale, which was a consequence of a maximal function argument in \cite{Mondino-Naber14}, this time follows from a weighted maximal function argument suitably adapted to the codimension 1 framework. This argument heavily relies on the interplay between the fact that the perimeter measure is a codimension 1 measure (which was proved in a fairly more general context in \cite{A02}) and the fact that harmonic functions satisfy $L^2$ Hessian bounds on $\RCD(K,N)$ spaces.

In order to explain the strategy and the difficulties in the proof of rectifiability for the reduced boundary, let us recall how things work on $\R^n$. Therein a crucial role is played by the exterior normal to the set of finite perimeter, which is an almost everywhere unit valued vector field providing the representation $D\nchi_E=\nu_E\abs{D\nchi_E}$ for the distributional derivative of the set of finite perimeter $E$. Relying on the properties of the exterior normal one can obtain a characterization of blow-ups in a much simpler way than in \cite{ABS18} and even get rectifiability of the boundary, proving that sets where the unit normal is not oscillating too much are bi-Lipschitz to subsets of $\R^{n-1}$.\\
When trying to reproduce the Euclidean approach in the \textit{non smooth} and \textit{non flat} realm of $\RCD$ spaces, one faces two main difficulties. The first one due to the fact that the theory of tangent modules, as developed in \cite{Gigli14}, allows to talk about vector fields only up to negligible sets with respect to the reference measure (as the reduced boundary of a set of finite perimeter is not). The second one is that controlling the behaviour of the normal vector cannot be enough to control the behaviour of the set in this framework, since the space itself might ``oscillate''. This is a common feature of geometry on metric measure spaces (see also the introduction of \cite{CheegerJiangNaber18}), which can be understood looking at the following example: let $(X,\sfd,\mm)$ be any $\RCD(K,N)$ m.m.s.\ and take its product with the Euclidean line. Then consider the ``generalized half-space'' $\{t<0\}$, where $t$ denotes the coordinate along the line: it is easily seen that it is a set of locally finite perimeter and one can identify its reduced boundary with $X$. Moreover, whatever notion of unit normal we have in mind, this will be non oscillating in this case. Still, rectifiability of $(X,\sfd,\mm)$ is highly non trivial and requires \cite{Mondino-Naber14} to be achieved.      

To handle the first difficulty we mentioned above, we rely on the very recent \cite{DGP19}, where a notion of cotangent module with respect to the $2$-capacity is introduced and studied. Building upon the fact that the $2$-capacity controls the perimeter measure in great generality, we introduce the notion of tangent module over the boundary of a set of finite perimeter (cf.\ \Cref{thm:tg_mod_over_bdry}).\\ 
Furthermore we prove that there is a well-defined unit normal to a set of finite perimeter as an element of this module, that it satisfies the Gauss--Green integration-by-parts formula and, relying on functional analysis tools, that it can be approximated by regular vector fields (cf.\ \Cref{thm:Gauss-Green} for a rigorous statement).   

The results obtained in the study of the unit normal are then combined in a new way with the theory of $\delta$-splitting maps to prove rectifiability of the reduced boundary for sets of finite perimeter.\\ 
We introduce a notion of $\delta$-orthogonality to the unit normal for $\delta$-splitting maps. Then we prove on the one hand that $\delta$-splitting maps $\delta$-orthogonal to the unit normal control both the geometry of the space and that of the boundary of the set of finite perimeter (and vice-versa). On the other hand the combination of $\delta$-orthogonality and $\delta$-splitting is seen to be suitable for propagation at many locations and any scale with maximal function arguments (cf.\ \Cref{prop:good aproximation}  and \Cref{prop:rectifiability}).

We wish to emphasize the fact that, on the one hand the coarea formula (which holds in the great generality of metric measure spaces) provides plenty of sets of finite perimeter even in the non-smooth context, on the other one there is no hope to have a notion of smooth hypersurface within this setting. Therefore we expect the range of applications to be large in the development of the theory of spaces satisfying lower curvature bounds, both for the techniques we develop in the paper and for our main results that, to the best of our knowledge, are new also for Ricci limits.

A number of open questions remains open and suitable for future investigation after the study pursued in this paper. In particular, we wish to point out that neither the constancy of the dimension result of \cite{BS18}, nor the absolute continuity of the reference measure with respect to the Hausdorff measure (\cite{MK16,GP16-2,DPMR16}), play a role in the proofs of our results. It might be interesting to investigate whether one can prove constancy of the dimension for tangents also in the case of sets of finite perimeter and sharpen the representation formula for the perimeter measure (maybe relying on the good understanding we have of the top dimensional case). In this regard let us point out that, in none of these cases, the techniques adopted to solve the analogous problems in codimension 0 seem to work when dealing with sets of finite perimeter. 

This note is organised as follows: in \Cref{sec:preliminaries} we collect all the preliminary material to be used in the paper. We dedicate \Cref{section:Gauss-Green formula} to the construction of the tangent module over the boundary of a set of finite perimeter and to establishing a Gauss--Green integration-by-parts formula. Uniqueness of blow-ups is the main outcome of \Cref{sec:uniqueness}, while rectifiability for the reduced boundary is obtained in \Cref{sec:bdryrect}.
\medskip

\textbf{Acknowledgements.} The authors wish to thank Luigi Ambrosio and Nicola Gigli for several useful comments on an earlier version of this note and Aaron Naber for an enlightening discussion about propagation of regularity in codimension 1.

The second named author was partially supported by the Academy of
Finland, projects no.\ 274372, 307333, 312488, and 314789.
\section{Preliminaries and notations}\label{sec:preliminaries}
\subsection{Calculus tools}
Throughout this paper a \textit{metric measure space} is a triple $(X,\sfd,\mm)$, where $(X,\sfd)$ is a complete and separable metric space and $\mm$ is a nonnegative Borel measure on $X$ finite on bounded sets. From now on when we write \emph{m.m.s.}\ we mean \emph{metric measure space(s)}.\\ 
In order to simplify the notation, numerical constants depending only on the parameters entering into play, will be denoted with the same letter $C$ even if they do vary. Often we will make explicit their dependence on the parameters writing for instance $C_N,C_{N,K}$.
    
We will denote by $B_r(x)=\{\sfd(\cdot,x)<r\}$ and $\bar{B}_r(x)=\{\sfd(\cdot,x)\leq r\}$ the open and closed balls respectively, by $\Lip(X,\sfd)$ (resp.\ $\Lipb(X,\sfd)$, $\Lip_c(X,\sfd)$, $\Lip_{\rm bs}(X,\sfd)$, $\Lip_{\text{loc}}(X,\sfd)$) the space of Lipschitz
(resp.\ bounded Lipschitz, compactly supported Lipschitz, Lipschitz with bounded support, Lipschitz on bounded sets) functions and for any $f\in\Lip(X,\sfd)$ we shall denote its slope by
\begin{equation*}
\lip f(x):=\limsup_{y\to x}\frac{\abs{f(x)-f(y)}}{\sfd(x,y)}.
\end{equation*}
We shall use the standard notation $L^p(X,\mm)=L^p(\mm)$, for the $L^p$ spaces and $\Leb^n$ for the $n$-dimensional Lebesgue measure on $\R^n$. We shall denote by $\omega_n$ the Lebesgue measure of the unit ball in $\R^n$. If $f\in L^1_{\rm loc}(X,\mm)$ and $U\subset X$ is such that $0<\mm(U)<+\infty$, then $\fint_Uf\d \mm$ denotes the average of $f$ over $U$.

The Cheeger energy $\Ch:L^2(X,\mm)\to[0,+\infty]$ is the convex and lower semicontinuous functional defined through
\begin{equation}\label{eq:cheeger}
\Ch(f):= \inf\left\lbrace\liminf_{n\to\infty}\int_X(\lip f_n)^2\d\mm:\quad f_n\in\Lipb(X)\cap L^2(X,\mm),\ \norm{f_n-f}_2\to 0 \right\rbrace
\end{equation}
and its finiteness domain will be denoted by $H^{1,2}(X,\sfd,\mm)$, sometimes we write $H^{1,2}(X)$ omitting the dependence on $\sfd$ and $\mm$ when it is clear from the context. Looking at the optimal approximating sequence in \eqref{eq:cheeger}, it is possible to identify a canonical object $\abs{\nabla f}$, called minimal relaxed slope, providing the integral representation
\begin{equation*}
\Ch(f):= \int_X\abs{\nabla f}^2\d\mm\qquad\forall f\in H^{1,2}(X,\sfd,\mm).
\end{equation*}

Any metric measure space such that $\Ch$ is a quadratic form is said to be \textit{infinitesimally Hilbertian}. Let us recall from \cite{AmbrosioGigliSavare11-2,Gigli12} that, under this assumption, the function 
\begin{equation*}
\nabla f_1\cdot\nabla f_2:= \lim_{\eps\to 0}\frac{\abs{\nabla(f_1+\eps f_2)}^2-\abs{\nabla f_1}^2}{2\eps}
\end{equation*}
defines a symmetric bilinear form on $H^{1,2}(X,\sfd,\mm)\times H^{1,2}(X,\sfd,\mm)$ with values into $L^1(X,\mm)$.

It is possible to define a Laplacian operator $\Delta:\mathcal{D}(\Delta)\subset L^{2}(X,\mm)\to L^2(X,\mm)$ in the following way. We let $\mathcal{D}(\Delta)$ be the set of those $f\in H^{1,2}(X,\sfd,\mm)$ such that, for some $h\in L^2(X,\mm)$, one has
\begin{equation}\label{eq:amb1}
\int_X \nabla f\cdot\nabla g\, \d\mm=-\int_X hg\,\d\mm\qquad\forall g\in H^{1,2}(X,\sfd,\mm) 
\end{equation} 
and, in that case, we put $\Delta f=h$. 
It is easy to check that the definition is well-posed and that the Laplacian is linear (because $\Ch$ is a quadratic form). 

The heat flow $P_t$ is defined as the $L^2(X,\mm)$-gradient flow of $\frac{1}{2}\Ch$. Its existence and uniqueness
follow from the Komura-Brezis theory. It can be equivalently characterized by saying that for any
$u\in L^2(X,\mm)$ the curve $t\mapsto P_tu\in L^2(X,\mm)$ is locally absolutely continuous in $(0,+\infty)$ and satisfies
\begin{equation*}
\frac{\d}{\d t}P_tu=\Delta P_tu \quad\text{for }\Leb^1\text{-a.e. }t\in(0,+\infty),\qquad
\lim_{t\downarrow 0}P_tu=u\quad\text{in $L^2(X,\mm)$.}
\end{equation*}
Under the infinitesimal Hilbertianity assumption the heat flow provides a linear, continuous and self-adjoint contraction semigroup in $L^2(X,\mm)$. Moreover $P_t$ extends to a linear, continuous and mass preserving operator,
still denoted by $P_t$, in all the $L^p$ spaces for $1\le p < +\infty$. 

\begin{definition}[Function of bounded variation]\label{def:bvfunction}
	We say that $f\in L^1(X,\mm)$ belongs to the space ${\rm BV }(X,\sfd,\mm)$ of functions of bounded variation if there exist
	locally Lipschitz functions $f_i$ converging to $f$ in $L^1(X,\mm)$ such that
	\begin{equation*}
	\limsup_{i\to\infty}\int_X\abs{\nabla f_i}\d \mm<+\infty.
	\end{equation*}
	\end{definition}  
	If $f\in {\rm BV }(X,\sfd,\mm)$ one can define 
	\begin{equation*}
	\abs{Df}(A):= \inf\left\lbrace \liminf_{i\to\infty}\int_A\abs{\nabla f_i}\d \mm: f_i\in\Lip_{\text{loc}}(A),\quad f_i\to f \text{ in } L^1(A,\mm)\right\rbrace  
	\end{equation*}
	for any open $A\subset X$. In \cite{Ambrosio-DiMarino14} (see also \cite{MR03} for the case of locally compact spaces) it is proven that this set function 
	is the restriction to open sets of a finite Borel measure that we call \emph{total variation of $f$} and still denote by $\abs{Df}$.

Dropping the global integrability condition on $f=\nchi_E$, let us recall now the analogous definition of set of finite perimeter 
in a metric measure space (see again \cite{A02,MR03,Ambrosio-DiMarino14}).

\begin{definition}[Perimeter and sets of finite perimeter]
	\label{def:setoffiniteperimeter}
	Given a Borel set $E\subset X$ and an open set $A$ the perimeter $\Per(E,A)$ is defined in the following way:
	\begin{equation*}
	\Per(E,A):=\inf\left\lbrace \liminf_{n\to\infty}\int_A\abs{\nabla u_n}\d\mm: u_n\in\Lip_{{\rm loc}}(A),\quad u_n\to\nchi_E\quad \text{in } L^1_{{\rm loc}}(A,\mm)\right\rbrace .
	\end{equation*}
	We say that $E$ has finite perimeter if $\Per(E,X)<+\infty$. In that case it can be proved that the set function $A\mapsto\Per(E,A)$ is the restriction to open sets of a finite Borel measure $\Per(E,\cdot)$ defined by
	\begin{equation*}
	\Per(E,B):=\inf\left\lbrace \Per(E,A): B\subset A,\text{ } A \text{ open}\right\rbrace.
	\end{equation*}
\end{definition}

Let us remark for the sake of clarity that $E\subset X$ with finite $\mm$-measure is a set of finite perimeter if and only if $\nchi_E\in{\rm BV}(X,\sfd,\mm)$ and that $\Per(E,\cdot)=\abs{D\nchi_E}(\cdot)$. In the following we will say that $E\subset X$ is a set of locally finite perimeter if $\nchi_E$ is a function of locally bounded variation, that is to say $\eta\nchi_E\in{\rm BV}(X,\sfd,\mm)$ for any $\eta\in \Lip_{\rm bs}(X,\sfd)$.

\subsubsection{Total variation of ${\rm BV}$ functions via integration by parts}
Let us present an equivalent approach to the study of ${\rm BV}$ functions in m.m.s.\ introduced by Di Marino in \cite{DM14}. Before stating \Cref{thm:repr_tv} we need to recall the notion of derivation.
\begin{definition}[Derivation]
	Let \((X,\sfd,\mm)\) be a metric measure space. Then a \emph{derivation}
	on \(X\) is a linear map \(\boldsymbol b:\,\Lip_{\rm bs}(X)\to L^0(\mm)\) such
	that the following properties are satisfied:
	\begin{itemize}
		\item[\(\rm i)\)] \textsc{Leibniz rule.} \(\boldsymbol b(fg)=\boldsymbol b(f)g
		+f\boldsymbol b(g)\) for every \(f,g\in\Lip_{\rm bs}(X)\).
		\item[\(\rm ii)\)] \textsc{Weak locality.} There exists \(G\in L^0(\mm)\) such that
		\[
		|\boldsymbol b(f)|\leq G\,\lip_a(f)\footnote{where $\lip_a(f)(x):=\lim_{r\to 0}\sup_{\sfd(x,y)<r }\frac{|f(x)-f(y)|}{\sfd(x,y)}$ is the so-called asymptotic Lipschitz constant.}		\;\;\;\mm\text{-a.e.}
		\quad\text{ for every }f\in\Lip_{\rm bs}(X).
		\]
		The least function \(G\) (in the \(\mm\)-a.e.\ sense) with this property is denoted
		by \(|\boldsymbol b|\).
	\end{itemize}
\end{definition}
The space of all derivations on \(X\) is denoted by \({\rm Der}(X)\).
Given any derivation \(\boldsymbol b\in{\rm Der}(X)\), we define its \emph{support}
\({\rm supp}(\boldsymbol b)\subset X\) as the essential closure of
\(\{|\boldsymbol b|\neq 0\}\). For any open set \(U\subset X\), we write
\({\rm supp}(\boldsymbol b)\Subset U\) if \({\rm supp}(\boldsymbol b)\) is bounded
and \({\rm dist}({\rm supp}(\boldsymbol b),X\setminus U)>0\).
Given any \(\boldsymbol b\in{\rm Der}(X)\) with \(|\boldsymbol b|\in L^1_{\rm loc}(X)\),
we say that \({\rm div}(\boldsymbol b)\in L^p(\mm)\) -- for some exponent
\(p\in[1,\infty]\) -- provided there exists a function \(h\in L^p(\mm)\) such that
\begin{equation}\label{eq:def_div_der}
-\int\boldsymbol b(f)\,\d\mm=\int fh\,\d\mm
\quad\text{ for every }f\in\Lip_{\rm bs}(X).
\end{equation}
The function \(h\) is uniquely determined, thus it can be unambiguously denoted by
\({\rm div}(\boldsymbol b)\). We set
\[\begin{split}
{\rm Der}^p(X)&:=\big\{\boldsymbol b\in{\rm Der}(X)\;\big|\;
|\boldsymbol b|\in L^p(\mm)\big\},\\
{\rm Der}^{p,p}(X)&:=\big\{\boldsymbol b\in{\rm Der}^p(X)\;\big|\;
{\rm div}(\boldsymbol b)\in L^p(\mm)\big\}
\end{split}\]
for any \(p\in[1,\infty]\). The set \({\rm Der}^p(X)\) is a Banach 
space if endowed with the norm
\(\|\boldsymbol b\|_p:=\||\boldsymbol b|\|_{L^p(\mm)}\).
\begin{remark}\label{rmk:div_local}
		We claim that for every \(\boldsymbol b\in{\rm Der}^{p,p}(X)\) --
		where \(p\in[1,\infty]\) -- it holds that
		\begin{equation}\label{eq:bound_supp_div}
		{\rm supp}\big({\rm div}(\boldsymbol b)\big)\subset{\rm supp}(\boldsymbol b).
		\end{equation}
		In order to prove it, fix any open bounded subset \(U\) of
		\(X\setminus{\rm supp}(\boldsymbol b)\). Then formula \eqref{eq:def_div_der} guarantees
		that \(\int f\,{\rm div}(\boldsymbol b)\,\d\mm=-\int\boldsymbol b(f)\,\d\mm=0\) for
		every \(f\in\Lip_{\rm bs}(U)\), whence accordingly \({\rm div}(\boldsymbol b)=0\)
		holds \(\mm\)-a.e.\ on \(U\). By arbitrariness of \(U\), we conclude that
		the claim \eqref{eq:bound_supp_div} is verified.
		\end{remark}
    In the next proposition the notions of tangent module \(L^2(TX)\) and, more generally,
     of Hilbert \(L^2(\mm)\)-normed
    \(L^\infty(\mm)\)-module, play a role. We will denote by
    \(\nabla\colon H^{1,2}(X)\to L^2(TX)\) the gradient map. 
    We refer to \Cref{subsection:modules} below for the definition of
    these objects.
\begin{proposition}\label{prop:identif_tangent_mod}
	Let \((X,\sfd,\mm)\) an infinitesimally Hilbertian metric measure space.
	Let us denote by \(\overline{\mathbb D}\) the closure in \({\rm Der}^2(X)\)
	of the pre-Hilbert space \(\mathbb D:=\big({\rm Der}^{2,2}(X),\|\cdot\|_2\big)\).
	Then \(\overline{\mathbb D}\) has a natural structure of Hilbert \(L^2(\mm)\)-normed
	\(L^\infty(\mm)\)-module and the map
    \(A\colon L^2(TX)\to\overline{\mathbb D}\), defined as
\[
A(v)(f)\coloneqq v\cdot\nabla f\quad\text{ for every }
v\in L^2(TX)\text{ and }f\in\Lip_{bs}(X).
\]

	is a normed module isomorphism between \(L^2(TX)\) and \(\overline{\mathbb D}\).
	Moreover, it holds \(A\big(D({\rm div})\big)=\mathbb D\) and
	\[
	{\rm div}(A(v))={\rm div}(v)\quad\text{ for every }v\in D({\rm div}).
	\]
\end{proposition}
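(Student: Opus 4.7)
\textbf{First,} the plan is to establish well-definedness of $A$ into ${\rm Der}^2(X)$ together with the bound $|A(v)|\leq|v|$ $\mm$-a.e. Since $\Lip_{\rm bs}(X)\subset H^{1,2}(X)$, the function $v\cdot\nabla f\in L^1(\mm)$ is well defined for $f\in\Lip_{\rm bs}(X)$. The Leibniz rule for $A(v)$ is inherited from $\nabla(fg)=f\nabla g+g\nabla f$, while weak locality follows from the pointwise Cauchy--Schwarz inequality combined with the basic bound $|\nabla f|\leq\lip_a(f)$:
\[
|A(v)(f)|=|v\cdot\nabla f|\leq|v|\,|\nabla f|\leq|v|\,\lip_a(f)\qquad\mm\text{-a.e.}
\]
In particular $A(v)\in{\rm Der}^2(X)$ with $|A(v)|\leq|v|$ $\mm$-a.e., and the $L^\infty(\mm)$-linearity of $A$ is immediate from that of the pointwise inner product on $L^2(TX)$.

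\textbf{Second,} for the reverse inequality $|v|\leq|A(v)|$ $\mm$-a.e.\ I would use that elements of the form $\sum_i\nchi_{E_i}\nabla g_i$ with $g_i\in\Lip_{\rm bs}(X)$ and $\{E_i\}$ a disjoint Borel family generate $L^2(TX)$, a consequence of the density of $\Lip_{\rm bs}(X)$ in $H^{1,2}(X)$. For such a simple $v$, the identity $A(v)(g_j)=|\nabla g_j|^2$ $\mm$-a.e.\ on $E_j$ together with the weak locality of $A(v)$ and the identification $|\nabla g_j|=\lip_a(g_j)$ $\mm$-a.e.\ (available in the infinitesimally Hilbertian setting) yield $|\nabla g_j|\leq|A(v)|$ on $E_j$, hence $|v|\leq|A(v)|$. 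Density then propagates the pointwise equality $|A(v)|=|v|$ to every $v\in L^2(TX)$, so that $A$ is an isometric $L^\infty(\mm)$-linear embedding with closed image.

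\textbf{Third,} I would identify the image and match divergences. If $v\in D({\rm div})$, then for every $f\in\Lip_{\rm bs}(X)\subset H^{1,2}(X)$,
\[
-\int A(v)(f)\,\d\mm=-\int v\cdot\nabla f\,\d\mm=\int f\,{\rm div}(v)\,\d\mm,
\]
which says $A(v)\in\mathbb D$ with ${\rm div}(A(v))={\rm div}(v)$ and gives $A(D({\rm div}))\subset\mathbb D$. Conversely, given $\boldsymbol b\in\mathbb D$ I would construct a bounded $L^\infty(\mm)$-linear map $L^2(T^*X)\to L^1(\mm)$ by extending $df\mapsto\boldsymbol b(f)$; the Riesz-type duality in the Hilbert module $L^2(T^*X)$ then produces a unique $v\in L^2(TX)$ with $v\cdot\nabla f=\boldsymbol b(f)$, and the integration-by-parts formula above shows $v\in D({\rm div})$ with matching divergence. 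Combined with the isometry of Step~2, this yields $A(L^2(TX))=\overline{\mathbb D}$, and the Hilbert $L^2(\mm)$-normed $L^\infty(\mm)$-module structure on $\overline{\mathbb D}$ is just the pushforward through $A$ of the one on $L^2(TX)$. \textbf{The main obstacle} I anticipate is precisely the well-definedness of the extension $df\mapsto\boldsymbol b(f)$: one must show that $\sum_i\nchi_{E_i}\nabla f_i=0$ $\mm$-a.e.\ forces $\sum_i\nchi_{E_i}\boldsymbol b(f_i)=0$ $\mm$-a.e., a strong locality property sharper than the weak locality axiom, whose proof relies again on the identity $|\nabla f|=\lip_a(f)$ $\mm$-a.e.\ combined with a truncation and Leibniz-rule argument.
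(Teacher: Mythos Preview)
The paper does not give a self-contained argument here; it simply refers to \cite[Proposition~6.5]{DMGPS18}. Your sketch follows the natural strategy and you correctly isolate where the real difficulty lies, but the resolution you propose has a genuine gap.

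The problem is the repeated appeal to the identity $|\nabla f|=\lip_a(f)$ $\mm$-a.e.\ for $f\in\Lip_{\rm bs}(X)$, which you assert is ``available in the infinitesimally Hilbertian setting''. This is not a consequence of infinitesimal Hilbertianity: it is a deep theorem on PI spaces (Cheeger) and it holds on ${\rm RCD}$ spaces, but quadraticity of $\Ch$ alone does not force it. Without this identity your Step~2 breaks: from $|\nabla g_j|^2\leq|A(v)|\,\lip_a(g_j)$ on $E_j$ you cannot deduce $|\nabla g_j|\leq|A(v)|$. The obstacle you flag in Step~3 is the same issue in dual form.

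What actually makes the argument go through (and is the substance of the cited references \cite{DM14,DMGPS18}) is a \emph{strengthened locality} for derivations possessing a divergence: for every $\boldsymbol b\in\mathbb D$ one has
\[
|\boldsymbol b(f)|\leq|\boldsymbol b|\,|\nabla f|\qquad\mm\text{-a.e.\ for every }f\in\Lip_{\rm bs}(X),
\]
with the minimal relaxed slope replacing $\lip_a$. The proof exploits the integration-by-parts identity (hence the divergence is essential) together with the approximation of $f$ in energy by Lipschitz functions whose asymptotic Lipschitz constants converge to $|\nabla f|$ in $L^2$; a weak-convergence plus Mazur-type argument then upgrades the bound. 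Once this inequality is available, your Step~2 works verbatim with $|\nabla g_j|$ in place of $\lip_a(g_j)$, and the well-posedness in Step~3 is immediate since $|\nabla f_i|=0$ on $E_i$ directly forces $\boldsymbol b(f_i)=0$ there. So the architecture of your proof is correct; only the key technical input needs to be swapped.
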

\begin{proof}
	Cf.\ the proof of \cite[Proposition 6.5]{DMGPS18}.
\end{proof}
\begin{remark}\label{rmk:ineq_Df_deriv}
		Given an infinitesimally Hilbertian space \((X,\sfd,\mm)\)
		and any \(f\in{\rm BV}(X,\sfd,\mm)\), it holds
		\[
		\int f\,{\rm div}(v)\,\d\mm\leq|Df|(X)\quad\text{ for every }v\in D({\rm div})
		\text{ with }|v|\leq 1\;\;\mm\text{-a.e.\ and }{\rm div}(v)\in L^\infty(\mm).
		\]
		Such inequality readily follows from \cite[Theorem 3.3]{DM14} and \Cref{prop:identif_tangent_mod}.
		\end{remark}
\begin{theorem}[Representation formula for \(|Df|\)]\label{thm:repr_tv}
	Let \((X,\sfd,\mm)\) be an infinitesimally Hilbertian metric measure space.
	Let \(f\in{\rm BV}(X,\sfd,\mm)\) be given. Then for every open set \(U\subset X\)
	it holds that
	\[
	|Df|(U)=\sup\bigg\{\int_U f\,{\rm div}(v)\,\d\mm\;\bigg|\;v\in D({\rm div}),\;|v|\leq 1
	\;\;\mm\text{-a.e.},\;{\rm div}(v)\in L^\infty(\mm),\,{\rm supp}(v)\Subset U\bigg\}.
	\]
\end{theorem}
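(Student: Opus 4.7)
The plan is to establish the two inequalities separately, exploiting the identification between derivations and $L^2$ vector fields supplied by \Cref{prop:identif_tangent_mod}. The direction ``supremum $\leq |Df|(U)$'' will follow by localizing the integration-by-parts estimate of \Cref{rmk:ineq_Df_deriv}, while the reverse direction will be extracted from Di Marino's global representation of $|Df|$ in \cite[Theorem 3.3]{DM14} via a cutoff-and-exhaustion argument. Throughout I pass freely between an element of $D(\dive)\subset L^2(TX)$ and its associated derivation.

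For the easy direction, I would fix a competitor $v\in D(\dive)$ with $|v|\leq 1$ $\mm$-a.e., $\dive(v)\in L^\infty(\mm)$, and $\supp(v)\Subset U$, so that \Cref{rmk:div_local} yields $\supp(\dive(v))\Subset U$ as well. Pick $f_i\in\Lip_{\rm loc}(U)$ with $f_i\to f$ in $L^1_{\rm loc}(U,\mm)$ and $\int_U|\nabla f_i|\,\d\mm\to |Df|(U)$ (by the very definition of the total variation on $U$), together with a cutoff $\eta\in\Lip_{\rm bs}(X)$, $0\leq\eta\leq 1$, with $\eta\equiv 1$ on a neighborhood of $\supp(v)$ and $\supp(\eta)\subset U$. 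Then $\eta f_i\in\Lip_{\rm bs}(X)$, and since $v\cdot\nabla\eta=0$ $\mm$-a.e.\ (on $\supp(v)$, $\eta$ is locally constant; off $\supp(v)$, $|v|=0$ by weak locality), integration by parts through the derivation associated with $v$ gives
\[
\int f_i\,\dive(v)\,\d\mm=\int \eta f_i\,\dive(v)\,\d\mm=-\int v\cdot\nabla(\eta f_i)\,\d\mm=-\int \eta\,v\cdot\nabla f_i\,\d\mm,
\]
whose modulus is at most $\int_U|\nabla f_i|\,\d\mm$. Letting $i\to\infty$ yields the claim.

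For the reverse inequality I would rely on Di Marino's global representation \cite[Theorem 3.3]{DM14}, which asserts that for every $g\in{\rm BV}(X,\sfd,\mm)$ one has $|Dg|(X)=\sup\int g\,\dive(\boldsymbol b)\,\d\mm$ over derivations $\boldsymbol b$ with $|\boldsymbol b|\leq 1$ and $\dive(\boldsymbol b)\in L^\infty$. Given open $V\Subset U$ and a cutoff $\eta$ with $\eta\equiv 1$ on a neighborhood of $\bar V$ and $\supp(\eta)\Subset U$, the function $\eta f\in{\rm BV}(X,\sfd,\mm)$ satisfies $|D(\eta f)|(X)\geq |D(\eta f)|(V)=|Df|(V)$, so Di Marino supplies $\boldsymbol b^\star$ with $\int(\eta f)\,\dive(\boldsymbol b^\star)\,\d\mm\geq |Df|(V)-\varepsilon$. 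Setting $v:=\eta\boldsymbol b^\star$ (which has support in $\supp(\eta)\Subset U$, $|v|\leq 1$, and $\dive(v)=\eta\dive(\boldsymbol b^\star)+\boldsymbol b^\star(\eta)\in L^\infty$ by the product rule for derivations), a direct computation gives
\[
\int f\,\dive(v)\,\d\mm=\int(\eta f)\,\dive(\boldsymbol b^\star)\,\d\mm+\int f\,\boldsymbol b^\star(\eta)\,\d\mm\geq |Df|(V)-\varepsilon-\mathrm{err}(\eta),
\]
with $\mathrm{err}(\eta):=\int|f|\,|\boldsymbol b^\star(\eta)|\,\d\mm$. Exhausting $U$ by such $V$'s and invoking inner regularity of the Radon measure $|Df|$, together with \Cref{prop:identif_tangent_mod}, should yield $|Df|(U)\leq\sup$.

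The main obstacle is controlling $\mathrm{err}(\eta)$ while simultaneously exhausting $U$. The weak locality bound $|\boldsymbol b^\star(\eta)|\leq\lip_a(\eta)$ is available only on the transition region $\supp(\eta)\setminus\{\eta\equiv 1\}$, so one must choose cutoffs whose gradient is concentrated on annular regions carrying small $|f|$-mass; this can be achieved by a preliminary $L^1$-approximation of $f$ by bounded functions, together with careful geometric design of the cutoffs on the shrinking annulus. Once this bookkeeping is handled, the two inequalities combine to give the asserted identity.
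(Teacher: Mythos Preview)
Your proof of the inequality ``$\sup\leq |Df|(U)$'' is fine, but the reverse direction has a genuine gap that you yourself flag: the error term $\mathrm{err}(\eta)=\int|f|\,|\boldsymbol b^\star(\eta)|\,\d\mm$ is not shown to vanish in the limit. The competitor $\boldsymbol b^\star$ depends on $\eta$ (through $\eta f$), so you cannot first fix $\boldsymbol b^\star$ and then optimize $\eta$; and as $V\uparrow U$ the transition annulus is forced to shrink, which drives $\lip_a(\eta)\to\infty$ with no compensating smallness from $\int_{\text{annulus}}|f|\,\d\mm$. Your proposed remedy---replace $f$ by a bounded truncation and place the annulus where $|f|$ has little mass---does not close the argument, because even for bounded $f$ the quantity $M\int\lip_a(\eta)\,\d\mm$ need not be small, and in any case reprocessing $f$ changes $\eta f$ and hence $\boldsymbol b^\star$ again. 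This localize-from-global route is genuinely delicate and, as written, incomplete.

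The paper sidesteps the whole issue: it invokes \cite[Theorem~3.4]{DM14} rather than \cite[Theorem~3.3]{DM14}. The former already establishes the \emph{local} representation
\[
|Df|(U)=\sup\Big\{\textstyle\int f\,\dive(\boldsymbol b)\,\d\mm\;:\;\boldsymbol b\in{\rm Der}^{\infty,\infty}(X),\ |\boldsymbol b|\leq 1,\ \supp(\boldsymbol b)\Subset U\Big\},
\]
so no cutoff or exhaustion is needed. One then simply observes that any such competitor $\boldsymbol b$ has bounded support, hence lies in ${\rm Der}^{2,2}(X)$ by \Cref{rmk:div_local}, and \Cref{prop:identif_tangent_mod} identifies it with an element $v\in D(\dive)$ with $|v|\leq 1$, $\dive(v)\in L^\infty(\mm)$, $\supp(v)\Subset U$. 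That is the entire proof.
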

\begin{proof}
	Combine \cite[Theorem 3.4]{DM14} with \Cref{prop:identif_tangent_mod}
	(recall that we have \(\boldsymbol b\in{\rm Der}^{2,2}(X)\) for every
	\(\boldsymbol b\in{\rm Der}^{\infty,\infty}(X)\) such that \({\rm supp}(\boldsymbol b)\)
	is bounded, thanks to \Cref{rmk:div_local}).
\end{proof}

\subsubsection{PI spaces}
Let us recall that $(X,\sfd,\mm)$ satisfies a weak local \((1,2)\)-Poincar\'{e} inequality with constants \(C_P>0\) and \(\lambda\geq 1\) if it holds
	\begin{equation}\label{eq:Poincare_ineq}
	\fint_{B_r(x)}\big|f-(f)_{x,r}\big|\,\d\mm\leq
	C_P\,r\,\left(\fint_{B_{\lambda r}(x)}|Df|^2\,\d\mm\right)^{\nicefrac{1}{2}}
	\quad\text{ for all }f\in H^{1,2}(X),\,x\in X,\,r>0,
	\end{equation}

where
\begin{equation}\label{eq:average}
	(f)_{x,r}:=\fint_{B_r(x)}f\,\d\mm.
\end{equation}

Before giving the definition of PI space we need to recall the notion of locally doubling m.m.s.: we say that $(X,\sfd,\mm)$ is locally doubling if for any $R>0$ there exists $C_D>0$ depending only on $R$ such that
\begin{equation}\label{eq:locallydoubling}
	\mm(B_{2r}(x))\le C_D\mm(B_r(x))\quad \forall\ 0<r<R,\ x\in X.
\end{equation}

\begin{definition}\label{def:PI spaces}
A \emph{PI space} is a locally doubling metric
measure space supporting a weak local \((1,2)\)-Poincar\'{e} inequality.
\end{definition}
\subsubsection{Capacity and Hausdorff measures}
We briefly recall the notion of capacity and its main properties in this setting, referring
to \cite{DGP19} for a detailed discussion on the topic.
The \emph{capacity} of a given set \(E\subset X\) is defined as
\[
\Cap(E):=\inf\Big\{\|f\|^2_{H^{1,2}(X)}\;\Big|\;f\in H^{1,2}(X,\sfd,\mm),\ f\geq 1\ \mm\text{-a.e.\ on some neighbourhood of }E\Big\}.
\]
It turns out that \(\Cap\) is a submodular outer measure on \(X\),
finite on all bounded sets, such that the inequality \(\mm(E)\leq\Cap(E)\) holds
for any Borel set \(E\subset X\). Any function \(f:\,X\to[0,+\infty]\)
can be integrated with respect to the capacity via Cavalieri's formula:
\[
\int f\,\d\Cap:=\int_0^{+\infty}\Cap\big(\{f>t\}\big)\,\d t.
\]
(The function \(t\mapsto\Cap\big(\{f>t\}\big)\) is non-increasing,
thus in particular it is Lebesgue measurable.) The integral operator
\(f\mapsto\int f\,\d\Cap\) is subadditive as a consequence of the submodularity
of \(\Cap\). Given any set \(E\subset X\), we shall use the shorthand
notation \(\int_E f\,\d\Cap:=\int\nchi_E f\,\d\Cap\).
\medskip

Let us now introduce the \emph{codimension-\(\alpha\) Hausdorff measure}. We refer to \cite{A02} for a more detailed introduction to this topic.

\begin{definition}\label{def:codimension measure}
	Given a locally doubling metric measure space $(X,\sfd,\mm)$, for any $\alpha>0$ we set
	\begin{equation*}
	h_{\alpha}(B_r(x)):=\frac{\mm(B_r(x))}{r^{\alpha}}
	\qquad \text{for any}\ x\in X,\ r\in (0,\infty).
	\end{equation*}
	The codimension-\(\alpha\) Hausdorff measure $\haus^{h_{\alpha}}$ is the Borel regular outer measure built from $h_{\alpha}$ with the Carath\'{e}odory construction. We will denote by  $\haus_{\delta}^{h_{\alpha}}$ the pre-measure with parameter $\delta$.
\end{definition}
The codimension-$1$ measure plays a crucial role in the theory of sets of finite perimeter over PI spaces, since $\Per(E,\cdot)\ll \haus^{h_1}$ for any set of finite perimeter $E$. This result has been proved by Ambrosio in \cite[Lemma 5.2]{A02}.
\begin{lemma}\label{lemma:AssolutacontinuitaHauss}
	Let $(X,\sfd,\mm)$ be a PI space. For any set of locally finite perimeter $E\subset X$ it holds
	\begin{equation*}
		\haus^{h_1}(B)=0\implies \Per(E,B)=0
		\quad\text{for any Borel set }B\subset X.
	\end{equation*}
\end{lemma}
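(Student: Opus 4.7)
My plan is to reduce the claim to a codimension-one upper density bound for $\Per(E,\cdot)$ and then conclude by a standard covering argument in the doubling setting. Concretely, I would first aim to show that for $\Per(E,\cdot)$-a.e.\ $x\in X$ the upper density
\[
\Theta^{*}(x) := \limsup_{r\downarrow 0}\frac{r\,\Per(E,B_r(x))}{\mm(B_r(x))}
\]
is finite, and in fact bounded by a constant $C$ depending only on the doubling and Poincar\'e constants of $(X,\sfd,\mm)$.

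The heart of the proof lies in establishing this density bound, since the covering argument is then routine. For the density, one would exploit the PI structure: applying the weak $(1,2)$-Poincar\'e inequality to $\nchi_E$ on $B_r(x)$ yields the relative isoperimetric inequality
\[
\min\{\mm(E\cap B_r(x)),\,\mm(B_r(x)\setminus E)\} \leq C\,r\,\Per(E,B_{\lambda r}(x)),
\]
and combining this with the coarea formula applied to the distance function $\sfd(\cdot,x)$---which lets one choose a good radius $\rho\in(r,2r)$ with $\Per(B_\rho(x),X)$ controlled---one builds competitor sets $E\cap B_\rho(x)$ whose perimeters can be bounded from above by $h_1(B_r(x))$ up to dimensional constants. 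Together with the fact that for $\Per(E,\cdot)$-a.e.\ $x$ the relative density $\mm(E\cap B_r(x))/\mm(B_r(x))$ stays away from $0$ and $1$ along a sequence of radii (the ``essential boundary'' property), this furnishes the uniform upper density bound. This is precisely the content of \cite[Lemma~5.2]{A02}.

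Once the density bound is in hand, the conclusion follows by decomposing $B=N\cup\bigcup_{M,k}B_{M,k}$, with $N$ being $\Per(E,\cdot)$-negligible and
\[
B_{M,k}:=\big\{x\in B : \Per(E,B_r(x)) \leq M\, h_1(B_r(x)) \text{ for all } r<1/k\big\}.
\]
Fixing $M,k$ and $\varepsilon>0$, since $\haus^{h_1}(B_{M,k})=0$ one picks a cover $\{B_{r_j}(y_j)\}_j$ of $B_{M,k}$ with $r_j<1/k$ and $\sum_j h_1(B_{r_j}(y_j))<\varepsilon$. After shifting each $y_j$ to a point $z_j\in B_{M,k}\cap B_{r_j}(y_j)$ (whenever this intersection is non-empty) and doubling the radius---operations that cost only an overall doubling factor---the density bound and subadditivity of $\Per(E,\cdot)$ give
\[
\Per(E,B_{M,k})\leq\sum_j\Per(E,B_{2r_j}(z_j))\leq M\sum_j h_1(B_{2r_j}(z_j))\lesssim M\,\varepsilon.
\]
Letting $\varepsilon\downarrow 0$ and then summing over $M,k$ closes the argument. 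The principal obstacle is the density bound: unlike in Euclidean space, where it follows from De~Giorgi's structure theorem, here no a priori rectifiability is available and the estimate must be extracted directly from the PI axioms via the coarea-slicing argument outlined above.
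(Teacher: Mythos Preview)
The paper does not give a proof of this lemma at all; it simply cites \cite[Lemma~5.2]{A02}. Your proposal identifies the same source and correctly summarizes the overall architecture of that argument (an upper density bound for $\Per(E,\cdot)$ with respect to $h_1$, followed by a Vitali-type covering reduction), so you are aligned with the paper.

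One cautionary remark on your sketch of the density bound itself: the relative isoperimetric inequality you display gives a \emph{lower} bound on $\Per(E,B_{\lambda r}(x))$ in terms of $h_1(B_r(x))$, not the upper bound you need; and bounding $\Per(E\cap B_\rho(x),X)$ from above does not directly control $\Per(E,B_\rho(x))$, since the former includes the ball's boundary contribution while the latter does not. The actual argument in \cite{A02} for the upper density is somewhat more delicate (it uses coarea slicing together with the locality of perimeter and a comparison/iteration step). Since you correctly defer to \cite[Lemma~5.2]{A02} for this, just as the paper does, this is only a matter of tightening the informal description; the covering step you wrote out is fine.
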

Let us now prove two results connecting the \emph{codimension-\(\alpha\) Hausdorff measure} and the capacity. Their proofs are inspired by those given for the analogous results in the Euclidean setting in \cite{EvansGariepy}.
\begin{lemma}\label{lem:H_theta_null}
Let \((X,\sfd,\mm)\) be a locally doubling m.m.s.. Let
\(f\in L^1(X,\mm)\), \(f\geq 0\) be given. Then for any exponent \(\alpha>0\)
it holds that
\[
\mathscr H^{h_\alpha}(\Lambda_\alpha)=0,\quad\text{ where we set }
\Lambda_\alpha:=\Big\{x\in X\;\Big|\;
\limsup_{r\searrow 0}r^\alpha(f)_{x,r}>0\Big\}.
\]
\end{lemma}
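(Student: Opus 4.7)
The plan is to use a classical ball-covering argument, combining the $5B$ Vitali lemma with local doubling and the absolute continuity of the integral. First, I would rewrite the relevant quantity as
\[
r^{\alpha}(f)_{x,r}=\frac{1}{h_\alpha(B_r(x))}\int_{B_r(x)} f\,\d\mm,
\]
and stratify \(\Lambda_\alpha=\bigcup_{k\in\N}\Lambda_\alpha^k\), where
\[
\Lambda_\alpha^k:=\Big\{x\in X\;:\;\limsup_{r\searrow 0}\,\frac{1}{h_\alpha(B_r(x))}\int_{B_r(x)} f\,\d\mm\,>\,1/k\Big\}.
\]
Since \(\haus^{h_\alpha}\) is countably subadditive, it suffices to show that \(\haus^{h_\alpha}(\Lambda_\alpha^k\cap B_N(\bar x))=0\) for every fixed \(k,N\in\N\) and \(\bar x\in X\), which I fix from now on.

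As \((X,\sfd,\mm)\) is locally doubling, the Lebesgue differentiation theorem applies and gives \((f)_{x,r}\to f(x)\in\R\) as \(r\searrow 0\) at \(\mm\)-a.e.\ \(x\); multiplying by \(r^\alpha\) yields \(r^\alpha(f)_{x,r}\to 0\) \(\mm\)-a.e., so in particular \(\mm(\Lambda_\alpha^k)=0\). Fix \(\eps>0\). Since \(f\nchi_{B_{N+1}(\bar x)}\in L^1(\mm)\) and the finite Borel measure \(f\mm\) is outer regular on the Polish space \(X\), I can pick an open set \(A\) with
\[
\Lambda_\alpha^k\cap B_N(\bar x)\subset A\subset B_{N+1}(\bar x),\qquad \int_A f\,\d\mm<\eps.
\]

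Given \(\delta\in(0,1)\), for every \(x\in\Lambda_\alpha^k\cap B_N(\bar x)\) I may choose a radius \(r(x)\in(0,\delta)\) such that \(B_{r(x)}(x)\subset A\) and \(\int_{B_{r(x)}(x)} f\,\d\mm>h_\alpha(B_{r(x)}(x))/k\). By the \(5B\) Vitali covering lemma I extract a countable disjoint subfamily \(\{B_{r_i}(x_i)\}_i\) of this family such that \(\Lambda_\alpha^k\cap B_N(\bar x)\subset\bigcup_i B_{5r_i}(x_i)\). Local doubling on the bounded set \(B_{N+1}(\bar x)\) yields a constant \(C=C_N>0\) with \(\mm(B_{5r}(x))\leq C\,\mm(B_r(x))\) for all \(x\in B_{N+1}(\bar x)\) and \(r\in(0,1)\); disjointness, together with the defining inequality of \(\Lambda_\alpha^k\), then gives
\[
\haus^{h_\alpha}_{10\delta}\bigl(\Lambda_\alpha^k\cap B_N(\bar x)\bigr)\leq\sum_i h_\alpha(B_{5r_i}(x_i))\leq\frac{C}{5^\alpha}\sum_i h_\alpha(B_{r_i}(x_i))\leq\frac{Ck}{5^\alpha}\sum_i\int_{B_{r_i}(x_i)} f\,\d\mm\leq\frac{Ck}{5^\alpha}\,\eps.
\]
Letting \(\delta\searrow 0\) gives \(\haus^{h_\alpha}(\Lambda_\alpha^k\cap B_N(\bar x))\leq (Ck/5^\alpha)\,\eps\), and then \(\eps\searrow 0\) concludes the proof after taking a countable union over \(k\) and \(N\).

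The only delicate points are the verification that \(\mm(\Lambda_\alpha^k)=0\) (via Lebesgue differentiation in the locally doubling setting) and the uniformity of the doubling constant on the ball \(B_{N+1}(\bar x)\); both are standard facts in this framework, so the argument reduces to routine bookkeeping.
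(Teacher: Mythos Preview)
Your proof is correct and follows essentially the same approach as the paper: Lebesgue differentiation gives \(\mm(\Lambda_\alpha^k)=0\), one encloses \(\Lambda_\alpha^k\) in an open set carrying small \(f\)-mass, applies the \(5B\) Vitali lemma to a fine cover by balls witnessing the defining inequality, and uses local doubling to pass from \(h_\alpha(B_{5r_i})\) to \(h_\alpha(B_{r_i})\). The only cosmetic differences are that you further localize to balls \(B_N(\bar x)\) (harmless but unnecessary, since the paper's locally doubling condition already gives a doubling constant uniform in \(x\)) and that you invoke outer regularity of \(f\mm\) directly rather than absolute continuity of the integral combined with outer regularity of \(\mm\).
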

\begin{proof}
By Lebesgue differentiation theorem we know that the limit
\(\lim_{r\searrow 0}(f)_{x,r}\) exists and is finite for
\(\mm\)-a.e.\ \(x\in X\), thus for any \(\alpha>0\) we have that
\(\limsup_{r\searrow 0}r^\alpha(f)_{x,r}=0\) holds for \(\mm\)-a.e.\ \(x\in X\).
This means that \(\mm(\Lambda_\alpha)=0\). Calling
\[
\Lambda_\alpha^k:=\Big\{x\in X\;\Big|\;
\limsup_{r\searrow 0}r^\alpha(f)_{x,r}\geq 1/k\Big\}\quad\text{ for every }k\in\N,
\]
we see that \(\Lambda_\alpha=\bigcup_k\Lambda_\alpha^k\),
thus in particular \(\mm(\Lambda_\alpha^k)=0\) for every \(k\in\N\).
Given that \(f\in L^1(X,\mm)\), for any \(\eps>0\) there exists \(\delta>0\)
such that \(\int_A f\,\d\mm\leq\eps\) for any Borel set \(A\subset X\)
satisfying \(\mm(A)<\delta\). Fix \(k\in\N\) and pick an open set
\(U\subset X\) such that \(\Lambda_\alpha^k\subset U\) and \(\mm(U)<\delta\).
Let us define
\[
\mathcal F:=\bigg\{B_r(x)\;\bigg|\;x\in\Lambda_\alpha^k,\;r\in(0,\eps),\;
B_r(x)\subset U,\;\int_{B_r(x)}f\,\d\mm\geq\mm\big(B_r(x)\big)/(r^\alpha k)\bigg\}.
\]
Therefore by applying the Vitali covering theorem we can find a sequence
\((B_i)_{i\in\N}\subset\mathcal F\) of pairwise disjoint balls
\(B_i=B_{r_i}(x_i)\) such that \(\Lambda_\alpha^k\subset\bigcup_i B_{5 r_i}(x_i)\).
Being \(\mm\) locally doubling, there exists a constant \(C_D\geq 1\) such that
\(\mm\big(B_{5r}(x)\big)\leq C_D\,\mm\big(B_r(x)\big)\) holds for every \(x\in X\)
and \(r<\eps\). Consequently, one has that
\[\begin{split}
\mathscr H^{h_\alpha}_{10\eps}(\Lambda_\alpha^k)
&\leq\frac{1}{5^\alpha}\sum_{i=1}^\infty\frac{\mm\big(B_{5r_i}(x_i)\big)}{r_i^\alpha}
\leq\frac{C_D}{5^\alpha}\sum_{i=1}^\infty\frac{\mm(B_i)}{r_i^\alpha}
\leq\frac{C_D k}{5^\alpha}\sum_{i=1}^\infty\int_{B_i}f\,\d\mm
\leq\frac{C_D k}{5^\alpha}\int_U f\,\d\mm\\
&\leq\frac{C_D k}{5^\alpha}\eps.
\end{split}\]
By letting \(\eps\searrow 0\) we conclude that
\(\mathscr H^{h_\alpha}(\Lambda_\alpha^k)=0\), whence
\(\mathscr H^{h_\alpha}(\Lambda_\alpha)=\lim_k\mathscr H^{h_\alpha}(\Lambda_\alpha^k)=0\).
\end{proof}

\begin{theorem}\label{thm:H_ac_Cap}
Let \((X,\sfd,\mm)\) be a PI space. Then it holds that
\(\mathscr H^{h_\alpha}\ll\Cap\) for every \(\alpha\in(0,2)\).
\end{theorem}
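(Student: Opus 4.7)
The plan is to prove the contrapositive: I would show that if \(\Cap(E)=0\), then \(\mathscr H^{h_\alpha}(E)=0\).

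\emph{Step 1: build a Sobolev function that diverges on \(E\).}
By the very definition of capacity, for each \(n\in\N\) I can pick \(f_n\in H^{1,2}(X,\sfd,\mm)\) with \(f_n\geq 1\) \(\mm\)-a.e.\ on some open neighbourhood \(U_n\supset E\) and \(\|f_n\|_{H^{1,2}}\leq 2^{-n}\); replacing \(f_n\) by \(|f_n|\) we may assume \(f_n\geq 0\). Then \(g:=\sum_n f_n\) converges in \(H^{1,2}\), so \(g\in H^{1,2}(X,\sfd,\mm)\), and in particular \(|\nabla g|^2\in L^1(\mm)\). For \(x\in E\) and \(N\in\N\), whenever \(r<\min_{n\le N}\sfd(x,X\setminus U_n)\) one has \(B_r(x)\subset U_n\) for all \(n\le N\), hence \((f_n)_{x,r}\ge 1\); by monotone convergence \((g)_{x,r}=\sum_n (f_n)_{x,r}\ge N\). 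Therefore \((g)_{x,r}\to+\infty\) as \(r\downarrow 0\) for every \(x\in E\).

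\emph{Step 2: control \((g)_{x,r}\) off a \(\mathscr H^{h_\alpha}\)-null set.}
Applying \Cref{lem:H_theta_null} to the \(L^1\) function \(|\nabla g|^2\), the set
\[
\Lambda:=\Big\{x\in X\;\Big|\;\limsup_{r\downarrow 0}r^{\alpha}\bigl(|\nabla g|^2\bigr)_{x,r}>0\Big\}
\]
satisfies \(\mathscr H^{h_\alpha}(\Lambda)=0\). For \(x\notin\Lambda\), there exist \(r_0(x)>0\) and \(C_x<\infty\) such that \(\bigl(|\nabla g|^2\bigr)_{x,r}\le C_x\,r^{-\alpha}\) whenever \(r<r_0(x)\). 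Combining the Poincar\'e inequality \eqref{eq:Poincare_ineq} with local doubling \eqref{eq:locallydoubling} I would get
\[
|(g)_{x,r/2}-(g)_{x,r}|\le \frac{\mm(B_r(x))}{\mm(B_{r/2}(x))}\fint_{B_r(x)}|g-(g)_{x,r}|\,\d\mm\le C\,r\left(\fint_{B_{\lambda r}(x)}|\nabla g|^2\,\d\mm\right)^{1/2}\le C'_x\,r^{1-\alpha/2},
\]
for \(r\) small. Since \(\alpha<2\), summing along the dyadic scales \(r_k:=2^{-k}r_0(x)\) produces a finite bound, so \(\{(g)_{x,r_k}\}_k\) is Cauchy and converges to a finite limit. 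For \(r\in[r_{k+1},r_k]\) the elementary estimate \((g)_{x,r}\le (\mm(B_{r_k}(x))/\mm(B_r(x)))(g)_{x,r_k}\le C\,(g)_{x,r_k}\) then shows that the whole family \(r\mapsto(g)_{x,r}\) stays bounded as \(r\downarrow 0\) for every \(x\notin\Lambda\).

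\emph{Conclusion.} Steps 1 and 2 together force \(E\subset\Lambda\), whence \(\mathscr H^{h_\alpha}(E)\le\mathscr H^{h_\alpha}(\Lambda)=0\). The crucial quantitative fact is the strict bound \(\alpha<2\), which is exactly what makes the series \(\sum_k r_k^{1-\alpha/2}\) absolutely summable and hence drives the entire argument; the only further point requiring (routine) care is the upgrade from Cauchyness along dyadic scales to boundedness at every scale, which follows immediately from local doubling.
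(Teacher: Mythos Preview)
Your proof is correct and follows essentially the same approach as the paper: build \(g=\sum_n f_n\in H^{1,2}\) with diverging averages on the zero-capacity set, apply \Cref{lem:H_theta_null} to \(|\nabla g|^2\), and use the Poincar\'e inequality together with a dyadic telescoping argument (exploiting \(1-\alpha/2>0\)) to show that averages of \(g\) stay bounded off \(\Lambda\). The only cosmetic difference is that the paper phrases the key implication as a proof by contradiction (assuming \(x\in A\setminus\Lambda\) and reaching a contradiction) rather than your direct contrapositive, but the analytic content is identical.
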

\begin{proof}
Fix \(\alpha\in(0,2)\) and a set \(A\subset X\) with \(\Cap(A)=0\).
We aim to prove that \(\mathscr H^{h_\alpha}(A)=0\). By definition of
capacity, we can find a sequence \((f_i)_i\subset H^{1,2}(X)\)
such that \(f_i\geq 1\) on some neighbourhood of \(A\) and
\({\|f_i\|}_{H^{1,2}(X)}\leq 1/2^i\) for every \(i\in\N\).
Since \(\sum_{i=1}^\infty{\|f_i\|}_{H^{1,2}(X)}<+\infty\), one has that
\(g:=\sum_{i=1}^\infty f_i\) is a well-defined element of the Banach
space \(H^{1,2}(X)\). For any \(k\in\N\) it clearly holds that
\(g\geq k\) on some neighbourhood of \(A\), whence for any \(x\in A\)
we have \((g)_{x,r}\geq k\) for every \(r<{\rm dist}\big(x,\{g<k\}\big)\)
and accordingly
\begin{equation}\label{eq:H_ac_Cap_claim1}
\lim_{r\searrow 0}(g)_{x,r}=+\infty\quad\text{ for every }x\in A.
\end{equation}
Furthermore, we claim that
\begin{equation}\label{eq:H_ac_Cap_claim2}
\limsup_{r\searrow 0}r^\alpha\fint_{B_r(x)}|Dg|^2\,\d\mm=+\infty
\quad\text{ for every }x\in A.
\end{equation}
In order to prove it, we argue by contradiction: suppose that
\(\limsup_{r\searrow 0}r^\alpha\fint_{B_r(x)}|Dg|^2\,\d\mm<+\infty\)
for some \(x\in A\), so that there exists a constant \(M>0\) such that
\begin{equation}\label{eq:H_ac_Cap_claim3}
r^\alpha\fint_{B_r(x)}|Dg|^2\,\d\mm\leq M\quad\text{ for every }r\in(0,1).
\end{equation}
Call \(C_D\) the doubling constant of \(\mm\) (for \(r<1/2\)).
Therefore, for every \(r<1/(2\lambda)\) we have that
\[\begin{split}
\big|(g)_{x,r}-(g)_{x,2r}\big|
&\overset{\phantom{\eqref{eq:H_ac_Cap_claim3}}}=
\frac{1}{\mm\big(B_r(x)\big)}\bigg|\int_{B_r(x)}g-(g)_{x,2r}\,\d\mm\bigg|\\
&\overset{\phantom{\eqref{eq:H_ac_Cap_claim3}}}\leq
C_D\fint_{B_{2r}(x)}\big|g-(g)_{x,2r}\big|\,\d\mm\\
&\overset{\eqref{eq:Poincare_ineq}}\leq
2\,C_D\,C_P\,r\,\bigg(\fint_{B_{2\lambda r}(x)}|Dg|^2\,\d\mm\bigg)^{\nicefrac{1}{2}}\\
&\overset{\eqref{eq:H_ac_Cap_claim3}}\leq
(2^{1-\nicefrac{\alpha}{2}}\,C_D\,C_P\,\lambda^{-\nicefrac{\alpha}{2}}\,
M^{\nicefrac{1}{2}})\,r^{1-\nicefrac{\alpha}{2}}.
\end{split}\]
Let us set
\(C:=2^{1-\nicefrac{\alpha}{2}}\,C_D\,C_P\,
\lambda^{-\nicefrac{\alpha}{2}}\,M^{\nicefrac{1}{2}}\)
and \(\theta:=1-\alpha/2\in(0,1)\). Then the previous computation gives
\(\sum_{i=2}^\infty\big|(g)_{x,2^{-i}}-(g)_{x,2^{-i+1}}\big|\leq
C\sum_{i=2}^\infty(2^\theta)^{-i}<+\infty\), contradicting
\eqref{eq:H_ac_Cap_claim1}. This proves \eqref{eq:H_ac_Cap_claim2}.

Finally, it immediately follows from \eqref{eq:H_ac_Cap_claim2} that \(A\)
is contained in the set of all points \(x\in X\) that satisfy
\(\limsup_{r\searrow 0}r^\alpha\fint_{B_r(x)}|Dg|^2\,\d\mm>0\), which
is \(\mathscr H^{h_\alpha}\)-negligible by \Cref{lem:H_theta_null}.
Therefore, we conclude that \(\mathscr H^{h_\alpha}(A)=0\), thus completing
the proof of the statement.
\end{proof}
\subsection{\texorpdfstring{$\RCD$}{RCD} metric measure spaces}\label{section: preliminaries RCD}
The main object of our investigation in this note are $\RCD(K,N)$ metric measure spaces, that is infinitesimally Hilbertian spaces satisfying a lower Ricci curvature bound and an upper dimension bound in synthetic sense according to \cite{Sturm06I,Sturm06II,Lott-Villani09}. Before passing to the description of the main properties of $\RCD(K,N)$ spaces that will be relevant for the sake of this note, let us briefly focus on the adimensional case.

The class of $\RCD(K,\infty)$  spaces was introduced in \cite{AmbrosioGigliSavare12} (see also \cite{AmbrosioGigliMondinoRajala12} for the extension to the case of $\sigma$-finite reference measures) adding to the $\CD(K,\infty)$ condition, formulated in terms of $K$-convexity properties of the logarithmic entropy over the Wasserstein space $(\P_{2},W_2)$, the infinitesimal Hilbertianity assumption. 

Under the $\RCD(K,\infty)$ condition it was proved that the dual heat semigroup $P_t^{*}:\P_2(X)\to\P_2(X)$, defined by
\begin{equation*}
\int_Xf\,\d P_t^*\mu=\int_XP_tf\,\d\mu\qquad\forall\mu\in\P_2(X),\quad\forall f \in \Lip_{\rm bs}(X,\sfd)
\end{equation*}
is $K$-contractive w.r.t.\ the $W_2$-distance and, for $t>0$, maps probability measures into probability measures absolutely continuous w.r.t.\ $\mm$. Then, for any $t>0$, it is possible to define the \textit{heat kernel} $p_t:X\times X\to[0,+\infty)$ by 
\begin{equation}\label{eq:heat kernel}
p_t(x,\cdot)\mm=P_t^*\delta_x.
\end{equation}
We go on stating a few regularization properties of $\RCD(K,\infty)$ spaces, referring again to \cite{AmbrosioGigliSavare12,AmbrosioGigliMondinoRajala12} for a more detailed discussion and for the proofs of these results.

First we have the \textit{Bakry-\'Emery} contraction estimate:
\begin{equation}\label{eq:BE2}
\abs{\nabla P_tf}^2\le e^{-2Kt}P_t\abs{\nabla f}^2\quad \mm\text{-a.e.,}
\end{equation}
for any $t>0$ and for any $f\in H^{1,2}(X,\sfd,\mm)$. This contraction estimate can be generalized to the whole range of exponents $1<p<+\infty$. Furthermore in \cite{GigliHan14} it has been proved that on any proper $\RCD(K,\infty)$ m.m.s.\ it holds
\begin{equation}\label{eq:BE1}
\abs{DP_t f}\le e^{-Kt}P_t^*\abs{Df},
\end{equation}
for any $t>0$ and for any $f\in{\rm BV}(X,\sfd,\mm)$.

Next we have the so-called \textit{Sobolev-to-Lipschitz} property, stating that any $f\in H^{1,2}(X,\sfd,\mm)$ such that $\abs{\nabla f}\in L^{\infty}(X,\mm)$ admits a representative $\tilde{f}\in \Lip(X,\sfd)$ with Lipschitz constant bounded from above by $\norm{\abs{\nabla f}}_{L^{\infty}}$.

Let us introduce the space of test functions $\Test(X,\sfd,\mm)$ following \cite{Gigli14}:
\begin{equation}\label{eq:test}
\Test(X,\sfd,\mm):=\{f\in D(\Delta)\cap L^{\infty}(X,\mm): \abs{\nabla f}\in L^{\infty}(X,\mm)\text{ and }\Delta f\in H^{1,2}(X,\sfd,\mm) \}.
\end{equation}

The notion of $\RCD(K,N)$ m.m.s.\ was proposed and extensively studied in \cite{Gigli12,AmbrosioMondinoSavare13,Erbar-Kuwada-Sturm13} (see also \cite{CavallettiMilman16} for the equivalence between the $\RCD$ and the $\RCD^*$ condition\footnote{In \cite{CavallettiMilman16} the case of finite measure is considered but, due to the local nature of their arguments, it is thought that the identification result extends to the general case.}), as a finite dimensional counterpart to $\RCD(K,\infty)$ m.m.s.\ which were introduced and firstly studied in \cite{AmbrosioGigliSavare12}. Here we just recall that they can be characterized asking for the quadraticity of $\Ch$, the volume growth condition $\mm(B_r(x))\le c_1\exp(c_2r^2)$ for some (and thus for all) $x\in X$, the validity of the \textit{Sobolev-to-Lipschitz property} and of a weak form of Bochner's inequality
\begin{equation*}
\frac{1}{2}\Delta\abs{\nabla f}^2-\nabla f\cdot\nabla\Delta f\ge \frac{\left(\Delta f\right)^2}{N}+K\abs{\nabla f}^2 
\quad\text{for any $f\in \Test(X,\sfd,\mm)$.}
\end{equation*}   
We refer to \cite{AmbrosioMondinoSavare13,Erbar-Kuwada-Sturm13} for a more detailed discussion and equivalent characterizations of the $\RCD(K,N)$ condition.

Note that, if $(X,\sfd,\mm)$ is an $\RCD(K,N)$ m.m.s., then so is $(\supp\, \mm,\sfd,\mm)$, hence in the following we will always tacitly assume $\supp\, \mm = X$.

We recall that any $\RCD(K,N)$ m.m.s.\ $(X,\sfd,\mm)$ satisfies the Bishop-Gromov inequality:
\begin{equation}\label{eq:BishopGromovInequality}
\frac{\mm(B_R(x))}{v_{K,N}(R)}\le\frac{\mm(B_r(x))}{v_{K,N}(r)}
\quad \text{for any $0<r<R$ and $x\in X$},
\end{equation}
where $v_{K,N}(r)$ is the volume of the ball with radius $r$ in the model space with dimension $N$ and Ricci curvature $K$. We refer to \cite[Theorem 30.11]{Villani09} for the proof of \eqref{eq:BishopGromovInequality}.
In particular $(X,\sfd,\mm)$ is locally uniformly doubling. Furthermore, it was proved in \cite{Rajala12} that it satisfies a local Poincar\'{e} inequality. Therefore $\RCD(K,N)$ spaces fit in the framework of PI spaces that we introduced above. 

We assume the reader to be familiar with the notion of pointed measured Gromov-Hausdorff convergence (pmGH-convergence for short), referring to \cite[Chapter 27]{Villani09} for an overview on the subject. 

\begin{remark}\label{remark:stability}
	A fundamental property of $\RCD(K,N)$ spaces, that will be used several times in this paper, is the stability w.r.t.\ pmGH-convergence, meaning that a pmGH-limit of a sequence of (pointed) $\RCD(K_n,N_n)$ spaces for some $K_n\to K$ and $N_n\to N$ is an $\RCD(K,N)$ m.m.s..  
\end{remark}

Let us finally recall the construction of good cut-off functions over $\RCD(K,N)$ metric measure spaces, see \cite[Lemma 3.1]{Mondino-Naber14} for a proof.

\begin{lemma}\label{lemma:good cut-off}
	Let $(X,\sfd,\mm)$ be an $\RCD(K,N)$ m.m.\ space. For any $0<2r<R$ and $x\in X$ there exists a test function $\eta:X\to \R$ satisfying
	\begin{itemize}
		\item[(i)] $0\le \eta\le 1$ on $X$, $\eta=1$ on $B_r(x)$ and $\eta=0$ on $X\setminus B_{2r}(x)$;
		\item[(ii)] $r^2|\Delta \eta|+r|\nabla \eta|\le C_{N,K,R}$.
	\end{itemize}
\end{lemma}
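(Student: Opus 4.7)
The strategy will follow the by-now standard approach of regularising a rough Lipschitz cutoff with the heat flow and then composing with a smooth scalar function. I fix a time parameter \(t=\tau r^{2}\) with \(\tau>0\) to be determined in terms of \(N,K,R\), and start from an explicit Lipschitz cutoff \(\chi:X\to[0,1]\), built directly from the distance \(\sfd(\cdot,x)\), that equals \(1\) on \(B_{5r/4}(x)\), vanishes outside \(B_{7r/4}(x)\), and satisfies \(\Lip(\chi)\leq C/r\). I then set \(u:=P_{t}\chi\) and define the desired cutoff as \(\eta:=\psi\circ u\), where \(\psi\in C^{\infty}(\R;[0,1])\) is a fixed auxiliary function with \(\psi\equiv 1\) on \([3/4,\infty)\), \(\psi\equiv 0\) on \((-\infty,1/4]\), and with uniformly bounded derivatives.

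The core of the argument consists of three quantitative estimates on \(u\) at the scale \(t\sim r^{2}\). First, the Bakry--\'Emery inequality \eqref{eq:BE2} applied to \(\chi\) gives the \(\mm\)-a.e.\ bound \(\abs{\nabla u}^{2}\leq e^{-2Kt}P_{t}\abs{\nabla\chi}^{2}\leq C_{K,R}/r^{2}\), which upgrades to a genuine Lipschitz estimate for a continuous representative via the Sobolev-to-Lipschitz property. Second, using the Gaussian heat kernel bounds available on \(\RCD(K,N)\) spaces together with Bishop--Gromov, I expect to prove \(\abs{u-\chi}\leq 1/4\) on \(B_{r}(x)\cup(X\setminus B_{2r}(x))\) once \(\tau=\tau(N,K,R)\) is chosen small enough; this forces \(\eta\equiv 1\) on \(B_{r}(x)\) and \(\eta\equiv 0\) outside \(B_{2r}(x)\), giving (i). Third, and most delicately, I need the \(L^{\infty}\) Laplacian bound \(\abs{\Delta u}\leq C_{N,K,R}/r^{2}\), which I plan to derive from the representation
\[
\Delta u(y)=\int_{X}\partial_{t}p_{t}(y,z)\,\chi(z)\,\d\mm(z)
\]
combined with the Gaussian upper bound on \(\partial_{t}p_{t}\) available in this setting.

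Once these three estimates are in hand, the chain rule for the gradient and the Laplacian of \(\psi\circ u\), valid in the \(\RCD\) framework given the regularity of \(u\), yields
\[
\nabla\eta=\psi'(u)\,\nabla u,\qquad
\Delta\eta=\psi'(u)\,\Delta u+\psi''(u)\,\abs{\nabla u}^{2},
\]
and (ii) follows at once from the \(L^{\infty}\) norms of \(\psi'\) and \(\psi''\) together with the estimates above. To conclude that \(\eta\in\Test(X,\sfd,\mm)\) in the sense of \eqref{eq:test} it still remains to check that \(\Delta\eta\in H^{1,2}(X)\); the plan there is to exploit that \(\eta\) has support in \(\bar{B}_{2r}(x)\) and that the two summands in the chain-rule expression for \(\Delta\eta\) admit \(H^{1,2}\) representatives, thanks to one further semigroup differentiation (writing \(P_{t}=P_{t/2}\circ P_{t/2}\)) and to the smoothness of \(\psi\) with bounded derivatives.

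The main obstacle I foresee is the pointwise Laplacian estimate in the third step: unlike \(\abs{\nabla u}\), which is controlled by the clean semigroup inequality \eqref{eq:BE2}, the \(L^{\infty}\) bound on \(\Delta P_{t}\chi\) does not descend directly from Bochner's inequality and forces me to enter the heat kernel machinery. Keeping the final constant dependent only on \(N\), \(K\) and \(R\), and not on \(x\), will require working at the fixed scale \(t=\tau r^{2}\) with \(r<R/2\), so that the local doubling and Poincar\'e constants entering the kernel estimates can be absorbed into a single \(C_{N,K,R}\).
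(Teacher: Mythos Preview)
The paper does not actually prove this lemma: it is stated as a preliminary result with a reference to \cite[Lemma 3.1]{Mondino-Naber14} for the proof. Your proposed strategy---heat-regularise a rough Lipschitz cutoff at time \(t\sim r^{2}\), then compose with a fixed smooth scalar function---is precisely the approach of that reference (and of the earlier Cheeger--Colding construction on manifolds), so there is nothing substantive to compare.

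Your identification of the \(L^{\infty}\) Laplacian bound as the delicate step is accurate: this is where the finite-dimensional Gaussian heat kernel estimates (including the bound on \(\partial_{t}p_{t}\)) enter, and it is the reason the lemma is stated for \(\RCD(K,N)\) rather than \(\RCD(K,\infty)\). One small remark on the final verification that \(\Delta\eta\in H^{1,2}(X)\): rather than differentiating the semigroup once more, it is cleaner to observe that \(u=P_{t}\chi\in\Test(X)\) already (since \(P_{t}\) maps \(L^{2}\cap L^{\infty}\) into \(\Test\) for \(t>0\)), so that \(\abs{\nabla u}^{2}\in H^{1,2}(X)\) by \cite{Savare13}, and \(\Delta u=P_{t/2}\Delta P_{t/2}\chi\in\Test(X)\) as well; the chain-rule terms \(\psi'(u)\Delta u\) and \(\psi''(u)\abs{\nabla u}^{2}\) are then products of bounded Lipschitz functions with \(H^{1,2}\cap L^{\infty}\) functions, hence in \(H^{1,2}\).
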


\subsubsection{Structure theory}
Let us briefly review the main results concerning the state of the art about the so-called structure theory of $\RCD(K,N)$ spaces.

Given a m.m.s.\ $(X,\sfd,\mm)$, $x\in X$ and $r\in(0,1)$, we consider the rescaled and normalized pointed m.m.s.\ $(X,r^{-1}\sfd,\mm_r^{x},x)$, where 
\begin{equation*}
\mm_r^x:= \left( \int_{B_r(x)} \left(1-\frac{\sfd(x,y)}{r}\right) \d \mm(y)\right)^{-1}\mm=C(x,r)^{-1}\mm.
\end{equation*}

\begin{definition}
	We say that a pointed m.m.s.\ $(Y,\sfd_Y,\eta,y)$ is tangent to $(X,\sfd,\mm)$ at $x$ if there exists a sequence $r_i\downarrow 0$ such that $(X,r_i^{-1}\sfd,\mm_{r_i}^x,x)\rightarrow(Y,\sfd_Y,\eta,y)$ in the pmGH-topology. The collection of all the tangent spaces of $(X,\sfd,\mm)$ at $x$ is denoted by $\Tan_x(X,\sfd,\mm)$.
\end{definition}

A compactness argument, which is due to Gromov, together with the rescaling and stability properties of the $\RCD(K,N)$ condition (see \Cref{remark:stability}), yields that $\Tan_x(X,\sfd,\mm)$ is non-empty for every $x\in X$ and its elements are all $\RCD(0,N)$ pointed m.m.\ spaces.\\
Let us recall below the notion of $k$-regular point and $k$-regular set. 

\begin{definition}\label{def:regular point}
	Given any natural $1\le k\le N$, we say that $x\in X$ is a $k$-regular point if
	\begin{equation*}
		\Tan_x(X,\sfd,\mm)=\left\lbrace (\R^k,\sfd_{eucl},c_k\Leb^k,0)  \right\rbrace.
	\end{equation*}
	We shall denote by $\mathcal{R}_k$ the set of $k$-regular points in $X$.
\end{definition}
Observe that, by explicit computation, in \Cref{def:regular point} the
constant $c_k$ equals $\frac{\omega_k}{k+1}$.
	\begin{remark}\label{remark: limit of different normalization}
		Observe that, if $x\in \mathcal{R}_k$, then one has
		\begin{equation}\label{eq:asympotic of different normalization}
		\lim_{r\to 0} \frac{\int_{B_r(x)}\left( 1-\frac{\sfd(x,y)}{r}\right)\, \d \mm(y)}{\mm(B_r(x))}=\frac{1}{k+1}.
		\end{equation}
		Moreover it can be easily checked that $x\in \mathcal{R}_k$ if and only if
		\begin{equation*}
		\lim_{r\to 0}\sfd_{pmGH}\left( \left(X,r^{-1}\sfd,\frac{\mm}{\mm(B_r(x))},x\right), \left( \R^k,\sfd_{\text{eucl}}, \frac{1}{\omega_k}\Leb^k, 0^k\right)\right)=0.
		\end{equation*}
	\end{remark}
	
After the works \cite{Gigli-Mondino-Rajala15,Mondino-Naber14,GP16-2,DPMR16,MK16} and \cite{BS18} we have the following structure theorem for $\RCD(K,N)$ spaces.

\begin{theorem}\label{thm:structure theory}
	Let $(X,\sfd,\mm)$ be an $\RCD(K,N)$ m.m.s.\ with $K\in\R$ and $N\ge 1$. Then there exists a natural number $1\le n\le N$, called essential dimension of $X$, such that $\mm(X\setminus \mathcal{R}_n)=0$. Moreover $\mathcal{R}_n$ is $(\mm,n)$-rectifiable and $\mm$ is representable as $\theta\haus^n\res {\mathcal{R}_n}$ for some nonnegative density $\theta\in L^1_{\rm loc}(X,\haus^n\res\mathcal{R}_n)$.
\end{theorem}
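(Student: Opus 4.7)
The statement is a synthesis of results established in five separate works cited in the introduction, so my plan is not to give a unified direct proof but rather to assemble the three distinct conclusions from the appropriate sources, and to verify that the pieces fit together.

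First I would handle the pointwise structure. By the Mondino--Naber rectifiability theorem from \cite{Mondino-Naber14}, one has $\mm\big(X\setminus\bigcup_{k=1}^{\lfloor N\rfloor}\mathcal R_k\big)=0$, and moreover each stratum $\mathcal R_k$ is $(\mm,k)$-rectifiable. Their argument proceeds via the stability of $\RCD(K,N)$ under pmGH-convergence (\Cref{remark:stability}), together with an iteration of tangent spaces and a covering argument using the almost splitting theorem. At this stage, $\mm$-a.e. point of $X$ has a unique Euclidean tangent of \emph{some} dimension, but different points could in principle sit on different strata of positive $\mm$-measure.

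The second step is to rule out the coexistence of strata of different dimensions, i.e. to produce the essential dimension $n$. This is the content of the constancy-of-dimension theorem of \cite{BS18}: there is a unique $1\leq n\leq N$ with $\mm(\mathcal R_n)>0$, and consequently $\mm(X\setminus\mathcal R_n)=0$. The proof there relies on an analysis of regular Lagrangian flows of Sobolev vector fields and is the truly non-trivial step of the whole package; it is the one I would expect to be the main obstacle if one were to re-prove the theorem from scratch.

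Finally, for the representation formula, I would invoke the absolute continuity results of \cite{MK16,GP16-2,DPMR16}, which together establish $\mm\ll\haus^n\res\mathcal R_n$ on any $\RCD(K,N)$ m.m.s. Combined with the $(\mm,n)$-rectifiability of $\mathcal R_n$ from the first step, Radon--Nikodym then yields a nonnegative Borel density $\theta$ with $\mm=\theta\,\haus^n\res\mathcal R_n$. To check that $\theta\in L^1_{\rm loc}(\haus^n\res\mathcal R_n)$ it suffices to note that for every bounded set $B\subset X$ one has $\int_B\theta\,\d\haus^n=\mm(B)<+\infty$ since $\mm$ is finite on bounded sets by definition of m.m.s. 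The Bishop--Gromov inequality \eqref{eq:BishopGromovInequality} additionally gives a quantitative pointwise control on $\theta$ in terms of the $n$-density of $\mm$, but this is not needed for the bare statement.
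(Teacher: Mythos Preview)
Your proposal is correct and matches the paper's treatment: the theorem is stated in the preliminaries as a compilation of results from \cite{Gigli-Mondino-Rajala15,Mondino-Naber14,GP16-2,DPMR16,MK16,BS18} without proof, and you have accurately attributed each of the three components (rectifiability of the strata, constancy of dimension, absolute continuity $\mm\ll\haus^n$) to the appropriate sources.
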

	Recall that $X$ is said to be $(\mm,n)$-rectifiable if there exists a family $\left\lbrace A_i \right\rbrace_{i\in\N}$ of Borel subsets of $X$ such that each $A_i$ is bi-Lipschitz to a Borel subset of $\R^n$ and $\mm(X\setminus \cup_{i\in\N}A_i)=0$.

\subsubsection{Sobolev functions and Laplacian on balls}\label{subsubsection:laplacian on balls}
Following a standard approach let us give a notion of Sobolev functions and Laplacian on balls, we refer to \cite{AmbrosioHonda18} for more detailed presentation.

 We define the space $H_0^{1,2}(B_r(x),\sfd,\mm)$ considering the closure of $\Lip_{\rm c}(B_r(x),\sfd)$ in $H^{1,2}(X,\sfd,\mm)$. Let us also define $H_{{\rm loc}}^{1,2}(B_r(x),\sfd,\mm)$ as the space of those $f:B_r(x)\to\R$ such that $\eta f\in H^{1,2}(X,\sfd,\mm)$ for any $\eta\in \Lip_c(B_r(x),\sfd)$. Exploiting the locality of the minimal relaxed slope one can easily define $|\nabla f|$ for any $f\in H_{{\rm loc}}^{1,2}(B_r(x),\sfd,\mm)$. This allows us to introduce $H^{1,2}(B_r(x),\sfd,\mm)$ as the space of $f\in  H_{{\rm loc}}^{1,2}(B_r(x),\sfd,\mm)$ such that $f, |\nabla f|\in L^2(X,\mm)$.
 
\begin{definition}\label{def:laplacian on balls}
	A function $f\in H^{1,2}(B_r(x),\sfd,\mm)$ belongs to $D(\Delta, B_r(x))$ if there exists $g\in L^2(B_r(x),\mm)$ satisfying
	\begin{equation*}
		\int_{B_r(x)} \nabla f\cdot \nabla  h\, \d \mm=-\int_{B_r(x)} f\, g\, \d \mm
		\quad \text{for any }h\in H_0^{1,2}(B_r(x),\sfd,\mm).
	\end{equation*}
	With a slight abuse of notation we write $\Delta f=g$ in $B_r(x)$.
\end{definition}
It is easily seen that, if $f\in D(\Delta,B_r(x))$ and $\eta\in \Lip_c(B_r(x),\sfd)\cap D(\Delta)$, $\Delta \eta\in L^{\infty}(X,\mm)$ then $\eta f\in D(\Delta)$.

\subsubsection{Stability and convergence results}\label{subsubsection:stability results}
Let us fix a pointed measured Gromov-Hausdorff convergent sequence
\begin{equation*}
	(X_i,\sfd_i,\mm_i,x_i)\to (Y,\varrho,\mu,y)
\end{equation*}
of $\RCD(K,N)$ m.m.\ spaces. Recall that, in the setting of uniformly locally doubling spaces, the pointed measured Gromov-Hausdorff convergence can be equivalently characterized asking for the existence of a proper metric space $(Z,\sfd_Z)$ where $(X_i,\sfd_i)$ and $(Y,\varrho)$ are isometrically embedded, $x_i\to y$ and $\mm_i\to\mu$ in duality with $\Cbs(Z)$ (the space of continuous functions with bounded supports in $Z$). This is the so-called extrinsic approach and it is convenient to formulate various notions of convergence.

\begin{definition}\label{def:convpuntunif}
	Let $(X_i, \sfd_i, \mm_i, x_i)$, $(Y,\varrho,\mu, y)$, $(Z,\sfd_Z)$ be as above and $f_i:X_i\to\R$, $f:Y\to\R$. We say that $f_i\to f$ pointwise if
	$f_i(z_i)\to f(z)$ for every sequence of points $z_i\in X_i$ such that $z_i\to z$ in $Z$.
	If for every $\varepsilon>0$ there exists $\delta>0$ such that
	$\abs{f_i(z_i)-f(z)}\le\varepsilon$
	for every $i\ge\delta^{-1}$ and $z_i\in X_i$, $z\in Y$ with $\sfd_Z(z_i,z)\le\delta$,
	then we say that $f_i\to f$ uniformly.
\end{definition}
The next proposition is a version of the Ascoli--Arzel\`{a} compactness theorem for sequences of functions defined on varying spaces. We omit the proof, that can be obtained arguing as in the case of a fixed space.
\begin{proposition}\label{thm:AscoliArzela}
	Let $(X_i, \sfd_i,\mm_i,x_i)$ and $(Y,\rho,\mu,y)$ be as above and $R>0$, $L>0$ fixed.
	Then for any sequence of $L$-Lipschitz functions $f_i:B_R(x_i)\to \R$
such that \(\sup_i\big|f_i(x_i)\big|<+\infty\)	
	there exists a subsequence that converges uniformly to some $L$-Lipschitz function $f:B_R(y)\to \R$.
\end{proposition}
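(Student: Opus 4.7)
The plan is to adapt the classical diagonal-extraction proof of Ascoli--Arzel\`a to the varying-space setting. First I embed all the $(X_i,\sfd_i)$ together with $(Y,\rho)$ isometrically into a common proper metric space $(Z,\sfd_Z)$, realizing the pmGH convergence as stated in the extrinsic formulation: $\sfd_Z$ restricts to $\sfd_i$ on $X_i$, to $\rho$ on $Y$, and $x_i\to y$ in $Z$.

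Next I pick a countable set $\{y_k\}_{k\in\N}\subset B_R(y)$ which is dense in $\overline{B_R(y)}$. For each $k$, using that $X_i\to Y$ in $Z$, I choose $z_{k,i}\in X_i$ with $z_{k,i}\to y_k$ in $(Z,\sfd_Z)$; in particular $\sfd_i(z_{k,i},x_i)\to\rho(y_k,y)<R$, so $z_{k,i}\in B_R(x_i)$ for $i$ large. The sequence $\bigl(f_i(z_{k,i})\bigr)_i$ is bounded, since
\[
|f_i(z_{k,i})|\leq |f_i(x_i)|+L\,\sfd_i(z_{k,i},x_i),
\]
and both terms are uniformly bounded in $i$ by hypothesis and by the previous remark. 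A standard Cantor diagonal argument then produces a subsequence (not relabeled) such that $f_i(z_{k,i})\to a_k\in\R$ for every $k\in\N$.

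I now upgrade $k\mapsto a_k$ to an $L$-Lipschitz function on $B_R(y)$. For $k,k'$,
\[
|f_i(z_{k,i})-f_i(z_{k',i})|\leq L\,\sfd_i(z_{k,i},z_{k',i})=L\,\sfd_Z(z_{k,i},z_{k',i})\longrightarrow L\,\rho(y_k,y_{k'}),
\]
so $|a_k-a_{k'}|\leq L\rho(y_k,y_{k'})$ and the map extends uniquely to an $L$-Lipschitz function $f\colon\overline{B_R(y)}\to\R$; I then restrict to $B_R(y)$. Note $(Y,\rho)$ is proper (being a pmGH-limit of uniformly locally doubling spaces, or directly by Bishop--Gromov), so $\overline{B_R(y)}$ is compact, which I use in the last step.

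Finally I verify uniform convergence in the sense of \Cref{def:convpuntunif}. Given $\eps>0$, pick $\delta_0>0$ with $3L\delta_0<\eps/2$ and cover $\overline{B_R(y)}$ by finitely many balls $B_{\delta_0}(y_{k_j})$, $j=1,\dots,m$. Choose $N$ large enough that for all $i\geq N$ and all $j\leq m$ one has $|f_i(z_{k_j,i})-a_{k_j}|\leq\eps/6$ and $\sfd_Z(z_{k_j,i},y_{k_j})\leq\delta_0$. Now for any $z_i\in X_i$ and $z\in Y$ with $\sfd_Z(z_i,z)\leq\delta_0$, pick $j$ with $\rho(z,y_{k_j})\leq\delta_0$; then $\sfd_i(z_i,z_{k_j,i})=\sfd_Z(z_i,z_{k_j,i})\leq 3\delta_0$, and writing
\[
|f_i(z_i)-f(z)|\leq |f_i(z_i)-f_i(z_{k_j,i})|+|f_i(z_{k_j,i})-a_{k_j}|+|f(y_{k_j})-f(z)|
\]
gives $|f_i(z_i)-f(z)|\leq 3L\delta_0+\eps/6+L\delta_0<\eps$, as required. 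I expect no serious obstacle: the only point requiring care is the bookkeeping of the three metrics $\sfd_i$, $\rho$, $\sfd_Z$ and the fact that the lift $z_{k,i}$ of each $y_k$ must be chosen once and for all before the diagonal extraction.
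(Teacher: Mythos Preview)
Your proof is correct and follows precisely the classical Ascoli--Arzel\`a diagonal-extraction argument adapted to varying spaces, which is exactly what the paper indicates: the paper omits the proof entirely, remarking only that it ``can be obtained arguing as in the case of a fixed space.'' One cosmetic point: in the final step, the $\delta$ in \Cref{def:convpuntunif} serves double duty (both as the closeness threshold and, via $i\geq\delta^{-1}$, as the index threshold), so you should take $\delta=\min\{\delta_0,1/N\}$ rather than $\delta_0$ alone; this is pure bookkeeping and does not affect the argument.
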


We recall below the notions of convergence in $L^p$ and Sobolev spaces for functions defined over converging sequences of metric measure spaces. We will be concerned only with the cases $p=2$ and $p=1$ in the rest of the note. We refer again to \cite{AmbrosioHonda17,AmbrosioHonda18} for a more general treatment and the proofs of the results we state below.

\begin{definition}\label{def:L2convergence}
	We say that $f_i\in L^2(X_i,\mm_i)$ converge in $L^2$-weak to $f\in L^2(Y,\mu)$ if $f_i\mm_i\to f\mu$ in duality with $\Cbs(Z)$ 
	and $\sup_i\norm{f_i}_{L^2(X_i,\mm_i)}<+\infty$.
	
	We say that $f_i\in L^2(X_i,\mm_i)$ converge in $L^2$-strong to $f\in L^2(Y,\mu)$ if $f_i\mm_i\to f\mu$ in duality with $\Cbs(Z)$ 
	and $\lim_i\norm{f_i}_{L^2(X_i,\mm_i)}=\norm{f}_{L^2(Y,\mu)}$.
\end{definition}

\begin{definition}\label{def:L1convergence}
	We say that a sequence $(f_i)\subset L^1(X_i,\mm_i)$ converges $L^1$-strongly to $f\in L^1(Y,\mu)$ if 
	\begin{equation*}
	\sigma\circ f_i\,\mm_i\to \sigma\circ f\mu
	\qquad
	\text{and} 
	\qquad
	\int_{X_i}|f_i|\,\d\mm_i\to \int_Y |f|\,\d\mu,
	\end{equation*}
	where $\sigma(z):={\rm sign}(z)\sqrt{|z|}$ and the weak convergence is understood in duality with $\Cbs(Z)$. Equivalently, if $\sigma\circ f_i$ $L^2$-strongly converge to $\sigma\circ f$.
\end{definition}

Dealing with characteristic functions one has the following equivalent notion of $L^1$-convergence.

\begin{definition}\label{def:L1 convergence of sets}
	We say that a sequence of Borel sets $E_i\subset X_i$ such that $\mm_i(E_i)<\infty$ for any $i\in\N$ converges in $L^1$-strong to a Borel set $F\subset Y$ with $\mu(F)<\infty$ if $\nchi_{E_i}\mm_i\to \nchi_F\mu$ in duality with $\Cbs(Z)$ and $\mm_i(E_i)\to \mu(F)$.
	
	We also say that a sequence of Borel sets $E_i\subset X_i$ converges in $L^1_{{\rm loc}}$ to a Borel set $F\subset Y$ if $E_i\cap B_R(x_i)\to F\cap B_R(y)$ in $L^1$-strong for any $R>0$.
\end{definition}

\begin{remark}\label{remark:convergence of sets}
	It follows from the very definition of $L^1$-convergence that, if a sequence of sets $E_i\to F$ in $L^1$, then
	$\nchi_{E_i}\to \nchi_F$ in $L^2$-strong.
\end{remark}

\begin{definition}\label{def:localBVconvergence}
    We say that a sequence of sets with locally finite perimeter $E_i\subset X_i$ converges locally strongly in $\text{\rm BV}$ to a set of locally finite perimeter $F\subset Y$ if $E_i\to F$ in $L^1_{\rm loc}$ and $|D\nchi_{E_i}|\to |D\nchi_F|$ in duality with $\Cbs(Z)$.
\end{definition}

A proof of the technical result below can be found in \cite{AmbrosioHonda17}.
\begin{proposition}\label{prop:sum L1 convergence}
	Let us fix $p=1,2$.
	\begin{itemize}
		\item[(i)] 	For any $f_i, g_i\in L^p(X_i,\mm_i)$ such that $f_i\to f\in L^p(Y,\mu)$ and $g_i\to g\in L^p(Y,\mu)$ strongly in $L^p$ one has $f_i+g_i\to f+g$ strongly in $L^p$.
		\item[(ii)] If $f_i\to f$ and $g_i\to g$ in $L^2$-strong then $f_ig_i\to fg$ in $L^1$-strong.
		\item[(ii)] If $f_i\to f$ in $L^1$-strong and $\sup_{i\in\N} \norm{f_i}_{L^{\infty}(X_i,\mm_i)}<\infty$
		then $\norm{f_i}_{L^2(X_i,\mm_i)}\to \norm{f}_{L^2(Y,\mu)}$. In particular $f_i\to f$ in $L^2$-strong.
	\end{itemize}
\end{proposition}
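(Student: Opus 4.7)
The three items are continuity properties of varying-space \(L^p\)-strong convergence, and they all reduce to a single key principle: if \(u_i\to u\) and \(v_i\to v\) both in \(L^2\)-strong, then \(\int u_iv_i\,\d\mm_i\to\int uv\,\d\mu\), the varying-space analogue of the continuity of the Hilbert inner product under strong convergence of both factors. I would first establish this principle, then derive (i), (ii), (iii) from it combined with the defining conditions of \(L^p\)-strong convergence and density of \(\Cbs(Z)\).

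\textbf{The key principle.} Fix \(\eps>0\) and pick \(\psi\in\Cbs(Z)\) with \(\|u-\psi\|_{L^2(\mu)}<\eps\). Decompose
\[
\int u_iv_i\,\d\mm_i-\int uv\,\d\mu=\int(u_i-\psi)v_i\,\d\mm_i+\bigg(\int\psi v_i\,\d\mm_i-\int\psi v\,\d\mu\bigg)+\int(\psi-u)v\,\d\mu.
\]
The middle term vanishes as \(i\to\infty\) by the weak convergence \(v_i\mm_i\to v\mu\) in duality with \(\Cbs(Z)\). The outer terms are controlled by Cauchy--Schwarz, using the identity \(\|u_i-\psi\|_{L^2(\mm_i)}^2=\|u_i\|^2-2\int u_i\psi\,\d\mm_i+\|\psi\|^2_{L^2(\mm_i)}\) together with \(\|u_i\|\to\|u\|\) and the weak convergence against \(\psi\), which force \(\|u_i-\psi\|_{L^2(\mm_i)}\to\|u-\psi\|_{L^2(\mu)}<\eps\). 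Letting \(\eps\searrow 0\) concludes.

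\textbf{Items (i) and (ii).} For (i) with \(p=2\), linearity gives the weak convergence, and \(\|f_i+g_i\|_{L^2(\mm_i)}^2=\|f_i\|^2+2\int f_ig_i\,\d\mm_i+\|g_i\|^2\) combined with the key principle applied to the cross term yields the norm convergence. For \(p=1\), pass to the equivalent \(\sigma\)-formulation and combine the triangle inequality \(\int|f_i+g_i|\,\d\mm_i\le\int|f_i|+|g_i|\,\d\mm_i\) with lower semicontinuity of the \(L^1\)-norm under the weak convergence of \((f_i+g_i)\mm_i\) to match upper and lower bounds. For (ii), the identity \(\sigma(f_ig_i)=\sigma(f_i)\sigma(g_i)\) reduces the claim to showing \(\sigma(f_i)\sigma(g_i)\to\sigma(f)\sigma(g)\) in \(L^2\)-strong; weak convergence against any \(\phi\in\Cbs(Z)\) is a bilinear variant of the key principle (absorb \(\phi\) into one factor), while the norm convergence \(\int|f_ig_i|\,\d\mm_i\to\int|fg|\,\d\mu\) follows from the key principle applied to the non-negative \(L^2\)-strongly convergent sequences \(|\sigma(f_i)|,|\sigma(g_i)|\), using \(|\sigma(f_i)\sigma(g_i)|^2=|f_i||g_i|\).

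\textbf{Item (iii) and the main obstacle.} Weak convergence \(f_i\mm_i\to f\mu\) in duality with \(\Cbs(Z)\) follows from the pointwise identity \(f_i=\sigma(f_i)|\sigma(f_i)|\) and item (ii) applied to the \(L^2\)-strongly convergent sequences \(\sigma(f_i)\) and \(|\sigma(f_i)|\). The main obstacle is the norm convergence \(\|f_i\|_{L^2(\mm_i)}\to\|f\|_{L^2(\mu)}\), where the uniform \(L^\infty\)-bound is essential. I would exploit the non-negativity of \(M|f_i|-f_i^2\): granted the weak convergence \(f_i^2\mm_i\to f^2\mu\) in duality with \(\Cbs(Z)\), lower semicontinuity of the integral under weak convergence of non-negative measures, combined with \(\int M|f_i|\,\d\mm_i\to M\int|f|\,\d\mu\) from \(L^1\)-strong, yields \(\limsup_i\int f_i^2\,\d\mm_i\le\int f^2\,\d\mu\); the matching liminf is the standard lower semicontinuity of the \(L^2\)-norm under the weak convergence \(f_i\mm_i\to f\mu\). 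The hardest part is precisely the weak convergence of \(f_i^2\mm_i\): the naive decomposition \(\int\phi f_i^2\,\d\mm_i=\int(\phi f_i)f_i\,\d\mm_i\) is circular since it would require the very \(L^2\)-strong convergence we are trying to establish, and one needs instead a delicate bootstrap that uses the uniform \(L^\infty\)-bound together with approximation arguments on \([-M,M]\) applied to \(z\mapsto z^2\), combined with Fatou-type arguments on the non-negative quantities \(M|f_i|-f_i^2\). This bootstrap is the main technical difficulty I expect in carrying out the proof.
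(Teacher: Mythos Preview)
The paper does not prove this proposition: it simply records the statement and cites \cite{AmbrosioHonda17} for the proof. So there is no ``paper's own proof'' to compare against here.

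That said, let me comment on your sketch. Your key principle --- continuity of the bilinear pairing \((u_i,v_i)\mapsto\int u_iv_i\,\d\mm_i\) under \(L^2\)-strong convergence of both factors --- is correct and is indeed the backbone of such arguments; item (i) for \(p=2\) follows cleanly from it. However, several of the remaining steps have genuine gaps. For (i) with \(p=1\), your triangle-inequality bound gives only \(\limsup_i\int|f_i+g_i|\,\d\mm_i\le\int|f|\,\d\mu+\int|g|\,\d\mu\), which does \emph{not} match the liminf bound \(\int|f+g|\,\d\mu\); these two do not pinch, so the argument as written does not close. For (ii), you claim that \(|\sigma(f_i)|,|\sigma(g_i)|\) are \(L^2\)-strongly convergent, but the hypothesis is that \(f_i,g_i\) converge in \(L^2\)-strong, not \(\sigma(f_i),\sigma(g_i)\); note \(\|\sigma(f_i)\|_{L^2(\mm_i)}^2=\|f_i\|_{L^1(\mm_i)}\), which you have no control over. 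A cleaner route to (ii) is to use your key principle directly (with a test function absorbed into one factor) to get weak convergence of \(f_ig_i\mm_i\), and then to obtain \(\int|f_i||g_i|\to\int|f||g|\) one needs \(|f_i|\to|f|\) in \(L^2\)-strong, which requires the nontrivial fact that \(L^2\)-strong convergence is stable under composition with Lipschitz maps fixing the origin. For (iii), you yourself flag the main obstacle (weak convergence of \(f_i^2\mm_i\)) and do not resolve it; the ``delicate bootstrap'' you allude to is real and cannot be waved away. All of these issues are handled in \cite{AmbrosioHonda17}, typically via equivalent characterizations of \(L^1\)-strong convergence (e.g.\ in terms of uniform integrability plus weak convergence of \(f_i\mm_i\)) that make the linearity and product statements much more tractable than the raw \(\sigma\)-definition.
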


Let us present a compactness result for sets with finite perimeter that is partially taken from \cite{ABS18}.
\begin{proposition}\label{prop:compactness sets with finite perimeter}
	Let $E_i\subset X_i$ be sets of finite perimeter satisfying
	\begin{equation*}
	\sup_{i\in\N}	\Per(E_i, B_1(x_i))<\infty.
	\end{equation*}
	Then there exists $F\subset Y$ of finite perimeter such that, up to extract a subsequence, $E_i\cap B_1(x_i)\to F\cap B_1(y)$ in $L^1$-strong and
	\begin{equation}\label{z25}
		\liminf_{i\to\infty} \int g \,\d |D\nchi_{E_i}|\ge \int g \,\d |D\nchi_F|,
		\quad\text{for any $g\in C(Z)$, nonnegative with $\supp(g)\subset \bar B_{1/2}(y)$}.
	\end{equation}
	If we further assume that  
	\begin{equation}\label{z27}
		\lim_{i\to\infty}\abs{D\nchi_{E_i}}(B_{1/2}(x_i))=\abs{D\nchi_F}(B_{1/2}(y)),
	\end{equation}
	then \eqref{z25} improves to
	\begin{equation}\label{z26}
	\lim_{i\to\infty} \int g \,\d |D\nchi_{E_i}|= \int g \,\d |D\nchi_F|,
	\quad\text{for any $g\in C(Z)$ with $\supp(g)\subset B_{1/2}(y)$}.
	\end{equation}
\end{proposition}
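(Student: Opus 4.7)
The plan is to combine a Rellich--Kondrachov type compactness for \({\rm BV}\) functions on varying \(\RCD(K,N)\) spaces with the divergence-based representation of the total variation (Theorem \ref{thm:repr_tv}) to extract the limit set \(F\) and establish \eqref{z25}, and then to upgrade to the full weak convergence \eqref{z26} under the additional mass assumption \eqref{z27} via a measure-theoretic argument.

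The first step is to extract a subsequence along which \(\nchi_{E_i\cap B_1(x_i)}\) converges \(L^1\)-strongly (in the sense of Definition \ref{def:L1convergence}) to some \(g\in L^1(Y,\mu)\). This is a standard compactness result for sequences of \({\rm BV}\) functions with uniformly bounded total variation over pmGH-converging PI spaces (and is the portion of the statement already contained in \cite{ABS18}); it rests on the uniform local doubling and the Poincar\'e inequality available in \(\RCD(K,N)\) spaces. Since \(L^1\)-strong convergence of a uniformly bounded sequence upgrades to \(L^2\)-strong (Proposition \ref{prop:sum L1 convergence}) and the identity \(\nchi_{E_i}^2=\nchi_{E_i}\) is preserved in the limit, one finds \(g=g^2\) \(\mu\)-a.e., so that \(g=\nchi_F\) for some Borel set \(F\subset B_1(y)\).

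For the lower semicontinuity \eqref{z25}, the natural route is Theorem \ref{thm:repr_tv}: fix a derivation \(\boldsymbol b\) on \(Y\) compactly supported in \(B_{1/2}(y)\) with \(|\boldsymbol b|\leq 1\) \(\mu\)-a.e.\ and \(\dive(\boldsymbol b)\in L^\infty(\mu)\), and construct lifted derivations \(\boldsymbol b_i\) on \(X_i\) (via the identification of Proposition \ref{prop:identif_tangent_mod} with gradients of Sobolev potentials, exploiting the stability of \(H^{1,2}\), of \(\Delta\) and of the heat semigroup under pmGH-convergence, and maintaining compact support by means of Lemma \ref{lemma:good cut-off}). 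Passing the integration-by-parts identity to the limit using Remark \ref{rmk:ineq_Df_deriv} together with the \(L^1\)-strong convergence \(\nchi_{E_i}\to\nchi_F\), then weighting with \(g\) and taking the supremum over admissible \(\boldsymbol b\), yields \eqref{z25}.

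For the improvement under \eqref{z27}, a classical upgrade from lower semicontinuity on open sets plus convergence of total masses to upper semicontinuity on compact subsets applies. Concretely, given \(g\in C(Z)\) nonnegative with \(\supp g\subset B_{1/2}(y)\), pick an auxiliary \(\tilde g\in C(Z)\) with \(\supp\tilde g\subset\bar B_{1/2}(y)\) and \(\tilde g\geq\|g\|_\infty\) on \(\supp g\); applying \eqref{z25} to \(\tilde g-g\) and combining with \eqref{z27} forces the matching inequality \(\limsup_i\int g\,\d|D\nchi_{E_i}|\leq\int g\,\d|D\nchi_F|\), hence \eqref{z26}. The main obstacle is the robust construction of the lifted derivations \(\boldsymbol b_i\) along the pmGH sequence with controlled norm and divergence, where the infinitesimally Hilbertian identification of Proposition \ref{prop:identif_tangent_mod} and the stability of the differential calculus under pmGH-convergence of \(\RCD(K,N)\) spaces are essential.
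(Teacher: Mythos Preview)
Your route to \eqref{z25} differs from the paper's. The paper does not lift derivations from \(Y\) back to the \(X_i\); instead it localizes by multiplication with good cut-offs \(\eta_i\) as in Lemma~\ref{lemma:good cut-off} (with \(\eta_i\equiv 1\) on \(B_{1/2}(x_i)\), \(\eta_i\equiv 0\) outside \(B_{3/4}(x_i)\)), passes to a uniformly convergent subsequence \(\eta_i\to\eta_\infty\), and applies the \emph{global} lower semicontinuity of total variation \cite[Proposition~3.6]{ABS18} to the sequence \(\eta_i\nchi_{E_i}\to\eta_\infty\nchi_F\). Since \(|D(\eta_i\nchi_{E_i})|=|D\nchi_{E_i}|\) on \(B_{1/2}\), \eqref{z25} follows. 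Your proposal---fix a competitor \(\boldsymbol b\) on \(Y\) in the representation of Theorem~\ref{thm:repr_tv} and lift it to \(\boldsymbol b_i\) on \(X_i\) with controlled pointwise norm and divergence---is essentially the mechanism behind the proof of \cite[Proposition~3.6]{ABS18} itself, so you are reproving that result rather than invoking it. The lifting you flag as ``the main obstacle'' is genuinely delicate (a general \(v\in D(\dive)\) is not a gradient, so one must first approximate by test vector fields and lift each factor while keeping \(|v_i|\leq 1+o(1)\)); the paper's cut-off trick sidesteps all of this.

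Your argument for \eqref{z26} has a gap as written. Applying \eqref{z25} to \(\tilde g-g\) gives \(\liminf_i\int(\tilde g-g)\,\d|D\nchi_{E_i}|\geq\int(\tilde g-g)\,\d|D\nchi_F|\), but to deduce \(\limsup_i\int g\,\d|D\nchi_{E_i}|\leq\int g\,\d|D\nchi_F|\) from this you need control on \(\int\tilde g\,\d|D\nchi_{E_i}|\), and \eqref{z27} concerns \(|D\nchi_{E_i}|(B_{1/2})\), not that integral. With your hypotheses on \(\tilde g\) one only obtains \(\limsup_i\int g\,\d|D\nchi_{E_i}|\leq\int g\,\d|D\nchi_F|+\big(\|g\|_\infty|D\nchi_F|(B_{1/2})-\int\tilde g\,\d|D\nchi_F|\big)\), with a nonzero error. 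The fix is to take a sequence \(\tilde g_k=\|g\|_\infty\eta_k\) with \(\eta_k\in C_c(B_{1/2}(y))\), \(\eta_k\equiv 1\) on \(\supp g\), \(\eta_k\nearrow\nchi_{B_{1/2}}\); then \eqref{z27} bounds \(\limsup_i\int\tilde g_k\,\d|D\nchi_{E_i}|\leq\|g\|_\infty|D\nchi_F|(B_{1/2})\) and the error term vanishes as \(k\to\infty\). The paper instead refers to \cite[Corollary~3.7]{ABS18}.
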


\begin{proof}
	The convergence $E_i\cap B_1(x_i)\to F\cap B_1(y)$ in $L^1$-strong up to subsequence can be obtained arguing as in the proof of \cite[Corollary 3.4]{ABS18}. 
	
	Inequality \eqref{z25} follows from \cite[Proposition 3.6]{ABS18} along with a localization argument that we sketch briefly. For any $i\in \N$, using \Cref{lemma:good cut-off} we build a good cut-off function $\eta_i\in \Lip(X_i,\sfd_i)$ satisfying $\eta_i=1$ in $B_{1/2}(x_i)$ and $\eta_i=0$ in $X_i\setminus B_{3/4}(x_i)$. By \Cref{thm:AscoliArzela}, up to extract a subsequence, we can assume that $\eta_i\to \eta_{\infty}\in \Lip(Y,\rho)$ uniformly and in $L^2$-strong. It is easily seen that $\eta_{\infty}=1$ in $B_{1/2}(y)$ and $\eta_{\infty}=0$ in $Y\setminus B_1(y)$. The sequence $(\eta_i\nchi_{E_i})_{i}$ satisfies
	\begin{equation*}
		\eta_i\nchi_{E_i}\to \eta_{\infty}\nchi_F
		\ \text{in $L^1$-strong}
		\quad\text{and}
		\quad
		\sup_{i\in\N}|D (\eta_i\nchi_{E_i})|(X_i)<\infty,
	\end{equation*}
	thanks to \Cref{prop:sum L1 convergence}(ii) and standard calculus rules. Applying \cite[Proposition 3.6]{ABS18} to the sequence $(\eta_i \nchi_{E_i})_i$ we get \eqref{z25}.
	
	Inequality \eqref{z26}) is a weak convergence result in the ball $B_{1/2}(y)\subset Z$, which can be proved arguing as in the proof of \cite[Corollary 3.7]{ABS18} taking into account \eqref{z25} and \eqref{z27}.
\end{proof}

Let us now introduce a notion of $H^{1,2}$-convergence along with its local counterpart.

\begin{definition}\label{def:H12convergence}
	We say that $f_i\in H^{1,2}(X_i,\sfd_i,\mm_i)$ are weakly convergent to $f\in H^{1,2}(Y,\varrho,\mu)$ if they converge in $L^2$-weak and $\sup_i\Ch^i(f_i)<+\infty$. Strong $H^{1,2}$-convergence is defined asking that $f_i$ converge to $f$ in $L^2$-strong and $\lim_i\Ch^i(f_i)=\Ch(f)$.  
\end{definition}

\begin{definition}\label{def:H12convergencelocal}
	We say that $f_i\in H^{1,2}(B_R(x_i),\sfd_i,\mm_i)$ are weakly convergent in $H^{1,2}$ to $f\in H^{1,2}(B_R(y),\varrho,\mu)$ on $B_R(y)$ if $f_i$ are $L^2$-weakly (or $L^2$-strongly, equivalently) to $f$ on $B_R(y)$ with $\sup_{i\in\N}\norm{f_i}_{H^{1,2}}<\infty$. Strong convergence in $H^{1,2}$ on $B_R(y)$ is defined by requiring 
	\begin{equation*}
			\lim_{i\to\infty} \int_{B_R(x_i)}|\nabla f_i|^2\,\d \mm_i=\int_{B_R(y)}|\nabla f|^2\,\d \mu.
	\end{equation*}
\end{definition}

Let us now collect results from \cite{AmbrosioHonda18} that will play a role in this note.

\begin{lemma}[{\cite[Lemma 2.10]{AmbrosioHonda18}}]\label{lemma:approxWithLipschitz}
	For any $f\in \Lip_c(B_R(y),\varrho)$ there exist $f_i\in \Lip_c(B_R(x_i), \sfd_i)$ satisfying 
	\begin{equation*}
		\sup_{i\in\N}\norm{|\nabla f_i|}_{L^{\infty}(X_i,\mm_i)}<\infty
	\end{equation*}
    and strongly convergent to $f$ in $H^{1,2}$.   
\end{lemma}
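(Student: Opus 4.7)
The plan is to lift $f$ to the common ambient metric space $(Z,\sfd_Z)$ into which the $X_i$ and $Y$ are isometrically embedded, multiply by a cut-off so that its support sits strictly inside $B_R(y)$, and then restrict to $X_i$. Concretely, set $L:=\Lip(f)$ and choose $\eta>0$ so small that $\supp f\subset\bar B_{R-2\eta}(y)$. By the McShane--Whitney extension on $Z$, produce $\tilde f\colon Z\to\R$ with $\Lip(\tilde f)\leq L$ and $\tilde f\restr{Y}=f$. Multiply by a Lipschitz function $\psi\colon Z\to[0,1]$ equal to $1$ on the $\eta$-thickening of $\supp f$ in $Z$ and vanishing outside the $2\eta$-thickening, and set $\tilde g:=\tilde f\,\psi\in\Lip(Z,\sfd_Z)$, whose support is a compact subset of $B_{R-\eta}(y)\subset Z$. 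Define $f_i:=\tilde g\restr{X_i}$. Since $x_i\to y$ in $Z$ and the $X_i$ converge to $Y$ in the Hausdorff sense over bounded regions, for $i$ large $\supp f_i\subset B_R(x_i)$, so $f_i\in\Lip_c(B_R(x_i),\sfd_i)$ with Lipschitz constant uniformly bounded by $\Lip(\tilde g)$.

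The uniform $L^\infty$-bound on $|\nabla f_i|$ is then immediate, since the minimal relaxed slope is always dominated by the pointwise Lipschitz constant. For $L^2$-strong convergence, both $\tilde g$ and $\tilde g^2$ lie in $\Cbs(Z)$; testing the weak convergence $\mm_i\to\mu$ against them yields $\int f_i\,\d\mm_i\to\int f\,\d\mu$ and $\norm{f_i}_{L^2(\mm_i)}\to\norm{f}_{L^2(\mu)}$, so that $f_i\to f$ in $L^2$-strong, and in fact uniformly in the sense of \Cref{def:convpuntunif}.

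The main obstacle is the convergence of the Cheeger energies $\int|\nabla f_i|^2\d\mm_i\to\int|\nabla f|^2\d\mu$. The $\liminf$ inequality is the standard lower semicontinuity of the Cheeger energy along pmGH-convergent sequences of $\RCD(K,N)$ spaces (a Mosco-type result of Gigli--Mondino--Savar\'e). The $\limsup$ is more delicate: the naive restriction only gives the one-sided slope bound $\lip_a f_i\leq\lip_a\tilde g\restr{X_i}$, and $\lip_a\tilde g\restr{Y}$ can strictly exceed $|\nabla f|$ because the extension may oscillate transversally to $Y$. To bridge the gap one exploits the identification $|\nabla u|=\lip_a u$ for Lipschitz $u$ on infinitesimally Hilbertian PI spaces, and regularizes through the heat semigroup: set $f_{i,s}:=P_s f_i$ and $f_s:=P_s f$, control their gradients via the Bakry--\'Emery estimate \eqref{eq:BE2}, and invoke the pmGH-stability of heat semigroups to obtain strong $H^{1,2}$-convergence $f_{i,s}\to f_s$ at each fixed $s>0$. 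A diagonal procedure $s=s_i\downarrow 0$, followed by a final truncation via \Cref{lemma:good cut-off} to restore compact support in $B_R(x_i)$ (whose energetic cost tends to zero with the $L^2$-mass of $f_{i,s_i}$ escaping out of the $\eta$-thickening), closes the argument.
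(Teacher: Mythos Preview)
The paper does not prove this lemma; it is simply quoted from \cite[Lemma~2.10]{AmbrosioHonda18}, so there is no in-paper argument to compare against.

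Your strategy is sound and can be completed, but the write-up conflates two different sequences. The restrictions $f_i:=\tilde g\restr{X_i}$ give compact support, a uniform Lipschitz bound, and $L^2$-strong convergence; however there is no reason to expect $\Ch^i(f_i)\to\Ch(f)$ for \emph{these} $f_i$. The slope $\lip(\tilde g\restr{X_i})$ is sensitive to the infinitesimal geometry of $X_i$, and pmGH convergence does not force it to converge to $\lip f$. Your heat-flow argument does not establish the $\limsup$ inequality for the original $f_i$; rather, it manufactures a \emph{different} sequence $\tilde f_i:=\eta_i\,P^i_{s_i}(\tilde g\restr{X_i})$ for which strong $H^{1,2}$-convergence holds. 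That is perfectly acceptable for the statement of the lemma, but you should declare from the outset that the $\tilde f_i$ are the approximants and organise the proof accordingly.

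The final truncation step also deserves more than a parenthetical. Once the diagonal gives $P^i_{s_i}(\tilde g\restr{X_i})\to f$ strongly in $H^{1,2}$, the Ambrosio--Honda theory yields $|\nabla P^i_{s_i}f_i|\to|\nabla f|$ strongly in $L^2$; hence $(P^i_{s_i}f_i)^2$ and $|\nabla P^i_{s_i}f_i|^2$ converge in $L^1$-strong to $f^2$ and $|\nabla f|^2$. Testing against a continuous $\phi\in\Cbs(Z)$ which equals $1$ outside $B_{R-\eta}(y)$ and vanishes on $B_{R-3\eta/2}(y)$ then shows that both the $L^2$-mass and the Dirichlet energy of $P^i_{s_i}f_i$ in the region where $\eta_i\neq 1$ tend to zero, which is exactly what controls the cost of multiplying by the cut-off. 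No Gaussian heat-kernel estimates are needed here, contrary to what your wording might suggest.
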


\begin{theorem}[{\cite[Theorem 4.4]{AmbrosioHonda18}}]\label{thm:stabilityLaplacian}
	Let $f_i\in D(\Delta, B_R(x_i))$  with
	\begin{equation*}
		\sup_{i\in\N} \int_{B_R(x_i)}(|f_i|^2+|\nabla f_i|^2+(\Delta f_i)^2)\,\d \mm_i<\infty,
	\end{equation*}
	and let $f$ be an $L^2$-strong limit function of $f_i$ on $B_R(y)$.
	Then:
	\begin{itemize}
		\item[(i)] $f\in D(\Delta, B_R(y))$;
		\item[(ii)] $\Delta f_i\to \Delta f$ on $B_R(y)$ weakly in $L^2$;
		\item[(iii)] $|\nabla f_i|^2\to  |\nabla f|^2$ on $B_R(y)$ strongly in $L^1$.
	\end{itemize}
\end{theorem}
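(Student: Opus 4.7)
The strategy is to first extract weak $L^2$-subsequential limits of the Laplacians and gradients, then identify them by passing to the limit in the defining weak formulation of the Laplacian on balls, and finally upgrade weak convergence of gradients to strong $L^1$-convergence of the squared norms via an integration-by-parts trick that converts quadratic gradient expressions into ones behaving well under weak-strong duality.

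To begin, from $\sup_i\int_{B_R(x_i)}\bigl((\Delta f_i)^2 + |\nabla f_i|^2\bigr)\,\d\mm_i<\infty$ and the Ambrosio--Honda compactness theory for varying spaces (a suitable Banach--Alaoglu-type argument in the $L^2$-weak sense of \Cref{def:L2convergence}), up to a subsequence there exist $g\in L^2(B_R(y),\mu)$ and a vector field in the limit tangent module to which $\Delta f_i$ and $\nabla f_i$ converge $L^2$-weakly on $B_R(y)$. Strong $L^2$-convergence $f_i\to f$ and the closure of the gradient operator under $L^2$-weak convergence of gradients identify the weak limit of $\nabla f_i$ with $\nabla f$. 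For statements (i)-(ii) I then test the weak formulation: given any $h\in\Lip_c(B_R(y),\varrho)$, lift it via \Cref{lemma:approxWithLipschitz} to $h_i\in\Lip_c(B_R(x_i),\sfd_i)$ with $h_i\to h$ strongly in $H^{1,2}$. Since $h_i\in H_0^{1,2}(B_R(x_i),\sfd_i,\mm_i)$, \Cref{def:laplacian on balls} yields
\[
\int_{B_R(x_i)}\nabla f_i\cdot\nabla h_i\,\d\mm_i=-\int_{B_R(x_i)}h_i\,\Delta f_i\,\d\mm_i.
\]
Both sides are weak$\times$strong $L^2$-pairings, hence pass to the limit, giving $\int_{B_R(y)}\nabla f\cdot\nabla h\,\d\mu=-\int_{B_R(y)}hg\,\d\mu$. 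Density of $\Lip_c(B_R(y),\varrho)$ in $H_0^{1,2}(B_R(y),\varrho,\mu)$ together with the $H^{1,2}$-bound on $f$ extends the identity to all admissible test functions, so $f\in D(\Delta,B_R(y))$ with $\Delta f=g$; uniqueness of this limit forces convergence along the full sequence.

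For (iii) pick a Lipschitz cutoff $\eta$ supported in $B_R(y)$ and an approximating sequence $\eta_i\in\Lip_c(B_R(x_i),\sfd_i)$ strongly converging to $\eta$ in $H^{1,2}$ via \Cref{lemma:approxWithLipschitz}. Since $\eta_i f_i\in H_0^{1,2}(B_R(x_i),\sfd_i,\mm_i)$, the Leibniz rule combined with the definition of the Laplacian on balls gives the key identity
\[
\int \eta_i\,|\nabla f_i|^2\,\d\mm_i \;=\; -\int \eta_i\,f_i\,\Delta f_i\,\d\mm_i \;-\; \int f_i\,\nabla\eta_i\cdot\nabla f_i\,\d\mm_i.
\]
The first term on the right-hand side is a strong$\times$weak $L^2$-pairing and converges to $-\int \eta f\,\Delta f\,\d\mu$; the second is the pairing between $f_i\nabla\eta_i$ (converging strongly in $L^2$ as a vector field, thanks to boundedness of $f_i$ in $L^2$ and strong $H^{1,2}$-convergence of $\eta_i$) and $\nabla f_i$ (weakly $L^2$-convergent), and converges to $-\int f\,\nabla\eta\cdot\nabla f\,\d\mu$. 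The same identity applied directly to the limit yields $\int\eta_i|\nabla f_i|^2\,\d\mm_i\to\int\eta|\nabla f|^2\,\d\mu$ for every such cutoff. Combined with the $L^2$-lower semicontinuity of the Cheeger energy and the already established weak $L^2$-convergence of the gradients, this convergence of localized energies upgrades to strong $L^1$-convergence of $|\nabla f_i|^2$ to $|\nabla f|^2$ on $B_R(y)$ in the sense of \Cref{def:L1convergence}.

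The main technical obstacle is the careful deployment of weak--strong duality for gradient pairings across varying metric measure spaces, which requires the tangent-module convergence framework of Ambrosio--Honda and in particular the fact that $\int\nabla u_i\cdot\nabla v_i\,\d\mm_i$ is continuous when one factor converges strongly and the other weakly in $H^{1,2}$. A secondary difficulty is selecting the cutoffs $\eta_i$ on $X_i$ approximating $\eta$ in a sufficiently strong sense; this is handled by \Cref{lemma:approxWithLipschitz} (and, where bounded Laplacians of cutoffs are needed, by the pmGH-stability of the good cutoff construction of \Cref{lemma:good cut-off}).
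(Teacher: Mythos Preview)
The paper does not prove this theorem; it is simply quoted from \cite[Theorem 4.4]{AmbrosioHonda18} with no argument given. So there is no ``paper's own proof'' to compare against here.

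Your sketch is essentially the argument one finds in Ambrosio--Honda, and the overall architecture is sound: weak $L^2$-compactness for $\Delta f_i$, identification of the limit via the weak formulation tested against lifted Lipschitz cutoffs, and then convergence of localized Dirichlet energies via the identity $\int\eta_i|\nabla f_i|^2=-\int\eta_i f_i\Delta f_i-\int f_i\nabla\eta_i\cdot\nabla f_i$. One point deserving more care is your claim that $f_i\nabla\eta_i$ converges strongly in $L^2$ as a vector field: strong $L^2$-convergence of $f_i$ together with strong $H^{1,2}$-convergence of $\eta_i$ and the uniform $L^\infty$-bound on $|\nabla\eta_i|$ from \Cref{lemma:approxWithLipschitz} do yield this, but it genuinely requires the tensor-field convergence theory of \cite{AmbrosioHonda17,AmbrosioHonda18} (not merely the scalar results collected in \Cref{prop:sum L1 convergence}), as you correctly flag at the end. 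A second minor gap: before identifying the weak limit of $\nabla f_i$ with $\nabla f$ you need $f\in H^{1,2}(B_R(y))$, which follows from lower semicontinuity of the Cheeger energy along pmGH-convergent sequences (Mosco convergence), a fact you implicitly use but do not state. With these two points made explicit the sketch is correct.
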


\begin{proposition}[{\cite[Corollary 4.12]{AmbrosioHonda18}}]\label{prop:harmonic approximation}
	Let $f\in H^{1,2}(B_R(y),\varrho,\mu)$ be a harmonic function
	(i.e., $f\in D(\Delta,B_R(y))$ with \(\Delta f=0\)). Then, for any $0<r<R$ there exist $f_i\in H^{1,2}(B_r(x_i),\sfd_i,\mm_i)$ harmonic such that $f_i\to f$ on $B_r(y)$ strongly in $H^{1,2}$.
\end{proposition}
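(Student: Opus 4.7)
The plan is to obtain $f_i$ as solutions of a Dirichlet problem on $B_r(x_i)$ whose boundary data approximate $f$, and then to identify the limit via the stability of the Laplacian combined with the uniqueness of harmonic functions with prescribed trace.

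Fix $0<r<R$ and pick an intermediate radius $r_1\in(r,R)$. Using \Cref{lemma:good cut-off} choose a Lipschitz cut-off $\eta$ on $Y$ with $\eta\equiv 1$ on $B_{r_1}(y)$ and $\eta\equiv 0$ outside $B_R(y)$. Set $h:=\eta f\in H^{1,2}(Y,\varrho,\mu)$; by construction $h$ has support compactly contained in $B_R(y)$ and coincides with $f$ on $B_{r_1}(y)$. Apply \Cref{lemma:approxWithLipschitz} (combined with a truncation/partition-of-unity argument to make the supports lie inside $B_R(x_i)$) to produce $h_i\in\Lip_c(B_R(x_i),\sfd_i)$ with $\sup_i\||\nabla h_i|\|_{L^\infty}<\infty$ and $h_i\to h$ strongly in $H^{1,2}$. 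Now define $f_i\in H^{1,2}(B_r(x_i),\sfd_i,\mm_i)$ to be the unique minimizer of the Dirichlet energy $u\mapsto\int_{B_r(x_i)}|\nabla u|^2\d\mm_i$ subject to $u-h_i\in H_0^{1,2}(B_r(x_i),\sfd_i,\mm_i)$. By the variational characterization $f_i\in D(\Delta,B_r(x_i))$ with $\Delta f_i=0$.

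The next step is to derive uniform estimates and extract a limit. Energy minimality gives
\[
\int_{B_r(x_i)}|\nabla f_i|^2\,\d\mm_i\le\int_{B_r(x_i)}|\nabla h_i|^2\,\d\mm_i,
\]
and since $\eta\equiv 1$ on $B_{r_1}(y)\supset B_r(y)$, the right-hand side converges to $\int_{B_r(y)}|\nabla f|^2\d\mu$. A Poincar\'e inequality for functions with zero trace on $B_r(x_i)$ bounds $\|f_i-h_i\|_{L^2(B_r(x_i))}$ by the Dirichlet energy, so altogether $\sup_i\|f_i\|_{H^{1,2}(B_r(x_i))}<\infty$. By the Rellich-type compactness for varying $\RCD$ spaces (which follows from uniform doubling and Poincar\'e), up to a subsequence $f_i\to f_\infty$ $L^2$-strongly on $B_r(y)$ for some $f_\infty\in L^2(B_r(y),\mu)$. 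Since $\Delta f_i=0$, \Cref{thm:stabilityLaplacian} applies with $R$ replaced by $r$ and yields $f_\infty\in D(\Delta,B_r(y))$ with $\Delta f_\infty=0$ and, crucially, $|\nabla f_i|^2\to|\nabla f_\infty|^2$ strongly in $L^1$ on $B_r(y)$.

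The remaining and main obstacle is to identify $f_\infty$ with $f$ on $B_r(y)$. The idea is to use that $f_i-h_i\in H_0^{1,2}(B_r(x_i))$, so the sequence is a uniform $H^{1,2}$-bounded family of functions with compactly supported traces in the varying balls. Passing to the limit and using that $h_i\to f$ in $H^{1,2}$ on $B_r(y)$ (because $\eta=1$ there), one obtains $f_\infty-f\in H_0^{1,2}(B_r(y),\varrho,\mu)$; the stability of $H_0^{1,2}$ under pmGH-convergence is the delicate technical point here, which can be handled by cutting off with an additional radial Lipschitz function supported in $B_r$ and approximating in the strong $H^{1,2}$ topology on each space. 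Once $f_\infty-f\in H_0^{1,2}(B_r(y))$ and $\Delta(f_\infty-f)=0$ on $B_r(y)$ are in hand, testing this difference against itself gives $\int_{B_r(y)}|\nabla(f_\infty-f)|^2\d\mu=0$, hence $f_\infty\equiv f$ by uniqueness of the Dirichlet problem. Finally, the $L^1$-strong convergence $|\nabla f_i|^2\to|\nabla f|^2$ yields $\int_{B_r(x_i)}|\nabla f_i|^2\d\mm_i\to\int_{B_r(y)}|\nabla f|^2\d\mu$, which is precisely strong $H^{1,2}$-convergence on $B_r(y)$ in the sense of \Cref{def:H12convergencelocal}.
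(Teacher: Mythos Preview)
The paper does not supply its own proof of this proposition: it is quoted verbatim as \cite[Corollary 4.12]{AmbrosioHonda18} and used as a black box. Your sketch reproduces exactly the strategy of the cited reference---approximate the boundary datum, solve the Dirichlet problem on each $B_r(x_i)$, extract a subsequential limit via uniform energy bounds and Rellich compactness, and identify the limit with $f$ using stability of $H_0^{1,2}$ and uniqueness for the Dirichlet problem---so there is no meaningful divergence to discuss.

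Two small points worth tightening. First, \Cref{lemma:approxWithLipschitz} as stated applies to $f\in\Lip_c(B_R(y))$, while your $h=\eta f$ is only in $H^{1,2}$; you should first approximate $h$ in $H^{1,2}(Y)$ by compactly supported Lipschitz functions (density of such functions in $H^{1,2}$ on $\RCD$ spaces) and then invoke the lemma, or appeal directly to the more general $H^{1,2}$-strong approximation results in \cite{AmbrosioHonda17}. Second, the step you correctly flag as delicate---that a strong $H^{1,2}$-limit of functions in $H_0^{1,2}(B_r(x_i))$ lands in $H_0^{1,2}(B_r(y))$---is precisely the Mosco convergence of local Dirichlet forms established in \cite{AmbrosioHonda18}; your radial cut-off idea is on the right track but would need the Mosco machinery (or an equivalent $\Gamma$-convergence statement) to be made rigorous.
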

\subsection{Normed modules}\label{subsection:modules}
Let \((X,\sfd,\mm)\) be a metric measure space.
We begin by briefly recalling the definitions of normed module
over \((X,\sfd,\mm)\), which have been introduced in \cite{Gigli14} and are in turn inspired by the theory developed in \cite{Weaver01}.
\medskip

Let \(R\) be either \(L^\infty(\mm)\) or \(L^0(\mm)\). Let \(\mathscr M\)
be a module over the commutative ring \(R\). Then an \emph{$L^p$-pointwise norm}
on \(\mathscr M\), for some $p\in\{0\}\cup[1,\infty)$, is any mapping \(|\cdot|:\,\mathscr M\to L^p(\mm)\) such that
\begin{equation}\label{eq:def_normed_module}\begin{split}
|v|\geq 0&\quad\text{ for every }v\in\mathscr M,
\text{ with equality if and only if }v=0,\\
|v+w|\leq|v|+|w|&\quad\text{ for every }v,w\in\mathscr M,\\
|fv|=|f||v|&\quad\text{ for every }f\in R\text{ and }v\in\mathscr M,
\end{split}\end{equation}
where all (in)equalities are in the \(\mm\)-a.e.\ sense.
We shall consider two classes of normed modules:
\begin{itemize}
\item \textsc{\(L^p(\mm)\)-normed \(L^\infty(\mm)\)-modules,
with \(p\in[1,\infty)\).} A module \(\mathscr M^p\) over \(L^\infty(\mm)\)
endowed with an $L^p$-pointwise norm \(|\cdot|\) such that
\(\|v\|_{\mathscr M^p}:=\big\||v|\big\|_{L^p(\mm)}\)
is a complete norm on \(\mathscr M^p\).
\item \textsc{\(L^0(\mm)\)-normed \(L^0(\mm)\)-modules.}
A module \(\mathscr M^0\) over \(L^0(\mm)\) endowed with an $L^0$-pointwise
norm \(|\cdot|\) such that
\(\sfd_{\mathscr M^0}(v,w):=\int\min\big\{|v-w|,1\big\}\,\d\mm'\)
(where \(\mm'\) is any probability measure that is mutually absolutely
continuous with \(\mm\)) is a complete distance on \(\mathscr M^0\).
\end{itemize}
We refer to \cite{Gigli17} for an account of the abstract normed modules theory
on metric measure spaces.
\medskip

Assume \((X,\sfd,\mm)\) is \emph{infinitesimally Hilbertian},
i.e., its Sobolev space \(H^{1,2}(X,\sfd,\mm)\) is Hilbert.
Then a key example of normed module on \(X\) is represented by the
\emph{tangent module} \(L^0(TX)\), which is characterized as follows:
there is a unique couple \(\big(L^0(TX),\nabla)\), where \(L^0(TX)\)
is an \(L^0(\mm)\)-normed \(L^0(\mm)\)-module and
\(\nabla:\,H^{1,2}(X)\to L^0(TX)\) is a linear
\emph{gradient} map, such that the following hold:
\[\begin{split}
|\nabla f|\text{ coincides with the minimal relaxed slope of }f
& \text{ for every }f\in H^{1,2}(X),\\
\bigg\{\sum_{i=1}^n\nchi_{E_i}\nabla f_i\;\bigg|\;(E_i)_{i=1}^n
\text{ Borel partition of }X,\ (f_i)_{i=1}^n\subset H^{1,2}(X)\bigg\}
&\ \text{ is dense in }L^0(TX).
\end{split}\]
For any exponent \(p\in[1,\infty]\), we set
\(L^p(TX):=\big\{v\in L^0(TX)\,:\,|v|\in L^p(\mm)\big\}\).
It can be readily checked that the space \(L^p(TX)\) has a natural
\(L^p(\mm)\)-normed \(L^\infty(\mm)\)-module structure (for $p<\infty$).

\subsubsection{Second order calculus over $\RCD$ spaces}
Gigli in \cite{Gigli14} has developed a second order calculus for $\RCD(K,\infty)$ metric measure spaces. The notions of Hessian and covariant derivative have been introduced as bilinear forms on $L^2(TX)$, along with the spaces $H^{2,2}(X,\sfd,\mm)\subset H^{1,2}(X,\sfd,\mm)$ and $H^{1,2}_C(TX)\subset L^2(TX)$, see \cite[Definition 3.3.1, Definition 3.4.1, Definition 3.3.17, Definition 3.4.3]{Gigli14}.

Let us recall that, as proved in \cite[Proposition 3.3.18]{Gigli14}, we have the inclusion
\begin{equation}\label{eq:dominioLaplacianoH22}
	D(\Delta)\subset H^{2,2}(X,\sfd,\mm).
\end{equation}
Moreover, assuming $(X,\sfd,\mm)$ to be $\RCD(K,N)$ m.m.s., one has the local estimate
\begin{equation}\label{eq:local Hessian estimate}
	\int_{B_1(x)} |\Hess f|^2\, \d\mm  \le C_{N,K}\left(\int_{B_2(x)}|\Delta f|^2\,\d \mm
	+\inf_{m\in\R}\int_{B_2(x)}\big||\nabla f|^2-m\big|\,\d\mm\right) 
	 -K\int_{B_2(x)} |\nabla f|^2\,\d\mm,
\end{equation}
that can be checked integrating the improved Bochner inequality proved in
 \cite{Han14} against a good cut-off function (see \Cref{lemma:good cut-off} above).

Let us recall that the Hessian enjoys the following locality property that has been proved in \cite[Proposition 3.3.24]{Gigli14}.
\begin{proposition}\label{prop:localityHessian}
	Given $f_1, f_2\in H^{2,2}(X,\sfd,\mm)$ it holds
	\begin{equation*}
		|\Hess f_1|=|\Hess f_2|
		\quad \text{$\mm$-a.e.\ in }\left\lbrace f_1=f_2 \right\rbrace.
	\end{equation*}
\end{proposition}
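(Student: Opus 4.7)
The plan is to reduce the statement, via bilinearity, to a vanishing principle, and then to derive it from the first-order locality of the gradient combined with the defining identity of the Hessian. Since $\Hess$ depends linearly on the function in Gigli's second-order calculus, one has $\Hess(f_1-f_2)=\Hess f_1-\Hess f_2$ as symmetric bilinear forms on $L^2(TX)\times L^2(TX)$. Setting $f:=f_1-f_2\in H^{2,2}(X,\sfd,\mm)$ and $E:=\{f_1=f_2\}$, the statement reduces to the following claim: if $f\in H^{2,2}(X,\sfd,\mm)$ vanishes $\mm$-a.e.\ on a Borel set $E$, then $|\Hess f|=0$ $\mm$-a.e.\ on $E$.

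The key ingredient is the first-order locality principle
\[
|\nabla u|=0\quad\mm\text{-a.e.\ on }\{u=0\},\qquad\forall\,u\in H^{1,2}(X,\sfd,\mm),
\]
which I would apply twice. First, directly to $f$, yielding $|\nabla f|=0$ $\mm$-a.e.\ on $E$. Second, to the function $h_g:=\nabla f\cdot\nabla g$ for $g$ ranging in the class of test functions $\Test(X,\sfd,\mm)$: by the Leibniz rule of the second-order calculus one has $h_g\in H^{1,2}(X,\sfd,\mm)$, and $h_g$ vanishes on $E$ by the first application of locality, hence $|\nabla h_g|=0$ $\mm$-a.e.\ on $E$. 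Inserting this into the polarization identity
\[
2\,\Hess f(\nabla g,\nabla g)=2\,\nabla g\cdot\nabla h_g-\nabla f\cdot\nabla|\nabla g|^2
\quad\mm\text{-a.e.},
\]
used to characterize the Hessian in \cite{Gigli14}, both terms on the right-hand side vanish $\mm$-a.e.\ on $E$, and therefore $\Hess f(\nabla g,\nabla g)=0$ $\mm$-a.e.\ on $E$ for every such test $g$.

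The final step is a density argument in the module structure. By the results of \cite{Gigli14}, finite $L^\infty$-linear combinations of gradients of test functions are pointwise dense in $L^2(TX)$ in the normed-module sense. Since $|\Hess f|$ is defined as the pointwise operator norm of the bilinear form $(v,w)\mapsto\Hess f(v,w)$, the $\mm$-a.e.\ vanishing of the diagonal values on a pointwise dense family upgrades, via the symmetry of $\Hess f$ and the polarization identity applied to pairs $(g_1,g_2)$, to $|\Hess f|=0$ $\mm$-a.e.\ on $E$, which is the claim.

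The main obstacle is the second application of first-order locality. One needs to verify rigorously that $h_g=\nabla f\cdot\nabla g$ is a Sobolev function whenever $f\in H^{2,2}(X,\sfd,\mm)$ and $g\in\Test(X,\sfd,\mm)$, and that the locality principle applies to $h_g$; this is essentially the Leibniz rule at the second-order level, which is built into Gigli's definition of $H^{2,2}$ via integration by parts and is where most of the work is hidden. A secondary technical point is confirming that the selected class of test fields is rich enough for the pointwise density argument to close the proof in the $L^0(\mm)$-module sense, a fact that is standard in the calculus developed in \cite{Gigli14} but must be invoked explicitly.
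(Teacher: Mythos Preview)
The paper does not prove this proposition; it merely records it as a citation to \cite[Proposition 3.3.24]{Gigli14}. Your outline is correct and is precisely the argument carried out in that reference: reduce by linearity to showing that $|\Hess f|=0$ $\mm$-a.e.\ on $\{f=0\}$, apply first-order locality to $f$ to obtain $\nabla f=0$ there, then apply it again to $\nabla f\cdot\nabla g$ for test $g$, and feed both vanishing statements into the pointwise identity characterizing $\Hess f(\nabla g,\nabla g)$, concluding by density of test gradients in the tangent module.

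One minor correction: you should not expect $h_g=\nabla f\cdot\nabla g$ to lie in $H^{1,2}$ in general; in Gigli's framework it lies in $W^{1,1}$ (see the calculus rules surrounding \cite[Proposition 3.3.22]{Gigli14}, quoted in the paper as \eqref{eq:calculusrule}). This does not affect the argument, since first-order locality of the minimal weak upper gradient holds in that class as well. The density step is exactly the content of the generation property of $L^2(TX)$ by gradients of test functions, which is part of the definition of the tangent module recalled in \Cref{subsection:modules}, so there is no hidden difficulty there.
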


In addition we shall use the following inequality that has been proved in \cite[Proposition 3.3.22]{Gigli14}:
\begin{equation}\label{eq:calculusrule}
	|\nabla (\nabla f\cdot \nabla g)|\le |\Hess f|\, |\nabla g|+|\Hess g|\, |\nabla f|
	\quad\text{for any }f,g\in H^{2,2}(X,\sfd,\mm).
\end{equation}

\subsubsection{Module with respect to the capacity measure}
We recall a variant of the notion of \(L^0\)-normed \(L^0\)-module
-- where the Borel measure \(\mm\) is replaced by the capacity -- which
has been proposed in \cite{DGP19}. Fix a metric measure space \((X,\sfd,\mm)\).
The space of all Borel functions on \(X\) -- considered up to
\(\Cap\)-a.e.\ equality -- is denoted by \(L^0(\Cap)\). If continuous
functions are strongly dense in \(H^{1,2}(X)\) (this condition is met, for instance,
if the space is infinitesimally Hilbertian), then there exists a unique
``quasi-continuous representative'' map \({\sf QCR}:\,H^{1,2}(X)\to L^0(\Cap)\)
that is characterized as follows: \(\sf QCR\) is a continuous map, and for any
\(f\in H^{1,2}(X)\) it holds that \({\sf QCR}(f)\) is (the equivalence class of)
a quasi-continuous function that is \(\mm\)-a.e.\ coinciding with \(f\) itself.
	Let us recall that a function $f:X\to \R$ is said to be quasi-continuous if for any $\eps>0$ there exists a set $E\subset X$ with $\Cap (E)<\eps$ such that $f:X\setminus E\to \R$ is continuous.
We refer to \cite[Theorem 1.20]{DGP19} for a proof of this result.
\medskip

Given a module \(\mathscr M_\Cap\) over the ring \(L^0(\Cap)\),
we say that a mapping \(|\cdot|:\,\mathscr M_\Cap\to L^0(\Cap)\)
is a \emph{pointwise norm} provided it satisfies the (in)equalities in
\eqref{eq:def_normed_module} in the \(\Cap\)-a.e.\ sense for any choice of
\(v,w\in\mathscr M_\Cap\) and \(f\in L^0(\Cap)\). Then the space
\(\mathscr M_\Cap\) is said to be an \emph{\(L^0(\Cap)\)-normed \(L^0(\Cap)\)-module}
if it is complete when endowed with the distance
\[
\sfd_{\mathscr M_\Cap}(v,w):=\sum_{k\in\N}\frac{1}{2^k\max\big\{\Cap(A_k),1\big\}}
\int_{A_k}\min\big\{|v-w|,1\big\}\,\d\Cap,
\]
where \((A_k)_k\) is any increasing sequence of open subsets of \(X\) having
finite capacity that is chosen in such a way that any bounded set \(B\subset X\) 
is contained in \(A_k\) for some \(k\in\N\) sufficiently big.

Let us recall, since this fact plays a crucial role in the discussion below, that $\abs{\nabla f}^2\in H^{1,2}(X)$ for any $f\in \Test(X)$ (see \cite{Savare13}), and thus \(|\nabla f|\in H^{1,2}(X)\) as well (see \cite{DGP19}). In particular, for any $f\in \Test(X)$, $\abs{\nabla f}$ admits a quasi-continuous representative.
\begin{theorem}[Tangent \(L^0(\Cap)\)-module \cite{DGP19}]
\label{thm:tangent_Cap-module}
Let \((X,\sfd,\mm)\) be an \(\RCD(K,\infty)\) space. Then there exists
a unique couple \(\big(L^0_\Cap(TX),\tilde\nabla\big)\), where \(L^0_\Cap(TX)\)
is an \(L^0(\Cap)\)-normed \(L^0(\Cap)\)-module and
\(\tilde\nabla:\,\Test(X)\to L^0_\Cap(TX)\) is a linear operator, such that
the following hold:
\[\begin{split}
|\tilde\nabla f|={\sf QCR}(|\nabla f|)\;\;\;\text{in the }\Cap\text{-a.e.\ sense}
&\quad\text{ for every }f\in\Test(X),\\
\bigg\{\sum_{n\in\N}\nchi_{E_n}\tilde\nabla f_n\;\bigg|\;(E_n)_n
\text{ Borel partition of }X,\ (f_n)_n\subset\Test(X)\bigg\}
&\quad\text{ is dense in }L^0_\Cap(TX).
\end{split}\]
The space \(L^0_\Cap(TX)\) is called \emph{capacitary tangent module} on
\(X\), while \(\tilde\nabla\) is the \emph{capacitary gradient}.
\end{theorem}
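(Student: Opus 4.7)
The plan is to construct $L^0_\Cap(TX)$ in direct analogy with Gigli's construction of the (measure-theoretic) $L^0(\mm)$-tangent module $L^0(TX)$, replacing everywhere the $\mm$-a.e.\ pointwise norm by a $\Cap$-a.e.\ pointwise norm built through the quasi-continuous representative. Existence will be produced by a concrete completion of a pre-module of simple elements generated by $\Test(X)$, and uniqueness will follow formally from the density of simple elements together with the defining identity for the pointwise norm.

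For existence, I would first introduce the set \(\mathcal V\) of formal countable sums \(\sum_{n}\nchi_{E_n}\mathbf e_{f_n}\) with \((E_n)_n\) a Borel partition of \(X\) and \((f_n)_n\subset\Test(X)\). Using the Hilbertian structure and the crucial fact recalled just before the statement, namely that \(|\nabla f|\in H^{1,2}(X)\) for \(f\in\Test(X)\) (so that \({\sf QCR}(|\nabla f|)\) is well-defined, and similarly \({\sf QCR}(\nabla f\cdot\nabla g)\) for \(f,g\in\Test(X)\)), I would equip \(\mathcal V\) with the candidate pointwise norm
\[
\Big|\sum_n\nchi_{E_n}\mathbf e_{f_n}\Big|_\Cap:=\sum_n\nchi_{E_n}\,{\sf QCR}(|\nabla f_n|),
\]
extend the natural \(L^0(\Cap)\)-module structure (inherited from the refinement of Borel partitions and pointwise operations on coefficients), and complete with respect to the distance $\sfd_{\mathscr M_\Cap}$ described in the introduction of \(L^0(\Cap)\)-normed modules. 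The map \(\tilde\nabla\colon\Test(X)\to L^0_\Cap(TX)\) is then defined by \(\tilde\nabla f:=[\nchi_X\mathbf e_f]\), and its linearity, together with the required density of simple elements, is built into the construction.

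The technical core, which I expect to be the main obstacle, is the \emph{capacitary locality} needed to see that \(|\cdot|_\Cap\) is well-defined on the quotient by $\Cap$-a.e.\ equivalence and that it is compatible with the linear relations forced by \(\nabla\) on \(H^{1,2}(X)\). Concretely, one must show that if \(f,g\in\Test(X)\) have \({\sf QCR}(f)={\sf QCR}(g)\) \(\Cap\)-a.e.\ on a Borel set \(E\), then \({\sf QCR}(|\nabla f|)={\sf QCR}(|\nabla g|)\) \(\Cap\)-a.e.\ on \(E\). This follows by combining (i) the \(\mm\)-a.e.\ locality of the minimal relaxed slope (giving \(|\nabla f|=|\nabla g|\) \(\mm\)-a.e.\ on \(E\)) with (ii) the uniqueness of the quasi-continuous representative up to \(\Cap\)-null sets from \cite{DGP19} (so that \(\mm\)-a.e.\ equality of two quasi-continuous functions promotes to \(\Cap\)-a.e.\ equality), together with the fact that \(\Cap\)-a.e.\ equality of QCR's implies \(\mm\)-a.e.\ equality at the \(H^{1,2}\) level since \(\mm\leq\Cap\). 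An analogous reasoning, applied to \(\nabla(af+bg)-a\nabla f-b\nabla g\) (which vanishes \(\mm\)-a.e.\ and hence whose QCR vanishes \(\Cap\)-a.e.), delivers the linearity of \(\tilde\nabla\) at the capacitary level.

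Uniqueness is then formal: given two candidates $(\mathscr M,\tilde\nabla)$ and $(\mathscr M',\tilde\nabla')$, define
\(\Phi\colon\sum_n\nchi_{E_n}\tilde\nabla f_n\mapsto\sum_n\nchi_{E_n}\tilde\nabla' f_n\)
on the dense subset of simple elements. The identity \(|\tilde\nabla f|_\Cap={\sf QCR}(|\nabla f|)=|\tilde\nabla' f|_\Cap\) combined with linearity of both gradients guarantees that \(\Phi\) is well-defined, \(L^0(\Cap)\)-linear and preserves the pointwise norm on the dense subset; extending by density yields the desired isomorphism of \(L^0(\Cap)\)-normed \(L^0(\Cap)\)-modules intertwining the two gradients.
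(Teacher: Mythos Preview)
The paper does not prove this theorem; it is stated as a result imported from \cite{DGP19}, with the citation in the theorem title and no proof given in the present paper. Your sketch follows the expected route---indeed the one carried out in \cite{DGP19}---namely the construction of the module as a completion of simple elements generated by $\Test(X)$, with pointwise norm assigned via ${\sf QCR}(|\nabla f|)$, followed by a formal uniqueness argument by density.

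One point deserves more care than you give it. In your locality step you want to promote the $\mm$-a.e.\ equality $|\nabla f|=|\nabla g|$ on a Borel set $E$ to a $\Cap$-a.e.\ equality of the quasi-continuous representatives on $E$. The standard fact from \cite{DGP19} is that two quasi-continuous functions which agree $\mm$-a.e.\ \emph{globally} agree $\Cap$-a.e.\ globally; the localized version on an arbitrary Borel set $E$ is not an immediate corollary and requires either working with quasi-open sets (so that the restriction of a quasi-continuous function remains quasi-continuous in the appropriate sense) or reorganizing the construction so that the quotient is taken \emph{after} defining the pseudo-norm, making well-definedness automatic and deferring locality to the verification of the module axioms. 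In \cite{DGP19} this is handled through the machinery of quasi-continuous functions and their fine-topological properties; your sketch correctly identifies where the difficulty lies but does not resolve it.
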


Fix any Radon measure \(\mu\) on a m.m.s.\ \((X,\sfd,\mm)\) and suppose
that \(\mu\ll\Cap\). Then there is a natural projection
\(\pi_\mu:\,L^0(\Cap)\to L^0(\mu)\). Given an \(L^0(\Cap)\)-normed
\(L^0(\Cap)\)-module \(\mathscr M_\Cap\),  we define an equivalence relation
\(\sim_\mu\) on \(\mathscr M_\Cap\) as follows:
given any \(v,w\in\mathscr M_\Cap\), we declare that
\[
v\sim_\mu w\quad\Longleftrightarrow\quad|v-w|=0\ \text{ holds }\mu\text{-a.e.\ on }X.
\]
Then the quotient \(\mathscr M^0_\mu:=\mathscr M_\Cap/\sim_\mu\)
inherits a natural structure of \(L^0(\mu)\)-normed \(L^0(\mu)\)-module.
Call \(\bar\pi_\mu:\,\mathscr M_\Cap\to\mathscr M^0_\mu\) the canonical projection.
Moreover, for any exponent \(p\in[1,\infty)\) we define
\begin{equation}\label{eq:restr_mod}
\mathscr M^p_\mu:=\big\{v\in\mathscr M^0_\mu\;\big|\;|v|\in L^p(\mu)\big\}.
\end{equation}
It turns out that \(\mathscr M^p_\mu\) is an \(L^p(\mu)\)-normed \(L^\infty(\mu)\)-module.
Notice that \(|\bar\pi_\mu(v)|=\pi_\mu(|v|)\) holds in the \(\mu\)-a.e.\ sense
for every \(v\in\mathscr M_\Cap\).
\begin{lemma}\label{lem:gen_set_restr}
Let \((X,\sfd,\mm)\) be a m.m.s., \(\mathscr M_\Cap\)
an \(L^0(\Cap)\)-normed \(L^0(\Cap)\)-module. Fix a finite
Borel measure \(\mu\geq 0\) on \(X\) such that \(\mu\ll\Cap\). Let \(V\) be a linear
subspace of \(\mathscr M_\Cap\) such that \(|v|\) admits a bounded $\Cap$-a.e.\ representative for every \(v\in V\) and
\[
\mathcal V:=\bigg\{\sum_{n\in\N}\nchi_{E_n} v_n\;\bigg|\;
(E_n)_{n\in\N}\text{ Borel partition of }X,\;(v_n)_{n\in\N}\subset V\bigg\}
\quad\text{ is dense in }\mathscr M_\Cap.
\]
Then for any \(p\in[1,\infty)\) it holds that
\[
\mathcal W:=\bigg\{\sum_{i=1}^n\nchi_{E_i}\bar\pi_\mu(v_i)\;\bigg|\;
n\in\N,\;(E_i)_{i=1}^n\text{ Borel partition of }X,\;(v_i)_{i=1}^n\subset V\bigg\}
\quad\text{ is dense in }\mathscr M^p_\mu.
\]
\end{lemma}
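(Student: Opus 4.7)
The plan is to transfer density from \(\mathscr M_\Cap\) to \(\mathscr M^p_\mu\) through a double truncation argument. Fix \(w\in\mathscr M^p_\mu\) and, for \(M>0\), set \(w^{(M)}:=\nchi_{\{|w|\leq M\}}w\). Since \(|w^{(M)}-w|=\nchi_{\{|w|>M\}}|w|\) is dominated by \(|w|\in L^p(\mu)\) and vanishes \(\mu\)-a.e.\ as \(M\to\infty\), dominated convergence gives \(w^{(M)}\to w\) in \(\mathscr M^p_\mu\), so I may fix \(M\) and assume \(|w|\leq M\) \(\mu\)-a.e. I then pick any lift \(\tilde w\in\mathscr M_\Cap\) with \(\bar\pi_\mu(\tilde w)=w\); since \(|\bar\pi_\mu(\tilde w)|=\pi_\mu(|\tilde w|)\), the set \(\{|\tilde w|>M\}\) is \(\mu\)-null, so replacing \(\tilde w\) by \(\nchi_{\{|\tilde w|\leq M\}}\tilde w\) leaves \(\bar\pi_\mu(\tilde w)\) unchanged and lets me also assume \(|\tilde w|\leq M\) \(\Cap\)-a.e.

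By density of \(\mathcal V\) in \(\mathscr M_\Cap\) I pick \(w_k=\sum_n\nchi_{E_n^k}v_n^k\in\mathcal V\) with \(\sfd_{\mathscr M_\Cap}(w_k,\tilde w)\to 0\). Nothing forces \(|w_k|\) to be bounded, so I truncate: \(\tilde w_k:=\nchi_{\{|w_k|\leq 3M\}}w_k=\sum_n\nchi_{F_n^k}v_n^k\) with \(F_n^k:=E_n^k\cap\{|v_n^k|\leq 3M\}\); after adjoining the partition element \(X\setminus\bigcup_n F_n^k\) with value \(0\in V\), this is still in \(\mathcal V\). On \(\{|w_k|>3M\}\) the bound \(|\tilde w|\leq M\) forces \(|w_k-\tilde w|>2M\) \(\Cap\)-a.e., giving
\[
|\tilde w_k-\tilde w|\leq|w_k-\tilde w|+M\nchi_{\{|w_k-\tilde w|>2M\}}\leq\tfrac{3}{2}|w_k-\tilde w|\quad\Cap\text{-a.e.}
\]
Together with the elementary inequality \(\min\{ca,1\}\leq c\min\{a,1\}\) valid for \(c\geq 1\), this yields \(\sfd_{\mathscr M_\Cap}(\tilde w_k,\tilde w)\to 0\). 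Extracting a subsequence I obtain \(|\tilde w_k-\tilde w|\to 0\) \(\Cap\)-a.e.\ on each \(A_j\) and hence on \(X\) (the \(A_j\) cover \(X\) by separability of \((X,\sfd)\)), which by \(\mu\ll\Cap\) becomes \(\mu\)-a.e.\ convergence. The uniform bound \(|\tilde w_k|\leq 3M\) and the finiteness of \(\mu\) then make dominated convergence applicable and yield \(\bar\pi_\mu(\tilde w_k)\to w\) in \(\mathscr M^p_\mu\).

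Finally, for \(N\in\N\) the finite partial sum \(\tilde w_{k,N}:=\sum_{n=1}^N\nchi_{F_n^k}v_n^k\) lies in \(\mathcal V\), and because the \(F_n^k\) (together with \(X\setminus\bigcup_n F_n^k\)) partition \(X\) one has \(|\tilde w_k-\tilde w_{k,N}|\leq 3M\,\nchi_{X\setminus\bigcup_{n\leq N}F_n^k}\) \(\Cap\)-a.e.; the finiteness of \(\mu\) gives \(\mu(X\setminus\bigcup_{n\leq N}F_n^k)\to 0\), so \(\bar\pi_\mu(\tilde w_{k,N})\to\bar\pi_\mu(\tilde w_k)\) in \(\mathscr M^p_\mu\) as \(N\to\infty\). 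A diagonal extraction then produces \(N_k\) with \(\bar\pi_\mu(\tilde w_{k,N_k})\in\mathcal W\) and \(\bar\pi_\mu(\tilde w_{k,N_k})\to w\) in \(\mathscr M^p_\mu\). The main delicate point is the truncation at level \(3M\) in the middle step: without the resulting uniform \(\Cap\)-a.e.\ bound, \(\sfd_{\mathscr M_\Cap}\)-convergence provides only \(\mu\)-a.e.\ convergence of pointwise norms and no control in \(L^p(\mu)\); the hypothesis that \(|v|\) has a bounded \(\Cap\)-a.e.\ representative for every \(v\in V\) is precisely what lets the truncation \(\nchi_{\{|w_k|\leq 3M\}}w_k\) remain inside \(\mathcal V\).
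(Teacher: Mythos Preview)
Your argument is correct and follows a genuinely different route from the paper. The paper lifts \(v\) to \(\mathscr M_\Cap\), approximates in \(L^0(\Cap)\), pushes down to \(L^0(\mu)\), and then invokes Egorov's theorem to upgrade convergence in measure to uniform convergence on a compact set \(K\) of nearly full \(\mu\)-measure; the small complement is controlled by absolute continuity of \(\int|v|^p\,\d\mu\). You instead pre-truncate the target \(w\) to be bounded, lift to a bounded \(\tilde w\), and then truncate the approximants \(w_k\) at level \(3M\) so that a uniform \(L^\infty\) bound is available and dominated convergence applies directly. Your approach trades Egorov for an explicit truncation trick and is arguably more elementary; the paper's approach avoids the initial reduction to bounded \(w\).

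One small slip: in the last step you claim \(\mu\big(X\setminus\bigcup_{n\leq N}F_n^k\big)\to 0\), but this limit is \(\mu(\{|w_k|>3M\})\), which need not vanish. The fix is immediate: since \(\tilde w_k-\tilde w_{k,N}\) is supported on \(\bigcup_{n>N}F_n^k\) (both vanish on \(X\setminus\bigcup_n F_n^k\)), the correct estimate is \(|\tilde w_k-\tilde w_{k,N}|\leq 3M\,\nchi_{\bigcup_{n>N}F_n^k}\), and \(\mu\big(\bigcup_{n>N}F_n^k\big)\to 0\) by disjointness and finiteness of \(\mu\). Also, your closing remark that the boundedness hypothesis on \(|v|\) is ``precisely what lets the truncation remain inside \(\mathcal V\)'' is not quite accurate: the truncation \(\sum_n\nchi_{F_n^k}v_n^k\) (with \(0\) on the complement) lies in \(\mathcal V\) regardless, since \(0\in V\). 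In fact your proof does not use that hypothesis in any essential way.
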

\begin{proof}
Fix \(v\in\mathscr M^p_\mu\) and \(\eps>0\). Since \(|v|^p\in L^1(\mu)\),
there is \(\delta>0\) such that
\(\big(\int_E|v|^p\,\d\mu\big)^{\nicefrac{1}{p}}\leq\eps/3\) holds for any
Borel set \(E\subset X\) with \(\mu(E)<\delta\). Choose any
\(\bar v\in\mathscr M_\Cap\) such that \(\bar\pi_\mu(\bar v)=v\).
We can find \((\bar v_k)_k\subset\mathcal V\) so that
\(|\bar v_k-\bar v|\to 0\) in \(L^0(\Cap)\). Hence
\(\big|\bar\pi_\mu(\bar v_k)-\bar\pi_\mu(\bar v)\big|=
\pi_\mu\big(|\bar v_k-\bar v|\big)\to 0\) in \(L^0(\mu)\). Thanks to Egorov theorem,
there exists a compact set \(K\subset X\) with \(\mu(X\setminus K)<\delta\)
such that (possibly taking a not relabeled subsequence) it holds that
\(\big|\bar\pi_\mu(\bar v_k)-v\big|\to 0\) uniformly on \(K\). Consequently,
by dominated convergence theorem we see that \(\nchi_K\bar\pi_\mu(\bar v_k)\to\nchi_K v\)
in \(\mathscr M^p_\mu\). Then we can pick \(k\in\N\) so that the element
\(\bar w:=\bar v_k\) satisfies
\(\big\|\nchi_K\bar\pi_\mu(\bar w)-\nchi_K v\big\|_{\mathscr M^p_\mu}\leq\eps/3\). If
\(\bar w\) is written as \(\sum_{n\in\N}\nchi_{E_n}\bar w_n\), then we have
\(\nchi_K\bar\pi_\mu(\bar w)=\sum_{n\in\N}\nchi_{K\cap E_n}\bar\pi_\mu(\bar w_n)\).
By dominated convergence theorem we know that for \(N\in\N\) sufficiently big
the element \(z:=\sum_{n=1}^N\nchi_{K\cap E_n}\bar\pi_\mu(\bar w_n)\in\mathcal W\)
satisfies \(\big\|z-\nchi_K\bar\pi_\mu(\bar w)\big\|_{\mathscr M^p_\mu}\leq\eps/3\).
Therefore, we conclude that
\[
\|z-v\|_{\mathscr M^p_\mu}\leq\big\|z-\nchi_K\bar\pi_\mu(\bar w)\big\|_{\mathscr M^p_\mu}
+\big\|\nchi_K\bar\pi_\mu(\bar w)-\nchi_K v\big\|_{\mathscr M^p_\mu}
+\|\nchi_{X\setminus K}v\|_{\mathscr M^p_\mu}\leq\eps,
\]
thus proving the statement.
\end{proof}
\subsection{Hodge Laplacian of vector fields on \texorpdfstring{\(\RCD\)}{RCD} spaces}\label{subsection:HodgeLaplacian}
Let \((X,\sfd,\mm)\) be an \(\RCD(K,\infty)\) space. Consider the
space \(H^{1,2}_{\rm H}(TX)\) and the Hodge Laplacian
\(\Delta_{\rm H}:\,D(\Delta_{\rm H})\subset H^{1,2}_{\rm H}(TX)\to L^2(TX)\),
which have been defined in \cite[Definition 3.5.13]{Gigli14} and
\cite[Definition 3.5.15]{Gigli14}, respectively (cf.\ the first paragraph of
\cite[Section 2.6]{Gigli17} for the identification between vector and covector fields).
\medskip

It follows from its definition that the Hodge Laplacian is self-adjoint,
namely that
\begin{equation}\label{eq:Hodge_Lapl_sa}
\int\langle\Delta_{\rm H}v,w\rangle\,\d\mm=\int\langle v,\Delta_{\rm H}w\rangle\,\d\mm
\quad\text{ for every }v,w\in D(\Delta_{\rm H}).
\end{equation}
Let us consider the \emph{augmented Hodge energy functional}
\(\tilde{\mathcal E}_{\rm H}:\,L^2(TX)\to[0,+\infty]\), which is defined
in \cite[eq.\ (3.5.16)]{Gigli14} (up to identifying \(L^2(T^*X)\) with \(L^2(TX)\)
via the musical isomorphism). Then we denote by
\(({\sf h}_{{\rm H},t})_{t\geq 0}\)
the gradient flow in \(L^2(TX)\) of the functional  \(\tilde{\mathcal E}_{\rm H}\).
This means that for any vector field \(v\in L^2(TX)\) it holds that
\(t\mapsto{\sf h}_{{\rm H},t}(v)\in L^2(TX)\) is the unique continuous curve on \([0,+\infty)\)
with \({\sf h}_{{\rm H},0}(v)=v\), which is locally absolutely continuous on \((0,+\infty)\)
and satisfies
\[
{\sf h}_{{\rm H},t}(v)\in D(\Delta_{\rm H})\;\;\;\text{and}\;\;\;
\frac{\d}{\d t}{\sf h}_{{\rm H},t}(v)=-\Delta_{\rm H}{\sf h}_{{\rm H},t}(v)
\quad\text{ for every }t>0.
\]
Cf.\ the discussion that precedes \cite[Proposition 3.6.10]{Gigli14}.
It also holds that
\begin{equation}\label{eq:commute_h_Ht}
{\sf h}_{{\rm H},t}(\nabla f)=\nabla P_t f
\quad\text{ for every }f\in H^{1,2}(X)\text{ and }t\geq 0.
\end{equation}
Finally, we recall that vector fields satisfy the following Bakry-\'{E}mery contraction estimate (see \cite[Proposition 3.6.10]{Gigli14}):
\begin{equation}\label{eq:BE_vf}
|{\sf h}_{{\rm H},t}(v)|^2\leq e^{-2Kt}P_t(|v|^2)\;\;\;\mm\text{-a.e.}
\quad\text{ for every }v\in L^2(TX)\text{ and }t\geq 0.
\end{equation}
\begin{lemma}[\({\sf h}_{{\rm H},t}\) is self-adjoint]
Let \((X,\sfd,\mm)\) be an \(\RCD(K,\infty)\) space. Then it holds that
\begin{equation}\label{eq:h_HT_sa}
\int\langle{\sf h}_{{\rm H},t}(v),w\rangle\,\d\mm=
\int\langle v,{\sf h}_{{\rm H},t}(w)\rangle\,\d\mm
\quad\text{ for every }v,w\in L^2(TX)\text{ and }t\geq 0.
\end{equation}
\end{lemma}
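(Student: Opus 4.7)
My plan is to reduce \eqref{eq:h_HT_sa} to the self-adjointness of the generator $\Delta_{\rm H}$ recorded in \eqref{eq:Hodge_Lapl_sa} via a standard differentiation-along-the-flow trick, and then to extend by density from $D(\Delta_{\rm H})$ to all of $L^2(TX)$.

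First I fix $s>0$ and $v,w\in D(\Delta_{\rm H})$, and I consider the auxiliary function
\[
\phi(t):=\int\langle{\sf h}_{{\rm H},t}(v),{\sf h}_{{\rm H},s-t}(w)\rangle\,\d\mm,\qquad t\in[0,s].
\]
Since $\tilde{\mathcal E}_{\rm H}$ is a lower semicontinuous quadratic form on the Hilbert space $L^2(TX)$, Komura-Brezis theory for gradient flows grants that ${\sf h}_{{\rm H},t}$ preserves $D(\Delta_{\rm H})$ and that on this domain the ODE $\frac{\d}{\d t}{\sf h}_{{\rm H},t}(u)=-\Delta_{\rm H}{\sf h}_{{\rm H},t}(u)$ holds in the strong $L^2(TX)$ sense for every $t\geq 0$. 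Differentiating $\phi$ under the integral sign (the chain rule produces a compensating minus sign in the second slot because of $s-t$) and then invoking the self-adjointness \eqref{eq:Hodge_Lapl_sa} applied to ${\sf h}_{{\rm H},t}(v),\,{\sf h}_{{\rm H},s-t}(w)\in D(\Delta_{\rm H})$ yields
\[
\phi'(t)=-\int\langle\Delta_{\rm H}{\sf h}_{{\rm H},t}(v),{\sf h}_{{\rm H},s-t}(w)\rangle\,\d\mm+\int\langle{\sf h}_{{\rm H},t}(v),\Delta_{\rm H}{\sf h}_{{\rm H},s-t}(w)\rangle\,\d\mm=0.
\]
Hence $\phi$ is constant on $[0,s]$, and the identity $\phi(0)=\phi(s)$ is precisely \eqref{eq:h_HT_sa} at $t=s$ for $v,w\in D(\Delta_{\rm H})$.

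To remove the regularity assumption I would use that $D(\Delta_{\rm H})$ is dense in $L^2(TX)$ (as the domain of the generator of a strongly continuous contraction semigroup) and that for each fixed $t\geq 0$ the map ${\sf h}_{{\rm H},t}$ is a bounded linear operator on $L^2(TX)$ (it is even a contraction, as follows from being the gradient flow of a convex functional). Both sides of \eqref{eq:h_HT_sa} are then continuous bilinear forms in $(v,w)$, so the identity extends by density to all $v,w\in L^2(TX)$. The only conceptual point that requires care is the legitimacy of differentiating $\phi$ under the integral sign; this is a routine consequence of the Cauchy-Schwarz inequality applied to the difference quotients, combined with the strong $L^2$-differentiability of $t\mapsto{\sf h}_{{\rm H},t}(u)$ on $D(\Delta_{\rm H})$, so I do not expect it to be a genuine obstacle.
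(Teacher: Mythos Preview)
Your argument is correct and follows essentially the same route as the paper: define the auxiliary function $\phi$ and show it is constant by differentiating and invoking the self-adjointness \eqref{eq:Hodge_Lapl_sa} of $\Delta_{\rm H}$. The only cosmetic difference is that the paper works directly with arbitrary $v,w\in L^2(TX)$, using the regularization property ${\sf h}_{{\rm H},t}(v)\in D(\Delta_{\rm H})$ for $t>0$ to justify the computation for a.e.\ parameter value, rather than first restricting to $D(\Delta_{\rm H})$ and then extending by density.
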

\begin{proof}
Fix \(v,w\in L^2(TX)\) and \(t>0\). We define the function \(\varphi:\,[0,t]\to\R\) as
\[
\varphi(s):=\int\langle{\sf h}_{{\rm H},s}(v),{\sf h}_{{\rm H},t-s}(w)\rangle\,\d\mm
\quad\text{ for every }s\in[0,t].
\]
Therefore, the function \(\varphi\) is absolutely continuous and satisfies
\[
\varphi'(s)=
-\int\langle\Delta_{\rm H}{\sf h}_{{\rm H},s}(v),{\sf h}_{{\rm H},t-s}(w)\rangle\,\d\mm
+\int\langle{\sf h}_{{\rm H},s}(v),\Delta_{\rm H}{\sf h}_{{\rm H},t-s}(w)\rangle\,\d\mm
\overset{\eqref{eq:Hodge_Lapl_sa}}=0\quad\text{ for a.e.\ }t>0.
\]
Then \(\varphi\) is constant, thus in particular
\(\int\langle{\sf h}_{{\rm H},t}(v),w\rangle\,\d\mm=\varphi(t)
=\varphi(0)=\int\langle v,{\sf h}_{{\rm H},t}(w)\rangle\,\d\mm\).
\end{proof}
\begin{proposition}\label{prop:heat_flow_div}
Let \((X,\sfd,\mm)\) be an \(\RCD(K,\infty)\) space.
Then for any \(v\in D({\rm div})\) it holds that
\[
{\sf h}_{{\rm H},t}(v)\in H^{1,2}_C(TX)\cap D({\rm div})\;\;\;\text{and}\;\;\;
{\rm div}({\sf h}_{{\rm H},t}(v))=P_t({\rm div}(v))
\quad\text{ for every }t>0.
\]
\end{proposition}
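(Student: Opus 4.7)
The proof rests on the interplay of three self-adjointness/commutation facts already recorded in the paper: (i) the heat semigroup \(P_t\) is self-adjoint on \(L^2(\mm)\); (ii) the Hodge heat flow \({\sf h}_{{\rm H},t}\) is self-adjoint on \(L^2(TX)\) by \eqref{eq:h_HT_sa}; and (iii) the commutation \(\nabla P_t f={\sf h}_{{\rm H},t}(\nabla f)\) given by \eqref{eq:commute_h_Ht}. Chaining these should turn the integration-by-parts identity defining \({\rm div}(v)\) into the desired identity for \({\rm div}({\sf h}_{{\rm H},t}(v))\). The \(H^{1,2}_C\)-membership will be a separate, essentially cited, ingredient.

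\textbf{Main computation.} Fix \(v\in D({\rm div})\). First observe that \(P_t({\rm div}(v))\in L^2(\mm)\) since \({\rm div}(v)\in L^2(\mm)\) and \(P_t\) is a contraction on \(L^2\). Fix now an arbitrary \(f\in\Lip_{\rm bs}(X)\); then \(\nabla f\in L^2(TX)\), so that we may compute
\[
\int\nabla f\cdot{\sf h}_{{\rm H},t}(v)\,\d\mm
\overset{\eqref{eq:h_HT_sa}}{=}\int{\sf h}_{{\rm H},t}(\nabla f)\cdot v\,\d\mm
\overset{\eqref{eq:commute_h_Ht}}{=}\int\nabla(P_tf)\cdot v\,\d\mm.
\]
Now \(P_tf\in H^{1,2}(X)\) (in fact \(P_tf\in D(\Delta)\)). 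Via the identification of \Cref{prop:identif_tangent_mod}, the divergence-by-parts formula \eqref{eq:def_div_der}, originally stated on \(\Lip_{\rm bs}(X)\), extends to all \(H^{1,2}(X)\)-functions by \(L^2\)-density (this uses that \(|v|\) and \({\rm div}(v)\) both lie in \(L^2(\mm)\) and that \(\Lip_{\rm bs}(X)\) is dense in \(H^{1,2}(X)\) on any PI, hence \(\RCD\), space). Hence
\[
\int\nabla(P_tf)\cdot v\,\d\mm=-\int P_tf\cdot{\rm div}(v)\,\d\mm
=-\int f\cdot P_t({\rm div}(v))\,\d\mm,
\]
where the last equality is the self-adjointness of \(P_t\). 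Combining the two chains of identities,
\[
\int\nabla f\cdot{\sf h}_{{\rm H},t}(v)\,\d\mm=-\int f\cdot P_t({\rm div}(v))\,\d\mm
\quad\text{ for every }f\in\Lip_{\rm bs}(X),
\]
which, read through \eqref{eq:def_div_der} and \Cref{prop:identif_tangent_mod}, exactly says that \({\sf h}_{{\rm H},t}(v)\in D({\rm div})\) with \({\rm div}({\sf h}_{{\rm H},t}(v))=P_t({\rm div}(v))\).

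\textbf{The \(H^{1,2}_C\)-inclusion.} By definition of the gradient flow of \(\tilde{\mathcal E}_{\rm H}\), for every \(t>0\) one has \({\sf h}_{{\rm H},t}(v)\in D(\Delta_{\rm H})\). On an \(\RCD(K,\infty)\) space the improved Bochner/Weitzenböck inequality yields the continuous inclusion \(D(\Delta_{\rm H})\subset H^{1,2}_C(TX)\) (cf.\ \cite{Gigli14}), which gives the claim.

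\textbf{Main obstacle.} The one non-mechanical point is the density step used to enlarge \eqref{eq:def_div_der} from \(\Lip_{\rm bs}(X)\) to all of \(H^{1,2}(X)\): one must approximate \(P_tf\) by \(\Lip_{\rm bs}\)-functions in \(H^{1,2}\)-norm and pass \(L^2\)-limits on both \(v\) and \({\rm div}(v)\). This is routine in the PI setting, but is the only genuine technicality beyond formally invoking \eqref{eq:h_HT_sa}, \eqref{eq:commute_h_Ht} and self-adjointness of \(P_t\).
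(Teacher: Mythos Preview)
Your proof is correct and follows essentially the same route as the paper: both chain together the self-adjointness of \({\sf h}_{{\rm H},t}\) \eqref{eq:h_HT_sa}, the commutation \eqref{eq:commute_h_Ht}, the integration-by-parts definition of \(\mathrm{div}\), and the self-adjointness of \(P_t\), and both obtain \(H^{1,2}_C\)-membership by citing the relevant inclusion from \cite{Gigli14}. The only cosmetic difference is that the paper tests directly against all \(f\in H^{1,2}(X)\) (which is legitimate since \(D(\mathrm{div})\) on the module side is defined by duality with gradients of Sobolev functions, cf.\ \Cref{prop:identif_tangent_mod}), so the density extension you flag as the ``main obstacle'' is in fact unnecessary once one works in the module framework rather than the derivation framework \eqref{eq:def_div_der}.
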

\begin{proof}
First of all, observe that
\({\sf h}_{{\rm H},t}(v)\in H^{1,2}_{\rm H}(TX)\subset H^{1,2}_C(TX)\)
by \cite[Corollary 3.6.4]{Gigli14}. Moreover, let \(f\in H^{1,2}(X)\) be given.
Then it holds that
\[\begin{split}
\int\langle\nabla f,{\sf h}_{{\rm H},t}(v)\rangle\,\d\mm
&\overset{\eqref{eq:h_HT_sa}}=\int\langle{\sf h}_{{\rm H},t}(\nabla f),v\rangle\,\d\mm
\overset{\eqref{eq:commute_h_Ht}}=\int\langle\nabla P_t f,v\rangle\,\d\mm
=-\int P_t f\,{\rm div}(v)\,\d\mm\\
&\overset{\phantom{\eqref{eq:h_HT_sa}}}=-\int f P_t({\rm div}(v))\,\d\mm.
\end{split}\]
By arbitrariness of \(f\), we conclude that \({\sf h}_{{\rm H},t}(v)\in D({\rm div})\)
and \({\rm div}({\sf h}_{{\rm H},t}(v))=P_t({\rm div}(v))\).
\end{proof}

\section{A Gauss-Green formula on \texorpdfstring{\(\RCD\)}{RCD} spaces}
\label{section:Gauss-Green formula}

Let \((X,\sfd,\mm)\) be an \(\RCD(K,N)\) m.m.\ space and \(E\subset X\)
a set of finite perimeter. We recall that, by \Cref{lemma:AssolutacontinuitaHauss}, one has \(|D\nchi_E|\ll\mathscr H^{h_1}\),
so accordingly \(|D\nchi_E|\ll\Cap\) by \Cref{thm:H_ac_Cap}.
It thus makes sense to consider the projection
\(\pi_{|D\nchi_E|}:\,L^0(\Cap)\to L^0(|D\nchi_E|)\).
Recall also that \({\sf QCR}:\,H^{1,2}(X)\to L^0(\Cap)\)
stands for the ``quasi-continuous representative'' operator. Then let us define
\[
\tr_E:\,H^{1,2}(X)\to L^0(|D\nchi_E|),\quad\tr_E:=\pi_{|D\nchi_E|}\circ{\sf QCR},
\]
the trace operator over the boundary of $E$.
Observe that \(\tr_E(f)\in L^\infty(|D\nchi_E|)\) holds for
every test function \(f\in\Test(X)\).

\bigskip

This being said, let us state the two main results of this section. The first one gives existence and uniqueness of the tangent module over the boundary of a set of finite perimeter.
The second theorem provides a Gauss--Green formula tailored for finite-dimensional $\RCD$ spaces along with a strong approximation result for the exterior normal of sets with finite perimeter. 
This approximation result, whose proof heavily relies on the abstract machinery of normed modules and on functional-analytic tools, plays a key role in the study of rectifiability properties for boundaries of sets with finite perimeter that we are going to perform in the last section of this note. 

Let us point out that in the very recent \cite{BuffaComiMiranda19} the problem of obtaining a Gauss--Green formula on $\RCD(K,\infty)$ spaces has been treated. A comparison between our stronger result, heavily relying on finite dimensionality, and those in \cite{BuffaComiMiranda19} is outside the scope of this note.  

\begin{theorem}[Tangent module over \(\partial E\)]\label{thm:tg_mod_over_bdry}
Let \((X,\sfd,\mm)\) be an \(\RCD(K,N)\) space. Let \(E\subset X\)
be a set of finite perimeter.
Then there exists a unique couple \(\big(L^2_E(TX),\bar\nabla\big)\) -- where
\(L^2_E(TX)\) is an \(L^2(|D\nchi_E|)\)-normed \(L^\infty(|D\nchi_E|)\)-module
and \(\bar\nabla:\,{\rm Test}(X)\to L^2_E(TX)\) is linear -- such that:
\begin{itemize}
\item[\(\rm i)\)] The equality \(|\bar\nabla f|=\tr_E(|\nabla f|)\) holds
\(|D\nchi_E|\)-a.e.\ for every \(f\in{\rm Test}(X)\).
\item[\(\rm ii)\)] \(\big\{\sum_{i=1}^n\nchi_{E_i}\bar\nabla f_i\;
\big|\;(E_i)_{i=1}^n\text{ Borel partition of }X,
\,(f_i)_{i=1}^n\subset{\rm Test}(X)\big\}\) is dense in \(L^2_E(TX)\).
\end{itemize}
Uniqueness is intended up to unique isomorphism: given another couple
\((\mathscr M,\bar\nabla')\) satisfying \(\rm i)\), \(\rm ii)\) above, there exists a unique
normed module isomorphism \(\Phi:\,L^2_E(TX)\to\mathscr M\) such that
\(\Phi\circ\bar\nabla=\bar\nabla'\). The space \(L^2_E(TX)\) is called
\emph{tangent module over the boundary of \(E\)} and \(\bar\nabla\)
is the \emph{gradient}.
\end{theorem}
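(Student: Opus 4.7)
The plan is to construct $L^2_E(TX)$ as a measure-theoretic restriction of the capacitary tangent module $L^0_\Cap(TX)$ provided by \Cref{thm:tangent_Cap-module}. Since \Cref{lemma:AssolutacontinuitaHauss} and \Cref{thm:H_ac_Cap} give $|D\nchi_E|\ll\Cap$, the general restriction procedure recalled in \eqref{eq:restr_mod} applies with $\mathscr M_\Cap=L^0_\Cap(TX)$ and $\mu=|D\nchi_E|$. Concretely, I would set
\[
L^2_E(TX):=\big(L^0_\Cap(TX)\big)^2_{|D\nchi_E|},\qquad
\bar\nabla f:=\bar\pi_{|D\nchi_E|}(\tilde\nabla f)\quad\text{for every }f\in\Test(X),
\]
where $\bar\pi_{|D\nchi_E|}$ denotes the canonical projection; linearity of $\bar\nabla$ is inherited from that of $\tilde\nabla$ and of $\bar\pi_{|D\nchi_E|}$.

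For property (i), unwinding the definitions yields
\[
|\bar\nabla f|=\pi_{|D\nchi_E|}\big(|\tilde\nabla f|\big)
=\pi_{|D\nchi_E|}\big({\sf QCR}(|\nabla f|)\big)
=\tr_E(|\nabla f|)\qquad |D\nchi_E|\text{-a.e.},
\]
by the characterization of $\tilde\nabla$ in \Cref{thm:tangent_Cap-module} and the very definition of $\tr_E$. Since $f\in\Test(X)$ gives $|\nabla f|\in L^\infty(\mm)\cap H^{1,2}(X)$, its quasi-continuous representative admits a bounded $\Cap$-a.e.\ version, and together with finiteness of $|D\nchi_E|$ this shows $\bar\nabla f\in L^2_E(TX)$. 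Property (ii) would then follow from \Cref{lem:gen_set_restr} applied with $V:=\tilde\nabla(\Test(X))$: the boundedness hypothesis on $|v|$ for $v\in V$ is precisely the $\Cap$-boundedness of ${\sf QCR}(|\nabla f|)$ just noted, while the density of the countable simple combinations of $V$ in $L^0_\Cap(TX)$ is part of \Cref{thm:tangent_Cap-module}.

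For uniqueness, let $(\mathscr M,\bar\nabla')$ be another couple satisfying (i) and (ii). I would define $\Phi$ on the dense subspace provided by (ii) for $L^2_E(TX)$ by
\[
\Phi\bigg(\sum_{i=1}^n\nchi_{E_i}\bar\nabla f_i\bigg):=\sum_{i=1}^n\nchi_{E_i}\bar\nabla' f_i.
\]
The main obstacle is to show that this prescription is well-defined and pointwise-norm-preserving at the level of simple elements: using property (i) in both $L^2_E(TX)$ and $\mathscr M$ together with the disjointness of the $E_i$, the pointwise norm of either sum equals $\sum_{i=1}^n\nchi_{E_i}\tr_E(|\nabla f_i|)$ $|D\nchi_E|$-a.e., so that two representations of the same element on the left land on elements of $\mathscr M$ with identical pointwise norm (hence coinciding). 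Once well-definedness and the isometry property on simple combinations are established, $\Phi$ extends uniquely by density to an $L^\infty(|D\nchi_E|)$-linear isometry $L^2_E(TX)\to\mathscr M$; surjectivity follows from (ii) applied to $(\mathscr M,\bar\nabla')$, while uniqueness of $\Phi$ among maps intertwining $\bar\nabla$ and $\bar\nabla'$ is immediate, since $\Phi$ is prescribed on the generators.
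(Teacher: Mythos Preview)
Your proposal is correct and follows essentially the same approach as the paper: existence via the quotient of the capacitary tangent module $L^0_\Cap(TX)$ by $\sim_{|D\nchi_E|}$ (using $|D\nchi_E|\ll\Cap$), with property (ii) obtained from \Cref{lem:gen_set_restr}; uniqueness by defining $\Phi$ on simple combinations and extending by density. The only phrasing to tighten is the well-definedness step: two images having ``identical pointwise norm'' does not by itself force them to coincide---what you actually use (and what the paper uses) is that the difference of two representations, rewritten over the common refinement, is again a simple combination whose image has pointwise norm equal to $|\omega-\omega|=0$, hence vanishes.
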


We denote by \(\bar{\sf QCR}:\,H^{1,2}_C(TX)\to L^0_\Cap(TX)\)
the ``quasi-continuous representative'' map for Sobolev vector fields,
whose existence has been proven in \cite[Theorem 2.14]{DGP19} (see \cite[Definition 2.12]{DGP19} for a notion of ``quasi-continuous vector field'' suitable for this context). Moreover, with a slight abuse of notation we define
\[
\tr_E:\,H^{1,2}_C(TX)\cap L^\infty(TX)\to L^2_E(TX),
\quad\tr_E:=\bar\pi_{|D\nchi_E|}\circ\bar{\sf QCR}.
\]
Notice that \(|\tr_E(v)|=\tr_E(|v|)\) holds in the \(|D\nchi_E|\)-a.e.\ sense
for every \(v\in H^{1,2}_C(TX)\cap L^\infty(TX)\).

\begin{theorem}[Gauss--Green formula on \(\RCD\) spaces]\label{thm:Gauss-Green}
	Let \((X,\sfd,\mm)\) be an \(\RCD(K,N)\) space and
	\(E\subset X\) be a set of finite perimeter such that \(\mm(E)<+\infty\).
	Then there exists a unique vector field \(\nu_E\in L^2_E(TX)\) such that \(|\nu_E|=1\)
	holds \(|D\nchi_E|\)-a.e.\ and
	\begin{equation}\label{eq:Gauss-Green}
	\int_E{\rm div}(v)\,\d\mm=-\int\big\langle\tr_E(v),\nu_E\big\rangle\,\d|D\nchi_E|
	\quad\text{ for all }v\in H^{1,2}_C(TX)\cap D({\rm div})
	\text{ with }|v|\in L^\infty(\mm).
	\end{equation}
	Moreover, there exists a sequence \((v_n)_n\subset{\rm TestV}_E(X)\) of test
	vector fields over the boundary of \(E\) (see \Cref{lem:density_TestVE} below for the precise definition of this class) such that \(v_n\to\nu_E\) in the
	strong topology of \(L^2_E(TX)\).
\end{theorem}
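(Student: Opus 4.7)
My plan is to realize $\nu_E$ by Riesz representation on the Hilbert $L^\infty(|D\nchi_E|)$-module $L^2_E(TX)$ constructed in \Cref{thm:tg_mod_over_bdry}. Let $\mathcal V:=H^{1,2}_C(TX)\cap D(\dive)\cap\{|v|\in L^\infty(\mm)\}$ and define
$$L_E\colon\mathcal V\to\R,\qquad L_E(v):=-\int_E\dive(v)\,\d\mm.$$
The argument splits into (i) showing that $L_E$ descends through the trace $\tr_E$ to a bounded linear functional on $L^2_E(TX)$, (ii) applying Riesz to represent it as $\langle\cdot,\nu_E\rangle$, and (iii) verifying the normalization $|\nu_E|=1$.

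The step I expect to be the main obstacle is the pointwise trace bound
$$|L_E(v)|\le\int\tr_E(|v|)\,\d|D\nchi_E|\qquad\text{for every }v\in\mathcal V.\quad(\star)$$
\Cref{rmk:ineq_Df_deriv} applied to $f=\nchi_E$ and $v/\||v|\|_\infty$ already gives the coarser estimate $|L_E(v)|\le\||v|\|_\infty|D\nchi_E|(X)$; to upgrade it to $(\star)$ one must pass from a uniform $L^\infty$ control to an integral bound against $|D\nchi_E|$. This passage relies crucially on the capacity control $|D\nchi_E|\ll\Cap$ (a consequence of \Cref{lemma:AssolutacontinuitaHauss} and \Cref{thm:H_ac_Cap}), which makes $\tr_E(|v|)$ well-defined and equal $|D\nchi_E|$-a.e.\ to the quasi-continuous representative of $|v|$. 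I would obtain $(\star)$ by a localization argument: approximate $\tr_E(|v|)$ from above by step functions $\sum_k c_k\nchi_{U_k}$ on open sets $U_k\subset X$ where $|v|<c_k$ $\mm$-a.e., apply \Cref{thm:repr_tv} to the vector fields $v\eta_k$ with good cut-offs $\eta_k$ coming from \Cref{lemma:good cut-off} (noting $v\eta_k\in\mathcal V$ via the calculus rule \eqref{eq:calculusrule}), and then sum, exploiting the disjointness of supports and letting the discretization refine.

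Granted $(\star)$, the functional $L_E$ factors as $\tilde L_E\circ\tr_E$, where $\tilde L_E$ is linear and satisfies $|\tilde L_E(w)|\le|D\nchi_E|(X)^{1/2}\|w\|_{L^2_E(TX)}$ on $\tr_E(\mathcal V)$. The density of test vector fields in $L^2_E(TX)$ supplied by \Cref{lem:density_TestVE} extends $\tilde L_E$ continuously to the whole module, and the Riesz theorem for Hilbert $L^2(|D\nchi_E|)$-modules (which holds by the pointwise duality combined with a Borel partition argument in the spirit of \Cref{lem:gen_set_restr}) delivers a unique $\nu_E\in L^2_E(TX)$ with $\tilde L_E(w)=\int\langle w,\nu_E\rangle\,\d|D\nchi_E|$; this is exactly \eqref{eq:Gauss-Green}. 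The strong approximation of $\nu_E$ by elements of $\TestV_E(X)$ is then an immediate consequence of \Cref{lem:density_TestVE} applied to $\nu_E$ itself.

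It remains to prove $|\nu_E|=1$. The inequality $|\nu_E|\le 1$ is forced by $(\star)$ together with the density just used. For the reverse, observe that for every $v\in\mathcal V$ with $\||v|\|_\infty\le 1$ one has
$$-L_E(v)\le\int|\tr_E(v)|\,|\nu_E|\,\d|D\nchi_E|\le\int|\nu_E|\,\d|D\nchi_E|,$$
and hence
$$|D\nchi_E|(X)=\sup_{v}\int_X\nchi_E\,\dive(v)\,\d\mm\le\int|\nu_E|\,\d|D\nchi_E|,$$
where the supremum is the one in \Cref{thm:repr_tv}, which remains unchanged if restricted to $v\in\mathcal V$ by Hodge heat flow regularization (cf.\ \Cref{prop:heat_flow_div} together with the Bakry--\'Emery estimate \eqref{eq:BE_vf}). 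Combined with $|\nu_E|\le 1$ this forces $|\nu_E|=1$ $|D\nchi_E|$-a.e., completing the proof.
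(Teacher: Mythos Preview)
Your overall architecture---define $L_E$ on $\mathcal V$, obtain the trace bound $(\star)$, factor through $\tr_E$, apply Riesz, then prove $|\nu_E|=1$ via \Cref{thm:repr_tv_RCD}---matches the paper exactly. The last part (the chain of inequalities forcing $|\nu_E|=1$) is essentially identical to the paper's argument.

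The gap is in your proof of $(\star)$. The localization you sketch does not work as written. You propose open sets $U_k$ with $|v|<c_k$ $\mm$-a.e.\ on $U_k$, cut-offs $\eta_k$ supported in $U_k$, and then ``disjointness of supports'' to sum. But: (a) the level sets of $|v|$ are not open---$|v|$ only has a quasi-continuous representative, and $|D\nchi_E|\ll\Cap$ gives no quantitative control of $|D\nchi_E|$ on sets of small capacity; (b) open sets $U_k$ on which $|v|<c_k$ are \emph{nested}, not disjoint, so any partition of unity subordinate to them overlaps and the sum $\sum_k c_k|D\nchi_E|(\supp\eta_k)$ overcounts; (c) \Cref{lemma:good cut-off} only produces cut-offs for concentric balls, and the reference to \eqref{eq:calculusrule} concerns Hessians of functions, not the Leibniz rule you need for $v\eta_k\in\mathcal V$. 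Even granting continuous $|v|$, a standard partition-of-unity argument yields $(\star)$ with a constant strictly larger than $1$, which is useless for concluding $|\nu_E|\le 1$.

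The paper's route for this step is genuinely different and is where the analytic content lies. One works with the heat-regularized normals $\nu_t:=\nchi_{\{P_t^*|D\nchi_E|>0\}}\nabla P_t\nchi_E/P_t^*|D\nchi_E|$ and the measures $\sigma_t:=\langle v,\nu_t\rangle\,P_t^*|D\nchi_E|\,\mm$. Integration by parts against $P_t\nchi_E$ gives a formula for $\int\varphi\,\d\sigma_t$; tightness produces a weak limit $\sigma\ll|D\nchi_E|$, and one sets $L(v):=\d\sigma/\d|D\nchi_E|$. The pointwise bound $|L(v)|\le\tr_E(|v|)$---which is strictly stronger than your $(\star)$---then follows by combining \Cref{lemma:sci} (the non-trivial input from \cite{ABS18} asserting $e^{Kt}|\nu_t|\to 1$ in $L^1(P_t^*|D\nchi_E|\,\mm)$) with the trace identities of \Cref{lem:approx_hf_per}. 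This heat-flow argument is what replaces your attempted localization, and there is no evident shortcut around it.
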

\begin{remark}
In the case in which $X$ is a Riemannian manifold and $E\subset X$ is a domain with smooth boundary, it holds that $L^2_E(TX)$ is the space of all
Borel vector fields over \(X\) which are concentrated on the boundary of \(E\)
and $2$-integrable with respect to the surface measure and, in this case, $\bar{\nabla}$ is the classical gradient for smooth functions.
\end{remark}

\begin{remark}
	The tangent \(L^0(\Cap)\)-module \(L^0_\Cap(TX)\) is a Hilbert
	module; cf.\ \cite[Proposition 2.8]{DGP19}. Therefore, it is immediate to
	see by passing to the quotient that \(L^2_E(TX)\) is a Hilbert module as well.
\end{remark}

The remaining part of this section is dedicated to the proofs of \Cref{thm:tg_mod_over_bdry} and \Cref{thm:Gauss-Green}.

\begin{proof}[Proof of \Cref{thm:tg_mod_over_bdry}]
\textsc{Uniqueness.} Call \(\mathcal W\) the family of elements
of \(L^2_E(TX)\) considered in item ii). Given any
\(\omega=\sum_{i=1}^n\nchi_{E_i}\bar\nabla f_i\in\mathcal W\), we are
forced to set \(\Phi(\omega):=\sum_{i=1}^n\nchi_{E_i}\bar\nabla' f_i\).
Well-posedness of such definition stems from the \(|D\nchi_E|\)-a.e.\ identity
\[
\bigg|\sum_{i=1}^n\nchi_{E_i}\bar\nabla' f_i\bigg|
=\sum_{i=1}^n\nchi_{E_i}|\bar\nabla' f_i|
=\sum_{i=1}^n\nchi_{E_i}\tr_E(|\nabla f_i|)=\sum_{i=1}^n\nchi_{E_i}|\bar\nabla f_i|
=|\omega|,
\]
which also shows that \(\Phi\) preserves the pointwise norm. Then \(\Phi\) is
linear continuous, thus it can be uniquely extended to a linear continuous map
\(\Phi:\,L^2_E(TX)\to\mathscr M\) by density of \(\mathcal W\) in
\(L^2_E(TX)\). By an approximation argument, it is easy to see that the extended
\(\Phi\) preserves the pointwise norm and is an \(L^\infty(|D\nchi_E|)\)-module
morphism. Finally, the map \(\Phi\) is surjective, because its image is dense
(as \(\mathscr M\) satisfies ii)) and closed (as \(\Phi\) is an isometry).
Consequently, we have proved that there exists a unique normed module isomorphism
\(\Phi:\,L^2_E(TX)\to\mathscr M\) such that \(\Phi\circ\bar\nabla=\bar\nabla'\).\\
\textsc{Existence.} Let us consider the tangent \(L^0(\Cap)\)-module
\(L^0_\Cap(TX)\) and the relative capacitary gradient operator
\(\tilde\nabla:\,{\rm Test}(X)\to L^0_\Cap(TX)\) associated to the space
\((X,\sfd,\mm)\); cf.\ \Cref{thm:tangent_Cap-module}.
We define \(L^0_E(TX)\) as
\(L^0_\Cap(TX)/\sim_{|D\nchi_E|}\) and the \(L^2(|D\nchi_E|)\)-normed
\(L^\infty(|D\nchi_E|)\)-module \(L^2_E(TX)\) as in \eqref{eq:restr_mod}.
Moreover, we define the differential \(\bar\nabla:\,{\rm Test}(X)\to L^2_E(TX)\)
as \(\bar\nabla:=\bar\pi_{|D\nchi_E|}\circ\tilde\nabla\). Clearly, the map
\(\bar\nabla\) is linear by construction. Given any function \(f\in{\rm Test}(X)\),
it \(|D\nchi_E|\)-a.e.\ holds
\[
|\bar\nabla f|=\big|\bar\pi_{|D\nchi_E|}(\tilde\nabla f)\big|
=\pi_{|D\nchi_E|}(|\tilde\nabla f|)
=\pi_{|D\nchi_E|}\big({\sf QCR}(|\nabla f|)\big)=\tr_E(|\nabla f|),
\]
which shows that i) is satisfied. We also set \(V:={\rm Test}(X)\) and
the associated space \(\mathcal V\subset L^0_\Cap(TX)\) as in the statement
of \Cref{lem:gen_set_restr}. By the defining property of the cotangent
\(\Cap\)-module we know that \(\mathcal V\) is dense in \(L^0_\Cap(TX)\),
whence \Cref{lem:gen_set_restr} ensures that \(\mathcal W\) is dense
in \(L^2_E(TX)\). This means that property ii) holds.
Therefore, the existence part of the statement is proven.
\end{proof}

To prove \Cref{thm:Gauss-Green} we need some auxiliary results.
Let us begin with the following one, which was obtained as an
intermediate step in the proof of \cite[Theorem 4.2]{ABS18}.

\begin{lemma}\label{lemma:sci}
Let \((X,\sfd,\mm)\) be an \(\RCD(K,N)\) space. Let \(E\subset X\)
be a set of finite perimeter. Then
\begin{equation}\label{eq:sci}
\lim_{t\searrow 0}\int\bigg|1-e^{Kt}\frac{|\nabla P_t\nchi_E|}{P_t^*|D\nchi_E|}\bigg|
P_t^*|D\nchi_E|\,\d\mm=0.
\end{equation}
\end{lemma}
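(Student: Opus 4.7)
The plan is to exploit the BV Bakry--\'Emery inequality \eqref{eq:BE1}, namely \(|DP_t\nchi_E|\leq e^{-Kt}P_t^*|D\nchi_E|\), in order to remove the absolute value from the integrand, and then reduce the statement to a convergence of total masses that follows from lower semicontinuity and the same contraction estimate.

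First I would observe that for every \(t>0\) the measure \(P_t^*|D\nchi_E|\) is absolutely continuous with respect to \(\mm\) (with density given by integration of the heat kernel against \(|D\nchi_E|\)), so that both sides of \eqref{eq:BE1} are densities of \(\mm\)-absolutely continuous measures. Since moreover \(P_t\nchi_E\in H^{1,2}(X,\sfd,\mm)\), we have \(|DP_t\nchi_E|=|\nabla P_t\nchi_E|\,\mm\), and \eqref{eq:BE1} translates into the pointwise \(\mm\)-a.e.\ inequality
\[
e^{Kt}\,|\nabla P_t\nchi_E|\leq P_t^*|D\nchi_E|.
\]
In particular, where \(P_t^*|D\nchi_E|\) vanishes so does \(|\nabla P_t\nchi_E|\), so the quotient in the statement is well-defined and bounded above by \(1\) wherever the denominator is nonzero. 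Hence the absolute value may be dropped and the integral in \eqref{eq:sci} rewrites as
\[
\int\bigl(P_t^*|D\nchi_E|-e^{Kt}|\nabla P_t\nchi_E|\bigr)\d\mm
=|D\nchi_E|(X)-e^{Kt}\int|\nabla P_t\nchi_E|\,\d\mm,
\]
where I used mass preservation of the dual heat flow, \(\int P_t^*|D\nchi_E|\,\d\mm=|D\nchi_E|(X)\).

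The conclusion then reduces to proving
\[
\lim_{t\searrow 0}\int|\nabla P_t\nchi_E|\,\d\mm=|D\nchi_E|(X).
\]
For the upper bound I integrate \eqref{eq:BE1} over \(X\), obtaining
\(\int|\nabla P_t\nchi_E|\,\d\mm\leq e^{-Kt}|D\nchi_E|(X)\), so that \(\limsup_{t\to 0}\int|\nabla P_t\nchi_E|\,\d\mm\leq|D\nchi_E|(X)\). For the lower bound, I would rely on the standard fact that \(P_t\nchi_E\to\nchi_E\) in \(L^1(X,\mm)\) as \(t\searrow 0\) (a consequence of the contraction property and continuity of the heat flow), combined with the \(L^1_{\rm loc}\)-lower semicontinuity of the total variation encoded in \Cref{def:bvfunction}, which yields \(|D\nchi_E|(X)\leq\liminf_{t\to 0}\int|\nabla P_t\nchi_E|\,\d\mm\). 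Together with \(e^{Kt}\to 1\), this gives \eqref{eq:sci}.

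The only subtle point, and the one I would pay most attention to, is the precise meaning of \(P_t^*|D\nchi_E|\) and its mass preservation in the possibly non-proper case: since \(|D\nchi_E|\) is a finite Borel measure and \(P_t\) is a mass-preserving Markov semigroup on \(L^1\), the duality \(\int f\,\d P_t^*|D\nchi_E|=\int P_t f\,\d|D\nchi_E|\) extends to the test \(f\equiv 1\) via monotone approximation, giving \(\int P_t^*|D\nchi_E|\,\d\mm=|D\nchi_E|(X)\). Once this is in hand, the argument above is straightforward.
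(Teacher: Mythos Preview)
Your argument is correct. The paper does not give a self-contained proof of this lemma but records it as an intermediate step extracted from the proof of \cite[Theorem~4.2]{ABS18}; the computation there proceeds exactly as you outline, using the $1$-Bakry--\'Emery estimate \eqref{eq:BE1} to drop the absolute value and then sandwiching $\int|\nabla P_t\nchi_E|\,\d\mm$ between the $L^1$-lower semicontinuity of the total variation (via $P_t\nchi_E\to\nchi_E$) and the integrated contraction bound.
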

\begin{lemma}\label{lem:approx_hf_per}
Let \((X,\sfd,\mm)\) be an \(\RCD(K,N)\) space.
Let \(E\subset X\) be a set of finite perimeter.
Then
\begin{equation}\label{eq:equality_hf_per}
\int f\,P_t^*|D\nchi_E|\,\d\mm=\int\tr_E(P_t f)\,\d|D\nchi_E|
\quad\text{ for every }f\in H^{1,2}(X)\cap L^\infty(\mm)\text{ and }t>0.
\end{equation}
Moreover, it holds that
\begin{equation}\label{eq:approx_hf_per}
\lim_{t\searrow 0}\int\tr_E(P_t f)\,\d|D\nchi_E|=\int\tr_E(f)\,\d|D\nchi_E|
\quad\text{ for every }f\in H^{1,2}(X)\cap L^\infty(\mm).
\end{equation}
\end{lemma}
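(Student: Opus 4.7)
The plan is to handle \eqref{eq:equality_hf_per} first by unfolding the left-hand side via the defining duality of \(P_t^*\), and then to deduce \eqref{eq:approx_hf_per} from it by combining the strong continuity of the heat semigroup on \(H^{1,2}(X)\) with the continuity of the quasi-continuous representative map and a dominated-convergence argument made available by the \(L^\infty\) bound \(\|P_t f\|_\infty \leq \|f\|_\infty\).

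For \eqref{eq:equality_hf_per}, the starting point is that in an \(\RCD(K,N)\) space, \(P_t^*|D\nchi_E|\) is absolutely continuous with respect to \(\mm\) for every \(t>0\), so writing the left-hand side as \(\int f\,\d(P_t^*|D\nchi_E|)\) involves no ambiguity. The duality identity
\[
\int g\,\d(P_t^*\mu)=\int P_t g\,\d\mu,
\]
initially set for \(g\in\Lip_{\rm bs}(X)\) and \(\mu\in\mathcal P_2(X)\), extends to our setting (\(g=f\in H^{1,2}(X)\cap L^\infty(\mm)\) and \(\mu=|D\nchi_E|\)) by the usual density and \(L^p\)-continuity of \(P_t\). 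Hence the left-hand side equals \(\int P_tf\,\d|D\nchi_E|\). Since \(f\in L^\infty(\mm)\), the heat flow \(P_t f\) has a Lipschitz (in particular quasi-continuous) representative obtained through the Bakry--\'Emery estimate \eqref{eq:BE2} combined with the Sobolev-to-Lipschitz property, and this representative agrees \(\Cap\)-a.e.\ with \({\sf QCR}(P_tf)\). Because \(|D\nchi_E|\ll\Cap\) by \Cref{lemma:AssolutacontinuitaHauss} and \Cref{thm:H_ac_Cap}, one then rewrites \(\int P_tf\,\d|D\nchi_E|=\int\tr_E(P_tf)\,\d|D\nchi_E|\), yielding the identity.

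For \eqref{eq:approx_hf_per}, I would exploit the strong continuity of the heat semigroup: \(P_tf\to f\) as \(t\searrow 0\), both in \(L^2(\mm)\) and in \(H^{1,2}(X)\), when \(f\in H^{1,2}(X)\). The quasi-continuous representative map \({\sf QCR}:H^{1,2}(X)\to L^0(\Cap)\) is continuous (cf.\ \cite[Theorem 1.20]{DGP19}), and the projection \(\pi_{|D\nchi_E|}:L^0(\Cap)\to L^0(|D\nchi_E|)\) is continuous because \(|D\nchi_E|\ll\Cap\). Therefore \(\tr_E(P_tf)\to\tr_E(f)\) in \(L^0(|D\nchi_E|)\), i.e.\ in \(|D\nchi_E|\)-measure. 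The uniform bound \(|\tr_E(P_tf)|\le\|f\|_{L^\infty(\mm)}\) \(|D\nchi_E|\)-a.e., coming from the \(L^\infty\)-contractivity of \(P_t\) together with the very construction of the trace, combined with the finiteness of \(|D\nchi_E|\), upgrades this convergence to \(L^1(|D\nchi_E|)\) convergence via dominated convergence, which is exactly \eqref{eq:approx_hf_per}.

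The delicate point is the identification carried out at the end of the proof of \eqref{eq:equality_hf_per}: a priori \(P_tf\) is only an \(\mm\)-equivalence class, while the right-hand side sees its values up to \(|D\nchi_E|\)-null sets. The argument rests crucially on (i) the fact that in the \(\RCD(K,N)\) setting \(P_tf\) admits a canonical Lipschitz representative for \(t>0\), and (ii) the compatibility of this representative with the abstract quasi-continuous representative through which \(\tr_E\) is defined. Once that bridge between the pointwise continuous representative and the capacitary-theoretic one is in place, both statements follow from standard functional-analytic manipulations.
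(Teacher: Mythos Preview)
Your overall strategy matches the paper's, and your argument for \eqref{eq:approx_hf_per} is essentially identical to theirs: \(P_t f\to f\) in \(H^{1,2}\), continuity of \({\sf QCR}\) into \(L^0(\Cap)\), \(|D\nchi_E|\ll\Cap\), and dominated convergence using \(\|P_t f\|_\infty\le\|f\|_\infty\).

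There is, however, a gap in your treatment of \eqref{eq:equality_hf_per}. You write that the duality \(\int g\,\d(P_t^*\mu)=\int P_t g\,\d\mu\) ``extends to our setting \(\ldots\) by the usual density and \(L^p\)-continuity of \(P_t\)'', and then conclude that the left-hand side equals \(\int P_t f\,\d|D\nchi_E|\). But \(P_t f\) is only an \(\mm\)-class, and \(|D\nchi_E|\) is not \(\mm\)-absolutely continuous; \(L^p(\mm)\)-continuity of \(P_t\) gives you nothing about convergence of \(\int P_t f_n\,\d|D\nchi_E|\) along an approximating sequence \(f_n\). You recognize the issue in your final paragraph, but the fix you gesture at (the Lipschitz representative of \(P_t f\)) is the \emph{target} of the identification, not the mechanism for passing to the limit on the right-hand side.

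The paper closes this gap exactly the way you handle \eqref{eq:approx_hf_per}: choose \(f_n\in\Lip_{\rm bs}(X)\) bounded in \(L^\infty\) with \(f_n\to f\) strongly in \(H^{1,2}\) and weakly\(^*\) in \(L^\infty\). The weak\(^*\) convergence handles the left-hand side (since \(P_t^*|D\nchi_E|\in L^1(\mm)\)). For the right-hand side one uses that \(P_t:H^{1,2}\to H^{1,2}\) is continuous, so \(P_t f_n\to P_t f\) in \(H^{1,2}\); then continuity of \({\sf QCR}\) and \(|D\nchi_E|\ll\Cap\) give \(\tr_E(P_t f_n)\to\tr_E(P_t f)\) \(|D\nchi_E|\)-a.e.\ along a subsequence, and dominated convergence (with the uniform \(L^\infty\) bound) yields \(\int\tr_E(P_t f_n)\,\d|D\nchi_E|\to\int\tr_E(P_t f)\,\d|D\nchi_E|\). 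For the base case \(f_n\in\Lip_{\rm bs}\), the identity \(\int f_n\,P_t^*|D\nchi_E|\,\d\mm=\int\tr_E(P_t f_n)\,\d|D\nchi_E|\) holds because the continuous (Lipschitz) representative of \(P_t f_n\) coincides \(\Cap\)-a.e.\ with \({\sf QCR}(P_t f_n)\). In short: replace ``\(L^p\)-continuity'' by ``\(H^{1,2}\)-continuity plus \({\sf QCR}\)'', exactly as you already do for the second identity.
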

\begin{proof}
First of all, let us prove \eqref{eq:equality_hf_per}.
Fix any \(f\in H^{1,2}(X)\cap L^\infty(\mm)\) and \(t>0\). We claim that
\begin{equation}\label{eq:approx_hf_per_claim}
\exists\,(f_n)_n\subset\Lip_{\rm bs}(X,\sfd)\text{ bounded in }L^\infty(\mm):
\quad f_n\to f\text{ strongly in }H^{1,2}(X),\text{ weakly}^*\text{ in }L^\infty(\mm).
\end{equation}
To prove it, we argue as follows. Given any \(s>0\), the function \(P_s f\)
has a Lipschitz representative (still denoted by \(P_s f\)) thanks to the
\(L^\infty\)-\(\Lip\) regularisation of the heat flow.
Since \(\{P_s f\}_{s>0}\) is bounded in \(L^\infty(\mm)\)
by the weak maximum principle and \(P_s|\nabla f|^2\to|\nabla f|^2\) strongly
in \(L^1(\mm)\), we can find a function \(G\in L^1(\mm)\) and
a sequence \(s_n\searrow 0\) such that
\(P_{s_n}|\nabla f|^2\leq G\) holds \(\mm\)-a.e.\ for all \(n\)
and \(P_{s_n}f\to f\) weakly\(^*\) in \(L^\infty(\mm)\).
Fix \(\bar x\in X\) and for any \(n\in\N\) choose a compactly-supported
\(1\)-Lipschitz function \(\eta_n:\,X\to[0,1]\) such that \(\eta_n=1\) on \(B_n(\bar x)\).
Therefore, standard computations (based on the Leibniz rule
\(\nabla(\eta_n P_{s_n}f)=\eta_n\nabla P_{s_n}f+P_{s_n}f\nabla\eta_n\), the
dominated convergence theorem, and the Bakry-\'{E}mery contraction estimate) show that
\(f_n:=\eta_n P_{s_n}f\in\Lip_{\rm bs}(X,\sfd)\) satisfy \eqref{eq:approx_hf_per_claim}.
Now observe that \(P_t:\,H^{1,2}(X)\to H^{1,2}(X)\) is continuous, as a consequence of
the Bakry-\'{E}mery contraction estimate and the continuity of
\(P_t:\,L^2(\mm)\to L^2(\mm)\). This ensures that \(P_t f_n\to P_t f\) strongly
in \(H^{1,2}(X)\) as \(n\to\infty\), whence we know from
\cite[Propositions 1.12, 1.17 and 1.19]{DGP19} that (possibly passing to
a not relabeled subsequence) \({\sf QCR}(P_t f_n)\to{\sf QCR}(P_t f)\)
holds \(\rm Cap\)-a.e., and accordingly \(\tr_E(P_t f_n)\to\tr_E(P_t f)\)
holds \(|D\nchi_E|\)-a.e.. Moreover, since
\(|P_t f_n|\leq\sup_k\|f_k\|_{L^\infty(\mm)}=:C\) in the \(\mm\)-a.e.\ sense
for all \(n\in\N\), we deduce that \(\big|{\sf QCR}(P_t f_n)\big|\leq C\) holds
\(\rm Cap\)-a.e.\ for all \(n\in\N\), and thus \(\tr_E(P_t f_n)\leq C\) holds
\(|D\nchi_E|\)-a.e.\ for all \(n\in\N\). All in all, we obtain \eqref{eq:equality_hf_per}
by letting \(n\to\infty\) in
\(\int f_n\,P_t^*|D\nchi_E|\,\d\mm=\int\tr_E(P_t f_n)\,\d|D\nchi_E|\),
which is satisfied thanks to the defining property of \(P_t^*|D\nchi_E|\);
here we use the dominated convergence theorem and the \(L^\infty\)-weak\(^*\)
convergence \(f_n\to f\).

Let us now pass to the proof of \eqref{eq:approx_hf_per}.
Fix \(f\in H^{1,2}(X)\cap L^\infty(\mm)\).
By arguing as above, we see that
\(\big|\tr_E(P_t f)\big|\leq\|f\|_{L^\infty(\mm)}\) holds
\(|D\nchi_E|\)-a.e.\ for all \(t>0\), and that any given sequence \(t_n\searrow 0\)
admits a subsequence \(t_{n_i}\searrow 0\) such that
\(\tr_E(P_{t_{n_i}}f)\to\tr_E(f)\) holds \(|D\nchi_E|\)-a.e..
Therefore, by dominated convergence theorem we conclude that
\(\lim_i\int\tr_E(P_{t_{n_i}}f)\,\d|D\nchi_E|=\int\tr_E(f)\,\d|D\nchi_E|\),
which yields \eqref{eq:approx_hf_per}.
\end{proof}

\begin{lemma}[Test vector fields over \(\partial E\)]\label{lem:density_TestVE}
	Let \((X,\sfd,\mm)\) be an \(\RCD(K,N)\) space. Let \(E\subset X\)
	be a set of finite perimeter and finite mass. We define the class \({\rm TestV}_E(X)\subset L^2_E(TX)\)
	of \emph{test vector fields over the boundary of \(E\)} as
	\[
	{\rm TestV}_E(X):=\tr_E\big({\rm TestV}(X)\big)
	=\bigg\{\sum_{i=1}^n\tr_E(g_i)\bar\nabla f_i\;\bigg|\;
	n\in\N,\;(f_i)_{i=1}^n,(g_i)_{i=1}^n\subset{\rm Test}(X)\bigg\}.
	\]
	Then \({\rm TestV}_E(X)\) is dense in \(L^2_E(TX)\).
\end{lemma}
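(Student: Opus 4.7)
The plan is to reduce the density of $\TestV_E(X)$ in $L^2_E(TX)$ to the density of $\tr_E(\Test(X))$ in $L^2(|D\nchi_E|)$, and then to approximate Lipschitz functions with bounded support by test functions via the heat flow.

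By item \(\rm ii)\) of \Cref{thm:tg_mod_over_bdry}, finite sums of the form $\sum_{i=1}^n \nchi_{B_i}\bar\nabla f_i$, where $(B_i)_{i=1}^n$ is a Borel partition of $X$ and $f_i\in\Test(X)$, are dense in $L^2_E(TX)$. For any test function $f_i$ one has $|\nabla f_i|\in H^{1,2}(X)\cap L^\infty(\mm)$ (as recalled above), so its quasi-continuous representative is $\Cap$-a.e.\ bounded, and consequently $|\bar\nabla f_i|=\tr_E(|\nabla f_i|)\in L^\infty(|D\nchi_E|)$. Therefore it suffices to approximate each $\nchi_{B_i}$, for $B_i\subset X$ Borel, in $L^2(|D\nchi_E|)$ by elements of $\tr_E(\Test(X))$; equivalently, to show that $\tr_E(\Test(X))$ is dense in $L^2(|D\nchi_E|)$.

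Since $|D\nchi_E|$ is a finite Radon measure on the complete separable space $(X,\sfd)$, the class $\Lip_{\rm bs}(X,\sfd)$ is dense in $L^2(|D\nchi_E|)$ by a standard Lusin/tightness argument. Any $\phi\in\Lip_{\rm bs}(X,\sfd)$ belongs to $H^{1,2}(X)\cap L^\infty(\mm)$ and is continuous, so it is its own quasi-continuous representative and hence $\tr_E(\phi)$ is represented by $\phi$ itself in $L^2(|D\nchi_E|)$. It thus remains to construct, for each such $\phi$, test functions $g_n\in\Test(X)$ with $\tr_E(g_n)\to\phi$ in $L^2(|D\nchi_E|)$.

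I would take $g_n:=P_{1/n}\phi$. Using the semigroup identity $P_{1/n}=P_{1/(2n)}\circ P_{1/(2n)}$, standard heat-flow regularisation, and the Bakry-\'Emery estimate \eqref{eq:BE2}, one checks that $g_n\in\Test(X)$, that $g_n\to\phi$ strongly in $H^{1,2}(X)$, and that $\|g_n\|_{L^\infty(\mm)}\le\|\phi\|_{L^\infty(\mm)}$ by the weak maximum principle. The main subtlety is upgrading the $H^{1,2}$-convergence to $L^2(|D\nchi_E|)$-convergence of the traces. By the continuity of the quasi-continuous representative map ${\sf QCR}\colon H^{1,2}(X)\to L^0(\Cap)$ established in \cite{DGP19}, we get ${\sf QCR}(g_n)\to\phi$ in capacity, and hence $\Cap$-q.e.\ along a subsequence. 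Since $|D\nchi_E|\ll\haus^{h_1}\ll\Cap$ by \Cref{lemma:AssolutacontinuitaHauss} and \Cref{thm:H_ac_Cap}, and $|\tr_E(g_n)|\le\|\phi\|_{L^\infty(\mm)}$ uniformly in $n$, dominated convergence against the finite measure $|D\nchi_E|$ yields $\tr_E(g_n)\to\phi$ in $L^2(|D\nchi_E|)$, completing the proof.
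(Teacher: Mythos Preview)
Your proof is correct and follows essentially the same approach as the paper: both reduce via item~\(\rm ii)\) of \Cref{thm:tg_mod_over_bdry} to approximating characteristic functions in \(L^2(|D\nchi_E|)\) by traces of test functions, first pass through \(\Lip_{\rm bs}\) (the paper uses \(\Lip_c\), which is the same here), and then use that \(H^{1,2}\)-convergence of uniformly \(L^\infty\)-bounded functions yields \(|D\nchi_E|\)-a.e.\ convergence of traces (via the continuity of \({\sf QCR}\) from \cite{DGP19} and \(|D\nchi_E|\ll\Cap\)), hence \(L^2(|D\nchi_E|)\)-convergence by dominated convergence. The only cosmetic difference is that you explicitly construct the test approximants as \(P_{1/n}\phi\), whereas the paper cites \cite[eq.~(3.2.3)]{Gigli14} for the existence of such an approximating sequence.
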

\begin{proof}
	By item ii) of \Cref{thm:tg_mod_over_bdry}, it suffices to show that each 
	\(v\in L^2_E(TX)\) of the form \(v=\nchi_E\bar\nabla f\) -- where \(E\subset X\)
	is a Borel set and \(f\in{\rm Test}(X)\) -- can be approximated by elements
	of \({\rm TestV}_E(X)\) with respect to the strong topology of \(L^2_E(TX)\).
	Fix \(\eps>0\) and choose a function \(h\in\Lip_c(X)\) such that
	\(\|h-\nchi_E\|_{L^2(|D\nchi_E|)}\leq\eps/(2\,\Lip(f))\). Moreover, by exploiting
	\cite[eq.\ (3.2.3)]{Gigli14} we can find a sequence \((g_n)_n\subset{\rm Test}(X)\)
	such that \(\sup_n\|g_n\|_{L^\infty(\mm)}<+\infty\) and \(g_n\to h\) in \(H^{1,2}(X)\).
	Hence, by using the results in \cite{DGP19} we see that (up to a not relabeled
	subsequence) it holds \(\tr_E(g_n)(x)\to h(x)\) for \(|D\nchi_E|\)-a.e.\ \(x\in X\).
	Accordingly, by applying the dominated convergence theorem we conclude that
	\(\big|(\tr_E(g_n)-h)\bar\nabla f\big|\to 0\) in \(L^2(|D\nchi_E|)\).
	Now choose \(n\in\N\) so big that \(g:=g_n\) satisfies
	\(\big\|(\tr_E(g)-h)\bar\nabla f\big\|_{L^2_E(TX)}<\eps/2\). Hence, one has that
	\[\begin{split}
	\big\|\tr_E(g)\bar\nabla f-v\big\|_{L^2_E(TX)}
	&\leq\big\|(\tr_E(g)-h)\bar\nabla f\big\|_{L^2_E(TX)}
	+\big\|(h-\nchi_E)\bar\nabla f\big\|_{L^2_E(TX)}\\
	&\leq\frac{\eps}{2}+\|h-\nchi_E\|_{L^2(|D\nchi_E|)}\,\Lip(f)<\eps.
	\end{split}\]
	Given that \(\tr_E(g)\bar\nabla f\in{\rm TestV}_E(X)\), the statement is achieved.
\end{proof}
The last ingredient we need is an improvement of \Cref{thm:repr_tv} in the special case of \(\RCD(K,\infty)\) spaces. As we are going to see in the
ensuing result, to obtain the total variation of a BV function it is
sufficient to restrict the attention only to those competitors that
are Sobolev regular. The proof is based on a parabolic approximation argument that builds upon the technical results developed in \Cref{subsection:HodgeLaplacian}.
\begin{theorem}[Representation formula for \(|Df|\) on \(\RCD\) spaces]
	\label{thm:repr_tv_RCD}
	Let \((X,\sfd,\mm)\) be an \(\RCD(K,\infty)\) space and \(f\in{\rm BV}(X)\). Then it holds that
	\[
	|Df|(X)=\sup\bigg\{\int f\,{\rm div}(v)\,\d\mm\;\bigg|\;v\in H^{1,2}_C(TX)\cap D({\rm div}),
	\;|v|\leq 1\;\;\mm\text{-a.e.},\;{\rm div}(v)\in L^\infty(\mm)\bigg\}.
	\]
\end{theorem}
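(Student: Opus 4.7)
The plan is to strengthen \Cref{thm:repr_tv} (applied with \(U=X\)) by regularising an arbitrary admissible competitor through the Hodge heat flow \({\sf h}_{{\rm H},t}\) introduced in \Cref{subsection:HodgeLaplacian}. The inequality \(\ge\) is immediate: any Sobolev vector field \(v\in H^{1,2}_C(TX)\cap D({\rm div})\) with \(|v|\le 1\) \(\mm\)-a.e.\ and \({\rm div}(v)\in L^\infty(\mm)\) corresponds, via \Cref{prop:identif_tangent_mod}, to a derivation in \({\rm Der}^{2,2}(X)\) with the same pointwise norm and divergence, and \Cref{rmk:ineq_Df_deriv} then gives \(\int f\,{\rm div}(v)\,\d\mm\le|Df|(X)\).

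For the non-trivial inequality \(\le\), by \Cref{thm:repr_tv} it suffices to prove that for every derivation-type competitor \(v\in D({\rm div})\) with \(|v|\le 1\) \(\mm\)-a.e., \({\rm div}(v)\in L^\infty(\mm)\) and \({\rm supp}(v)\Subset X\), the integral \(\int f\,{\rm div}(v)\,\d\mm\) can be realised as a limit of analogous integrals with Sobolev-regular competitors. Set \(c_t:=e^{(K\wedge 0)t}\) and \(w_t:=c_t\,{\sf h}_{{\rm H},t}(v)\) for \(t>0\). By \Cref{prop:heat_flow_div} one has \(w_t\in H^{1,2}_C(TX)\cap D({\rm div})\) with
\[
{\rm div}(w_t)=c_t\,P_t({\rm div}(v))\in L^\infty(\mm),
\]
since \(P_t\) is an \(L^\infty\)-contraction. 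Moreover, the Bakry-\'{E}mery estimate \eqref{eq:BE_vf} gives \(|{\sf h}_{{\rm H},t}(v)|^2\le e^{-2Kt}P_t(|v|^2)\le e^{-2Kt}\) \(\mm\)-a.e., hence \(|w_t|\le c_t\,e^{-Kt}\le 1\) \(\mm\)-a.e.\ (checking separately \(K\le 0\), where \(c_t e^{-Kt}=1\), and \(K>0\), where \(c_t=1\) and \(e^{-Kt}\le 1\)). Thus each \(w_t\) is an admissible Sobolev competitor in the right-hand side.

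It remains to show \(\int f\,{\rm div}(w_t)\,\d\mm\to\int f\,{\rm div}(v)\,\d\mm\) as \(t\searrow 0\). By \Cref{rmk:div_local}, \({\rm supp}({\rm div}(v))\subset{\rm supp}(v)\) is bounded, so \({\rm div}(v)\in L^1(\mm)\cap L^\infty(\mm)\subset L^2(\mm)\). Using the \(L^2\)-self-adjointness of \(P_t\), extended by a standard density/duality argument to the pairing of \(f\in L^1(\mm)\) against bounded functions, one rewrites
\[
\int f\,{\rm div}(w_t)\,\d\mm=c_t\int f\,P_t({\rm div}(v))\,\d\mm=c_t\int (P_t f)\,{\rm div}(v)\,\d\mm,
\]
and the \(L^1\)-strong continuity \(P_t f\to f\) together with \({\rm div}(v)\in L^\infty(\mm)\) and \(c_t\to 1\) deliver the desired limit.

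The main obstacle I anticipate is the final convergence step: because \(f\) is only integrable and \({\rm div}(v)\) only essentially bounded, the self-adjoint transfer of \(P_t\) from \({\rm div}(v)\) onto \(f\) does not follow directly from the \(L^2\)-theory and has to be justified, e.g.\ by truncating \(f_N:=f\,\nchi_{\{|f|\le N\}}\in L^1(\mm)\cap L^2(\mm)\), applying self-adjointness for \(f_N\), and letting \(N\to\infty\). Once this has been handled, the construction assembles the Bakry-\'{E}mery inequality \eqref{eq:BE_vf} for vector fields with the commutation \({\rm div}\circ{\sf h}_{{\rm H},t}=P_t\circ{\rm div}\) from \Cref{prop:heat_flow_div}, and the conclusion follows.
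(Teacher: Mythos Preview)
Your proof is correct and follows essentially the same strategy as the paper: regularise an admissible competitor \(v\) from \Cref{thm:repr_tv} via the Hodge heat flow \({\sf h}_{{\rm H},t}\), use \Cref{prop:heat_flow_div} and the Bakry--\'Emery estimate \eqref{eq:BE_vf} to verify that the regularised vector field is still admissible, and then pass to the limit \(t\searrow 0\). The only differences are cosmetic: the paper takes the simpler normalisation \(v_t:=e^{Kt}{\sf h}_{{\rm H},t}(v)\) (which gives \(|v_t|\le\sqrt{P_t(|v|^2)}\le 1\) directly for all \(K\)), and for the final convergence it extracts an \(\mm\)-a.e.\ convergent subsequence of \({\rm div}(v_t)=e^{Kt}P_t({\rm div}(v))\to{\rm div}(v)\) in \(L^2(\mm)\) and applies dominated convergence against \(|f|\in L^1(\mm)\), rather than transferring \(P_t\) onto \(f\) by duality.
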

\begin{proof}
	Call \(S\) the right hand side of the above formula. We know by \Cref{rmk:ineq_Df_deriv} that \(|Df|(X)\geq S\). In order
	to prove the converse inequality, fix any \(\eps>0\). \Cref{thm:repr_tv} guarantees the
	existence of a vector field \(v\in D({\rm div})\) -- with \(|v|\leq 1\) in the
	\(\mm\)-a.e.\ sense and \({\rm div}(v)\in L^\infty(\mm)\) -- such that
	\(\int f\,{\rm div}(v)\,\d\mm>|Df|(X)-\eps/2\). Now define
	\(v_t:=e^{Kt}\,{\sf h}_{{\rm H},t}(v)\) for every \(t>0\).
	Notice that \(v_t\in H^{1,2}_C(TX)\cap D({\rm div})\) by \Cref{prop:heat_flow_div}. Since \({\rm div}(v)\in L^\infty(\mm)\) and
	\({\rm div}(v_t)=e^{Kt}P_t({\rm div}(v))\), we deduce from the weak
	maximum principle that \({\rm div}(v_t)\in L^\infty(\mm)\) as well.
	More precisely, one has
	\(\|{\rm div}(v_t)\|_{L^\infty(\mm)}\leq e^{Kt}\,\|{\rm div}(v)\|_{L^\infty(\mm)}\)
	for all \(t>0\). Moreover, the weak maximum principle also guarantees that
	\[
	|v_t|=e^{Kt}\,|{\sf h}_{{\rm H},t}(v)|\overset{\eqref{eq:BE_vf}}\leq
	\sqrt{P_t(|v|^2)}\leq 1\quad\text{ in the }\mm\text{-a.e.\ sense.}
	\]
	Given that \(\lim_{t\searrow 0}{\rm div}(v_t)={\rm div}(v)\) in \(L^2(\mm)\),
	we can find \(t_n\searrow 0\) such that \({\rm div}(v_{t_n})(x)\to{\rm div}(v)(x)\)
	holds for \(\mm\)-a.e.\ \(x\in X\). Being \(\big({\rm div}(v_{t_n})\big)_n\) a bounded
	sequence in \(L^\infty(\mm)\), we can finally conclude that
	\(\lim_n\int f\,{\rm div}(v_{t_n})\,\d\mm=\int f\,{\rm div}(v)\,\d\mm\)
	by dominated convergence theorem. Therefore, there exists \(n\in\N\) such that \(w:=v_{t_n}\) satisfies
	\[
	\int f\,{\rm div}(w)\,\d\mm>\int f\,{\rm div}(v)\,\d\mm-\frac{\eps}{2}>|Df|(X)-\eps.
	\]
	This shows that \(|Df|(X)<S+\eps\), whence \(|Df|(X)\leq S\) by arbitrariness
	of \(\eps\), as desired.
\end{proof}

\begin{proof}[Proof of \Cref{thm:Gauss-Green}]
First of all, let us define \(\mu_t:=P_t^*|D\nchi_E|\mm\) for every \(t>0\).
Recall that \(\mu_t\rightharpoonup|D\nchi_E|\) in duality with \(C_b(X)\) as
\(t\searrow 0\). Let us also set
\[
\nu_t:=\nchi_{\{P_t^*|D\nchi_E|>0\}}\frac{\nabla P_t\nchi_E}{P_t^*|D\nchi_E|}
\in L^0(TX)\quad\text{ for every }t>0.
\]
It follows from the \(1\)-Bakry-\'{E}mery estimate \eqref{eq:BE1} that \(|DP_t\nchi_E|\leq e^{-Kt}P_t^*|D\nchi_E|\)
holds \(\mm\)-a.e., thus accordingly \(\nu_t\in L^\infty(TX)\) and
\(|\nu_t|\leq e^{-Kt}\) is satisfied in the \(\mm\)-a.e.\ sense.
Call 
\begin{equation*}
	\mathcal V:=\big\{v\in H^{1,2}_C(TX)
	\cap D({\rm div})\,\big|\,|v|\in L^\infty(\mm)\big\} 
\end{equation*}
and fix \(v\in\mathcal V\).
The Leibniz rule for the divergence ensures that
\(\varphi v\in D({\rm div})\) for any \(\varphi\in\Lip_b(X)\), so the usual
integration-by-parts formula yields
\begin{equation}\label{eq:def_Lv_aux}
\int P_t\nchi_E\,{\rm div}(\varphi v)\,\d\mm
=-\int\varphi\,\langle\nabla P_t\nchi_E,v\rangle\,\d\mm
=-\int\varphi\,\langle v,\nu_t\rangle\,\d\mu_t
\quad\text{ for all }\varphi\in\Lip_b(X).
\end{equation}
Moreover, observe that \(\langle v,\nu_t\rangle\in L^\infty(\mu_t)\)
and \(\big\|\langle v,\nu_t\rangle\big\|_{L^\infty(\mu_t)}\leq
e^{-Kt}\,\||v|\|_{L^\infty(\mm)}\) for every \(t>0\). 
Let us call \(\sigma_t:=\langle v,\nu_t\rangle\mu_t\) for all \(t>0\).
Fix any sequence \(t_n\searrow 0\). Since \(\mu_{t_n}\rightharpoonup|D\nchi_E|\)
in duality with \(C_b(X)\), we know that \((\mu_{t_n})_n\) is tight by Prohkorov
theorem. Given that \(\sup_n\|\langle v,\nu_{t_n}\rangle\|_{L^\infty(\mu_{t_n})}\)
is finite, we deduce that \((\sigma_{t_n})_n\) is tight as well. By using
Prohkorov theorem again, we can thus take a subsequence \((t_{n_i})_i\)
such that \(\sigma_{t_{n_i}}\rightharpoonup\sigma\) in duality with
\(C_b(X)\) for some finite (signed) Borel measure \(\sigma\) on \(X\).
Since \(\Lip_b(X)\) is dense in \(C_b(X)\) and the identity in
\eqref{eq:def_Lv_aux} gives
\[
\int\varphi\,\d\sigma=\lim_{i\to\infty}\int\varphi\,\d\sigma_{t_{n_i}}
=-\int_E{\rm div}(\varphi v)\,\d\mm\quad\text{ for every }\varphi\in\Lip_b(X),
\]
we see that \(\sigma\) is independent of the chosen sequence \((t_{n_i})_i\).
Hence, \(\sigma_t\rightharpoonup\sigma\) in duality with \(C_b(X)\)
as \(t\searrow 0\). Given any non-negative function \(\varphi\in C_b(X)\),
it thus holds that
\[
\bigg|\int\varphi\,\d\sigma\bigg|\leq
\lim_{t\searrow 0}\int\varphi\,|\langle v,\nu_t\rangle|\,\d\mu_t
\leq e^{|K|}\,\||v|\|_{L^\infty(\mm)}\,\lim_{t\searrow 0}\int\varphi\,\d\mu_t
=e^{|K|}\,\||v|\|_{L^\infty(\mm)}\int\varphi\,\d|D\nchi_E|,
\]
whence \(\sigma\ll|D\nchi_E|\) and its Radon-Nikod\'{y}m
derivative \(L(v):=\frac{\d\sigma}{\d|D\nchi_E|}\) belongs to
\(L^\infty(|D\nchi_E|)\). Consequently, taking into account \eqref{eq:def_Lv_aux}
we deduce that
\begin{equation}\label{eq:def_Lv_aux2}
\int_E{\rm div}(\varphi v)\,\d\mm=-\int\varphi\,L(v)\,\d|D\nchi_E|
\quad\text{ for every }v\in\mathcal V\text{ and }\varphi\in\Lip_b(X).
\end{equation}
Furthermore, one also has that
\begin{equation}\label{eq:def_Lv_aux3}
\lim_{t\searrow 0}\int\varphi\,\langle v,\nu_t\rangle\,\d\mu_t
=\int\varphi\,L(v)\,\d|D\nchi_E|\quad\text{ for every }v\in\mathcal V
\text{ and }\varphi\in\Lip_b(X).
\end{equation}
Observe that for any \(v\in\mathcal V\) and \(\varphi\in\Lip_b(X)\),
\(\varphi\geq 0\) it holds that
\[\begin{split}
\bigg|\int\varphi\,L(v)\,\d|D\nchi_E|\bigg|&\overset{\eqref{eq:def_Lv_aux3}}=
\lim_{t\searrow 0}\bigg|e^{Kt}\int\varphi\,\langle v,\nu_t\rangle\,\d\mu_t\bigg|\\
&\overset{\phantom{\eqref{eq:def_Lv_aux3}}}\leq
\lim_{t\searrow 0}\bigg(\|\varphi\|_{L^\infty(\mm)}\,\||v|\|_{L^\infty(\mm)}
\int\big|1-e^{Kt}|\nu_t|\big|\,\d\mu_t+
\int\varphi\,\big\langle v,\frac{\nu_t}{|\nu_t|}\big\rangle\,\d\mu_t\bigg)\\
&\overset{\eqref{eq:sci}}\leq\lim_{t\searrow 0}\int\varphi\,|v|\,\d\mu_t
\overset{\eqref{eq:equality_hf_per}}=
\lim_{t\searrow 0}\int\tr_E\big(P_t(\varphi|v|)\big)\,\d|D\nchi_E|\\
&\overset{\eqref{eq:approx_hf_per}}=\int\varphi\,\tr_E(|v|)\,\d|D\nchi_E|.
\end{split}\]
In the last two equalities we used the fact that \(|v|\in H^{1,2}(X)\).
By arbitrariness of \(\varphi\), we obtain that
\(|L(v)|\leq\tr_E(|v|)\) holds \(|D\nchi_E|\)-a.e.\ for all \(v\in\mathcal V\).
Let us now define \(\omega:\,\tr_E(\mathcal V)\to L^1(|D\nchi_E|)\) as
\begin{equation}\label{eq:def_Lv_aux5}
\omega\big(\tr_E(v)\big):=L(v)\quad\text{ for every }v\in\mathcal V.
\end{equation}
The operator \(L:\,\mathcal V\to L^\infty(|D\nchi_E|)\) is linear by
its very construction, whence by exploiting the inequality \(|L(v)|\leq\tr_E(|v|)\)
we can conclude that \(\omega\) is well-posed, linear and satisfying
\[
\big|\omega(v)\big|\leq|v|\;\;\;|D\nchi_E|\text{-a.e.}
\quad\text{ for every }v\in\tr_E(\mathcal V).
\]
Since \({\rm TestV}(X)\subset\mathcal V\) and \({\rm TestV}_E(X)\)
is dense in \(L^2_E(TX)\), we infer from \Cref{lem:density_TestVE}
that \(\bar\tr_E(\mathcal V)\) is a dense linear subspace of \(L^2_E(TX)\).
Therefore, we know from \cite[Proposition 1.4.8]{Gigli14} that \(\omega\)
can be uniquely extended to an element \(\omega\in L^2_E(T^*X):=L^2_E(TX)^*\)
satisfying \(|\omega|\leq 1\) in the \(|D\nchi_E|\)-a.e.\ sense.
We denote by \(\nu_E\in L^2_E(TX)\) the vector field corresponding to
\(\omega\) via the Riesz isomorphism. By combining \eqref{eq:def_Lv_aux2}
(with \(\varphi\equiv 1\)) and \eqref{eq:def_Lv_aux5}, we conclude that
\eqref{eq:Gauss-Green} is satisfied. It only remains to show that
\(|\nu_E|\geq 1\) holds \(|D\nchi_E|\)-a.e.. In order to do it, just observe
that \Cref{thm:repr_tv_RCD} yields
\[\begin{split}
|D\nchi_E|(X)&\leq
\sup_{\substack{v\in\mathcal V, \\ |v|\leq 1\;\mm\text{-a.e.}}}\int_E{\rm div}(v)\,\d\mm
\overset{\eqref{eq:Gauss-Green}}=
\sup_{\substack{v\in\mathcal V, \\ |v|\leq 1\;\mm\text{-a.e.}}}
-\int\big\langle\tr_E(v),\nu_E\big\rangle\,\d|D\nchi_E|
\leq\int|\nu_E|\,\d|D\nchi_E|\\
&\leq|D\nchi_E|(X),
\end{split}\]
whence each inequality must be an equality. This clearly forces the
\(|D\nchi_E|\)-a.e.\ equality \(|\nu_E|=1\). The element
\(\nu_E\) is uniquely determined by \eqref{eq:Gauss-Green} as the
space \(\tr_E(\mathcal V)\) is dense in \(L^2_E(TX)\). Finally,
the last part of the statement is an immediate consequence of \Cref{lem:density_TestVE}.
\end{proof}

\section{Uniqueness of tangents for sets of finite perimeter}\label{sec:uniqueness}
In this section we prove a uniqueness theorem (up to negligible sets) for blow-ups of sets with finite perimeter over $\RCD(K,N)$ metric measure spaces. This has to be considered as a further step in the direction of generalizing De Giorgi's theorem to the framework of $\RCD$ spaces.

Let us recall the notion of tangent to a set of finite perimeter that has been introduced in \cite{ABS18}.

\begin{definition}[Tangents to a set of finite perimeter]\label{def:tan}
	Let $(X,\sfd,\mm)$ be an $\RCD(K,N)$ m.m.s., $x\in X$ and let $E\subset X$ be a set of locally finite perimeter. 
	We denote by $\Tan_x(X,\sfd,\mm,E)$ the collection of quintuples $(Y,\varrho,\mu,y, F)$ satisfying the following two properties:
	\begin{itemize}
		\item[(a)] $(Y,\varrho,\mu,y)\in\Tan_x(X,\sfd,\mm)$ and $r_i\downarrow 0$ are such that the rescaled spaces $(X,r_i^{-1}\sfd,\mm_x^{r_i},x)$ converge to $(Y,\varrho,\mu,y)$ in the pointed measured Gromov-Hausdorff topology;
		\item[(b)] $F$ is a set of locally finite perimeter in $Y$ with $\mu(F)>0$ and, if $r_i$ are as in $\rm(a)$, then the sequence $f_i=\nchi_E$ converges in $L^1_{\rm loc}$ to $\nchi_F$ according to \Cref{def:L1 convergence of sets}.
	\end{itemize}
\end{definition}
Let us point out that, up to a $\abs{D\nchi_E}$-negligible set, one also has that the perimeter measures on the rescaled spaces $\abs{D^i\nchi_E}$ weakly converge to $\abs{D\nchi_F}$ in duality w.r.t.\ $\Cbs$. This statement, which is part of \cite[Corollary 4.10]{ABS18}, plays a role in the rest of the note. 

We are ready to state the main theorem of this section.

\begin{theorem}\label{th:uniqueness}
	Let $(X,\sfd,\mm)$ be an $\RCD(K,N)$ m.m.s.\ with essential dimension $1\le n\le N$, $E\subset X$ be a set of finite perimeter. Then, for $|D\nchi_E|$-a.e.\ $x\in X$, there exists $k=1,\ldots,n$ such that
	\begin{equation*}
	\Tan_x(X,\sfd,\mm,E)=\left\lbrace (\R^k,\sfd_{eucl},c_k\Leb^k,0^k,\left\lbrace x_k>0\right\rbrace )\right\rbrace.
	\end{equation*}
\end{theorem}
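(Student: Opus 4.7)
The strategy is to adapt the uniqueness-of-tangents argument of \cite{Mondino-Naber14} for $\RCD(K,N)$ spaces to the codimension-$1$ setting of sets of finite perimeter, encoding ``closeness to a Euclidean half-space'' via harmonic $\delta$-splitting maps that are moreover $\delta$-orthogonal to the unit normal $\nu_E$ provided by \Cref{thm:Gauss-Green}. The starting input is the main result of \cite{ABS18}: at $|D\nchi_E|$-a.e.\ $x \in X$ there exists at least one tangent of the form $(\R^k, \sfd_{eucl}, c_k\Leb^k, 0, \{x_k > 0\})$ for some $k \in \{1,\ldots,n\}$. After localising on the Borel set where such a tangent of prescribed dimension $k$ appears, we may treat $k$ as fixed, and it suffices to show that at $|D\nchi_E|$-a.e.\ such $x$ \emph{every} tangent agrees with the half-space model.

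First I would introduce the relevant quantitative encoding: on $B_r(x)$ we consider harmonic $u=(u_1,\dots,u_k):B_r(x)\to\R^k$ satisfying the $\delta$-splitting conditions
\[
\fint_{B_r(x)}\bigl|\nabla u_i\cdot\nabla u_j-\delta_{ij}\bigr|\,\d\mm\leq\delta,\qquad
r^2\fint_{B_r(x)}|\Hess u_i|^2\,\d\mm\leq\delta^2,
\]
together with a $\delta$-orthogonality condition of the form
\[
r\fint_{B_r(x)}|\nabla u_k-\nu_E|^2\,\d|D\nchi_E|+\sum_{i<k}r\fint_{B_r(x)}|\nabla u_i\cdot\nu_E|^2\,\d|D\nchi_E|\leq\delta^2,
\]
the prefactor $r$ in front of the perimeter averages being dictated by the codimension-$1$ scaling. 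A compactness-and-contradiction argument based on \Cref{prop:compactness sets with finite perimeter}, \Cref{thm:stabilityLaplacian} and \Cref{prop:harmonic approximation} (plus the rigidity of splitting maps on $\RCD(0,N)$ cones) establishes the equivalence, with $\delta$ and $\eps$ vanishing simultaneously, between $\eps$-GH closeness of the rescaled ball $(X,r^{-1}\sfd,\mm_r^x,x)$ to the half-space model together with $L^1$-closeness of $E$ to $\{x_k>0\}$, and the existence of a $\delta$-splitting map on $B_r(x)$ which is $\delta$-orthogonal to $\nu_E$.

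The main obstacle is then to propagate the $\delta$-splitting $\delta$-orthogonality property from the single good scale provided by \cite{ABS18} to \emph{all} smaller scales at $|D\nchi_E|$-a.e.\ location. Given such a map $u$ on $B_r(x)$, the failure of $u$ to restrict to a $\delta'$-splitting $\delta'$-orthogonal map on a smaller ball $B_s(y)\subset B_r(x)$ is controlled by averages of $|\Hess u|^2$ against $\mm$ (whose smallness at most scales is secured by the local Hessian estimate \eqref{eq:local Hessian estimate}) together with averages of $|\nabla u-\nu_E|^2$ against $|D\nchi_E|$. Both are simultaneously controlled via a maximal function weighted by the perimeter: the crucial input is \Cref{lemma:AssolutacontinuitaHauss}, ensuring $|D\nchi_E|\ll\haus^{h_1}$, which combined with the codimension-$1$ rescaling makes the perimeter behave like a doubling measure with respect to the relevant rescaled integrals, so that a Vitali covering argument yields the standard weak-$L^1$ estimate. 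A Borel--Cantelli-type argument then shows that outside a $|D\nchi_E|$-negligible set the $\delta$-splitting $\delta$-orthogonal property persists at every sufficiently small scale. Running the equivalence of the previous step backwards at every scale yields quantitative GH-closeness of every rescaled ball $(X,r^{-1}\sfd,\mm_r^x,x)$ and set $E$ to the \emph{same} half-space $(\R^k,\{x_k>0\})$, which forces uniqueness of the tangent quintuple. The genuinely new ingredient with respect to \cite{Mondino-Naber14} is precisely this codimension-$1$ weighted maximal function argument, which replaces the reference-measure-weighted one used there.
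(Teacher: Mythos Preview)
Your proposal conflates the machinery the paper uses for \emph{rectifiability} (Section~\ref{sec:bdryrect}) with what is needed for \emph{uniqueness of tangents}, and in doing so introduces a genuine gap. The equivalence you assert---existence of a $k$-dimensional $\delta$-splitting map on $B_r(x)$ which is $\delta$-orthogonal to $\nu_E$ $\Longleftrightarrow$ $\eps$-GH closeness of the rescaled ball to $(\R^k,\{x_k>0\})$---fails in the backward direction: a $k$-dimensional $\delta$-splitting map only forces the ambient tangent to be of the form $\R^k\times Z$ for some $\RCD(0,N-k)$ space $Z$ (this is exactly \Cref{prop:auxpropdelta}/\Cref{cor:deltasplitclose}), and the orthogonality condition on $\nu_E$ constrains the \emph{set} $E$ inside the tangent, not the ambient factor $Z$. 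So after your propagation step you would conclude that every tangent is $(\R^k\times Z,\{x_k>0\})$, but you have no mechanism to force $Z$ to be a point.

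The paper closes this gap by a different, and in fact simpler, route that does not use $\nu_E$, the Gauss--Green formula, or any orthogonality condition at all. One first passes (\Cref{lemma:Ak}, via the iterated-tangent property \Cref{th: tangent of tangent}) to the auxiliary sets $A_k$ consisting of points where $(\R^k,\{x_k>0\})$ is a tangent but \emph{no} tangent of the form $(\R^k\times Y,\{x_k>0\})$ with $\diam(Y)>0$ occurs. On $A_k$ a plain $k$-dimensional $\delta$-splitting map is built from the one good scale (\Cref{rm:epsclosedeltascale}), and the codimension-$1$ weighted maximal argument (\Cref{proposition:propagation}/\Cref{cor:scale invariant version of propagation}) propagates only the $\delta$-splitting property---not any orthogonality---to all smaller scales outside a set of small $\haus^h_5$-measure. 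At the remaining points every tangent splits $\R^k$ by \Cref{cor:deltasplitclose}, and the \emph{definition of $A_k$} then rules out the extra factor $Z$. The Gauss--Green machinery and the $\delta$-orthogonality condition you invoke enter the paper only later, in \Cref{prop:good aproximation} and \Cref{prop:rectifiability}, where the goal is to produce bi-Lipschitz charts for $\mathcal F_kE$, not to establish uniqueness of tangents.
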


Let us explain the strategy of its proof. The starting point of our analysis is \cite[Theorem 4.3]{ABS18} that we state below.

\begin{theorem}\label{thm:tangenthalfspace}
	Let $(X,\sfd,\mm)$ be an $\RCD(K,N)$ m.m.s.\ and $E\subset X$ be a set of locally finite perimeter. 
	Then $E$ admits a Euclidean half-space as tangent at $x$ for $\abs{D\nchi_E}$-a.e.\ $x\in X$, that is to say
	\begin{equation*}
	\left(\R^k,\sfd_{eucl},c_k\Leb^k,0^k,\left\lbrace x_k>0\right\rbrace \right)\in \Tan_x(X,\sfd,\mm,E),\qquad\text{for some $k\in [1,N]$.}
	\end{equation*} 
\end{theorem}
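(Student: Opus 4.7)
The plan is to combine the asymptotic equality case of the $1$-Bakry-\'{E}mery contraction for $\nchi_E$ with Gigli's splitting theorem to produce one Euclidean factor in a blow-up, and then to iterate via a Preiss-type hereditarity of tangents argument until the residual factor collapses.

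\emph{Step 1 (Compactness of blow-ups).} At $|D\nchi_E|$-a.e.\ $x$ one has finite positive upper $1$-codimensional density of $|D\nchi_E|$ together with a two-sided density bound for $E$, namely $0<\liminf_{r}\mm(E\cap B_r(x))/\mm(B_r(x))$ and $\limsup_{r}\mm(E\cap B_r(x))/\mm(B_r(x))<1$. Gromov's precompactness theorem, the stability of the $\RCD(K,N)$ class under pmGH limits (\Cref{remark:stability}) and the finite-perimeter compactness of \Cref{prop:compactness sets with finite perimeter} guarantee that every sequence of rescalings $(X,r_i^{-1}\sfd,\mm_{r_i}^{x},x,E)$ admits a subsequential pmGH limit $(Y,\rho,\mu,y,F)\in\Tan_x(X,\sfd,\mm,E)$ with $(Y,\rho,\mu,y)$ an $\RCD(0,N)$ space and $F\subset Y$ of locally finite perimeter satisfying $0<\mu(F\cap B_1(y))<\mu(B_1(y))$.

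\emph{Step 2 (Splitting off one Euclidean factor).} The key analytic input is \Cref{lemma:sci}, which states that $e^{Kt}|\nabla P_t\nchi_E|/P_t^*|D\nchi_E|\to 1$ in $L^1(P_t^*|D\nchi_E|\mm)$ as $t\searrow 0$; that is, the $1$-Bakry-\'{E}mery contraction becomes asymptotically an equality at small scales. At $|D\nchi_E|$-a.e.\ $x$ a Fubini/differentiation argument selects scales $r_i\to 0$ and times $t_i=r_i^2$ for which this equality, rescaled at $x$ by $r_i^{-1}$, passes to the pmGH limit. Combining the joint stability of heat flows (\Cref{thm:stabilityLaplacian}, \Cref{prop:harmonic approximation}) with perimeter convergence (\Cref{prop:compactness sets with finite perimeter}), one obtains on the blow-up the exact equality $|\nabla P_s\nchi_F|=P_s^*|D\nchi_F|$ $\mu$-a.e.\ for every $s>0$. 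The rigidity of equality in the dimensional Bochner inequality (Gigli's splitting theorem applied in the $\RCD(0,N)$ setting) then yields an isometric splitting $(Y,\rho,\mu,y)\simeq\R\times(Y_1,\rho_1,\mu_1,y_1)$; the alignment of the normal to $F$ with the $\R$-factor, together with the two-sided density of $F$, forces $F=\{x_1>0\}\times Y_1$ up to a $\mu$-null set, and $(Y_1,\rho_1,\mu_1,y_1)$ is itself an $\RCD(0,N-1)$ space.

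\emph{Step 3 (Hereditarity and termination).} A Preiss-type diagonal argument, exploiting the fact that ``bad'' rescalings have vanishing density at $|D\nchi_E|$-a.e.\ $x$, shows that tangents of tangents of $E$ at $x$, taken at points of the reduced boundary of the first tangent near its base point, are themselves elements of $\Tan_x(X,\sfd,\mm,E)$. Iterating Step~2 inside the surviving factor $Y_k$ at each stage, hereditarity keeps the produced objects inside $\Tan_x(X,\sfd,\mm,E)$ and yields elements of the form $(\R^k\times Y_k,\,\sfd_{\rm eucl}\oplus\rho_k,\,c\Leb^k\otimes\mu_k,\,(0^k,y_k),\,\{x_k>0\}\times Y_k)$. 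The Bishop--Gromov inequality \eqref{eq:BishopGromovInequality} together with the dimension bound $N$ force the residual factor to collapse to a point after at most $N$ iterations, producing $(\R^k,\sfd_{\rm eucl},c_k\Leb^k,0^k,\{x_k>0\})\in\Tan_x(X,\sfd,\mm,E)$ for some $k\in[1,N]$, as desired.

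The main obstacle is Step~2: promoting the integral identity of \Cref{lemma:sci} to a sharp equality on the blow-up in a manner compatible with the joint pmGH/BV-convergence of the space, the measure and the perimeter, and strong enough to invoke Gigli's splitting theorem. Step~3 is largely soft once the underlying joint convergence is well understood, but must be executed with a careful Fubini-type selection of good scales to ensure validity $|D\nchi_E|$-a.e.
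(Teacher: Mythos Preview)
This statement is not proved in the present paper: it is quoted from \cite[Theorem~4.3]{ABS18} as the starting point of the analysis (see the sentence introducing \Cref{thm:tangenthalfspace}). Your outline is in fact an accurate high-level sketch of the argument in \cite{ABS18}: compactness of blow-ups, asymptotic saturation of the $1$-Bakry--\'Emery contraction at $|D\nchi_E|$-a.e.\ point (this is exactly \Cref{lemma:sci}, itself extracted from \cite{ABS18}), a rigidity step producing a one-dimensional Euclidean splitting of the blow-up with $F$ a half-space in the split direction, and then the iterated-tangent property (\Cref{th: tangent of tangent}, proved in the appendix of \cite{ABS18}) to strip off further factors until only $(\R^k,\{x_k>0\})$ remains.

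One point deserves sharpening. In Step~2 you invoke ``Gigli's splitting theorem'' for the rigidity, but that theorem takes as input a line, not the equality $|\nabla P_s\nchi_F|=P_s^*|D\nchi_F|$. Passing from this equality (for the specific function $\nchi_F$) on an $\RCD(0,N)$ space to an isometric splitting $Y\simeq\R\times Y_1$ with $F=\{x_1>0\}\times Y_1$ is precisely the new rigidity theorem that gives \cite{ABS18} its title; it requires a nontrivial analysis of the regular Lagrangian flow of $\nabla P_s\nchi_F$ and of the induced isometries, and is not a direct corollary of Gigli's result. So your Step~2 correctly identifies the mechanism but understates the work: the ``main obstacle'' you flag is indeed the heart of \cite{ABS18}, and it is resolved there by a bespoke rigidity argument rather than by an off-the-shelf application of the splitting theorem.
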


After establishing \Cref{thm:tangenthalfspace} the state of the art in the theory of sets of finite perimeter was similar to that of the structure theory of $\RCD$ spaces after \cite{Gigli-Mondino-Rajala15}, where the authors proved existence of a Euclidean tangent space up to negligible sets. The content of this and of the next section instead can be seen as a counterpart in codimension 1 of the main results obtained by Mondino--Naber in \cite{Mondino-Naber14}.\\ Also the main ideas underlying the proofs of the uniqueness of tangents and the rectifiability result are quite similar to those implemented in \cite{Mondino-Naber14}. As in that case, the existence of a Euclidean tangent along a fixed scale is a regularity information which can be propagated at any location and scale up to a set which is small w.r.t.\ the relevant measure, yielding uniqueness of tangents.

From a technical point of view, our construction heavily relies on the use of the so-called \textit{harmonic $\delta$-splitting maps}, a kind of good replacement for coordinate functions within the theory of lower Ricci bounds, that played a crucial role in the development of the theory of Ricci limits (see \cite{Cheeger-Colding97I,Cheeger-Colding97II,Cheeger-Colding97III} and the more recent \cite{CheegerNaber15,CheegerJiangNaber18}). Since, up to our knowledge, this is the first time they are explicitly used in the $\RCD$ framework, we dedicate \Cref{section:splitting map} below to establish some of their properties. With this tool at our disposal, the \textit{propagation of regularity step} is a consequence of a weighted maximal argument which was suggested in \cite{CheegerNaber15}. Let us point out that, in order for the whole procedure to work, the fact that perimeter measures have codimension 1 (see \Cref{lemma:AssolutacontinuitaHauss}) and the fact that harmonic functions satisfy $L^2$ Hessian bounds play a key role. The strategy would completely fail if perimeter measures had codimension bigger or equal than 2.

\subsection{Splitting maps and propagation of regularity}\label{section:splitting map}
This subsection is devoted to the study of $\delta$-splitting maps. Let us recall that their introduction in the study of spaces with lower Ricci curvature bounds dates back to \cite{Cheeger-Colding96}.
\begin{definition}\label{def:deltasplitting}
	Let $(X,\sfd,\mm)$ be an $\RCD(-1,N)$ metric measure space, $x\in X$ and $\delta>0$ be fixed.
	We say that $u:=(u_1,\ldots,u_k):B_r(x)\to \R^k$ is a $\delta$-splitting map provided it is harmonic (meaning that $u_a\in D(\Delta, B_r(x))$ with $\Delta u_a=0$ for any $a=1,\dots,k$) and satisfies:
	\begin{itemize}
		\item[(i)] $u_a$ is $C_N$-Lipschitz for any $a=1,\dots,k$;
		\item[(ii)] $r^2\fint_{B_r(x)} |\Hess u_a|^2\d\mm <\delta$ for any $a=1,\dots,k$;
		\item[(iii)] $\fint_{B_r(x)} |\nabla u_a\cdot \nabla u_b-\delta_{a,b}|\d\mm <\delta$ for any $a,b=1,\dots,k$.
	\end{itemize}
\end{definition}

\begin{remark}\label{remark:localHessian}
	Let us clarify the meaning of $|\Hess u|$ when $u:B_r(x)\to \R$ is harmonic and not necessarily globally defined. For any ball $B_{2s}(y)\subset B_r(x)$ we take a good cut-off function $\eta$ according to \Cref{lemma:good cut-off} that satisfies $\eta=1$ in $B_s(y)$ and $\eta=0$ in $X\setminus B_{2s}(y)$.
	As we already remarked in \Cref{subsubsection:laplacian on balls}, one has $\eta u\in D(\Delta)$, therefore $\eta u\in H^{2,2}(X,\sfd,\mm)$ as a consequence of \eqref{eq:dominioLaplacianoH22}. We can now set $|\Hess u|:=|\Hess (\eta u)|$ in $B_s(y)$. Observe that this is a good definition thanks to the locality of the Hessian (see \Cref{prop:localityHessian}).
\end{remark}

\begin{remark}
With respect to the definition of $\delta$-splitting map which is nowadays adopted within the theory of Ricci limits (see for instance \cite[Definition 1.20]{CheegerNaber15}) the main difference is condition (i). Therein the sharper bound $\abs{\nabla u}\le 1+\delta$ is imposed in the definition though, as they observe, it can be obtained as a consequence of the bound $\abs{\nabla u}\le C_N$ and of the other defining properties (when working in the smooth framework).
\end{remark}

\subsubsection{$\delta$-splitting maps and $\eps$-closeness}

The power of $\delta$-splitting maps in the theory of lower Ricci bounds is that, roughly speaking, they allow to pass from analysis to geometry and vice-versa. Namely, the existence of a $\delta$-splitting map with $k$ components on a Riemannian manifold with Ricci bounded below by $-\delta$ can be turned into $\eps$-GH closeness (in the scale invariant sense) to a space which splits a factor $\R^k$ and vice-versa (see \cite{Cheeger-Colding96} and \cite[Lemma 1.21]{CheegerNaber15}).

Below we wish to provide rigorous statements of the above-mentioned results in the framework of $\RCD$ spaces. The convergence and stability results of \cite{AmbrosioHonda17,AmbrosioHonda18} allow us to argue by compactness avoiding the explicit constructions of \cite{Cheeger-Colding96}. The price we have to pay is that the results become less local in nature w.r.t.\ \cite[Lemma 1.21]{CheegerNaber15}. Still they are sufficient for our purposes.

The first result presented below, \Cref{prop:auxpropdelta}, corresponds to the rough statement ``the existence of a $\delta$-splitting map with $k$ components implies that the m.m.s.\ is $\eps$-close to a product $\R^k\times Z$''. The second one, \Cref{prop:epsclosedeltasplit}, ensures that, over an $\RCD(-\eps,N)$ space $\eps$-close to a product $\R^k\times Z$, one can build a $\delta$-splitting map with $k$ components.

In order to shorten the notation for the rest of the paper we write $(\R^k\times Z, (0^k,z))$ to denote the p.m.m.s.\ $(\R^k\times Z,\sfd_{eucl}\times \sfd_Z, \Leb^k\times \mm_Z, (0^k,z))$.

\begin{proposition}\label{prop:auxpropdelta}
Let $N>1$ be fixed. Then, for any $\eps>0$, there exists $\delta=\delta_{N,\varepsilon}>0$ such that, for any $\RCD(-\delta,N)$ m.m.s.\ $(X,\sfd,\mm)$ and for any $x\in X$, if there exists a map $u:B_{\delta^{-1}}(x)\to\R^k$ such that $u$ is a $\delta$-splitting map over $B_s(x)$ for any $0<s<\delta^{-1}$, then 
\begin{equation*}
\sfd_{pmGH}\left((X,\sfd,\mm,x),(\R^k\times Z, (0^k,z))\right)< \eps
\end{equation*}
for some pointed $\RCD(0,N-k)$ metric measure space $(Z,\sfd_{Z},\mm_Z,z)$.
\end{proposition}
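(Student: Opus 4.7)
A compactness and contradiction argument, pushing the defining properties of a $\delta$-splitting map through a pmGH limit and invoking a splitting theorem in the $\RCD$ framework. The idea is that, as $\delta_j \searrow 0$, the limiting map satisfies the rigid analogues of conditions (i)--(iii), forcing the limiting space to split off an Euclidean factor of dimension $k$.

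Suppose the claim fails: there exist $\eps_0 > 0$, a sequence $\delta_j \searrow 0$, $\RCD(-\delta_j,N)$ spaces $(X_j,\sfd_j,\mm_j,x_j)$, and maps $u_j = (u_{j,1},\dots,u_{j,k}) \colon B_{\delta_j^{-1}}(x_j) \to \R^k$ that are $\delta_j$-splitting on each $B_s(x_j)$ with $0 < s < \delta_j^{-1}$, yet satisfying $\sfd_{pmGH}\bigl((X_j,\sfd_j,\mm_j,x_j),(\R^k\times Z,(0^k,z))\bigr) \geq \eps_0$ for every pointed $\RCD(0,N-k)$ space $(Z,\sfd_Z,\mm_Z,z)$. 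Up to subtracting constants we may assume $u_j(x_j) = 0$. By the Bishop--Gromov inequality (uniform local doubling) together with Gromov's compactness theorem and pmGH-stability of the $\RCD$ condition (\Cref{remark:stability}), a subsequence converges in pmGH to some pointed $\RCD(0,N)$ space $(Y,\sfd_Y,\mm_Y,y)$.

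Since the components $u_{j,a}$ are $C_N$-Lipschitz and vanish at $x_j$, \Cref{thm:AscoliArzela} and a diagonal extraction produce a $C_N$-Lipschitz limit $u_{\infty,a} \colon Y \to \R$, uniform on every ball $B_R(y)$. Fix $R > 0$: for $j$ large the $u_{j,a}$ are harmonic and uniformly bounded in $H^{1,2}(B_{2R}(x_j))$, so \Cref{thm:stabilityLaplacian} gives $u_{\infty,a} \in D(\Delta, B_R(y))$ with $\Delta u_{\infty,a} = 0$ and the strong $L^1(B_R(y))$-convergence $|\nabla u_{j,a}|^2 \to |\nabla u_{\infty,a}|^2$. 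Polarising and combining with condition (iii) of \Cref{def:deltasplitting} yields $\nabla u_{\infty,a}\cdot \nabla u_{\infty,b} = \delta_{a,b}$ $\mm_Y$-almost everywhere on $B_R(y)$. For the Hessian, the uniform bound (ii), together with the local $L^2$-Hessian estimate \eqref{eq:local Hessian estimate} and the lower semicontinuity of $\int |\Hess \cdot|^2 \d\mm$ along sequences of harmonic functions converging strongly in $H^{1,2}$ (a consequence of the Bochner-type identity and the stability of the Laplacian and gradient norms), force $|\Hess u_{\infty,a}| \equiv 0$ on $B_R(y)$. Since $R$ was arbitrary, each $u_{\infty,a}$ is a globally $C_N$-Lipschitz function on $Y$ with vanishing Hessian, and the matrix $\bigl(\nabla u_{\infty,a}\cdot \nabla u_{\infty,b}\bigr)$ is the identity $\mm_Y$-a.e.

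At this point $u_\infty$ is, in Gigli's terminology, a harmonic $0$-splitting map on $Y$, and the splitting theorem for $\RCD(0,N)$ spaces (Gigli's $\RCD$-analogue of the Cheeger--Gromoll theorem), iterated $k$ times, produces a measure-preserving isometry $(Y,\sfd_Y,\mm_Y) \cong (\R^k \times Z,\sfd_{eucl}\times \sfd_Z, \Leb^k \otimes \mm_Z)$ with $(Z,\sfd_Z,\mm_Z)$ an $\RCD(0,N-k)$ space and $y$ corresponding to some $(0^k,z)$. For $j$ large the pmGH distance from $(X_j,\sfd_j,\mm_j,x_j)$ to this product is smaller than $\eps_0$, a contradiction. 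The hard part is the Hessian step: \Cref{thm:stabilityLaplacian} controls the Laplacian and gradient norms but not directly the Hessian, so one must argue via the improved Bochner inequality of Han and the known lower semicontinuity of second-order energies under pmGH convergence; the subsequent identification of $Y$ as a metric product relies on the (non-trivial) $\RCD$ splitting theorem.
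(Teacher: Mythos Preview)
Your proof is correct and follows essentially the same compactness-and-contradiction strategy as the paper. The one point worth flagging is the Hessian step: you invoke a lower semicontinuity of $\int|\Hess\cdot|^2\,\d\mm$ under pmGH convergence, which is not stated in the preliminaries and whose justification you leave somewhat vague. The paper bypasses this entirely by working on the limit space: once you know $\Delta u_{\infty,a}=0$ and $|\nabla u_{\infty,a}|^2\equiv 1$ on the $\RCD(0,N)$ space $Y$, the local Hessian estimate \eqref{eq:local Hessian estimate} (with $K=0$, vanishing Laplacian, and \emph{constant} gradient norm) directly forces $\Hess u_{\infty,a}=0$, with no need to pass second-order quantities through the limit. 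This is both simpler and avoids the semicontinuity issue altogether.
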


\begin{proof}
We wish to prove the sought conclusion arguing by contradiction. To this aim let us suppose that, for any $n\ge 1$, there exist an $\RCD(-1/n,N)$ m.m.s. $(X_n,\sfd_n,\mm_n)$, a point $x_n\in X_n$ and a map $u_n:B_{n}(x_n)\to\R^k$ which is a $1/n$-splitting map when restricted to $B_s(x_n)$ for any $0<s<n$. Up to extracting a subsequence, that we do not relabel, we can assume that $(X_n,\sfd_n,\mm_n,x_n)$ converge in the pmGH-topology to an $\RCD(0,N)$ p.m.m.s.\ $(X_\infty,\sfd_\infty,\mm_\infty,x_\infty)$. Here we have used the stability and compactness property of $\RCD(K,N)$ spaces, cf.\ \Cref{remark:stability}. We claim that $X_\infty$ splits off a factor $\R^k$. Observe that, if this is the case, then we reach the sought contradiction. The rest of this proof is dedicated to establishing the claim.

We wish to prove that there exists a function $v:X_\infty\to\R^k$ such that, letting $v:=(v^1,\dots,v^k)$, it holds that $v^i$ is Lipschitz, harmonic and with vanishing Hessian for any $i=1,\dots,k$ and $\nabla v^i\cdot\nabla v^j=\delta_{ij}$ $\mm_\infty$-a.e.\ for any $i,j=1,\dots,k$. The function $v$ will be obtained as a limit function of the $1/n$-splitting maps $u_n:B_{n}(x_n)\to\R^k$. Indeed, since by the assumption in the defining condition of a $\delta$-splitting map the $u_n$ are $C_N$-Lipschitz for any $n\in\N$ and we can assume without loss of generality that $u_n(x_n)=0^k$ for any $n\in\N$, by a generalized version of the Ascoli--Arzel\`{a} theorem (\Cref{thm:AscoliArzela}) we can infer the existence of $v:X_\infty\to\R^k$ such that $u_n$ converge to $v$ locally uniformly on $B_R(x_n)$ for any $R>0$. As a consequence, it is easy to check that $u_n$ converge strongly in $L^2$ (see \Cref{def:L2convergence}) to $v$ on $B_R(x_n)$ for any $R>0$. Since the functions $u_n$ are harmonic on $B_{2R}(x_n)$, at least for $n$ sufficiently large, by \Cref{thm:stabilityLaplacian} and \Cref{prop:sum L1 convergence} it follows that $v$ is harmonic and that, for any $R>0$ and $i,j=1,\dots,k$,
\begin{equation*}
\fint_{B_R(x_\infty)}\abs{\nabla v^i\cdot\nabla v^j-\delta_{ij}}\d\mm_\infty=\lim_{n\to\infty}\fint_{B_R(x_n)}\abs{\nabla u^i_n\cdot\nabla u^j_n-\delta_{ij}}\d\mm_n=0.
\end{equation*}
Hence $\nabla v^i\cdot\nabla v^j=\delta_{ij}$ $\mm_\infty$-a.e.\ on $X_\infty$.

Since $(X_\infty,\sfd_\infty,\mm_\infty)$ is an $\RCD(0,N)$ m.m.s., from $\Delta v^i=0$ and $\abs{\nabla v^i}^2=1$ we infer by \eqref{eq:local Hessian estimate} that $\Hess v^i=0$, for any $i=1,\dots,k$. All in all we get by a standard argument (cf.\ the proof of \cite[Lemma 1.21]{ABS19}) that $X_\infty$ splits a factor $\R^k$, as we claimed.
\end{proof}

\begin{corollary}\label{cor:deltasplitclose}
Let $N>1$ and $K\in \R$ be fixed. For any $\varepsilon>0$ there exists $\delta>0$ such that, for any $r>0$, for any $\RCD(K,N)$ m.m.s.\ $(X,\sfd,\mm)$ and for any $x\in X$, if there exists $u:B_r(x)\to\R^k$ such that $u:B_s(x)\to\R^k$ is a $\delta$-splitting map for any $0<s<r$, then for any $(Y,\varrho,\mu,y)\in\Tan_x(X,\sfd,\mm)$ there exists an $\RCD(0,N-k)$ p.m.m.s.\ $(Z,\sfd_Z,\mm_Z,z)$ such that
\begin{equation*}
\sfd_{pmGH}\left((Y,\varrho,\mu,y),(Z\times\R^k,(z,0^k))\right)<\eps.
\end{equation*}
\end{corollary}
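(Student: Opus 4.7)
My plan is to reduce the corollary to \Cref{prop:auxpropdelta} by rescaling: the tangent $(Y,\varrho,\mu,y)$ is, by definition, the pmGH-limit of rescaled copies $(X, r_i^{-1}\sfd, \mm_{r_i}^x, x)$ along some sequence $r_i \searrow 0$, and on each of these rescaled spaces both the splitting hypothesis and an almost-$\RCD(0,N)$ bound will be available simultaneously.

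The first thing I would do is set $\tilde u_i := r_i^{-1} u$ and verify that it is a $\delta$-splitting map on large balls of the rescaled space $(X, r_i^{-1}\sfd, \mm_{r_i}^x)$. Writing $B_s^{r_i}(\cdot)$ for the $s$-ball in the rescaled metric (so $B_s^{r_i}(x) = B_{r_i s}(x)$), the relevant scaling rules are $|\nabla^{r_i}\tilde u_i| = |\nabla u|$ and $|\Hess^{r_i}\tilde u_i| = r_i |\Hess u|$. These give, for $s \in (0, r/r_i)$,
\begin{equation*}
s^2 \fint_{B_s^{r_i}(x)} |\Hess^{r_i} \tilde u_i|^2 \d\mm_{r_i}^x = (r_i s)^2 \fint_{B_{r_i s}(x)} |\Hess u|^2 \d\mm < \delta,
\end{equation*}
and conditions (i) and (iii) of \Cref{def:deltasplitting} transfer in the same manner. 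In particular, $\tilde u_i$ is a $\delta$-splitting map on $B_s^{r_i}(x)$ for every $0 < s < r/r_i$.

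Next, I would choose $\delta := \delta_{N,\varepsilon/2}$ from \Cref{prop:auxpropdelta}. For all $i$ large enough, the rescaled space is $\RCD(r_i^2 K, N)$, hence $\RCD(-\delta, N)$, and $r/r_i > 1/\delta$, so \Cref{prop:auxpropdelta} applies and yields a pointed $\RCD(0, N-k)$ space $(Z_i,\sfd_{Z_i},\mm_{Z_i},z_i)$ with
\begin{equation*}
\sfd_{pmGH}\!\big((X, r_i^{-1}\sfd, \mm_{r_i}^x, x),\; (\R^k \times Z_i, (0^k, z_i))\big) < \varepsilon/2.
\end{equation*}
The normalization of $\mm_{r_i}^x$ yields uniform local mass bounds on $\mm_{Z_i}$ near $z_i$ (via the displayed closeness), so Gromov pmGH-compactness for the class $\RCD(0, N-k)$ allows me to extract a (non-relabelled) subsequence with $(Z_i, z_i) \to (Z, z)$ for some $\RCD(0, N-k)$ limit $(Z,\sfd_Z,\mm_Z,z)$. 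Stability of products under pmGH-convergence and the triangle inequality for $\sfd_{pmGH}$, combined with $(X, r_i^{-1}\sfd, \mm_{r_i}^x, x) \to (Y, \varrho, \mu, y)$, will then give
\begin{equation*}
\sfd_{pmGH}\!\big((Y, \varrho, \mu, y),\; (\R^k \times Z, (0^k, z))\big) \le \varepsilon/2 < \varepsilon.
\end{equation*}

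The main obstacle will be the scaling computation in the first step: the Hessian, being a second-order object, picks up one extra factor of $r_i$ compared to the gradient, and one has to check that this combines correctly with the $s^2$ prefactor in the Hessian condition of \Cref{def:deltasplitting} to reproduce a scale-invariant integrand bounded by $\delta$. Everything else is a routine diagonal/compactness argument once \Cref{prop:auxpropdelta} is in hand.
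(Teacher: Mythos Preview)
Your argument is correct, and it uses the same core idea as the paper: rescale so that \Cref{prop:auxpropdelta} applies to the rescaled space, then compare with the tangent. The difference is that you introduce an unnecessary compactness step. The paper observes that since $(Y,\varrho,\mu,y)$ is the pmGH-limit of the rescaled spaces $(X,t^{-1}\sfd,\mm_x^t,x)$ as $t\searrow 0$, one can simply pick a \emph{single} scale $t>0$ small enough that simultaneously $t^{-1}r>\delta^{-1}$, $t^2|K|\le\delta$, and $\sfd_{pmGH}\big((X,t^{-1}\sfd,\mm_x^t,x),(Y,\varrho,\mu,y)\big)<\varepsilon/2$. Applying \Cref{prop:auxpropdelta} at this one scale yields a single $(Z,z)$, and the triangle inequality finishes the proof immediately. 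Your route instead applies \Cref{prop:auxpropdelta} along the whole sequence $r_i$, obtains a sequence $(Z_i,z_i)$, and then invokes Gromov compactness and stability of products to pass to a limit $Z$; this works, but the compactness step (and the claimed uniform mass bounds on $\mm_{Z_i}$) is avoidable overhead.
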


\begin{proof}
Choose $\delta=\delta(K,N,\varepsilon/2)$ given by \Cref{prop:auxpropdelta}.  
If $(Y,\varrho,\mu,y)\in\Tan_x(X,\sfd,\mm)$ then there exists $t>0$ such that $t^{-1}r>\delta^{-1}$, $t^2\abs{K}\le\delta$ and
\begin{equation}\label{eq:firstest}
\sfd_{pmGH}\left((X,t^{-1}\sfd,\mm_{x}^{t},x),(Y,\varrho,\mu,y)\right)<\varepsilon/2.
\end{equation} 
Thanks to \Cref{prop:auxpropdelta}, applied to $(X,t^{-1}\sfd,\mm_{x}^{t},x)$, there exists an $\RCD(0,N-k)$ p.m.m.s.\ $(Z,\sfd_Z,\mm_Z,z)$ such that
\begin{equation}\label{eq:scndest}
\sfd_{pmGH}\left((X,t^{-1}\sfd,\mm_{x}^{t},x),(Z\times\R^k,(z,0^k))\right)<\varepsilon/2.
\end{equation}
The conclusion follows from \eqref{eq:firstest} and \eqref{eq:scndest} by the triangle inequality.
\end{proof}

\begin{proposition}\label{prop:epsclosedeltasplit}
Let $N>1$ be fixed. For any $\delta>0$ there exists $\varepsilon=\eps_{N,\delta}>0$ such that, if $(X,\sfd,\mm)$ is an $\RCD(-\varepsilon,N)$ m.m.s., $x\in X$ and
\begin{equation*}
\sfd_{pmGH}\left(\left(X,\sfd,\mm,x\right),\left(\R^k\times Z,(0^k,z)\right)\right)< \eps
\end{equation*}
for some pointed $\RCD(0,N-k)$ metric measure space $(Z,\sfd_{Z},\mm_Z,z)$, then there exists a $\delta$-splitting map $u:B_5(x)\to\R^k$.
\end{proposition}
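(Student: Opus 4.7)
The plan is to argue by contradiction and compactness, reducing to the case of a fixed limit space of the form $\R^k\times Z$ on which coordinate projections provide an exact splitting, and then pulling this splitting back to the approximating spaces via the harmonic approximation result \Cref{prop:harmonic approximation}. Suppose the statement fails for some $\delta_0>0$. Then there exist $\eps_n\searrow 0$, pointed $\RCD(-\eps_n,N)$ spaces $(X_n,\sfd_n,\mm_n,x_n)$ and pointed $\RCD(0,N-k)$ spaces $(Z_n,\sfd_{Z_n},\mm_{Z_n},z_n)$ whose product lies within pmGH distance $\eps_n$ of $(X_n,\sfd_n,\mm_n,x_n)$, yet no $\delta_0$-splitting map on $B_5(x_n)$ exists. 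By Gromov precompactness and \Cref{remark:stability}, along a not relabeled subsequence $Z_n\to Z$ with $Z$ still $\RCD(0,N-k)$, and consequently $(X_n,\sfd_n,\mm_n,x_n)\to(Y,\rho,\mu,y):=(\R^k\times Z,(0^k,z))$ in pmGH.

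On $Y$, the Euclidean coordinates $\pi_a\colon Y\to\R$ ($a=1,\dots,k$) are $1$-Lipschitz, globally harmonic, and satisfy $\Hess\pi_a=0$ together with $\nabla\pi_a\cdot\nabla\pi_b=\delta_{ab}$ $\mu$-a.e. I would then fix $R$ large and apply \Cref{prop:harmonic approximation} to each $\pi_a\in H^{1,2}(B_R(y))\cap D(\Delta,B_R(y))$ to produce harmonic functions $u_n^a\in H^{1,2}(B_{20}(x_n),\sfd_n,\mm_n)$ converging strongly in $H^{1,2}$ on $B_{20}(y)$ to $\pi_a$, and set $u_n:=(u_n^1,\dots,u_n^k)\colon B_5(x_n)\to\R^k$.

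Conditions (ii) and (iii) of \Cref{def:deltasplitting} then fall out of the convergence machinery of \Cref{subsubsection:stability results}. By \Cref{thm:stabilityLaplacian}(iii) one has $|\nabla u_n^a|^2\to|\nabla\pi_a|^2\equiv 1$ strongly in $L^1(B_{10}(y))$; applying the same theorem to the harmonic combinations $u_n^a\pm u_n^b$ and polarizing yields $\nabla u_n^a\cdot\nabla u_n^b\to\delta_{ab}$ in $L^1$, which is (iii). For (ii), the local Hessian estimate \eqref{eq:local Hessian estimate} applied to the harmonic $u_n^a$ kills the Laplacian term, leaves the oscillation term $\inf_m\int_{B_{10}(x_n)}\bigl||\nabla u_n^a|^2-m\bigr|\,\d\mm_n$ (which vanishes in the limit by taking $m=1$ and using the convergence just recorded), and an $\eps_n$-prefactor term which also goes to $0$.

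The delicate step, and the main obstacle, is condition (i): the uniform $C_N$-Lipschitz control on $u_n^a$ over $B_5(x_n)$ is \emph{not} an immediate consequence of strong $H^{1,2}$-convergence. To get it I would invoke the Cheng-Yau type gradient estimate for harmonic functions on $\RCD(K,N)$ spaces (available in this framework through Jiang's elliptic regularity theory), which provides
\[
\sup_{B_{10}(x_n)}|\nabla u_n^a|\le C_N\Bigl(\fint_{B_{20}(x_n)}|u_n^a|^2\,\d\mm_n\Bigr)^{1/2}
\]
uniformly in $n$ because the curvature parameter $-\eps_n$ stays bounded. Since $u_n^a\to\pi_a$ strongly in $L^2$ and $\pi_a$ is bounded on $B_{20}(y)$, the right-hand side is controlled uniformly, and the Sobolev-to-Lipschitz property promotes this to a $C_N$-Lipschitz representative on $B_5(x_n)$. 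Once (i)-(iii) are verified with constants tending to $0$, the map $u_n$ is a $\delta_0$-splitting map on $B_5(x_n)$ for $n$ large, contradicting the standing assumption and completing the argument.
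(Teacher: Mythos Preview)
Your proof is correct and follows essentially the same route as the paper: contradiction and compactness, harmonic approximation of the coordinate projections via \Cref{prop:harmonic approximation}, \Cref{thm:stabilityLaplacian} for the orthonormality condition, the local Hessian estimate \eqref{eq:local Hessian estimate} for the Hessian smallness, and Jiang's gradient estimate for the uniform Lipschitz bound. One minor point where you are in fact more careful than the paper: you allow the comparison spaces $Z_n$ to vary with $n$ and pass to a subsequential limit $Z$, whereas the paper's write-up tacitly treats $Z$ as fixed along the contradicting sequence.
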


\begin{proof}
We are going to build upon the local convergence and stability results that we recalled in \Cref{subsubsection:stability results}, arguing by contradiction.

Suppose the conclusion to be false, then we could find a sequence of pointed $\RCD(-1/n,N)$ m.m.\ spaces $(X_n,\sfd_n,\mm_n,x_n)$ such that, for some $\RCD(0,N-k)$ pointed m.m.s.\ $(Z,\sfd_Z,\mm_Z,z)$ it holds that
\begin{equation*}
\sfd_{pmGH}\left(\left(X_n,\sfd_n,\mm_n,x_n\right),\left(\R^k\times Z,(0^k,z)\right)\right)<1/n
\end{equation*} 
for any $n\ge 1$. Furthermore there should be $\delta_0>0$ such that there is no $\delta_0$-splitting map over $B_5(x_n)$ for any $n\ge 1$.

Let $v:Z\times\R^k\to\R^k$ be defined by $v(p,x)=x$ and denote by $v^1,\dots,v^k$ its components (they are the coordinate functions of the split factor). Observe that $\Delta v^i=0$ for any $i=1,\dots,k$ and $\nabla v^i\cdot\nabla v^j=\delta_{ij}$ for any $i,j=1,\dots,k$. In particular, $v^i$ is harmonic on $B_{10}((z,0^k))$. Hence we can apply \Cref{prop:harmonic approximation} to get harmonic functions $v^i_n:B_9(x_n)\to\R$ that converge strongly in $H^{1,2}$ to $v^i$ on $B_9((z,0^k))$.

Observe that, thanks to \cite[Theorem 1.1]{Jang14},  we can assume that $v_n^i$ is $C_N$-Lipschitz for any $n\in\N$ and for any $i=1,\dots,k$. We wish to prove that $v_n=(v_n^1,\dots,v_n^k)$ is a $\delta_0$-splitting map on $B_5(x_n)$ for $n$ sufficiently big.

To this aim let us recall that \Cref{thm:stabilityLaplacian} yields strong $L^1$-convergence of $\nabla v_n^i\cdot\nabla v_n^j$ to $\delta_{ij}$ on $B_9((z,0^k))$ and on $B_5((z,0^k))$ for any $i,j=1,\dots,k$ (as a consequence of the $L^1$ convergence of $\nabla v_n^i\cdot\nabla v_n^i$ and of $\nabla (v_n^i+v_n^j)\cdot\nabla(v_n^i+v_n^j)$). In particular, due to the uniform boundedness of the gradients we obtained above, we get
\begin{equation*}
\lim_{n\to\infty}\fint_{B_R(x_n)}\abs{\nabla v_n^i\cdot\nabla v_n^j-\delta_{ij}}\d\mm_n=0,
\end{equation*}
for any $i,j=1,\dots,k$ and for any $R=5,9$. The choice $R=5$ gives that the second defining condition of $\delta$-splitting map is satisfied for $n$ sufficiently large and we are left with the verification of the third one.
We wish to prove that
\begin{equation*}
\lim_{n\to\infty}\int_{B_5(x_n)}\abs{\Hess v_n^i}^2\d\mm_n=0
\end{equation*}  
for any $i=1,\dots,k$. To this aim we choose cut-off functions $\eta_n$ for the pairs $B_5(x_n)\subset B_9(x_n)$ as in \Cref{lemma:good cut-off} and, taking into account \eqref{eq:local Hessian estimate}
\begin{equation}\label{eq:bochtohess}
\int_{B_9(x_n)}\Delta\eta_n\left(\abs{\nabla v_n^i}^2-1\right)\d\mm_n+C_N\frac{\mm_n(B_9(x_n))}{n}\ge\int_{B_5(x_n)}\abs{\Hess v_n^i}^2\d\mm_n
\end{equation}
for any $i=1,\dots,k$ and for any $n\ge 1$. Since, $|\Delta \eta_n|\le C_N$ by construction and as we already observed, $\abs{\nabla v_n^i}^2-1$ converge to $0$ in $L^1(B_9)$ and they are uniformly bounded, we get that the left-hand side in \eqref{eq:bochtohess} converges to $0$ as $n\to\infty$. Hence
\begin{equation*}
\lim_{n\to\infty}\fint_{B_5(x_n)}\abs{\Hess v_n^i}^2\d\mm_n=0,
\end{equation*}
as we claimed. 
\end{proof}

Arguing by scaling starting from \Cref{prop:epsclosedeltasplit}, it is possible to obtain the following statement.

\begin{corollary}\label{rm:epsclosedeltascale}
If $(X,\sfd,\mm)$ is an $\RCD(K,N)$ m.m.s., $r^2\abs{K}\le\varepsilon$ and
\begin{equation*}
\sfd_{pmGH}\left(\left(X,r^{-1}\sfd,\mm_x^r,x\right),\left(\R^k\times Z,(0^k,z)\right)\right)< \eps
\end{equation*}
for some pointed $\RCD(0,N-k)$ metric measure space $(Z,\sfd_{Z},\mm_Z,z)$, then there exists a $\delta$-splitting map $u:B_{5r}(x)\to\R^k$.
\end{corollary}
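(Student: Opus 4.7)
The plan is a routine scaling argument reducing the statement to Proposition \ref{prop:epsclosedeltasplit}. I would take $\eps := \eps_{N,\delta}$ as provided by that proposition, and pass to the rescaled pointed metric measure space $(X,\sfd',\mm_x^r,x)$ with $\sfd' := r^{-1}\sfd$. By the standard scaling invariance of the curvature-dimension condition this is an $\RCD(r^2 K,N)$ space, and the hypothesis $r^2|K|\le\eps$ ensures in particular that it is $\RCD(-\eps,N)$. The pmGH-closeness hypothesis is exactly the hypothesis of Proposition \ref{prop:epsclosedeltasplit} applied to $(X,\sfd',\mm_x^r,x)$, which therefore produces a $\delta$-splitting map $\tilde u=(\tilde u_1,\dots,\tilde u_k):B^{\sfd'}_5(x)\to\R^k$ in the rescaled space.

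I would then set $u_a(y) := r\,\tilde u_a(y)$ for every $y\in B^{\sfd'}_5(x) = B^{\sfd}_{5r}(x)$ and $a=1,\dots,k$, and claim that $u:=(u_1,\dots,u_k)$ is a $\delta$-splitting map on $B_{5r}(x)$ in $(X,\sfd,\mm)$. To verify this I would use the standard scaling rules $|\nabla_{\sfd'} f| = r\,|\nabla_\sfd f|$ for the minimal relaxed slope and $|\Hess_{\sfd'} f| = r^2\,|\Hess_\sfd f|$ for the Hessian norm, which, combined with the factor $r$ in the definition of $u_a$, yield
\[
|\nabla_\sfd u_a|=|\nabla_{\sfd'}\tilde u_a|,\qquad \nabla_\sfd u_a\cdot\nabla_\sfd u_b=\nabla_{\sfd'}\tilde u_a\cdot\nabla_{\sfd'}\tilde u_b,\qquad |\Hess_\sfd u_a|=r^{-1}|\Hess_{\sfd'}\tilde u_a|.
\]
Harmonicity transfers as well, since the Laplacian satisfies $\Delta_{\sfd',\mm_x^r}=r^2\Delta_{\sfd,\mm}$ (the positive multiplicative constant $C(x,r)$ between $\mm_x^r$ and $\mm$ being immaterial for harmonicity), so $\Delta_{\sfd',\mm_x^r}\tilde u_a=0$ implies $\Delta_{\sfd,\mm}u_a=0$ on $B_{5r}(x)$.

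Condition (i) of \Cref{def:deltasplitting} then reads $|\nabla_\sfd u_a|=|\nabla_{\sfd'}\tilde u_a|\le C_N$, and condition (iii) for $u$ on $B_{5r}^\sfd(x)$ transfers verbatim to condition (iii) for $\tilde u$ on $B_5^{\sfd'}(x)$, since the averages over these (equal) balls with respect to $\mm$ and $\mm_x^r$ coincide. For condition (ii) the scaling computation gives
\[
(5r)^2\fint_{B_{5r}^\sfd(x)}|\Hess_\sfd u_a|^2\,\d\mm \;=\;25\,\fint_{B_5^{\sfd'}(x)}|\Hess_{\sfd'}\tilde u_a|^2\,\d\mm_x^r \;<\;\delta,
\]
which is exactly what we need. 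No step presents any serious obstacle; the only mildly delicate point is the scaling rule for the Hessian norm, which I would extract from the definition of $\Hess$ in \cite{Gigli14} by tracking how $\nabla f\cdot\nabla g$ and $\Delta$ transform through the identity defining $\Hess f(\nabla g,\nabla h)$.
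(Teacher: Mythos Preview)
Your proposal is correct and is exactly the scaling argument the paper has in mind; the paper itself merely states ``Arguing by scaling starting from \Cref{prop:epsclosedeltasplit}'' without writing out the details you have supplied. The verification of conditions (i)--(iii) via the scaling identities for gradient, Hessian and Laplacian is routine and accurate.
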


\subsubsection{Propagation of the $\delta$-splitting property}
In the next result we are concerned with the propagation of the property of being a $\delta$-splitting map. We are going to prove that, if $\alpha\in(0,2)$, outside a set of small codimension-$\alpha$ content any $\delta$-splitting map at a given scale is a $C_{N,\alpha}\delta^{1/4}$ splitting map at any scale. The proof is based on a weighted maximal function argument.

\begin{proposition}
	\label{proposition:propagation}
	 Let $\alpha\in (0,2)$ and $N>1$. There exist constants $C_N>0$ and $C_{N,\alpha}>0$ such that, for any $0<\delta<1$, any $\RCD(-1,N)$ m.m.s.\ $(X,\sfd,\mm)$, any $p\in X$ and for any $\delta$-splitting map $u:=(u_1,\ldots,u_k):B_2(p)\to\R^k$, there exists a Borel set $G\subset B_1(p)$ with $\haus_5^{h_{\alpha}}(B_1(p)\setminus G)<C_N\sqrt{\delta}\mm(B_2(p))$ such that for any $x\in G$ it holds
	\begin{equation}\label{eq: Hessian improvement}
	\sup_{0<r<1}r^{\alpha}\fint_{B_r(x)} |\Hess u_a|^2\d \mm \le \sqrt{\delta} 
	\qquad\text{for any}\quad a=1,\ldots,k,
	\end{equation}
	and
	\begin{equation}\label{eq: propagation of splitting}
	u: B_r(x)\to \R^k
	\qquad\text{is a}\ C_{N,\alpha}\delta^{1/4}\text{-splitting map for any $0<r<1/2$.}
	\end{equation}
\end{proposition}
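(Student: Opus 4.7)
The plan is to execute a weighted maximal-function argument, in the spirit of Cheeger--Colding--Naber, that propagates the Hessian control to all small scales outside a set of small codimension-$\alpha$ content. For each $a \in \{1,\dots,k\}$ and $x \in B_1(p)$ set
\[
M_a(x) := \sup_{0 < r < 1} r^\alpha \fint_{B_r(x)} |\Hess u_a|^2\,\d\mm,
\]
let $B_a := \{x \in B_1(p) : M_a(x) > \sqrt\delta\}$, and $G := B_1(p) \setminus \bigcup_a B_a$. For each $x \in B_a$ pick $r_x \in (0,1)$ realising $r_x^\alpha \fint_{B_{r_x}(x)} |\Hess u_a|^2 \d\mm > \sqrt\delta$, and apply Vitali's $5r$-covering theorem to the family $\{B_{r_x}(x)\}_{x \in B_a}$ to extract a disjoint sub-family $\{B_{r_i}(x_i)\}_i$ whose five-fold dilations cover $B_a$. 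Using local doubling of $\mm$, disjointness, the inclusion $B_{r_i}(x_i) \subset B_2(p)$ and the defining bound $\int_{B_2(p)} |\Hess u_a|^2 \d\mm \le \delta\,\mm(B_2(p))/4$,
\[
\haus^{h_\alpha}_5(B_a) \le \sum_i \frac{\mm(B_{5 r_i}(x_i))}{(5 r_i)^\alpha} \le \frac{C_N}{\sqrt\delta} \sum_i \int_{B_{r_i}(x_i)} |\Hess u_a|^2 \d\mm \le C_N \sqrt\delta\,\mm(B_2(p));
\]
summing over the (at most $N$) indices $a$ yields both \eqref{eq: Hessian improvement} on $G$ and the asserted Hausdorff bound.

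Next I verify the $C_{N,\alpha}\delta^{1/4}$-splitting conditions on any $B_r(x)$ with $x \in G$ and $0 < r < 1/2$. Condition (i) is inherited with the same constant $C_N$, while (ii) follows at once from the Hessian propagation:
\[
r^2 \fint_{B_r(x)} |\Hess u_a|^2 \d\mm = r^{2-\alpha}\cdot r^\alpha \fint_{B_r(x)} |\Hess u_a|^2 \d\mm \le r^{2-\alpha}\sqrt\delta \le \sqrt\delta,
\]
since $2-\alpha>0$ and $r < 1$; and $\sqrt\delta \le \delta^{1/4}$ as $\delta<1$.

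The nontrivial condition is (iii). Set $c_r := \fint_{B_r(x)} \nabla u_a \cdot \nabla u_b\,\d\mm$ and split
\[
\fint_{B_r(x)} |\nabla u_a \cdot \nabla u_b - \delta_{ab}|\,\d\mm \le \fint_{B_r(x)} |\nabla u_a \cdot \nabla u_b - c_r|\,\d\mm + |c_r - \delta_{ab}|.
\]
The $(1,2)$-Poincar\'{e} inequality applied to $\nabla u_a \cdot \nabla u_b$ (Sobolev regular via the $H^{2,2}$ theory), combined with the calculus rule \eqref{eq:calculusrule}, the Lipschitz bound on $u_a,u_b$, and the Hessian propagation on $B_{\lambda r}(x)$ (valid when $\lambda r < 1$), gives
\[
\fint_{B_r(x)} |\nabla u_a \cdot \nabla u_b - c_r|\,\d\mm \le C_N\, r \left(\fint_{B_{\lambda r}(x)} (|\Hess u_a|^2 + |\Hess u_b|^2)\,\d\mm\right)^{1/2} \le C_{N,\alpha}\, r^{1-\alpha/2}\delta^{1/4}.
\]
Applying the same bound at dyadic scales $2^j r \le r_0 := 1/(4\lambda)$ yields $|c_{2^j r} - c_{2^{j+1} r}| \le C_{N,\alpha}(2^{j+1} r)^{1-\alpha/2}\delta^{1/4}$; since $1 - \alpha/2 > 0$, the geometric sum is dominated by its largest term, so $|c_r - c_{r_0}| \le C_{N,\alpha}\, \delta^{1/4}$. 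At the macroscopic scale, local doubling (uniform over $x \in B_1(p)$ as $r_0$ is a fixed constant) together with the defining condition (iii) of the $\delta$-splitting on $B_2(p)$ give
\[
|c_{r_0} - \delta_{ab}| \le \frac{\mm(B_2(p))}{\mm(B_{r_0}(x))}\fint_{B_2(p)} |\nabla u_a \cdot \nabla u_b - \delta_{ab}|\,\d\mm \le C_{N,\alpha}\,\delta,
\]
and the complementary range $r_0 \le r < 1/2$ is handled by the same direct doubling estimate on $B_r(x)\subset B_2(p)$. Combining yields (iii). The crux of the whole argument -- and the step whose failure would compromise the entire scheme -- is the summability of the telescoping series for $|c_r - c_{r_0}|$, which forces $\alpha < 2$; this exponent hypothesis is therefore sharp for the method and aligns with the codimension-$1$ application to the perimeter measure in the sequel.
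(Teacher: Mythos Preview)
Your proof is correct and follows essentially the same route as the paper's: a weighted maximal-function/Vitali argument to obtain \eqref{eq: Hessian improvement} and the Hausdorff bound, followed by a Poincar\'e-plus-telescoping argument over dyadic scales to propagate condition (iii). The only cosmetic difference is that you track the averages $c_r=\fint_{B_r(x)}\nabla u_a\cdot\nabla u_b\,\d\mm$ and split $\fint|\nabla u_a\cdot\nabla u_b-\delta_{ab}|$ into an oscillation term plus $|c_r-\delta_{ab}|$, whereas the paper telescopes the averages of $f_{a,b}:=|\nabla u_a\cdot\nabla u_b-\delta_{ab}|$ directly; both variants rest on the same gradient bound \eqref{eq:calculusrule} and the same geometric summability forced by $1-\alpha/2>0$.
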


\begin{proof}
	Let us start proving \eqref{eq: Hessian improvement}. To this aim fix any $a=1,\ldots,k$ and denote by $C_P$ and $C_D$ the Poincar\'{e} and the doubling constants over balls of radius $10$ of $(X,\sfd,\mm)$.
	To be more precise $C_P$ is a constant in the $(1,2)$-Poincar\'{e} inequality with $\lambda=2$ as in \eqref{eq:Poincare_ineq}. This inequality is available on $\RCD(K,N)$ m.m.s.\ (see for instance \cite[Theorem 30.26]{Villani09}) with constant depending only on $K$ and $N$. In particular, since $(X,\sfd,\mm)$ is an $\RCD(-1,N)$, $C_P$ depends only on $N$. The same conclusion holds for $C_D$ thanks to the Bishop-Gromov inequality \eqref{eq:BishopGromovInequality}.
	
	Set
	\begin{equation*}
	G:=\left\lbrace x\in B_1(p):\ \sup_{0<r<1} r^{\alpha}\fint_{B_r(x)} |\Hess u_a|^2\d \mm \le \sqrt{\delta}\right\rbrace. 
	\end{equation*}
	We claim that $\haus_5^{h_{\alpha}}(B_1(p)\setminus G)<C_N\sqrt{\delta}\mm(B_2(p))$. For any $x\in B_1(p)\setminus G$ we choose $\rho_x\in (0,1)$ satisfying
	\begin{equation}\label{z2}
	\rho_x^{\alpha}\fint_{B_{\rho_x}(x)} |\Hess u_a|^2\d \mm >\sqrt{\delta}.
	\end{equation}
	Observe that the family $\{B_{\rho_x}(x)\}_{x\in B_1(p)\setminus G}$ covers $B_1(p)\setminus G$.
	Using Vitali's covering lemma we can find a subfamily of disjoint balls $\left\lbrace  B_{\rho_i}(x_i)\right\rbrace _{i\in\N}$ such that $B_1(p)\setminus G\subset \cup_{i\in\N}B_{5\rho_i}(x_i)$. This gives the sought conclusion
	\begin{align*}
	\haus_5^{h_\alpha}(B_1(p)\setminus G )\le & \sum_{i\in\N} h_{\alpha}(B_{5\rho_i}(x_i))
	=  \sum_{i\in\N} \frac{\mm(B_{5{\rho_i}}(x_i))}{(5\rho_i)^{\alpha}}\\
	\le & C_N \sum_{i\in\N} \frac{\mm(B_{{\rho_i}}(x_i))}{\rho_i^{\alpha}}
	\le  C_N \sum_{i\in\N}\frac{1}{ \sqrt{\delta}}\int_{B_{\rho_i}(x_i)} |\Hess u_a|^2\d \mm\\
	\le & C_N\frac{1}{\sqrt{\delta}}\int_{B_2(p)} |\Hess u_a|^2 \d \mm
	\le  C_N \sqrt{\delta}\mm(B_2(p)),
	\end{align*}
	where we used the definition of $\haus^{h_{\alpha}}_5$, the Bishop-Gromov inequality, \eqref{z2} and the fact that $u$ is a $\delta$-splitting map. 
	
	In order to verify \eqref{eq: propagation of splitting} we just need to check that, for $a,b=1,\ldots,k$,
	\begin{equation*}
	\fint_{B_r(x)} |\nabla u_a\cdot \nabla u_b-\delta_{a,b}|\d \mm <C_{N,\alpha} \delta^{1/4}
	\quad\text{for any }x\in G,\ 0<r<1.
	\end{equation*}
	To this aim let us set $f_{a,b}:= |\nabla u_a\cdot \nabla u_b-\delta_{a,b}|$ and note that $|\nabla f_{a,b}|\le C_N(|\Hess u_a|+|\Hess u_b|)$ as a consequence of \Cref{def:deltasplitting}(i) and \eqref{eq:calculusrule}.
	Whence, the Poincar\'{e} inequality and \eqref{eq: Hessian improvement} yield
	\begin{align*}
	\abs{\fint_{B_{r}(x)}f_{a,b}\,\d \mm-\fint_{B_{r/2}(x)}f_{a,b}\,\d \mm}
	\le & C_P r\left(\fint_{B_{2r}(x)}|\nabla f_{a,b}|^2\,\d \mm\right)^{1/2} \\
	\le & C_N C_P\left( r^2\fint_{B_{2r}(x)} |\Hess u_a|^2\,\d \mm+r^2\fint_{B_{2r}(x)} |\Hess u_b|^2\,\d \mm\right)^{1/2}\\
	\le & C_N C_P\delta^{1/4}r^{1-\alpha/2}
	\end{align*}
	for any $0<r<1/2$. Applying a telescopic argument it is simple to see that
	\begin{equation*}
	\abs{\fint_{B_{2^{-1}}(x)}f_{a,b}\,\d\mm-\fint_{B_{2^{-k}}(x)}f_{a,b}\,\d \mm}\le C_{\alpha}C_N C_P \delta^{1/4},\qquad \text{for any}\ k>1.
	\end{equation*}
	Therefore, for any $0<r<1/2$ we take $k\in\N$ such that $2^{-k-1}<r\le 2^{-k}$ and using that $u:B_2(p)\to \R^k$ is a $\delta$-splitting map we get
	\begin{align*}
	\fint_{B_{r}(x)}f_{a,b}\,\d \mm \le & C_D2^N\fint_{B_{2^{-k}}(x)} f_{a,b}\,\d\mm\\
	\le & C_D2^N\abs{\fint_{B_{1/2}(x)}f_{a,b}\d\mm-\fint_{B_{r}(x)}f_{a,b}\,\d \mm}+C_D2^N\fint_{B_{1/2}(x)}f_{a,b}\,\d \mm\\
	\le & 2^NC_DC_{\alpha}C_N C_P \delta^{1/4}+ 8^NC_D^2\fint_{B_2(p)} f_{a,b}\,\d \mm\\
	\le & C_{N,\alpha}\delta^{1/4}.
	\end{align*}
\end{proof}

For the purposes of this paper we just need to consider the case $\alpha=1$ in \Cref{proposition:propagation}. This is related to the fact that boundaries of sets with finite perimeter are codimension one objects. In order to shorten the notation in the sequel we will write $h$ in place of $h_1$.\\
We are going to use several times the following scale invariant version of \Cref{proposition:propagation}. 

\begin{corollary}\label{cor:scale invariant version of propagation}
Let $(X,\sfd,\mm,p)$ be an $\RCD(K,N)$ p.m.m.s.\ and $u:B_{4r}(p)\to \R^k$ a $\delta$-splitting map for some $r>0$ such that $|K|r^2\le 4$ and $r<1/2$. Then there exists $G\subset B_{2r}(p)$ with 
	\begin{equation*}
	\haus_{5}^{h}(B_{2r}(p)\setminus G)
	\le \haus_{10 r}^{h}(B_{2r}(p)\setminus G)
	\le C_N \sqrt{\delta}\frac{\mm(B_{2r}(p))}{2r}
	\end{equation*}
	such that $u:B_s(x)\to \R^k$ is a $C_N\delta^{1/4}$-splitting map for any $x\in G$ and any $0<s<r$.
\begin{proof}
Apply \Cref{proposition:propagation} to the rescaled space $(X,(2r)^{-1}\sfd,\mm(B_{2r}(p))^{-1}\mm,p)$.
\end{proof}
\end{corollary}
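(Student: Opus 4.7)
The plan is to reduce the statement to \Cref{proposition:propagation} through a direct rescaling argument. I would first introduce the rescaled p.m.m.s.\ $(X,\tilde\sfd,\tilde\mm,p)$ defined by $\tilde\sfd:=(2r)^{-1}\sfd$ and $\tilde\mm:=\mm(B_{2r}(p))^{-1}\mm$, so that $B_{4r}(p)$ corresponds to $\tilde B_2(p)$ and the curvature bound transforms as $\tilde K=4r^2K$. The hypothesis $|K|r^2\le 4$ then ensures that the rescaled space is $\RCD(-16,N)$; although \Cref{proposition:propagation} is stated for $\RCD(-1,N)$ spaces, its proof depends only on the Bishop--Gromov doubling and $(1,2)$-Poincar\'{e} constants on balls of bounded radius, which remain under uniform control here. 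Hence the conclusion of \Cref{proposition:propagation} is still available in the rescaled space with constants $C_N$.

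Next I would verify that the rescaled map $\tilde u:=u/(2r):\tilde B_2(p)\to\R^k$ is still a $\delta$-splitting map in the new space. Using the standard scaling rules $|\tilde\nabla f|^2_{\tilde\sfd}=(2r)^2|\nabla f|^2_{\sfd}$ and $|\tilde\Hess f|^2_{\tilde\sfd}=(2r)^4|\Hess f|^2_{\sfd}$, together with the invariance of averages under constant rescaling of the measure, one finds $|\tilde\nabla\tilde u_a|=|\nabla u_a|$, $\tilde\nabla\tilde u_a\cdot\tilde\nabla\tilde u_b=\nabla u_a\cdot\nabla u_b$, and
\[
4\,\fint_{\tilde B_2(p)}|\tilde\Hess\tilde u_a|^2\,\d\tilde\mm=(4r)^2\fint_{B_{4r}(p)}|\Hess u_a|^2\,\d\mm<\delta,
\]
so conditions (i)--(iii) in \Cref{def:deltasplitting} transfer from $u$ on $B_{4r}(p)$ to $\tilde u$ on $\tilde B_2(p)$ with the same $\delta$. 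The very same scale-invariant computation will also allow one to transfer the $C_N\delta^{1/4}$-splitting property from $\tilde B_{\tilde s}(\tilde x)$ back to $B_{2r\tilde s}(\tilde x)$ at the end of the argument.

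Applying \Cref{proposition:propagation} to $\tilde u$ with $\alpha=1$ would produce $G\subset\tilde B_1(p)=B_{2r}(p)$ satisfying $\tilde\haus^h_5(\tilde B_1(p)\setminus G)\le C_N\sqrt\delta\,\tilde\mm(\tilde B_2(p))$, such that $\tilde u$ is a $C_N\delta^{1/4}$-splitting map on $\tilde B_{\tilde s}(\tilde x)$ for every $\tilde x\in G$ and $0<\tilde s<1/2$. To conclude I would unwind the rescaling: from $\tilde\haus^h_5(A)=\tfrac{2r}{\mm(B_{2r}(p))}\haus^h_{10r}(A)$ and $\tilde\mm(\tilde B_2(p))=\mm(B_{4r}(p))/\mm(B_{2r}(p))$ one obtains the bound $\haus^h_{10r}(B_{2r}(p)\setminus G)\le C_N\sqrt\delta\,\mm(B_{4r}(p))/(2r)$; Bishop--Gromov then gives $\mm(B_{4r}(p))\le C_N\mm(B_{2r}(p))$ (uniform because $|K|r^2\le 4$), while the assumption $r<1/2$ yields $10r<5$ and hence $\haus^h_5\le\haus^h_{10r}$. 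Putting these ingredients together recovers the claimed bound on $\haus^h_5(B_{2r}(p)\setminus G)$, and setting $s:=2r\tilde s\in(0,r)$ yields the propagation of the splitting property to $B_s(x)$. The whole argument is bookkeeping with the scaling rules above; the only mildly delicate point is the observation already made, that the rescaled curvature bound $-16$ does not affect the constants in \Cref{proposition:propagation} beyond their $N$-dependence.
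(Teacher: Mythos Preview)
Your proposal is correct and follows exactly the approach indicated in the paper: the paper's proof is the single line ``Apply \Cref{proposition:propagation} to the rescaled space $(X,(2r)^{-1}\sfd,\mm(B_{2r}(p))^{-1}\mm,p)$'', and you have simply unpacked the bookkeeping behind that sentence (scaling of the curvature bound, of the $\delta$-splitting conditions, and of $\haus^h$) in the expected way. Your remark that the rescaled lower bound is $-16$ rather than $-1$, but that this only affects the doubling and Poincar\'{e} constants and hence can be absorbed into $C_N$, is the correct reading of the implicit step.
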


\subsection{Uniqueness of tangents and consequences}
Let $(X,\sfd,\mm)$ be an $\RCD(K,N)$ metric measure space with essential dimension $n\le N$ (see \Cref{thm:structure theory}) and let $E\subset X$ be a set of locally finite perimeter. For any $k=1,\ldots,n$ we set
\begin{align*}
A_k:=\bigl\{&x\in X\;:\;\left(\R^k,\sfd_{eucl},c_k\Leb^k,0^k,\left\lbrace x_k>0\right\rbrace \right)\in \Tan_x(X,\sfd,\mm, E),\text{ but for no }(Y,\varrho,\mu,y)\ \text{with}\\
&\text{diam}(Y)>0\text{ it holds }(Y\times\R^k,\varrho\times\sfd_{eucl},\mu\times\Leb^k,(y,0^k), \left\lbrace x_k>0\right\rbrace )\in \Tan_x(X,\sfd,\mm,E)\bigl\}.
\end{align*}
Let us point out that, with arguments analogous to those in \cite[Lemma 6.1]{Mondino-Naber14} one can show that $A_k$ is a $\abs{D\nchi_E}$-measurable set for any $k=1,\ldots,n$.

Aiming at proving that the family $\left\lbrace A_k\right\rbrace_{k=1,\ldots,n}$ covers $X$ up to a $|D\nchi_E|$-negligible set we need to use the following result that has been proven in the appendix of \cite{ABS18}.

\begin{theorem}\label{th: tangent of tangent}
	Let $(X,\sfd,\mm)$ be an $\RCD(K,N)$ m.m.s.\ and let $E\subset X$ be a set of locally finite perimeter. 
	Then for $|D\nchi_E|$-a.e.\ $x\in X$ the following property holds true:
	for every $(Y,\varrho,\mu,y, F)\in \Tan_x(X,\sfd,\mm,E)$ one has
	\begin{equation*}
	\Tan_{y'}(Y,\varrho,\mu,F)\subset \Tan_x(X,\sfd,\mm,E)\qquad\text{for every $y'\in \supp |D\nchi_F|$.}
	\end{equation*}
\end{theorem}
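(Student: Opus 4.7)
The strategy is the classical ``tangent of tangent is a tangent'' scheme, adapted from the structure-theoretic setting (as in \cite{Mondino-Naber14}) to the present codimension-$1$ context by systematically replacing the role of $\mm$ with that of the perimeter measure $|D\nchi_E|$. The argument combines a Lusin-type continuity selection for the multifunction $x\mapsto\Tan_x(X,\sfd,\mm,E)$ with a diagonal extraction.

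First I equip the collection $\mathcal T$ of isomorphism classes of quintuples $(Y,\varrho,\mu,y,F)$ (pointed m.m.s.\ enriched with a set of locally finite perimeter) with a distance $D$ metrizing the simultaneous pmGH convergence, $L^1_{\mathrm{loc}}$ convergence of the characteristic function, and -- thanks to \cite[Corollary 4.10]{ABS18} -- weak convergence of the perimeter measures. The resulting space is Polish, and subfamilies with uniform $\RCD(K,N)$ bounds and uniform perimeter bounds on bounded balls are pre-compact in $(\mathcal T,D)$ by \Cref{prop:compactness sets with finite perimeter}. The multifunction $x\mapsto\Tan_x(X,\sfd,\mm,E)\subset\mathcal T$ is Borel measurable, because it is determined by the pmGH/BV limits along all sequences from a fixed countable dense set of scales (cf.\ \cite[\S6]{Mondino-Naber14}). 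Lusin's theorem applied to this multifunction with respect to $|D\nchi_E|$ then yields compact sets $K_m\subset\supp|D\nchi_E|$ with $|D\nchi_E|(\supp|D\nchi_E|\setminus K_m)\to 0$ on which $x\mapsto\Tan_x$ is Hausdorff-continuous.

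Argue now by contradiction, assuming the theorem fails on a set $B$ with $|D\nchi_E|(B)>0$. Pick $m$ with $|D\nchi_E|(B\cap K_m)>0$; since $|D\nchi_E|$ is asymptotically doubling on any $\RCD(K,N)$ space, $B\cap K_m$ admits a $|D\nchi_E|$-density point $x^*$. At $x^*$ fix failing data: $(Y,\varrho,\mu,y,F)\in\Tan_{x^*}(X,\sfd,\mm,E)$ arising from scales $r_i\downarrow 0$, a point $y'\in\supp|D\nchi_F|$, and $(Z,\sfd_Z,\mm_Z,z,G)\in\Tan_{y'}(Y,\varrho,\mu,F)$ arising from scales $s_j\downarrow 0$, with $(Z,\sfd_Z,\mm_Z,z,G)\notin\Tan_{x^*}(X,\sfd,\mm,E)$. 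The heart of the argument is the selection of approximating points $\tilde x_i\in K_m\cap\supp|D\nchi_E|$ that converge to $y'$ in the extrinsic sense realizing $(X,r_i^{-1}\sfd,\mm_{x^*}^{r_i},x^*)\to(Y,\varrho,\mu,y)$: every open neighbourhood $U$ of $y'$ has positive $|D\nchi_F|(U)$, and by the weak convergence of rescaled perimeter measures this transfers into a definite fraction of the (suitably normalized codimension-$1$) rescaled $|D\nchi_E|$-mass in the preimage of $U$ around $x^*$; since $x^*$ is a $|D\nchi_E|$-density point of $K_m$, an overwhelming proportion of this mass lies in $K_m$, and a routine covering argument produces the desired $\tilde x_i$.

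A Cantor diagonal extraction then produces indices $i(j)\to\infty$ such that
\[
\bigl(X,(r_{i(j)}s_j)^{-1}\sfd,\mm_{\tilde x_{i(j)}}^{r_{i(j)}s_j},\tilde x_{i(j)},E\bigr)\xrightarrow{\;D\;}(Z,\sfd_Z,\mm_Z,z,G),
\]
witnessing $(Z,\sfd_Z,\mm_Z,z,G)\in\Tan_{\tilde x_{i(j)}}(X,\sfd,\mm,E)$ in the limit. Since $\tilde x_{i(j)}\in K_m$ and $\tilde x_{i(j)}\to x^*$, the Hausdorff continuity of $x\mapsto\Tan_x$ on $K_m$ gives $\Tan_{\tilde x_{i(j)}}\to\Tan_{x^*}$ and hence $(Z,\sfd_Z,\mm_Z,z,G)\in\Tan_{x^*}$, contradicting our choice. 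The main obstacle is the extrinsic selection of $\tilde x_i\in K_m$ approximating $y'$: it requires carefully coupling the density point property of $K_m$ measured by $|D\nchi_E|$ with the weak convergence of the rescaled perimeter measures, which is the only place where the codimension-$1$ normalization and the use of $|D\nchi_E|$ (rather than $\mm$) in the density argument genuinely enter.
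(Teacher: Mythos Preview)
The paper does not itself prove this statement; it simply quotes the result and attributes the proof to the appendix of \cite{ABS18}. Your proposal is exactly the argument carried out there: the Preiss--Le Donne ``tangent of tangent'' scheme, transplanted from the $\mm$-a.e.\ setting of \cite{Mondino-Naber14} to the $|D\nchi_E|$-a.e.\ setting by exploiting the asymptotic doubling of the perimeter measure, the weak convergence of rescaled perimeters from \cite[Corollary~4.10]{ABS18}, Lusin continuity of the tangent multifunction, and a diagonal extraction. You have also correctly isolated the one genuinely new point relative to \cite{Mondino-Naber14}, namely the extrinsic selection of $\tilde x_i\in K_m$ approximating $y'\in\supp|D\nchi_F|$, which requires coupling the $|D\nchi_E|$-density of $K_m$ at $x^*$ with the weak convergence of perimeter measures.

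One technical refinement worth flagging: in the standard implementations (Le Donne, Mondino--Naber, and \cite{ABS18}) Lusin's theorem is applied not to $x\mapsto\Tan_x$ but to the larger closed-valued multifunction $x\mapsto\overline{\{(X,r^{-1}\sfd,\mm_x^r,x,E):0<r<1\}}$. The reason is that the diagonal limit produces $X^{\tilde x_{i(j)},\rho_j}\to Z$ with $\rho_j\to 0$, and this rescaling is automatically an element of the larger multifunction at $\tilde x_{i(j)}$, whence continuity on $K_m$ places $Z$ in the corresponding closure at $x^*$; one then checks separately (using $\rho_j\to 0$) that $Z$ is actually a tangent. Your phrasing ``witnessing $(Z,\ldots)\in\Tan_{\tilde x_{i(j)}}$ in the limit'' elides this step. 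It is routine, but it is where the choice of multifunction matters.
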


\begin{lemma}\label{lemma:Ak}
	Under the assumptions above
	\begin{equation*}
	|D\nchi_E|\left(X\setminus \bigcup_{k=1}^n A_k\right)=0.
	\end{equation*}
\end{lemma}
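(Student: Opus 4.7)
The plan is, for \(|D\nchi_E|\)-a.e.\ \(x \in X\) (those in the intersection of the full-measure sets given by \Cref{thm:tangenthalfspace} and \Cref{th: tangent of tangent}), to define \(K(x)\) as the set of \(k \ge 1\) for which there exists a pointed m.m.s.\ \((Y, \rho_Y, \mu_Y, y_Y)\) such that \((Y \times \mathbb{R}^k, \rho_Y \times \sfd_{eucl}, \mu_Y \times \mathcal{L}^k, (y_Y, 0^k), Y \times \{x_k > 0\}) \in \Tan_x(X, \sfd, \mm, E)\). I will show that \(K(x)\) is non-empty and contained in \(\{1, \ldots, n\}\); setting \(k^\ast := \max K(x)\), it will then follow that \(x \in A_{k^\ast}\), which proves the lemma. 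Non-emptiness of \(K(x)\) is immediate from \Cref{thm:tangenthalfspace} by taking \(Y\) a singleton.

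To see that \(x \in A_{k^\ast}\) once the bound is established, I rule out tangents of form \((Y \times \mathbb{R}^{k^\ast}, \ldots)\) with \(\mathrm{diam}(Y) > 0\). Suppose such a tangent \((Y \times \mathbb{R}^{k^\ast}, \mu_Y \times \mathcal{L}^{k^\ast}, (y_Y, 0), Y \times \{x_{k^\ast} > 0\})\) exists. Applying \Cref{thm:structure theory} to the \(\RCD(0, N - k^\ast)\) space \(Y\) produces its essential dimension \(m \ge 1\) and a regular point \(y_0 \in \mathcal{R}_m(Y) \cap \supp \mu_Y\). For any \(\xi \in \mathbb{R}^{k^\ast - 1}\), the point \((y_0, \xi, 0)\) lies in \(\supp |D\nchi_{Y \times \{x_{k^\ast} > 0\}}|\); the pmGH tangent of \(Y \times \mathbb{R}^{k^\ast}\) at \((y_0, \xi, 0)\), together with the natural blow-up of the half-space, is (after normalization) \((\mathbb{R}^{m + k^\ast}, c_{m + k^\ast} \mathcal{L}^{m + k^\ast}, 0, \{x_{m + k^\ast} > 0\})\). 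By \Cref{th: tangent of tangent} this lies in \(\Tan_x(X, \sfd, \mm, E)\), so \(m + k^\ast \in K(x)\), contradicting maximality of \(k^\ast\) since \(m \ge 1\). Hence \(Y\) is a point, \((\mathbb{R}^{k^\ast}, c_{k^\ast} \mathcal{L}^{k^\ast}, 0, \{x_{k^\ast} > 0\}) \in \Tan_x\), and by maximality no tangent has form \((Y \times \mathbb{R}^{k^\ast}, \ldots)\) with \(\mathrm{diam}(Y) > 0\); thus \(x \in A_{k^\ast}\).

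The main obstacle is the bound \(K(x) \subseteq \{1, \ldots, n\}\): the iteration argument above only yields \(k^\ast \le N\), and improving \(N\) to the essential dimension \(n\) requires coupling the structure theory with the splitting-map machinery developed in \Cref{section:splitting map}. Suppose \(k \in K(x)\) with \(k > n\). By \Cref{rm:epsclosedeltascale}, for every small \(\delta > 0\) there exist \(r > 0\) and a \(\delta\)-splitting map \(u = (u_1, \ldots, u_k) : B_{5r}(x) \to \mathbb{R}^k\). Let \(G_{ab}(y) := \nabla u_a(y) \cdot \nabla u_b(y)\); the defining properties of \(\delta\)-splitting give \(|G_{ab}| \le C_N^2\) \(\mm\)-a.e.\ and \(\fint_{B_{5r}(x)} \sum_{a,b} |G_{ab} - \delta_{ab}|\, d\mm < k^2 \delta\). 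Since \(\det\) is a polynomial continuous on the bounded region \(\{|G_{ab}| \le C_N^2\}\), this yields \(\fint_{B_{5r}(x)} |\det G - 1|\, d\mm \le C(k, N)\, \delta\). On the other hand, by \Cref{thm:structure theory}, \(\mm\)-a.e.\ \(y \in B_{5r}(x)\) is \(n\)-regular, so the fibre of the tangent module \(L^0(TX)\) at \(y\) has dimension \(n < k\); the vectors \(\nabla u_1(y), \ldots, \nabla u_k(y)\) are therefore linearly dependent and \(\det G(y) = 0\) at \(\mm\)-a.e.\ \(y \in B_{5r}(x)\). Combining the two estimates yields \(1 \le C(k, N)\, \delta\), a contradiction for sufficiently small \(\delta\), so \(k \le n\).
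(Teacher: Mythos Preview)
Your proof is correct, and the core mechanism — blowing up a tangent of the form $Y\times\R^{k}$ at a regular point of $Y$ via \Cref{th: tangent of tangent} to contradict maximality of $k$ — is exactly the argument the paper uses. The paper packages it through the auxiliary sets $A'_k$ (points where $\R^k$ but no higher Euclidean space arises as tangent) and shows $|D\nchi_E|(A'_k\setminus A_k)=0$; your formulation via $K(x)$ and $k^\ast=\max K(x)$ is an equivalent rearrangement of the same idea.

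The one substantive difference is your Step~3. The paper simply asserts that $\bigcup_{k=1}^{n}A'_k$ has full $|D\nchi_E|$-measure ``as a consequence of \Cref{thm:tangenthalfspace}'', although that theorem only guarantees $k\in[1,N]$; the restriction $k\le n$ is treated as implicit. You supply an explicit justification: a $\delta$-splitting map $u:B_{5r}(x)\to\R^k$ forces $\fint|\det G-1|\,\d\mm\le C_{k,N}\delta$ for the Gram matrix $G_{ab}=\nabla u_a\cdot\nabla u_b$, while the fact that the tangent module has dimension $n$ forces $\det G=0$ $\mm$-a.e.\ whenever $k>n$, a contradiction. This is a clean way to close the point the paper leaves tacit. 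One minor remark on phrasing: ``the fibre of $L^0(TX)$ at $y$ has dimension $n$'' is informal — the precise fact you use is that $L^2(TX)$ has dimension $n$ as an $L^\infty(\mm)$-module (a consequence of the rectifiable structure of \Cref{thm:structure theory}; see \cite{GP16}), from which the $\mm$-a.e.\ vanishing of $\det G$ for $k>n$ gradient fields follows.
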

\begin{proof}
	As a consequence of \Cref{thm:tangenthalfspace} we have
	\begin{equation*}
	|D\nchi_E|\left(X\setminus \bigcup_{k=1}^n A'_k\right)=0,
	\end{equation*}
	where
	\begin{align*}
	A'_k:=\bigl\{ x  \in X:\ &\left(\R^k,\sfd_{eucl},c_k\Leb^k,0^k,\left\lbrace x_k>0\right\rbrace \right)\in \Tan_x(X,\sfd,\mm,E)\ \text{but}\\
	& \left(\R^m,\sfd_{eucl},c_m\Leb^m,0^m,\left\lbrace x_m>0\right\rbrace \right)\notin \Tan_x(
	X,\sfd,\mm,E)\ \text{for any}\ m>k \bigl\}.
	\end{align*}
	The measurability of the $A'_k$'s can be verified as in the case of the $A_k$'s.
		
	It is clear that $A_k\subset A'_k$, let us prove $|D\nchi_E|(A'_k\setminus A_k)=0$. We argue by contradiction. If the claim is false we can find $x\in A'_k\setminus A_k$ such that the iterated tangent property of \Cref{th: tangent of tangent} holds true.
	Since $x\in A'_k\setminus A_k$ we can find $(Y,\varrho,\mu,y)\in \RCD(0,N-k)$ with $\diam(Y)>0$ such that
	\begin{equation*}
	(Y\times\R^k,\varrho\times\sfd_{eucl},\mu\times\Leb^k,(y,0^k),\left\lbrace x_k>0\right\rbrace )\in \Tan_x(X,\sfd,\mm,E).
	\end{equation*}
	Moreover $\Tan_{(y',x,0)}(Y\times \R^k,\varrho\times\sfd_{eucl},\mu\times\Leb^k,\left\lbrace x_k>0\right\rbrace )\subset\Tan(E,x)$ for any $(y',x)\in Y\times \R^{k-1}$, thanks to \Cref{th: tangent of tangent}. Thus, choosing $(y',x,0)\in Y\times\R^k$ such that \Cref{thm:tangenthalfspace} holds and $y'$ is regular in $Y$ we get the sought contradiction, since the essential dimension of $Y$ is bigger or equal than one (otherwise $\diam(Y)=0$).
\end{proof}

    We are now in a position to conclude the proof of \Cref{th:uniqueness}. 
\begin{proof}[Proof of \Cref{th:uniqueness}]    
    In light of \Cref{lemma:Ak} it is enough to prove that $A_k$ coincides up to a $|D\nchi_E|$-negligible set with
    \begin{equation*}
		\left\lbrace x\in X:\ \Tan_x(X,\sfd,\mm,E)=\left\lbrace (\R^k,\sfd_{eucl},c_k\Leb^k,0^k,\left\lbrace x_k>0\right\rbrace )\right\rbrace \right\rbrace .
    \end{equation*}
	Let us assume without loss of generality that $A_k\subset B_2(p)$ for some $p\in X$.
	We claim that, for any $\eta>0$, there exists $G^{\eta}\subset A_k$ with
	\begin{equation}\label{z4}
		\haus_{5}^{h}(A_k\setminus G^{\eta})\le C_N \eta \Per(E, B_2(p))
	\end{equation}
	such that, for any $x\in G^{\eta}$ and for any $(Y,\varrho,\mu,y)\in\Tan_x(X,\sfd,\mm)$, there exists a pointed $\RCD(0,N-k)$ m.m.s.\ $(Z,\sfd_Z,\mm_Z,z)$ satisfying
	\begin{equation}\label{z3}
		\sfd_{pmGH}((Y,\varrho,\mu,y),(\R^k\times Z,(0,z))\le \eta.
	\end{equation}
	Observe that the claim implies our conclusion. Indeed if we fix $\eta>0$ and set $\eta_i:=\eta 2^{-i}$ then $G_{\eta}:=\cup_{i\in \N}G^{\eta_i}$ satisfies
    $\haus_5^{h}(A_k\setminus G_{\eta})=0$ and thus $\Per(E,A_k\setminus G_{\eta})=0$ thanks to \Cref{lemma:AssolutacontinuitaHauss}. Moreover, for any $x\in G_{\eta}$, \eqref{z3} holds. We conclude observing that $G:=\cap_{k\in \N} G_{2^{-k}}$ still satisfies $\Per(E, A_k\setminus G)=0$ and any tangent cone at $x\in G$ splits off a factor $\R^k$. By definition of $A_k$ we deduce that the only tangent at $x\in G$ is the Euclidean space of dimension $k$.
    
	Let us pass to the verification of the claim. 
	Fix $\delta\in (0,1/2)$ and take $\eps>0$ as in \Cref{prop:epsclosedeltasplit}. Of course we can assume $\eps\le \delta$.
    We wish to prove that there exists a disjoint family of balls $\left\lbrace B_{r_i}(x_i)\right\rbrace_{i\in\N}$ such that $r_i^2|K|\le \eps$ for any $i\in\N$ and
	\begin{itemize}
	    \item[(i)] $A_k\cap B_1(p)\subset\cup_{i\in \N}B_{5r_i}(x_i)$;
		\item[(ii)] $\sfd_{pmGH}\left((X,r_i^{-1}\sfd ,\mm_x^{r_i},x_i),(\R^k,\sfd_{eucl},c_k\Leb^k,0^k)\right)\le \eps$;
		\item[(iii)] $\frac{\omega_{k-1}}{\omega_{k}}(1-\eps)\frac{\mm(B_{r_i}(x_i))}{r_i}\le \Per(E,B_{r_i}(x_i))\le \frac{\omega_{k-1}}{\omega_{k}}(1+\eps)\frac{\mm(B_{r_i}(x_i))}{r_i}$.
	\end{itemize}	
    Indeed, for any $x\in A_k$ there exists a sequence of radii $r_i\to 0$ such that
	\begin{equation*}
	\lim_{i\to\infty}\sfd_{pmGH}\big((X,r_i^{-1}\sfd ,\mm_x^{r_i},x),(\R^k,\sfd_{eucl},\Leb^k,0^k)\big)=0
	\quad\text{and}\quad \lim_{i\to \infty}\frac{r_i\Per(E,B_{r_i}(x))}{\mm(B_{r_i}(x))}=\frac{\omega_{k-1}}{\omega_{k}},
	\end{equation*}
    as a consequence of \Cref{thm:tangenthalfspace}, see also \eqref{eq:asympotic of different normalization}. Therefore, for any $x\in A_k$ we can choose $r_x^2|K|\le \eps$ such that the pair $(x,r_x)$ satisfies (ii) and (iii). In order to get a disjoint family of balls satisfying (i) we have just to apply Vitali's Lemma to $\left\lbrace B_{r_x}(x)\right\rbrace_{x\in A_k\cap B_1(p)}$. 

    Let us now focus the attention on a single ball $B_{20r_i}(x_i)\subset X$.
	\Cref{rm:epsclosedeltascale} yields the existence of a $\delta$-splitting map
	\begin{equation*}
		u^i:B_{5r_i}(x_i)\to \R^k.
	\end{equation*}
	Thanks to \Cref{cor:scale invariant version of propagation} we can find $G_i\subset B_{5r_i}(x_i)$ with 
	\begin{equation}\label{z5}
	\haus_5^{h}(B_{5r_i}(x_i)\setminus G_i)\le C_N\sqrt{\delta}\frac{\mm(B_{5r_i}(x_i))}{5r_i}
	\end{equation}
    and such that $u^i:B_s(x)\to \R^k$ is a $C_N\delta^{1/4}$-splitting map for any $x\in G_i$ and any $0<s<5r_i$.
	Applying \Cref{cor:deltasplitclose}, up to assuming $\delta$ small enough, we deduce that at any $x\in G_i$ \eqref{z3} holds true.\\
	To conclude let us verify that $G:=\cup_{i\in\N} G_i$ satisfies \eqref{z4}. 
	Using (iii), \eqref{z5} and the Bishop-Gromov inequality \eqref{eq:BishopGromovInequality} we get
    \begin{align*}
    \haus_5^{h}(A_k\setminus G)\le & \sum_{i\in\N}\haus_{5}^{h_1}(B_{5r_i}(x_i)\setminus G_i)
    \le \sum_{i\in\N}C_N\sqrt{\delta}\frac{\mm(B_{5r_i}(x_i))}{5r_i}\\
    \le & C_N\sqrt{\delta}\sum_{i\in\N} \frac{\mm(B_{r_i}(x_i))}{r_i}
    \le C_N\sqrt{\delta}\sum_{i\in\N} \Per(E, B_{r_i}(x_i))\\
    \le & C_N\sqrt{\delta} \Per(E, B_2(p)).
    \end{align*}
   Since we can assume $\delta<\eta^2$ we get the sought estimate.
\end{proof}

Let $(X,\sfd,\mm)$ be an $\RCD(K,N)$ metric measure space and $E\subset X$ a set of locally finite perimeter. 
For any $k=1,\ldots,n$, where $n$ is the essential dimension of $(X,\sfd,\mm)$, we set 
\begin{equation*}
\mathcal{F}_kE:=\left\lbrace x\in X\;:\;\Tan_x(X,\sfd,\mm,E)=\left\lbrace (\R^k,\sfd_{eucl},c_k\Leb^k,0^k,\left\lbrace x_k>0\right\rbrace )\right\rbrace \right\rbrace .
\end{equation*}
We know thanks to \Cref{th:uniqueness} that $\Per(E,\cdot)$ is concentrated on $\mathcal{F}E:=\cup_{k=1}^n\mathcal{F}_kE$ and, from now on, we shall call $\mathcal{F}E$ the reduced boundary of $E$. 

The result about uniqueness of tangents that we just proved allows to obtain a representation formula for the perimeter measure in terms of the codimension-$1$ Hausdorff measure. 

\begin{corollary}\label{cor:reprper}
Let $(X,\sfd,\mm)$ be an $\RCD(K,N)$ m.m.s.\ with essential dimension $n$. Let $E\subset X$ be a set of locally finite perimeter. Then 
\begin{equation}\label{eq:reprperimeter}
\abs{D\nchi_E}=\sum_{k=1}^{n}\frac{\omega_{k-1}}{\omega_k}\haus^{h}\res\mathcal{F}_kE.
\end{equation}
\end{corollary}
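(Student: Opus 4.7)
The plan is to combine uniqueness of tangents (\Cref{th:uniqueness}) with the absolute continuity $|D\nchi_E|\ll\haus^h$ from \Cref{lemma:AssolutacontinuitaHauss}, and to compute the Radon--Nikodym density of $|D\nchi_E|$ with respect to $\haus^h$ via a blow-up argument at $|D\nchi_E|$-typical points of each $\mathcal{F}_kE$.

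By \Cref{th:uniqueness}, the sets $\mathcal{F}_kE$, $k=1,\dots,n$, are pairwise disjoint Borel subsets of $X$ and $|D\nchi_E|$ is concentrated on $\mathcal{F}E=\bigsqcup_{k=1}^n\mathcal{F}_kE$. In view of \Cref{lemma:AssolutacontinuitaHauss}, it will be enough to prove, for each $k$, the asymptotic density
\[
\lim_{r\searrow 0}\frac{r\,\Per(E,B_r(x))}{\mm(B_r(x))}=\frac{\omega_{k-1}}{\omega_k}
\quad\text{for $|D\nchi_E|$-a.e.\ }x\in\mathcal{F}_kE,
\]
and then to convert this pointwise information into \eqref{eq:reprperimeter} by a measure-differentiation argument.

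To establish the density, I would combine \Cref{th:uniqueness} with the strong convergence of perimeter measures on tangents recalled immediately after \Cref{def:tan} (see \cite{ABS18}). These give that, at $|D\nchi_E|$-a.e.\ $x\in\mathcal{F}_kE$, the rescaled pointed spaces $(X,r^{-1}\sfd,\mm(B_r(x))^{-1}\mm,x,E)$ converge as $r\searrow 0$ to $(\R^k,\sfd_{eucl},\omega_k^{-1}\Leb^k,0^k,\{x_k>0\})$ together with perimeter measures (using the equivalent normalization of \Cref{remark: limit of different normalization}). Tracking the scaling of $\Per$ under this metric/measure renormalization, the ratio $r\Per(E,B_r(x))/\mm(B_r(x))$ equals precisely the rescaled perimeter of $E$ on the unit ball of the rescaled space, and converges to $\omega_k^{-1}\haus^{k-1}(B_1(0)\cap\{x_k=0\})=\omega_{k-1}/\omega_k$; weak convergence on the open ball $B_1(0)$ is legitimate because the limit perimeter measure is supported on $\{x_k=0\}$, which meets $\partial B_1(0)$ on a $\haus^{k-1}$-null set.

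To deduce \eqref{eq:reprperimeter} from the pointwise density, I would run a standard Vitali-type covering argument in the locally doubling space $(X,\sfd,\mm)$: given any Borel set $A\subset\mathcal{F}_kE$ and $\eps>0$, I would cover the full $|D\nchi_E|$-measure subset of $A$ on which the density exists by arbitrarily fine disjoint balls $\{B_{r_i}(x_i)\}_i$ along which $r_i\Per(E,B_{r_i}(x_i))/\mm(B_{r_i}(x_i))$ lies within $\eps$ of $\omega_{k-1}/\omega_k$; summing the gauges $h(B_{r_i}(x_i))=\mm(B_{r_i}(x_i))/r_i$ and letting $\eps\searrow 0$ then yields both inequalities comparing $|D\nchi_E|(A)$ with $(\omega_{k-1}/\omega_k)\haus^h(A)$. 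The main obstacle I foresee lies precisely in this last step: since the gauge $h$ is not countably subadditive and $\haus^h$ is not a priori doubling, one has to carefully combine the upper density bound with the absolute continuity $|D\nchi_E|\ll\haus^h$, so as to ensure that the exceptional sets where the density might fail to exist are $\haus^h$-negligible on $\mathcal{F}_kE$ and hence do not spoil either direction of the comparison.
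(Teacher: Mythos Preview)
Your density computation at points of $\mathcal{F}_kE$ is correct and coincides with the paper's. The gap is exactly where you locate it: turning the pointwise limit
\[
\lim_{r\searrow 0}\frac{r\,|D\nchi_E|(B_r(x))}{\mm(B_r(x))}=\frac{\omega_{k-1}}{\omega_k}
\]
into the identity of measures \eqref{eq:reprperimeter} by a bare Vitali covering does not go through cleanly. A Vitali argument applied to the gauge $h(B_r(x))=\mm(B_r(x))/r$ produces comparisons between $|D\nchi_E|$ and $\haus^h$ only up to the doubling constant of $\mm$, not with the sharp factor $\omega_{k-1}/\omega_k$; and, as you note, one has no a~priori $\sigma$-finiteness or differentiation theory for $\haus^h\res\mathcal{F}_kE$ that would let you remove that constant. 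The absolute continuity $|D\nchi_E|\ll\haus^h$ from \Cref{lemma:AssolutacontinuitaHauss} does not by itself guarantee that the exceptional set is $\haus^h$-negligible on $\mathcal{F}_kE$, so neither direction of the sharp comparison is secured.

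The paper resolves this differently. It upgrades the pointwise density to the Federer-type density
\[
\lim_{r\searrow 0}\ \sup_{y,\,s:\ B_s(y)\subset B_r(x)}\frac{s\,|D\nchi_E|(B_s(y))}{\mm(B_s(y))}=\frac{\omega_{k-1}}{\omega_k},
\]
i.e.\ the convergence is uniform over nearby centres and all smaller radii; this follows from the rigid structure of the tangent at points of $\mathcal{F}_kE$ (near such $x$ the whole configuration, not merely the single ratio at $x$, is close to the Euclidean half-space model). With this stronger density in hand the paper invokes the measure-theoretic area formula of Magnani \cite[Theorem~3]{Magnani15}, which is precisely the abstract tool converting a sup-density of this kind into the exact representation $\mu=\theta\,\haus^h$ for Carath\'eodory measures built from a gauge, and which sidesteps the subadditivity and doubling issues you flagged. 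So the missing ingredients in your proposal are the sup-density estimate and the appeal to Magnani's theorem (or an equivalent abstract differentiation result for $\haus^h$); the pointwise density alone is not known to suffice.
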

\begin{proof}
The proof can be obtained as in the case of the representation formula for the perimeter on non-collapsed spaces obtained in \cite[Corollary 4.7]{ABS18} relying on \cite[Theorem 3]{Magnani15} in place of \cite[Theorem 5]{Magnani15}. We just report here the key computation.

If $x\in\mathcal{F}_kE$, then we can compute 
\begin{align*}
\lim_{r\to 0}\frac{r\abs{D\nchi_E}(B_r(x))}{\mm(B_r(x))}=&\lim_{r\to 0}\frac{r\abs{D\nchi_E}(B_r(x))}{C(x,r)}\cdot\frac{C(x,r)}{\mm(B_r(x))}=\lim_{r\to 0}\frac{\abs{D^r\nchi_E}(B_1(x))}{\mm_x^r(B_1(x))}\\
=&\frac{\haus^{k-1}(B_1(0))}{\haus^{k}(B_1(0))}=\frac{\omega_{k-1}}{\omega_k},
\end{align*}
where the regularity of the point and the weak convergence of the rescaled perimeter measures to the perimeter measure of a half-space play a role.\\
This computation, together with the rigid structure of the tangent, allows then to infer, arguing as in the non-collapsed case, that
\begin{equation*}
\lim_{r\to 0}\sup_{x\in B_s(y),\ s\le r}\frac{s\abs{D\nchi_E}(B_s(y))}{\mm(B_s(y))}=\frac{\omega_{k-1}}{\omega_k},
\end{equation*}
which is the needed density estimate in order to obtain the representation formula \eqref{eq:reprperimeter}.
\end{proof}

\section{Rectifiability of the reduced boundary}\label{sec:bdryrect}
The main achievement of this section is a rectifiability result for the reduced boundary of sets with finite perimeter. With this theorem we complete the picture about the generalization of De Giorgi's theorem to the framework of $\RCD(K,N)$ spaces.

\begin{theorem}\label{th: rectifiability}
	Let $(X,\sfd,\mm)$ be an $\RCD(K,N)$ m.m.s.\ and $E\subset X$ be a set of locally finite perimeter. Then, for any $k=1,\dots,n$, $\mathcal{F}_k E$ is $\big(\abs{D\nchi_E},(k-1)\big)$-rectifiable.
\end{theorem}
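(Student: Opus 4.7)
Following the strategy announced in the introduction, the plan is to produce, at $|D\nchi_E|$-a.e.\ point $x\in\mathcal F_kE$, a $\delta$-splitting map $u=(u_1,\dots,u_k)\colon B_r(x)\to\R^k$ whose last component is aligned with the unit normal $\nu_E$ from \Cref{thm:Gauss-Green} and whose first $k-1$ components are $\delta$-orthogonal to $\nu_E$, and then to show that $(u_1,\dots,u_{k-1})$ restricts to a bi-Lipschitz map from a large piece of $\mathcal F_kE$ onto a subset of $\R^{k-1}$. A Vitali exhaustion of $\mathcal F_kE$ by such charts will then yield the desired $(|D\nchi_E|,k-1)$-rectifiability.

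\emph{Construction.} Fix $x\in\mathcal F_kE$. By \Cref{th:uniqueness} the only tangent at $x$ is the Euclidean $k$-half-space, so \Cref{rm:epsclosedeltascale} produces, for each $\delta>0$ and each sufficiently small $r>0$, a $\delta$-splitting map $u=u^{x,r}\colon B_{5r}(x)\to\R^k$ whose gradients, read in the pmGH identification, correspond to the canonical basis of $\R^k$, with $\nabla u_k$ pointing transversely to the half-space. The strong approximation $v_n\to\nu_E$ in $L^2_E(TX)$ of \Cref{thm:Gauss-Green}, combined with a rotation in the target $\R^k$, lets me further impose the base-scale $\delta$-orthogonality
\[
\frac{r}{|D\nchi_E|(B_r(x))}\int_{B_r(x)}\bigl|\langle\bar\nabla u_j,\nu_E\rangle\bigr|\,\d|D\nchi_E|<\delta,\qquad j=1,\dots,k-1.
\]

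\emph{Propagation.} Two weighted maximal-function arguments, in the spirit of the one used to prove \Cref{th:uniqueness}, propagate both conditions down to every sub-scale on large subsets. \Cref{cor:scale invariant version of propagation} yields a set $G_1\subset B_{2r}(x)$ with $\haus^h_5(B_{2r}(x)\setminus G_1)\le C_N\sqrt\delta\,\mm(B_{2r}(x))/r$ on which $u$ is a $C_N\delta^{1/4}$-splitting map at every scale $s<r$. The analogous Vitali argument applied to the density
\[
y\mapsto\sup_{0<s<r}\frac{s}{|D\nchi_E|(B_s(y))}\int_{B_s(y)}\bigl|\langle\bar\nabla u_j,\nu_E\rangle\bigr|\,\d|D\nchi_E|,
\]
relying on $|D\nchi_E|\ll\haus^h$ from \Cref{lemma:AssolutacontinuitaHauss} and on the representation \Cref{cor:reprper} to compare $|D\nchi_E|$ of enlarged balls with their $\haus^h$-content, yields a further set $G_2$ of small $|D\nchi_E|$-deficit on which the $\delta$-orthogonality persists at every scale up to a loss $\delta^{1/4}$. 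Setting $G:=G_1\cap G_2\cap\mathcal F_kE$, one has $|D\nchi_E|(\mathcal F_kE\cap B_r(x)\setminus G)<\eta$ for $\delta$ small enough, and by \Cref{cor:deltasplitclose} the blow-ups of $(X,E)$ at every $y\in G$ and every scale $s<r$ are $\eps(\delta)$-close to the Euclidean $k$-dimensional half-space with $\nu_E$ corresponding to $e_k$.

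\emph{Bi-Lipschitz chart and conclusion.} Consider $\varphi:=(u_1,\dots,u_{k-1})$ restricted to $G$: it is $C_N$-Lipschitz. For distinct $y,z\in G$ set $s:=\sfd(y,z)$. The splitting orthogonality relations among the $\bar\nabla u_a$, combined with the propagated $\delta^{1/4}$-orthogonality of $\bar\nabla u_1,\dots,\bar\nabla u_{k-1}$ to $\nu_E$, force $\bar\nabla u_k$ to be nearly parallel to $\nu_E$ on $\mathcal F_kE\cap B_s(y)$; hence the variation of $u_k$ along the reduced boundary at scale $s$ is $\eps(\delta)\,s$, giving $|u_k(z)-u_k(y)|\le\eps(\delta)\,s$. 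Coupled with the near-isometry $\sum_{a=1}^k|u_a(z)-u_a(y)|^2\ge(1-\eps(\delta))\,s^2$ obtained by comparing the scale-$s$ picture with the Euclidean model via \Cref{cor:deltasplitclose}, this forces $|\varphi(z)-\varphi(y)|\ge\tfrac12 s$ for $\delta$ small enough. Therefore $\varphi$ realises $G$ as bi-Lipschitz to a subset of $\R^{k-1}$, and a Vitali exhaustion of $\mathcal F_kE\cap B_r(x)$ with $\eta\downarrow 0$ yields the countable decomposition needed. The delicate step is precisely the lower Lipschitz bound: the $L^2_E(TX)$-orthogonality between $\bar\nabla u_j$ and $\nu_E$ must be converted into pointwise separation along $\partial E$, which is what forces one to combine the codimension-$1$ absolute continuity of $|D\nchi_E|$, the $L^2$ Hessian bound for harmonic functions, the test-vector-field approximation of $\nu_E$ from \Cref{thm:Gauss-Green}, and the rigidity of tangents at points of $\mathcal F_kE$ from \Cref{th:uniqueness}.
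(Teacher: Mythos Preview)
Your overall strategy matches the paper's: build at $|D\nchi_E|$-a.e.\ point of $\mathcal F_kE$ a splitting map whose first $k-1$ components are $\delta$-orthogonal to $\nu_E$, propagate both the splitting property and the orthogonality by weighted maximal arguments, and conclude that these $k-1$ components give a bi-Lipschitz chart on a large piece of the reduced boundary. The paper works directly with $(k-1)$-component maps rather than your $k$-component map, but that is cosmetic. Two of your steps, however, contain genuine gaps.

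\textbf{Construction.} The sentence ``a rotation in the target $\R^k$ lets me further impose the base-scale $\delta$-orthogonality'' does not do what you need: a target rotation can only arrange that the \emph{averages} $\int_{B_r(x)}\langle\bar\nabla u_j,\nu_E\rangle\,\d|D\nchi_E|$ vanish for $j<k$, which is strictly weaker than the $L^1$-smallness you claim. In the paper this step is \Cref{prop:good aproximation}, and its proof is substantial: one first shows (via the codimension-$1$ maximal argument and \Cref{lem:H_theta_null}) that $\nabla u_\alpha\cdot\nabla\phi$ has a well-defined constant trace $c_\alpha(x)$ on the boundary for every test function $\phi$; then one approximates $\nu_E$ by a test vector field $V$ using \Cref{thm:Gauss-Green} and applies the Gauss--Green formula itself to identify $c_\alpha(x)=0$. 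No rotation is involved; the orthogonality is inherited from the blow-up, but making this rigorous is exactly the content of that proposition.

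\textbf{Bi-Lipschitz bound.} Your claim ``$|u_k(z)-u_k(y)|\le\eps(\delta)\,s$'' does not follow from the fact that $\bar\nabla u_k$ is nearly parallel to $\nu_E$ in $L^2_E(TX)$: the tangent module over $\partial E$ carries only $|D\nchi_E|$-a.e.\ information and gives no direct control on the pointwise oscillation of $u_k$ between two specified boundary points. Likewise, the lower bound $\sum_a|u_a(z)-u_a(y)|^2\ge(1-\eps)\,s^2$ is not a consequence of \Cref{cor:deltasplitclose}, which concerns the structure of tangents, not the metric behaviour of the map $u$. The paper handles this (Step~1 of the proof of \Cref{prop:rectifiability}) by a compactness/contradiction argument: if the $(k-1)$-map failed to be an $\eta r$-GH isometry on $\supp|D\nchi_E|\cap B_r(x)$, one passes to the limit along a contradicting sequence, lands in the rigid Euclidean half-space with limit map $(x_1,\dots,x_{k-1})$ and limit boundary $\{x_k=0\}$, and the persisting defect gives a contradiction. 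This step uses the local strong BV convergence of $E$ to the half-space (guaranteed by condition (ii) in \Cref{def:quantitative boundary}) to place the limit points on $\{x_k=0\}$; your propagation of $(\mathcal F_kE)_{2s,\eps}$-membership is precisely what is needed here, but the conversion of integral orthogonality into a pointwise isometry requires this compactness step, not the direct estimate you sketch.
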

Let us recall that a set is $\big(\abs{D\nchi_E},\ell\big)$-rectifiable if up to a $\abs{D\nchi_E}$-negligible set it can be covered by $\cup_{i\in \N} A_i$ where any $A_i$ is bi-Lipschitz equivalent to a Borel subset of $\R^\ell$.

When specialized to the non-collapsed case (see \cite{DPG18}),
where the only non-empty regular set is the top dimensional one, \Cref{th: rectifiability} turns into:
\begin{corollary}\label{cor:Noncollapsed}
Let $(X,\sfd,\mm)$ be a $\ncRCD(K,N)$ m.m.s.\ and $E\subset X$ a set of locally finite perimeter. Then $\mathcal{F}E=\mathcal{F}_NE$ is $\left(\abs{D\nchi_E},N-1\right)$-rectifiable (equivalently, $\left(\mathcal{H}^{N-1},N-1\right)$-rectifiable, where $\mathcal{H}$ denotes the $(N-1)$-dimensional Hausdorff measure). Furthermore
\begin{equation}
\abs{D\nchi_E}=\mathcal{H}^{N-1}\res\mathcal{F}E.\footnote{In \cite{ABS18} it was proved that $\abs{D\nchi_E}=\mathcal{S}^{N-1}\res\mathcal{F}E$, where $\mathcal{S}$ denotes the spherical Hausdorff measure. Coincidence with the Hausdorff measure $\mathcal{H}$ is a consequence of rectifiability.}
\end{equation}
\end{corollary}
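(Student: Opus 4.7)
This corollary is the specialization of \Cref{th: rectifiability} and \Cref{cor:reprper} to the non-collapsed setting, combined with volume rigidity. I would proceed in three steps.

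First I would show that $\mathcal{F}E=\mathcal{F}_N E$ up to $|D\nchi_E|$-negligible sets. By the very definition of $\mathcal{F}_k E$, a point $x\in\mathcal{F}_k E$ admits a $k$-dimensional Euclidean half-space as tangent to $E$ at $x$; in particular the first four entries of any element of $\Tan_x(X,\sfd,\mm,E)$ form the rigid tangent $(\R^k,\sfd_{eucl},c_k\Leb^k,0^k)$, so $x\in\mathcal{R}_k$. The characterization of non-collapsed $\RCD(K,N)$ spaces in \cite{DPG18} forces every tangent (and iterated tangent) to be $N$-dimensional, so $\mathcal{R}_k=\emptyset$ for $k<N$. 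Hence $\mathcal{F}_k E=\emptyset$ for $k<N$, and \Cref{th:uniqueness} gives $\mathcal{F}E=\mathcal{F}_N E$ up to a $|D\nchi_E|$-null set.

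Second, the $(|D\nchi_E|,N-1)$-rectifiability of $\mathcal{F}_N E$ is then immediate from \Cref{th: rectifiability} applied with $k=N$, and the specialization of \Cref{cor:reprper} reads
\begin{equation*}
|D\nchi_E|=\frac{\omega_{N-1}}{\omega_N}\,\haus^{h}\res\mathcal{F}_N E.
\end{equation*}
In the non-collapsed setting $\mm=\haus^N$, and at every $x\in\mathcal{R}_N$ volume rigidity yields $\mm(B_r(x))/(\omega_N r^N)\to 1$, hence $h(B_r(x))/r^{N-1}\to\omega_N$ as $r\searrow 0$. A standard density / Radon-Nikod\'ym argument (of the type used in \cite{ABS18}, via \cite{Magnani15}) then identifies, on $\mathcal{F}_N E$, the Carath\'eodory construction $\haus^{h}$ with $\omega_N\,\mathcal{S}^{N-1}$, producing the identity $|D\nchi_E|=\mathcal{S}^{N-1}\res\mathcal{F}E$ already contained in \cite{ABS18}.

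Third, to upgrade $\mathcal{S}^{N-1}$ to $\haus^{N-1}$ I would invoke the rectifiability obtained in the first two steps: for $(N-1)$-rectifiable sets in an arbitrary metric space, the spherical Hausdorff measure and the diameter-based Hausdorff measure coincide (this is a classical consequence of the fact that, at $\haus^{N-1}$-almost every point of a rectifiable set, the tangent is flat and the diameter and radius normalisations match in the limit). This gives $|D\nchi_E|=\haus^{N-1}\res\mathcal{F}E$, which in turn proves the mutual absolute continuity $|D\nchi_E|\approx\haus^{N-1}$ on $\mathcal{F}E$, whence the equivalence of $(|D\nchi_E|,N-1)$- and $(\haus^{N-1},N-1)$-rectifiability is automatic.

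The only non-routine obstacle I foresee is the passage from the ball-based pre-measure $\haus^{h}$ (equivalently $\mathcal{S}^{N-1}$) to the diameter-based $\haus^{N-1}$: in a general metric space these measures need not agree, and it is exactly the $(N-1)$-rectifiability provided by \Cref{th: rectifiability} that unlocks their coincidence on $\mathcal{F}E$, as already acknowledged in the footnote to the statement.
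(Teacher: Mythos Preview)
Your proposal is correct and matches the paper's approach. The paper does not give an explicit proof of this corollary: it simply remarks that in the non-collapsed case ``the only non-empty regular set is the top dimensional one'' and then states the corollary, with the footnote handling the $\mathcal{S}^{N-1}$-versus-$\mathcal{H}^{N-1}$ issue exactly as you do; your three steps are just a careful unpacking of this.
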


\begin{remark}
	We point out that, given any \(\eps>0\), the maps providing rectifiability of the reduced boundary in \Cref{th: rectifiability} and \Cref{cor:Noncollapsed} can be taken $(1+\eps)$-bi-Lipschitz (compare with the analogous statement in the case of \cite{Mondino-Naber14}).

In particular, when $(X,\sfd,\mm)$ is non-collapsed,  $(X,\sfd,\abs{D\nchi_E})$ is a strongly $|D\nchi_E|$-rectifiable m.m.s.\ according to \cite{GP16}.  
\end{remark}

\begin{remark}
	It is worth mentioning that \Cref{th: rectifiability} is stronger than \cite[Theorem 1.1]{Mondino-Naber14}. Indeed, given an $\RCD(K,N)$ m.m.s.\ $(Z,\sfd_Z,\mm_Z)$ we can consider $X:=Z\times\R$ endowed with the product structure, and the set of finite perimeter $E:=\{(z,t)\in Z\times \R\,:\,t>0\}$. Applying \Cref{th: rectifiability} to $E\subset X$ we get the rectifiability result for $Z$.
\end{remark}

Let us outline the strategy of the proof of \Cref{th: rectifiability}.\\
First of all, up to intersecting with a ball and thanks to the locality of perimeter and tangents, we can assume that $E$ has finite measure and perimeter.\\
The bi-Lipschitz maps from subsets of $\mathcal{F}_kE$ to $\R^{k-1}$ providing rectifiability are going to be \textit{suitable approximations} of the $(k-1)$ coordinate maps over the hyperplane where the perimeter concentrates after the blow-up. Better said, they will be the first $(k-1)$ components of a $(k,\delta)$-splitting map ``$\delta$-orthogonal to the exterior normal $\nu_E$ to the boundary of $E$''.\\ 
Proving existence of these maps requires some technical work which builds upon the Gauss--Green formula \Cref{thm:Gauss-Green}. The rigorous statement is as follows.
\begin{proposition}\label{prop:good aproximation}
	Let $(X,\sfd,\mm)$ be an $\RCD(K,N)$ m.m.\ space and $E\subset X$ a set of finite perimeter and measure. For any $\delta>0$, $r_0>0$ and $|D\nchi_E|$-a.e.\ $x\in \mathcal{F}_kE$ there exist $r=r_{x,\delta}<r_0$ and a $\delta$-splitting map $u=(u_1,\ldots,u_{k-1}):B_r(x)\to \R^{k-1}$ such that
	\begin{equation*}
	\frac{r}{\mm(B_r(x))}\int_{B_r(x)}|\nu \cdot \nabla u_{\alpha}|\,\d|D\nchi_E|<\delta,
	\quad \text{for}\ \alpha=1,\ldots,k-1.
	\end{equation*}
\end{proposition}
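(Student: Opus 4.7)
The plan is a blow-up argument based on \Cref{th:uniqueness}, combined with the Gauss--Green formula (\Cref{thm:Gauss-Green}) and the harmonic approximation scheme behind \Cref{prop:epsclosedeltasplit}. Fix $x\in\mathcal{F}_k E$ that is typical in the following sense: the tangent to $(X,\sfd,\mm,E)$ at $x$ is the half-space model $(\R^k,\sfd_{eucl},c_k\Leb^k,0,H)$ with $H=\{x_k>0\}$, and the rescaled perimeter measures $|D^r\nchi_E|$ converge weakly to $|D\nchi_H|$ along every sequence $r_n\downarrow 0$ (as recalled after \Cref{def:tan} via \cite[Corollary 4.10]{ABS18}). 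These conditions are $|D\nchi_E|$-a.e.\ satisfied.

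For $r_n\downarrow 0$ I would use \Cref{prop:harmonic approximation} in the rescaled picture to produce harmonic maps $u^n=(u^n_1,\dots,u^n_{k-1})\colon B_{2 r_n}(x)\to\R^{k-1}$ whose rescalings converge strongly in $H^{1,2}$ to the tangential coordinate functions $(x_1,\dots,x_{k-1})$ on $\R^k$. Following the compactness argument of \Cref{prop:epsclosedeltasplit}---combining the Lipschitz estimate for harmonic functions on $\RCD(K,N)$ spaces from \cite{Jang14}, the $L^1$-stability of $\nabla u^n_\alpha\cdot\nabla u^n_\beta$ from \Cref{thm:stabilityLaplacian}, and the local Bochner-based Hessian bound \eqref{eq:local Hessian estimate}---the map $u^n$ is a $\delta_n$-splitting map on $B_{r_n}(x)$ with $\delta_n\to 0$, so $\delta_n<\delta$ eventually.

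The signed orthogonality comes directly from the Gauss--Green formula. Choosing a cut-off $\phi$ as in \Cref{lemma:good cut-off} with $\phi\equiv 1$ on $B_{r_n}(x)$ and supported in $B_{2 r_n}(x)$, and applying \Cref{thm:Gauss-Green} to the vector field $\phi\,\nabla u^n_\alpha$, harmonicity $\Delta u^n_\alpha=0$ yields
\[
\int\phi\,\langle\tr_E(\nabla u^n_\alpha),\nu_E\rangle\,d|D\nchi_E|=-\int_E\nabla\phi\cdot\nabla u^n_\alpha\,d\mm.
\]
The right-hand side passes to the blow-up limit (by strong $L^2$ convergence of $\nabla u^n_\alpha$ from \Cref{thm:stabilityLaplacian} and $L^1$-convergence of $\nchi_E$ from \Cref{prop:compactness sets with finite perimeter}) to $-\int_H\partial_{x_\alpha}\phi_\infty\,d\Leb^k$, which vanishes for $\alpha<k$ by integration by parts in $\R^k$, since $H=\{x_k>0\}$ has no boundary in the tangential $x_\alpha$-directions.

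The main obstacle is upgrading this signed control to an absolute-value control. My plan is to bound by Cauchy--Schwarz,
\[
\left(\int|\nu_E\cdot\nabla u^n_\alpha|\,d|D\nchi_E|\right)^2\le|D\nchi_E|(B_{r_n}(x))\int|\nu_E\cdot\nabla u^n_\alpha|^2\,d|D\nchi_E|,
\]
and then apply the pointwise Pythagorean identity
\[
\int|\nu_E\cdot\nabla u^n_\alpha|^2\,d|D\nchi_E|=\int|\nabla u^n_\alpha|^2\,d|D\nchi_E|-\int|\nabla u^n_\alpha-\langle\nabla u^n_\alpha,\nu_E\rangle\nu_E|^2\,d|D\nchi_E|,
\]
arguing that in the blow-up both terms on the right converge to the same positive constant (the $|D\nchi_H|$-mass of $\partial H\cap B_1$): the first because the splitting property forces $|\nabla u^n_\alpha|\to 1$ in a trace sense along $\partial E$, the second because $e_\alpha$ is fully tangent to $\partial H$ for $\alpha<k$. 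The delicate point is precisely passing these trace-weighted integrals against the singular-with-respect-to-$\mm$ measure $|D\nchi_E|$ to the blow-up limit; the ingredients I expect to need are the capacitary quasi-continuous representative machinery of \Cref{subsection:modules}, the density of test vector fields in $L^2_E(TX)$ provided by \Cref{lem:density_TestVE}, and a weighted maximal function argument \emph{à la} \Cref{proposition:propagation} that forces $|\nabla u^n|^2\approx 1$ on a large enough portion of $\partial E$ to control the trace.
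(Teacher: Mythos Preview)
Your blow-up setup and the signed-integral computation via Gauss--Green are correct and match the paper's Step~3 and part of its Step~2. The gap is in your plan for upgrading to the absolute-value control. The Pythagorean identity
\[
\int|\nu_E\cdot\nabla u^n_\alpha|^2\,\d|D\nchi_E|
=\int|\nabla u^n_\alpha|^2\,\d|D\nchi_E|
-\int\big|\nabla u^n_\alpha-\langle\nabla u^n_\alpha,\nu_E\rangle\,\nu_E\big|^2\,\d|D\nchi_E|
\]
is a tautology, so the whole argument rests on proving that the two terms on the right converge to the \emph{same} limit. But the second term already contains $\langle\nabla u^n_\alpha,\nu_E\rangle$, and your justification that it has full limit ``because $e_\alpha$ is fully tangent to $\partial H$'' presupposes that $\nu_E$ converges in some trace sense to $e_k$ along the blow-up. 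No such convergence is available: $\nu_E$ is only an element of $L^2_E(TX)$ with no extra regularity, and the framework offers no stability result for the exterior normal under blow-up. The tools you list (quasi-continuous representatives, density of $\TestV_E(X)$, maximal arguments) do not supply this; they control traces of \emph{Sobolev-regular} quantities, which $\nu_E$ is not.

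The paper's proof avoids this circularity by inserting a fixed approximation: one first picks $V\in\TestV(X)$ with $\int_{B_2(p)}|\nu_E-V|^2\,\d|D\nchi_E|\le\eps$ (possible by \Cref{lem:density_TestVE}), and a maximal argument (\Cref{lemma:appr}) localizes this so that $\frac{r}{\mm(B_r(x))}\int_{B_r(x)}|\nu_E-V|^2\,\d|D\nchi_E|\le\sqrt\eps$ at $\haus^h$-most points. Now $V=\sum_i\eta_i\nabla\phi_i$ with $\phi_i\in\Test(X)$, so $\nabla u^n_\alpha\cdot V$ has $L^2$ gradient controlled by $|\Hess u^n_\alpha|+|\Hess\phi_i|$; a weighted maximal/telescopic argument (the paper's Step~1) then shows $\nabla u^n_\alpha\cdot V\to c_\alpha(x)$ in $L^2(|D\nchi_E|)$ along the blow-up. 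Your signed Gauss--Green computation, applied with $V$ in place of $\nu_E$, shows $|c_\alpha(x)|\le C\eps^{1/4}$. Combining gives $\limsup\frac{r}{\mm(B_r(x))}\int|\nu_E\cdot\nabla u^n_\alpha|\,\d|D\nchi_E|\le C\eps^{1/4}$, and one concludes by sending $\eps\to 0$ along a sequence. The missing idea in your proposal is this replacement of $\nu_E$ by a regular $V$ \emph{before} taking any trace limits.
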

The second step in the proof of \Cref{th: rectifiability} is showing that the map built in \Cref{prop:good aproximation} is indeed bi-Lipschitz with its image if restricted to suitable subsets of $\mathcal{F}_kE$ (see \Cref{prop:rectifiability} below for the rigorous statement). These subsets are obtained collecting points $x\in \mathcal{F}_kE$ such that $B_s(x)\cap E$ is $\eps$-close, in a suitable sense, to $B_s(0^k)\cap\left\lbrace x_k>0\right\rbrace$ for any $s\le r_0$, where $r_0>0$ is a fixed radius. 

\begin{definition}\label{def:quantitative boundary}
Given $\eps>0$ and $r_0>0$, we define $(\mathcal{F}_kE)_{r_0,\eps}$ as the set of points $x\in \mathcal{F}_kE$ satisfying
\begin{itemize}
	\item[(i)] $\sfd_{pmGH}\left( \left(X,s^{-1}\sfd,\frac{\mm}{\mm(B_s(x))},x\right), \left( \R^k,\sfd_{\text{eucl}}, \frac{1}{\omega_k}\Leb^k, 0^k\right)\right)<\eps$ for any $s\le r_0$;
	\item [(ii)]
	\begin{equation}
	\abs{\frac{\mm(B_s(x)\cap E)}{\mm(B_s(x))}-\frac{1}{2}}+\abs{\frac{s\abs{D\nchi_E}(B_s(x))}{\mm(B_s(x))}-\frac{\omega_{k-1}}{\omega_{k}}}<\eps\quad \text{for any $s\le r_0$}.
	\end{equation}
\end{itemize}
\end{definition}
Observe that, as a consequence of \Cref{th:uniqueness} and \Cref{remark: limit of different normalization}, for any $\eps>0$ we have 
\begin{equation*}
	\mathcal{F}_kE=\bigcup_{0<r<1}(\mathcal{F}_kE)_{r,\eps}
	\quad\text{and}\quad
	(\mathcal{F}_kE)_{r,\eps}\subset(\mathcal{F}_kE)_{r',\eps}\ \text{for}\ r'<r.
\end{equation*}
Hence for any $\eta>0$ there exists $r=r(\eta)>0$ such that
\begin{equation}\label{z11}
	\abs{D\nchi_E}\big(\mathcal{F}_k E\setminus (\mathcal{F}_kE)_{s,\eps}\big)<\eta,
	\quad \text{for any }\ 0<s<r.
\end{equation}
\begin{proposition}\label{prop:rectifiability}
	Let $N>1$, $K\in\R$ and $k\in [1,N]$ be fixed. For any $\eta>0$ there exists $\eps=\eps(\eta,N)<\eta$ such that, if $(X,\sfd,\mm)$ is an $\RCD( K,N)$ m.m.s., $E\subset X$ is a set of finite perimeter and finite measure, $p\in (\mathcal{F}_k E)_{2s,\eps}$ for some $s\in (0,|K|^{-1/2})$ and there exists an $\varepsilon$-splitting map $u:B_{2s}(p)\to\R^{k-1}$ such that
	\begin{equation}\label{eq:ort}
	\frac{s}{\mm(B_{2s}(x))}
	\int_{B_{2s}(x)}\abs{\nu\cdot \nabla u_a}\,\d|D\nchi_E|<\eps,
	\quad\text{for any
		$a=1,\ldots,k-1$,}
	\end{equation}
	then there exists $G\subset B_s(p)$ that satisfies:
	\begin{itemize}
		\item[(i)] $G\cap (\mathcal{F}_k E)_{2s,\eps}$ is bi-Lipschitz to a Borel subset of $\R^{k-1}$. More precisely,
		\begin{equation}\label{z1}
		\big||u(x)-u(y)|-\sfd(x,y)\big|\le C_N \eta\,\sfd(x,y),
		\quad \forall x,y\in (\mathcal{F}_kE)_{2s,\eps}\cap G;
		\end{equation}
		\item[(ii)] $\haus_5^{h}(B_s(p)\setminus G)< C_N\eta \frac{\mm(B_s(p))}{s}$.
	\end{itemize}
\end{proposition}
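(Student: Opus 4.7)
The plan is to propagate both the splitting condition and the orthogonality condition \eqref{eq:ort} down to every scale via a weighted maximal function argument (mirroring \Cref{proposition:propagation}), and then, on the intersection of the resulting good set with $(\mathcal{F}_kE)_{2s,\eps}$, compare $u$ to a linear projection onto $\R^{k-1}$ via a blow-up/compactness argument.

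For the construction of $G$ I would define
\[
\mathcal B := \Bigl\{x\in B_s(p) : \sup_{0<\rho<s}\rho^{2}\fint_{B_\rho(x)}\sum_a|\Hess u_a|^2\,\d\mm>\sqrt{\eps}\Bigr\}\cup\Bigl\{x\in B_s(p) : \sup_{0<\rho<s}\frac{\rho\int_{B_\rho(x)}\sum_a|\nu\cdot\nabla u_a|\,\d|D\nchi_E|}{\mm(B_\rho(x))}>\sqrt{\eps}\Bigr\}
\]
and $G := B_s(p)\setminus\mathcal B$. Two Vitali coverings---the first powered by the global Hessian bound $\int_{B_{2s}(p)}|\Hess u_a|^2\,\d\mm\le C\eps\,\mm(B_{2s}(p))/s^2$ coming from $u$ being an $\eps$-splitting on $B_{2s}(p)$, the second powered directly by \eqref{eq:ort}---yield $\haus_5^h(\mathcal B)\le C_N\sqrt{\eps}\,\mm(B_s(p))/s$; taking $\eps\le(\eta/C_N)^2$ gives (ii). By \Cref{cor:scale invariant version of propagation}, for every $y\in G$ the map $u$ is a $C_N\eps^{1/4}$-splitting on every $B_\rho(y)$ with $\rho<s$, and simultaneously $\rho\int_{B_\rho(y)}|\nu\cdot\nabla u_a|\,\d|D\nchi_E|\le\sqrt{\eps}\,\mm(B_\rho(y))$ at all such scales.

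For (i), fix $x,y\in G\cap(\mathcal{F}_kE)_{2s,\eps}$ and set $r:=\sfd(x,y)<2s$. Consider the rescaled pointed space $(X,r^{-1}\sfd,\mm_x^r,x)$. Since $x\in (\mathcal{F}_kE)_{2s,\eps}$, \Cref{def:quantitative boundary} together with \Cref{prop:compactness sets with finite perimeter} force this space to be pmGH-close to $(\R^k,\sfd_{\rm eucl},c_k\Leb^k,0^k)$, the rescaled set $E$ to be $L^1_{\rm loc}$-close to $\{x_k>0\}$, and the rescaled perimeter measures to be weakly close (in duality with $\Cbs$) to the perimeter of this half-space. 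By the previous step, $\tilde u:=(u-u(x))/r$ is a uniformly $C_N\eps^{1/4}$-splitting map at every scale in the rescaled ball, so \Cref{thm:AscoliArzela} and \Cref{thm:stabilityLaplacian} yield a subsequential limit $u_\infty:\R^k\to\R^{k-1}$ that is harmonic, with $\nabla u_\infty^a\cdot\nabla u_\infty^b=\delta_{ab}$ and $\Hess u_\infty^a=0$ (the last via the improved Bochner inequality, exactly as in the proof of \Cref{prop:auxpropdelta}); hence $u_\infty$ is an isometric linear projection onto a $(k-1)$-dimensional subspace $V\subseteq\R^k$. Passing the orthogonality condition to the limit against the weakly converging rescaled perimeter measures gives $\int_{B_R(0)}|e_k\cdot\nabla u_\infty^a|\,\d\haus^{k-1}\res\{x_k=0\}=0$ for each $a$, and since $\nabla u_\infty^a$ is constant this forces $V=\{x_k=0\}$; thus $u_\infty$ is the coordinate projection onto the first $k-1$ coordinates.

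Finally, $y$ lies at unit distance from $x$ in the rescaled space, and since $y\in(\mathcal{F}_kE)_{2s,\eps}$ the perimeter density condition of \Cref{def:quantitative boundary} at $y$ combined with the $L^1_{\rm loc}$ convergence of the rescaled $E$ pins the image of $y$ in the tangent $\R^k$ onto the boundary hyperplane $\{x_k=0\}$; thus $|u_\infty(y)-u_\infty(x)|=1$. Quantifying all the above with the explicit $\eps^{1/4}$-rates from Step 1 yields $\bigl||u(y)-u(x)|-r\bigr|\le C_N\eps^{\alpha}r$ for some exponent $\alpha>0$ depending only on $N$, and taking $\eps$ polynomially small in $\eta$ delivers \eqref{z1}. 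The principal obstacle is this final quantitative passage: converting the qualitative blow-up description into an explicit error of order $\eta$ requires marrying the $C^0$-proximity of $\tilde u$ to $u_\infty$ (obtained from the uniform splitting via a telescopic Poincaré argument as in \Cref{proposition:propagation}) with a quantitative use of the density condition in \Cref{def:quantitative boundary} to locate $y$ near the boundary hyperplane---and it is precisely here that the codimension-$1$ absolute continuity $|D\nchi_E|\ll\haus^{h_1}$ together with the $L^2$ Hessian bounds for harmonic functions on $\RCD(K,N)$ spaces becomes indispensable.
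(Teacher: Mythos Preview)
Your propagation step is essentially the paper's Step~2: a maximal-function/Vitali argument applied separately to the Hessian and to the orthogonality integrand yields a set $G$ with the correct $\haus_5^h$ bound, and on $G$ both conditions survive at every scale below $s$. (Minor remark: for the Hessian part you want the weight $\rho$, not $\rho^2$, so as to match the codimension-$1$ exponent in $\haus_5^h$; but this is cosmetic, and in any case you correctly invoke \Cref{cor:scale invariant version of propagation}.)

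The gap is in your treatment of (i). As written, you fix a single pair $x,y$, rescale by $r=\sfd(x,y)$, and speak of a ``subsequential limit $u_\infty$''---but there is no sequence: you have one rescaled space at one scale. What you actually need is a contradiction/compactness lemma of the form ``for every $\eta>0$ there exists $\delta=\delta(\eta,N)$ such that, whenever the quantitative closeness, splitting, and orthogonality conditions hold at level $\delta$ on $B_{2r}(x)$, the map $u$ restricted to $\supp|D\nchi_E|\cap B_r(x)$ is an $\eta r$-isometry''. This is exactly the paper's Step~1: one assumes failure, obtains a sequence of counterexamples with $\delta_n=1/n$, extracts a pmGH limit $(\R^k,\{x_k>0\})$, identifies the limit $u_\infty$ as the projection onto the first $k-1$ coordinates, and derives a contradiction from the fact that the limit points $z_1^\infty,z_2^\infty$ land on $\{x_k=0\}$. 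Once this lemma is in hand, Step~3 is immediate: for $x,y\in G\cap(\mathcal F_kE)_{2s,\eps}$ set $r=\sfd(x,y)$ and apply the lemma at scale $r$. Your own final paragraph flags exactly this quantification as ``the principal obstacle''; the resolution is not to chase explicit $\eps^\alpha$-rates but to repackage the argument as a qualitative $\forall\eta\,\exists\delta$ statement proved by contradiction.

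A secondary technical point: passing the orthogonality condition to the limit by integrating $|\nu\cdot\nabla u_a|$ against the weakly converging perimeter measures is delicate, since the integrand is not a fixed continuous test function. The paper avoids this by using the Gauss--Green formula to rewrite $\int_{E_n}\nabla\psi_n\cdot\nabla u_a^n\,\d\mm_n=-\int\psi_n\,\nu_{E_n}\cdot\nabla u_a^n\,\d|D\nchi_{E_n}|$; the right-hand side goes to zero by the orthogonality hypothesis, while the left-hand side is a volume integral of a product of $L^2$-strongly converging functions, so it converges to $\int_F\partial_{x_a}\psi_\infty$. This forces $F$ to be a half-space with boundary orthogonal to each $e_a$, i.e.\ $F=\{x_k>0\}$ after normalization via the density condition. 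Your heuristic is correct, but the actual passage to the limit goes through the volume side.
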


Let us now prove \Cref{th: rectifiability} assuming \Cref{prop:good aproximation} and \Cref{prop:rectifiability}.

\begin{proof}[Proof of \Cref{th: rectifiability}]
	Assume without loss of generality that $E$ has finite perimeter and measure, and that $\mathcal{F}_k E\subset B_2(p)$ for some $p\in X$. We claim that, for any $\eta>0$, we can decompose $\mathcal{F}_k E=G^{\eta}\cup B^{\eta}\cup R^{\eta}$, where $G^{\eta}$ is $(k-1)$-rectifiable and 
	\begin{equation}\label{z8}
		\haus^h_5(B^{\eta})+\abs{D\nchi_E}(R^{\eta})\le C_{N,K}\abs{D\nchi_E}(B_2(p))\eta + \eta.
	\end{equation}
	Observe that the claim easily gives the sought conclusion. Indeed, setting $\eta_i:=\eta 2^{-i}$, $G_{\eta}:=\cup_i G^{\eta_i}$ and $R_{\eta}:=\cup_{i\in\N} R^{\eta_i}$, $G_{\eta}$ is still $(k-1)$-rectifiable and it holds
	\begin{equation*}
		\haus_{5}^h((\mathcal{F}_kE\setminus G_{\eta})\setminus R_{\eta})=0,
	\end{equation*}
    hence, as a consequence of \Cref{lemma:AssolutacontinuitaHauss}, $\abs{D\nchi_E}(\mathcal{F}_kE\setminus G_{\eta})\setminus R_{\eta})=0$. Therefore
    \begin{equation*}
    	\abs{D\nchi_E}(\mathcal{F}_k E\setminus G_{\eta})\le \abs{D\nchi_E}(R_{\eta})\le C_N\abs{D\nchi_E}(B_2(p))\eta +\eta.
    \end{equation*}
    Setting $G:=\cup_{i\in\N} G_{2^{-i}}$, we get that $G$ is still $(k-1)$-rectifiable and coincides with $\mathcal{F}_kE$ up to a $\abs{D\nchi_E}$-negligible set.
      	
	Let us now prove the claim. To this aim fix $r>0$ and $\eps>0$. We cover $(\mathcal{F}_kE)_{r,\eps}$ with balls of radius smaller than $r/5$ with center in $(\mathcal{F}_kE)_{r,\eps}$ such that the assumptions of \Cref{prop:rectifiability} are satisfied. The possibility of building such a covering is a consequence of \Cref{th:uniqueness} and of \Cref{prop:good aproximation}. By Vitali's lemma, we can extract a disjoint family $\left\lbrace B_{r_i/5}(x_i)\right\rbrace _{i\in\N}$ such that $(\mathcal{F}_kE)_{r,\eps}\subset \cup_i B_{r_i}(x_i)$. Applying \Cref{prop:rectifiability} above, for any $i\in\N$ we can find $G_i\subset B_{r_i}(x_i)$ such that $G_i\cap (\mathcal{F}_k E)_{r,\eps}$ is $(k-1)$-rectifiable and $\haus_5^{h}(B_{r_i}(x_i)\setminus G_i)< C_N\eta \frac{\mm(B_{r_i}(x_i))}{r_i}$. Set $G_r^{\eta}:= (\mathcal{F}_k E)_{r,\eps}\cap (\cup_i G_i)$ and observe that
	\begin{align*}
	\haus_5^{h}((\mathcal{F}_k E)_{r,\eps}\setminus G^{\eta}_r)\le & \sum_{i\in\N}\haus_{5}^{h}(B_{r_i}(x_i)\setminus G_i)
	\le \sum_{i\in\N}C_N\eta\frac{\mm(B_{r_i}(x_i))}{r_i}\\
	\le & C_N\eta\sum_{i\in\N} \frac{\mm(B_{r_i/5}(x_i))}{r_i/5}
	\le C_{N,K}\eta \sum_{i\in\N} \abs{D\nchi_E}(B_{r_i/5}(x_i))\\
	\le & C_{N,K}\eta \abs{D\nchi_E}(B_2(p)),
	\end{align*}
	where we used the Bishop-Gromov inequality \eqref{eq:BishopGromovInequality} and
	\begin{equation*}
	\frac{\mm(B_{r_i/5}(x_i))}{r_i/5}\le C(k) \abs{D\nchi_E}(B_{r_i/5}(x_i)),
	\end{equation*}
	that holds true provided $\eps$ is small enough.\\
	Setting $B_r^{\eta}:=(\mathcal{F}_k E)_{r,\eps}\setminus G^{\eta}_r$, the argument above gives the decomposition
	\begin{equation*}
		(\mathcal{F}_k E)_{r,\eps}=G_r^{\eta}\cup B_r^{\eta},
	\end{equation*}
	where $G_r^{\eta}$ is $(k-1)$-rectifiable and $\haus_{5}^h(B_r^{\eta})\le C_{N,K}\eta \abs{D\nchi_E}(B_2(p))$. Let us now choose $r>0$ small enough to have \eqref{z11}. This allows us to write
	\begin{equation*}
		\mathcal{F}_k E=G_r^{\eta}\cup B_r^{\eta}\cup (\mathcal{F}_kE\setminus(\mathcal{F}_kE)_{r,\eps})
		=: G^{\eta}\cup B^{\eta}\cup R^{\eta}
	\end{equation*}
	and to conclude the proof.
\end{proof}

\subsection{Proof of \texorpdfstring{\Cref{prop:good aproximation}}{Prop}}\label{subsection:regularityextiriornormal}

Let us start by recalling that one of the main results of the previous part of the note was proving that the exterior normal is indeed an element of $L^2_E(TX)$ (see \Cref{thm:Gauss-Green}). In the following, to simplify the notation, we shall write $v$ in place of $\tr_E(v)$ for any $v\in H^{1,2}_C(TX)\cap D(\dive)$.

\begin{definition}\label{def:approximationoftheboundary}
	Let $(X,\sfd,\mm)$ be an $\RCD(K,N)$ m.m.s.\ and $E\subset X$ a set of finite perimeter. Given $x\in X$ and a sequence $r_i\downarrow 0$ we say that $\left\lbrace u^{r_i}:=(u_1^{r_i},\ldots,u_{k-1}^{r_i}):B_{r_i}(x)\to \R^{k-1}\right\rbrace _{i\in\N}$ is a good approximation of the boundary of $E$ at $x$ if the following conditions hold true:
	\begin{itemize}
		\item[(i)] there exists a sequence $\delta_i\to 0$ such that $u^{r_i}:B_{r_i}(x)\to \R^{k-1}$ is a $\delta_i$-splitting map with $u^{r_i}(x)=0$;
		\item[(ii)] there exists $(Z,\sfd_Z)$ that realizes the convergences
		\begin{equation*}
		(X,r_i^{-1}\sfd,\mm_x^{r_i},x)\to (\R^k,\sfd_{\text{eucl}},c_k\Leb^k,0^k)
		\quad \text{and}\quad E_{r_i}\to \left\lbrace x_k>0\right\rbrace \ \text{locally strongly in $BV$}
		\end{equation*}
		and $r_i^{-1} u_{\alpha}^{r_i}\to x_{\alpha}$ in $H^{1,2}$-strong on $B_1(0^k)$ along the sequence
		\begin{equation*}
		(X,r_i^{-1}\sfd,\mm_x^{r_i},x)\to (\R^k,\sfd_{eucl}, c_k\Leb^k,0^k),
		\end{equation*}
		for any $\alpha=1,\ldots,k-1$.
	\end{itemize}
\end{definition}
\begin{lemma}\label{lemma:appr}
	Let $(X,\sfd,\mm)$ be an $\RCD(K,N)$ m.m.\ space and $E\subset X$ a set of finite perimeter and finite measure. Then for any $p\in X$ and for any $\eps>0$ there exists $V\in \TestV(X)$ such that
	\begin{equation*}
	\int_{B_2(p)}|\nu-V|^2\,\d|D\nchi_E|\le \eps,
	\end{equation*}
	where $\nu$ is the exterior normal of $E$.\\
	Moreover, there exists $G\subset B_1(p)$ with $\haus^h(B_1(p)\setminus G)\le C_{K,N} \sqrt{\eps}$ and such that, for any $x\in G$, it holds
	\begin{equation*}
	\limsup_{r\to 0}\frac{r}{\mm(B_r(x))}\int_{B_r(x)}|\nu-V|^2\,\d|D\nchi_E|\le \sqrt{\eps}.
	\end{equation*}
\end{lemma}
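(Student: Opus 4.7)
The statement splits naturally into two independent pieces, and my plan is to handle them with very different ingredients: the first by a soft functional-analytic approximation, the second by a Vitali-type weighted maximal function argument adapted to the codimension-$1$ measure.

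For the existence of the test vector field $V \in \TestV(X)$, the plan is simply to invoke the density statement in \Cref{thm:Gauss-Green}, which produces a sequence $v_n \in \TestV_E(X)$ with $v_n \to \nu_E$ strongly in $L^2_E(TX)$. Unfolding the definition given in \Cref{lem:density_TestVE}, $\TestV_E(X) = \tr_E(\TestV(X))$, so each $v_n$ is of the form $\tr_E(V_n)$ for some $V_n \in \TestV(X)$. Choosing $n$ large enough that $\|v_n - \nu_E\|^2_{L^2_E(TX)} < \eps$ and setting $V := V_n$, the convention that identifies $V$ with $\tr_E(V)$ immediately gives
\[
\int_{B_2(p)} |\nu - V|^2 \, \d|D\nchi_E| \leq \int |\nu - \tr_E(V)|^2 \, \d|D\nchi_E| < \eps.
\]

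For the second assertion, set $f := |\nu - V|^2 \in L^1(|D\nchi_E|)$, so $\int f \, \d|D\nchi_E| < \eps$. Define the bad set
\[
B := \Bigl\{ x \in B_1(p) \,:\, \limsup_{r \to 0} \frac{r}{\mm(B_r(x))} \int_{B_r(x)} f \, \d|D\nchi_E| > \sqrt{\eps} \Bigr\},
\]
and let $G := B_1(p) \setminus B$. To estimate $\haus^h(B)$, fix $\delta \in (0,1)$: for every $x \in B$ there exists $r_x < \delta$ with $\frac{r_x}{\mm(B_{r_x}(x))} \int_{B_{r_x}(x)} f \, \d|D\nchi_E| > \sqrt{\eps}$, and the family $\{B_{r_x}(x)\}_{x \in B}$ is a fine cover of $B$. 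A Vitali $5$-covering selection yields a disjoint subfamily $\{B_{r_i}(x_i)\}_{i \in \N}$ with $B_{r_i}(x_i) \subset B_2(p)$ and $B \subset \bigcup_i B_{5 r_i}(x_i)$. Using the local doubling inequality \eqref{eq:BishopGromovInequality} to pass from $5 r_i$ to $r_i$,
\[
\haus^{h}_{10\delta}(B) \leq \sum_{i} \frac{\mm(B_{5 r_i}(x_i))}{5 r_i} \leq C_{K,N} \sum_{i} \frac{\mm(B_{r_i}(x_i))}{r_i} \leq \frac{C_{K,N}}{\sqrt{\eps}} \sum_{i} \int_{B_{r_i}(x_i)} f \, \d|D\nchi_E| \leq C_{K,N} \sqrt{\eps}.
\]
Letting $\delta \downarrow 0$ gives the desired estimate $\haus^h(B) \leq C_{K,N} \sqrt{\eps}$.

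The only point that requires any care is the (notational) identification of $V \in \TestV(X)$ with its trace $\tr_E(V) \in L^2_E(TX)$ so that the two parts of the statement speak about compatible objects; once the density output of \Cref{thm:Gauss-Green} is expressed at the level of $\TestV(X)$, both the integrability on $B_2(p)$ and the maximal-function-type estimate are straightforward. The structural reason why the argument closes is precisely the codimension-$1$ nature of the perimeter: the weight $r/\mm(B_r)$ is the inverse of $h_1(B_r)$, which is the density of $\haus^{h}$ one must pay in the Vitali covering, and it is the same mechanism that drives the propagation result \Cref{proposition:propagation}.
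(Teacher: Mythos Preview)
Your proof is correct and follows essentially the same approach as the paper's: the first part by density of test vector fields in $L^2_E(TX)$ via \Cref{thm:Gauss-Green} and \Cref{lem:density_TestVE}, and the second by the identical Vitali $5$-covering maximal argument with the codimension-$1$ weight. The only cosmetic difference is that you bound the full $L^2_E$-norm by $\eps$ (which is what the density statement gives anyway) rather than just the integral over $B_2(p)$.
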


\begin{proof}
	The first conclusion follows from \Cref{thm:Gauss-Green}, where we proved that the normal is an element of $L^2_E(TX)$, and \Cref{lem:density_TestVE}, yielding density of $\tr_E\left(\TestV(X)\right)$ in $L^2_E(TX)$.
	
	To prove the second part of the statement we set 
	\begin{equation*}
	G:=\left\lbrace x\in B_1(p)\;:\;\limsup_{r\downarrow 0}\frac{r}{\mm(B_r(x))}\int_{B_r(x)}\abs{\nu-V}^2\,\d|D\nchi_E|\le\sqrt{\eps}\right\rbrace. 
	\end{equation*}
	Then, for any $r_0>0$ and for any $x\in B_1(p)\setminus G$, there exists $r_x<r_0$ such that
	\begin{equation*}
	\frac{r_x}{\mm(B_{r_x}(x))}\int_{B_{r_x}(x)}\abs{\nu-V}^2\,\d|D\nchi_E|>\sqrt{\eps}.
	\end{equation*}
	Hence, applying Vitali's covering theorem we can find a disjoint set of balls $\left\lbrace B_{r_i}(x_i)\right\rbrace $ such that $\left\lbrace B_{5r_i}(x_i) \right\rbrace $ is a covering of $B_1(p)\setminus G$. Now we can estimate, for any $r_0>0$,
	\begin{align*}
	\haus_{5r_0}^h(B_1(p)\setminus G)&\le \sum_{i=0}^{\infty}\frac{\mm(B_{5r_i}(x_i))}{5r_i}
	\le C_{K,N}\sum_{i=0}^{\infty}\frac{\mm(B_{r_i}(x_i))}{r_i}\\
	&\le\frac{C_{K,N}}{\sqrt{\eps}}\sum_{i=0}^\infty\int_{B_{r_i}(x_i)}\abs{\nu-V}^2\,\d|D\nchi_E|\le \frac{C_{K,N}}{\sqrt{\eps}}\int_{B_2(p)}\abs{\nu-V}^2\,\d|D\nchi_E|\\
	&\le C_{K,N}\sqrt{\eps}.
	\end{align*}
	The conclusion follows letting $r_0\downarrow 0$.
\end{proof}

\begin{proof}[Proof of \Cref{prop:good aproximation}]
	The proof is divided in 3 steps. Aim of the first one is to prove that good approximations of the boundary are regular enough to guarantee that the scalar product between their gradient and the gradient of any given test function leaves a well-defined trace over the reduced boundary of $E$. In the second step we combine the outcome of the first one, the approximation result of \Cref{lemma:appr} and the \textit{orthogonality in weak sense} between the normal vector and the coordinates of its orthogonal hyperplane guaranteed by the Gauss--Green formula, to get that gradients of good approximations of the boundary leave a trace even when coupled with the normal to the boundary and that this trace is $0$. In the last step we prove existence of good approximations of the boundary and combine it with steps 1 and 2 to get the sought conclusion.

	\textbf{Step 1.} Observe that it suffices to restrict the attention to the ball $B_1(p)\subset X$, for any $p\in X$. 
	
	We claim that for any function $\phi\in\Test(X,\sfd,\mm)$ there exists a $\abs{D\nchi_E}$-negligible set ${\rm N}\subset\mathcal{F}_kE\cap B_1(p)$ such that, for any $x\in \mathcal{F}_kE\cap B_1(p)\setminus{\rm N}$ and any good approximation of the boundary of $E$ at $x$ with radii $r_i\downarrow 0$ and maps $\left\lbrace u^{r_i}:=(u^{r_i}_1,\ldots,u^{r_i}_{k-1})\,:\,B_{r_i}(x)\to \R^{k-1}\right\rbrace _{i\in \N}$, there exist a subsequence $r_{i_j}$ and $c(x)=(c_1(x),\dots,c_{k-1}(x))\in\R^{k-1}$ such that
	\begin{equation}\label{z21}
	\lim_{j\to\infty}\frac{r_{i_j}}{\mm(B_{r_{i_j}}(x))}\int_{B_{r_{i_j}}(x)}\abs{\nabla u^{r_{i_j}}_{\alpha}\cdot\nabla\phi-c_{\alpha}(x)}^2\,\d|D\nchi_E|=0,\quad\text{for any $\alpha=1,\dots,k-1$.}
	\end{equation} 
	
	Assume without loss of generality that $\abs{\nabla\phi}\le 1$. 
	Let us fix also $\alpha\in\left\lbrace 1\ldots,k-1\right\rbrace$ and set $g_{i}:=\nabla u^{r_i}_{\alpha}\cdot \nabla \phi$. We have
	\begin{itemize}
		\item[(i)] $\norm{g_i}_{L^{\infty}(B_{r_i}(x))}\le C_N$;
		\item[(ii)] $r_i^2\fint_{B_{r_i}(x)} |\nabla g_i|^2\,\d\mm\le 2\delta_i+C_N r_i^2\fint_{B_{r_i}(x)} |\Hess \phi |^2\,\d\mm$, where $\delta_i$ is as in \Cref{def:approximationoftheboundary}.
	\end{itemize}
	Since $\Hess \phi\in L^2(B_2(p),\mm)$, by \Cref{lem:H_theta_null} and \Cref{lemma:AssolutacontinuitaHauss}, we deduce that
	\begin{equation*}
		\lim_{r\to 0}r^2\fint_{B_{r}(x)} |\Hess \phi |^2\,\d\mm=0
	\end{equation*}
	for any $x\in X$ outside a $|D\nchi_E|$-negligible set depending only on $\phi$.
	Therefore we can assume that $x$ does not belong to this set obtaining
	\begin{equation}\label{z20}
	\lim_{i\to\infty} r_i^2\fint_{B_{r_i}(x)} |\nabla g_i|^2\d\mm=0.
	\end{equation}
	This gives that, up to subsequence, $g_i\to c_{\alpha}(x)$ in $H^{1,2}$-strong on $B_1(0^k)$ along the sequence in \Cref{def:approximationoftheboundary}(ii). Here we have used \eqref{eq:asympotic of different normalization}.
	 Taking into account \Cref{prop:sum L1 convergence}, it follows that $(g_i-c_{\alpha}(x))\to 0$ in $H^{1,2}$-strong on $B_1(0^k)$ and thus, reading the convergence in the starting space,
	\begin{equation}\label{z19}
	\fint_{B_{r_{i_j}}(x)} |g_{i_j} -c_{\alpha}(x)|^2\,\d\mm+r_{i_j}^2\fint_{B_{r_{i_j}}(x)} |\nabla g_{i_j}|^2\,\d\mm=:\eps_j\to 0\quad \text{as $j\to\infty$}.
	\end{equation}
	We wish to prove that, up to excluding another $|D\nchi_E|$-negligible set depending only on $E$, \eqref{z19} gives \eqref{z21}.
	More precisely we are going to prove that \eqref{z19} implies \eqref{z21} at any $x\in X$ such that $x\in E_{r_0,C}$ for some $r_0>0$ and $C>1$, where
	\begin{equation}\label{z22}
	E_{r_0,C}:=\left\lbrace y\in X:\ C^{-1}\le \frac{r|D\nchi_E|(B_r(y))}{\mm(B_r(y))}\le C \  \forall r<r_0\right\rbrace,
	\end{equation}
	and	
	\begin{equation}\label{eq:w1}
	\lim_{r\to 0}\frac{|D\nchi_E|(E_{r_0,C}\cap B_r(x))}{|D\nchi_E|(B_r(x))}=1.
	\end{equation}
	Observe that \eqref{z22} and \eqref{eq:w1} are satisfied at $|D\nchi_E|$-a.e. point in
	 $\mathcal{F}E$ thanks to \Cref{th:uniqueness}, the asymptotic doubling property of $\abs{D\nchi_E}$ and elementary considerations. In order to keep notations short, from now on we set $r_j:=r_{i_j}$ and $g_j:=g_{i_j}$.\\
	We claim that, for any $j$ such that $r_j\le  r_0/5$, it holds
	\begin{equation}\label{eq:claim}
	|D\nchi_E|\big(E_{r_0,C}\cap B_{r_j}(x)\cap \left\lbrace |g_j-c_{\alpha}(x)|^2\ge \sqrt{\eps_j}\right\rbrace\big)\le C C_{N,K}\sqrt{\eps_j}\frac{\mm(B_{r_j}(x))}{r_j},
	\end{equation}
	where $\eps_j$ is as in \eqref{z19} and $r_0$ and $C$ are as in \eqref{z22}.\\  
	Notice that \eqref{eq:claim}, together with the Chebyshev inequality, (i) and \eqref{eq:w1}, give \eqref{z21}.\\
	Let us see how to establish \eqref{eq:claim}.
	Fix any $j$ such that $r_j\le r_0/5$ and let us set 
	\begin{equation*}
	(X_j, \sfd_j, \mm_j, x):=\left(X,r_j^{-1}\sfd, \frac{\mm}{\mm(B_{r_j}(x))},x\right).
	\end{equation*}
	With a slight abuse of notation we use the notations $E_{r_0,C}$ and $g_j$ also in $X_j$.
	Let us observe that, when read in $X_j$, \eqref{z19} turns into
	\begin{equation*}
	\fint_{B^j_{1}(x)} |g_j-c_{\alpha}(x)|^2\,\d\mm_j+\fint_{B^j_{1}(x)}|\nabla g_j|^2\,\d\mm_j\le \eps_j.
	\end{equation*}
	Moreover, a telescopic argument as in the proof of \Cref{proposition:propagation} gives
	\begin{align*}
	B_1^j(x)\cap E_{r_0,C}&\cap \left\lbrace |g_j-c_{\alpha}(x)|^2\ge C_{N,K}\sqrt{\eps_j}\right\rbrace \\
	\subset\,&B_1^j(x)\cap E_{r_0,C}\cap\left\lbrace z\;:\;\sup_{0<s<1} s\fint_{B_s^j(z)}|\nabla g_j|^2\,\d\mm_j>\sqrt{\eps_j}\right\rbrace .
	\end{align*}
	Using Vitali's lemma we can find a disjoint family $\left\lbrace B_{s_i}^j(z_i)\right\rbrace _{i\in \N}$ with $s_i\le 1$ and $z_i\in B_1^j(x)\cap E_{r_0,C}\cap \left\lbrace z\,:\,\sup_{0<s<1} s\fint_{B_s^j(z)}|\nabla g_j|^2\,\d\mm_j>\sqrt{\varepsilon_j}\right\rbrace $ for any $i\in\N$ such that
	\begin{equation*}
	B_1^j(x)\cap E_{r_0,C}\cap \left\lbrace z\;:\;\sup_{0<s<1} s\fint_{B_s^j(z)}|\nabla g_j|^2\,\d\mm_j>\sqrt{\varepsilon_j}\right\rbrace   \subset	\bigcup_{i\in\N} B_{5 s_i}^j(z_i).
	\end{equation*}
	Taking into account \eqref{z22} and the defining identities
	\begin{equation*}
	B^j_{s_i}(z_i)=B_{r_js_i}(z_i),
	\quad
	\mm_j=\frac{\mm}{\mm(B_{r_j}(x))},
	\end{equation*}
	we get
	\begin{align*}
	\frac{r_j}{\mm(B_{r_j}(x))} &|D\nchi_E|\big(E_{r_0,C}\cap B_{r_j}(x)\cap \left\lbrace |g_j-c_{\alpha}(x)|^2\ge \sqrt{\eps_j}\right\rbrace\big)
	\le  \frac{r_j}{\mm(B_{r_j}(x))} \sum_{i\in\N} |D\nchi_E|(B_{5r_j s_i}(z_i))\\
	\le & \frac{C C_{N,K} r_j}{\mm(B_{r_j}(x))} \sum_{i\in\N}\frac{\mm(B_{ r_js_i}(z_i))}{r_js_i}
	=  C C_{N,K} \sum_{i\in\N} \frac{\mm_j(B_{s_i}^j(z_i))}{s_i}
	\le \frac{CC_{N,K}}{\sqrt{\eps_j}}\int_{B_1^j(x)} |\nabla g_j|^2\,\d\mm_j\\
	\le & CC_{N,K}\sqrt{\eps_j}.
	\end{align*}
	
	\textbf{Step 2.} We wish to prove that, for $|D\nchi_E|$-a.e.\ $x\in \mathcal{F}_k E$ and any good approximation of the boundary of $E$ at $x$ with radii $r_i\downarrow0$ and maps $\left\lbrace u^{r_i}:=(u^{r_i}_1,\ldots,u^{r_i}_{k-1})\,:\,B_{r_i}(x)\to \R^{k-1}\right\rbrace _{i\in \N}$, there exists a subsequence $r_{i_j}\to 0$ such that
	\begin{equation}\label{eq:claimstep2}
	\lim_{j\to\infty} \frac{r_{i_j}}{\mm(B_{r_{i_j}}(x))}\int_{B_{r_{i_j}}(x)}|\nu \cdot \nabla u_{\alpha}^{r_{i_j}}|\,\d|D\nchi_E|=0
	\quad \text{for any }\alpha=1,\ldots,k-1.
	\end{equation}
	Let us restrict our attention as above to $\mathcal{F}_kE\cap B_1(p)$.\\
	We claim that, for any $\eps>0$, there exists $G_{\eps}\subset B_1(p)\cap \mathcal{F}_k E$ with $\haus^h(B_1(p)\cap \mathcal{F}_k E\setminus G_{\eps})\le C_{N,K} \sqrt{\eps}$ and such that, for any $x\in G_{\eps}$, and any $\left\lbrace u^{r_i}:=(u^{r_i}_1,\ldots,u^{r_i}_{k-1})\,:\,B_{r_i}(x)\to \R^{k-1}\right\rbrace _{i\in \N}$ good approximation of the boundary of $E$ at $x$, there exists a subsequence $r_{i_j}\to 0$ satisfying
	\begin{equation}\label{z14}
	\limsup_{j\to\infty} \frac{r_{i_j}}{\mm(B_{r_{i_j}}(x))}\int_{B_{r_{i_j}}(x)}|\nu \cdot \nabla u_{\alpha}^{r_{i_j}}|\,\d|D\nchi_E|\le C_{N,K}\eps^{1/4}
	\quad \text{for any }\alpha=1,\ldots,k-1.
	\end{equation}
	
	Before then proving the claim let us see how it implies \eqref{eq:claimstep2}.\\ 
	Fix $\eps>0$, set $\eps_i:=\eps 2^{-i}$ and take $G^{\eps}:=\cup_{i\in\N} G_{\eps_i}$. Then we have $|D\nchi_E|(B_1(p)\cap\mathcal{F}_k E\setminus G^{\eps})=0$, thanks to \Cref{lemma:AssolutacontinuitaHauss}, and \eqref{z14} holds for any $x\in G^{\eps}$. Therefore the set $\cap_{i\in\N} G^{\eps_i}$ has full $|D\nchi_E|$-measure in $B_1(p)\cap \mathcal{F}_kE$ and has the sought property.
	
	The remaining part of this step is devoted to the proof of \eqref{z14}.
	Let $\eps>0$ be fixed, take $G$ and $V$ as in \Cref{lemma:appr}. Recalling that any test vector field can be represented as $\sum_{i=1}^m \eta_i\nabla \phi_i$ with $\eta_i,\phi_i\in\Test(X,\sfd,\mm)$ for some $m\in\N$ and using Step 1, we conclude that there exists $G_{\eps}\subset G\cap \mathcal{F}_kE$ with $|D\nchi_E|(G\cap\mathcal{F}_kE\setminus G_{\eps})=0$ and the property that, for any $x\in G_{\eps}$ and $\left\lbrace u^{r_i}:=(u_1^{r_i},\ldots,u_{k-1}^{r_i})\,:\,B_{r_i}(x)\to \R^{k-1}\right\rbrace _{i\in\N}$ good approximation of the boundary of $E$ at $x$, there exists $c(x):=(c_1(x),\ldots,c_{k-1}(x))\in \R^{k-1}$ and a subsequence $r_{i_j}\to 0$ such that
	\begin{equation}\label{z16}
	\lim_{j\to\infty}\frac{r_{i_j}}{\mm(B_{r_{i_j}}(x))}\int_{B_{r_{i_j}}(x)} |\nabla u^{r_{i_j}}_{\alpha}\cdot V -c_{\alpha}(x)|^2\,\d|D\nchi_E|=0
	\quad\text{for}\ \alpha=1,\ldots,k-1.
	\end{equation}
	In order to conclude the proof it suffices to show that
	\begin{equation}\label{z15}
	|c(x)|\le C_{K,N}\eps^{1/4}.
	\end{equation}
	Indeed, in that case, one has
		\begin{align*}
	&\limsup_{j\to\infty}\frac{r_{i_j}}{\mm(B_{r_{i_j}}(x))}\int_{B_{r_{i_j}}(x)} |\nu\cdot \nabla u^{r_{i_j}}_{\alpha}|\,\d|D\nchi_E|	\\
	\le\,&C_N\limsup_{j\to\infty}\bigg(\frac{r_{i_j}}{\mm(B_{r_{i_j}}(x))}\int_{B_{r_{i_j}}(x)} |\nu-V|^2\,\d|D\nchi_E|\bigg)^{1/2}
	+ \limsup_{j\to\infty}\frac{C_N r_{i_j}}{\mm(B_{r_{i_j}}(x))}\int_{B_{r_{i_j}}(x)} | \nabla u^{r_{i_j}}_{\alpha}\cdot V|\,\d|D\nchi_E|\\
	\le\,&C_N\eps^{1/4}+ \lim_{j\to\infty}C_N\bigg(\frac{r_{i_j}}{\mm(B_{r_{i_j}}(x))}\int_{B_{r_{i_j}}(x)} |\nabla u^{r_{i_j}}_{\alpha}\cdot V -c_{\alpha}(x)|^2\,\d|D\nchi_E|\bigg)^{1/2}+|c_{\alpha}(x)|\frac{r_{i_j}|D\nchi_E|(B_{r_{i_j}}(x))}{\mm(B_{r_{i_j}}(x))}\\
	\le\,&C_{K,N}\eps^{1/4},
	\end{align*}
	where we used \eqref{z16}, \eqref{z15} and the fact that $x\in \mathcal{F}_kE$.\\
	In order to prove \eqref{z15} we simplify the notation setting $r_{i_j}=:r_j$. Choose a smooth function $\psi_{\infty}:\R^k\to\R$ with compact support in $B_1(0^k)$ and such that $\int_{\left\lbrace x_k=0\right\rbrace }\psi_{\infty}\,\d\Leb^{k-1}=:C_k>0$. Then we consider a sequence $\psi_j\in \Lip(X,\sfd)$ with $\supp(\psi_j)\subset B_{r_j}(x)$, $\norm{\psi_j}_{L^\infty}\le 2$ and $\psi_j\to \psi_{\infty}$ strongly in $H^{1,2}$ along the sequence in \Cref{def:approximationoftheboundary}(ii), whose existence is proved in \Cref{lemma:approxWithLipschitz}.
	Observe now that
	\begin{equation}\label{z17}
	\lim_{j\to\infty} \frac{r_j}{\mm(B_{r_j}(x))}\int_E \nabla \psi_j\cdot \nabla u^{r_j}_{\alpha}\,\d\mm
	=c_k\int_{\left\lbrace x_k>0\right\rbrace } \nabla \psi_{\infty}\cdot e_{\alpha}\,\d\Leb^k=0,
	\quad\text{for $\alpha=1,\ldots,k-1$},
	\end{equation}
	and 
	\begin{equation}\label{z18}
	\lim_{j\to\infty} \frac{r_j}{\mm(B_{r_j}(x))}\int \psi_j V\cdot\nabla u^{r_j}_{\alpha} \d|D\nchi_E|
	=C_k c_{\alpha}(x),
	\quad\text{for $\alpha=1,\ldots,k-1$},
	\end{equation}
	where the last equality in \eqref{z17} is obtained integrating by parts and to prove \eqref{z18} we used \eqref{z16}.
	Building upon \eqref{z17}, \eqref{z18}, \Cref{thm:Gauss-Green} and \Cref{lemma:appr}, we get \eqref{z16}:
	\begin{align*}
	C_k|c_{\alpha}(x)| = & \abs{ \lim_{j\to\infty} \frac{r_j}{\mm(B_{r_j}(x))}\left(  \int_E \nabla \psi_j\cdot \nabla u^{r_j}_{\alpha} \d\mm+
		\int \psi_j V\cdot\nabla u^{r_j}_{\alpha}\,\d|D\nchi_E|\right)  }\\
	= &  
	\abs{ \lim_{j\to\infty} \frac{r_j}{\mm(B_{r_j}(x))}\left(  -\int \psi_j\nu \cdot \nabla u^{r_j}_{\alpha}\,\d|D\nchi_E|+ \int \psi_j V\cdot\nabla u^{r_j}_{\alpha}\,\d|D\nchi_E|\right)}\\
	\le &\limsup_{j\to\infty}\frac{C_Nr_j}{\mm(B_{r_j}(x))}\int_{B_{r_j}(x)} |\nu-V|\,\d|D\nchi_E|\\
	\le & C_{N,K}\eps^{1/4}.
	\end{align*}
	Note that in order to apply the Gauss--Green formula in the previous estimate the fact that $u_{\alpha}^{r_j}$ is locally the restriction of a $H^{2,2}(X,\sfd,\mm)$ function (see \Cref{remark:localHessian}) plays a role.
	
	\textbf{Step 3.} In order to conclude the proof we just observe that, since 
	\begin{equation*}
	\sfd_{pmGH}\left((X,r^{-1}\sfd,\mm_x^r,x),(\R^k,\sfd_{eucl},c_k\Leb^k,0^k)\right)\to 0
	\end{equation*}
	as $r\downarrow 0$ and the blow-up of the set of finite perimeter is a half-space (in the sense of $BV_{\rm loc}$ convergence, as we pointed out after \Cref{def:tan}), a slight modification of \Cref{prop:epsclosedeltasplit}\footnote{With the splitting functions defined on balls of radius $1$ in place of $5$.} provides, for any sequence $r_i\downarrow0$, existence of a good approximation of the boundary of $E$ at $x$ with maps $\left\lbrace u^{r_i}:=(u^{r_i}_1,\ldots,u^{r_i}_{k-1})\,:\,B_{r_i}(x)\to \R^{k-1}\right\rbrace _{i\in \N}$ (observe that \Cref{prop:epsclosedeltasplit} gives $\delta_i$-splitting maps defined on the balls of radius $1$ of the rescaled spaces for a sequence $\delta_i\downarrow0$ and then rescale these functions). The sought conclusion follows now from what we obtained in the previous step.
\end{proof}

\subsection{Proof of \texorpdfstring{\Cref{prop:rectifiability}}{Prop}}
The proof is divided in three steps.\\ 
Aim of the first one is to provide a bridge between analysis and geometry suitable for this context. We prove that, whenever at a certain location and scale the set of finite perimeter is quantitatively close to a half-space in a Euclidean space and there is a $(k-1,\delta)$-splitting map which is also $\delta$-orthogonal to the normal vector in the sense of \eqref{eq:ort}, then the $(k-1,\delta)$-splitting map is an $\eta$-isometry (in the scale invariant sense) when restricted to the support of the perimeter.\\ 
The second step is analytic and dedicated to the propagation of the $\delta$-orthogonality condition.\\ In the last one we get the bi-Lipschitz property relying on the observation that a map which is an $\eta$-isometry (in the scale invariant sense) at any location and scale is bi-Lipschitz.

\textbf{Step 1.}
	Let $N>0$, $K\in\R$ and $k\in [1,N]$ be fixed. 
	We claim that, for any $\eta>0$, there exists $\delta=\delta_{\eta,N}\le \eta$ such that, for any pointed $\RCD(K,N)$ m.m.s.\ $(X,\sfd,\mm,x)$ and for any set of finite perimeter and finite measure $E\subset X$ such that, for some $0<r<\abs{K}^{-1/2}$,
	\begin{itemize}
	\item[(i)] $\sfd_{pmGH}\left(\left(X, (2r)^{-1}\sfd,\frac{\mm}{\mm(B_{2r}(x))},x\right),\left(\R^k,\sfd_{eucl},\frac{1}{\omega_k}\Leb^k,0^k\right)\right)<\delta$;
	\item [(ii)]
	\begin{equation}
	\abs{\frac{\mm(B_t(x)\cap E)}{\mm(B_t(x))}-\frac{1}{2}}+\abs{\frac{t\abs{D\nchi_E}(B_t(x))}{\mm(B_t(x))}-\frac{\omega_{k-1}}{\omega_k}}<\delta
	\quad \text{for any $t\le 2r$};
	\end{equation}
	\item[(iii)] there exists $u:=(u_1,\ldots,u_{k-1}):B_{2r}(x)\to \R^{k-1}$ a $\delta$-splitting map satisfying
		\begin{equation}\label{eq:orthogonality}
			\frac{r}{\mm(B_{2r}(x))}
			\int_{B_{2r}(x)}\abs{\nu\cdot \nabla u_a}\,\d|D\nchi_E|<\delta,
			\quad\text{for any
			$a=1,\ldots,k-1$,}
		\end{equation}
   \end{itemize}
   then $u:\supp\abs{D\nchi_E}\cap B_{r}(x)\to B_r^{\R^{k-1}}(u(x))$ is an $\eta r$-GH isometry.
   \smallskip
   
   By scaling it is enough to prove the claim when $r=1/2$ and $|K|\le 4$. Let us argue by contradiction. Then we could find $\eta>0$, a sequence $(X_n,\sfd_n,\mm_n,E_n, x_n)$, points $z_1^n,z_2^n\in \supp |D\nchi_{E_n}|\cap B_{1/2}(x_n)$, and $1/n$-splitting maps $u^n:B_1(x_n)\to \R^{k-1}$ satisfying (i), (ii) and (iii) with $\delta=1/n$, $u^n(x_n)=0$ and
	\begin{equation}\label{z12}
		\big| |u^n(z_1^n)-u^n(z_2^n)|-\sfd_n(z_1^n,z_2^n)\big| \ge \eta,
		\quad \forall n\in\N.
	\end{equation}
	Notice that $\sfd_n(z_1^n,z_2^n)\ge\min\{\eta/(C_N-1),\eta\}$ since $u^n$ is $C_N$-Lipschitz.
	
	Observe that, by (i), $(X_n,\sfd_n,\mm_n,x_n)$ converge in the pmGH topology to $\left(\R^k,\sfd_{eucl},\frac{1}{\omega_k}\Leb^k,0^k\right)$. We can assume the existence of a metric space $(Z,\sfd_Z)$ realizing this convergence (cf.\ \Cref{subsubsection:stability results}).
	Since $E_n$ satisfies the bound
	\begin{equation}\label{z23}
		\abs{\frac{\mm_n(E_n\cap B_t(x_n))}{\mm_n(B_t(x_n))}-\frac      {1}{2}}+\abs{\frac{t|D\nchi_{E_n}|(B_t(x_n))}{\mm_n(B_t(x_n))}-\frac{\omega_{k-1}}{\omega_k}}<1/n
		\quad\text{for any $t\le 1$},	
	\end{equation}
	up to extracting a subsequence, $E_n\cap B_1(x_n)\to F\cap B_1(0^k)$ in $L^1$-strong, where $F$ is of locally finite perimeter in $B_1(0^k)$ thanks to \Cref{prop:compactness sets with finite perimeter}.

   Up to extracting again a subsequence we can   assume $u^n\to u^{\infty}$ strongly in $H^{1,2}$ on $B_1(0^k)$, where $u^{\infty}: B_1^{\R^k}(0)\to \R^{k-1}$ is the restriction of an orthogonal projection, as a consequence of \Cref{thm:AscoliArzela} and \Cref{thm:stabilityLaplacian}. 
    We assume, without loss of generality, that $u^{\infty}(x)=(x_1,\ldots,x_{k-1})$ for any $x\in B_1(0^k)$.
   
   We claim that $\Leb^k\left(\left(F\cap B_1(0^k)\right)\Delta\left(\left\lbrace x_k>0 \right\rbrace\cap B_1(0^k)\right)\right)=0$ and 
   \begin{equation}\label{z28}
   	\int g\,\d |D \nchi_{E_n}|\to\int g\,\d |D\nchi_{\left\lbrace x_k>0\right\rbrace}|
   	\quad\text{for any $g\in C(Z)$ with $\supp(g)\subset B_{1/2}(0^k)$}.
   \end{equation}
   This would imply that $z_1^{\infty}, z_2^{\infty}\in \left\lbrace x_k=0 \right\rbrace$, therefore $|u^{\infty}(z_1^{\infty})-u^{\infty}(z_2^{\infty})|=\sfd_{eucl}(z_1^{\infty},z_2^{\infty})$ that contradicts \eqref{z12}.\\   
   In order to verify the claim we argue as in the proof of the second step of \Cref{prop:good aproximation}.
   We choose a smooth function $\psi_{\infty}:\R^k\to\R$ with compact support in $B_1(0^k)$.
   Then we consider a sequence $\psi_n\in \Lip(X_n,\sfd_n)$ with $\supp(\psi_n)\subset B_1(x_n)$, $\norm{\psi_n}_{L^\infty}+\norm{|\nabla \psi_n|}_{L^{\infty}}\le 4$ and $\psi_n\to \psi_{\infty}$ strongly in $H^{1,2}$ along
  the sequence $(X_n,\sfd_n,\mm_n,x_n)$, whose existence is proved in \Cref{lemma:approxWithLipschitz}.
  Observe now that
  \begin{equation*}
  	\nabla \psi_n\cdot \nabla u_a^n\to \nabla \psi_{\infty}\cdot e_a=\frac{\partial \psi_{\infty}}{\partial x_a}
  	\quad \text{in $L^2$-strong, for any $a=1,\ldots,k-1$},
  \end{equation*}
  by \Cref{prop:sum L1 convergence}(i) and \Cref{prop:sum L1 convergence}(iii).
  This observation, along with \Cref{prop:sum L1 convergence}(ii) and \Cref{remark:convergence of sets}, gives
  \begin{equation}\label{z24}
    \int_F\frac{\partial \psi_{\infty}}{\partial x_a}\,\d\frac{\Leb^k}{\omega_k} 
    = \lim_{n\to \infty} \int_{E_n}\nabla\psi_n\cdot\nabla u_a^n\,\d\mm_n.
  \end{equation}
  We can now use \eqref{z24}, Theorem \ref{thm:Gauss-Green} and (iii) to conclude that
  \begin{align*}
  \left|\int_F\frac{\partial \psi_{\infty}}{\partial x_a}\,\d\frac{\Leb^k}{\omega_k} \right|
  =& \lim_{n\to \infty} \left|\int_{E_n}\nabla\psi_n\cdot\nabla u_a^n\,\d\mm_n\right|\\
  =&\lim_{n\to\infty}\left|\int\psi_n\nabla u_a^n\cdot\nu_{E_n}\,\d|D\nchi_{E_n}| \right|\\
  \le&\lim_{n\to\infty}\int\abs{\psi_n}\abs{\nabla u_a^{n}\cdot\nu_{E_n}}\,\d|D\nchi_{E_n}|=0,
  \end{align*}
  for $a=1,\ldots,k-1$. Since $\psi_{\infty}\in C^{\infty}_c(B_1(0^k))$ is arbitrary we obtain that 
  \begin{equation*}
  	\Leb^k\left(\left(F\cap B_1(0^k)\right)\Delta\left(\left\lbrace x_k>\lambda \right\rbrace\cap B_1(0^k)\right)\right)=0
  	\quad \text{for some $\lambda\in \R$.}
  \end{equation*}
  Using again \eqref{z23} we get $\Leb^k(F\cap B_1(0^k))=\omega_k/2$ that forces $\lambda=0$.\\ 
  Let us finally prove \eqref{z28}. To this end we use again \eqref{z23} with $t=1/2$ obtaining that
  \begin{equation*}
  	\lim_{n\to \infty}\abs{D\nchi_{E_n}}(B_{1/2}(x_n))=\frac{\omega_{k-1}}{2^{k-1}}=\abs{D\nchi_{\{x_k>0\}}}(B_{1/2}(0^k)).
  \end{equation*}
  We can now apply the third conclusion of \Cref{prop:compactness sets with finite perimeter} and conclude.
 
\textbf{Step 2.}
	By assumption there exists an $\eps$-splitting map $u:B_{2s}(p)\to\R^{k-1}$ such that \eqref{eq:ort} holds true.
    We wish to propagate now both the $\eps$-splitting condition and the orthogonality condition \eqref{eq:ort} at any scale and point outside a set of small $\haus_{5}^h$-measure. More precisely we are going to prove that there exists a set $G\subset B_s(p)$ with $\haus_{5}^h(B_s(p)\setminus G)\le C_N\sqrt{\eps}\frac{\mm(B_{s}(p))}{s}$ such that
    \begin{itemize}
    	\item[(i)] for any $x\in G$, $0<r<s$, $u:B_r(x)\to \R^{k-1}$ is a $C_N\eps^{1/4}$-splitting map;
    	\item[(ii)] for any $x\in G$, $0<r<s$, it holds
    	\begin{equation}\label{eq:ort1}
    	\frac{r}{\mm(B_r(x))}
    	\int_{B_r(x)} \abs{\nu\cdot \nabla u_a}\,\d|D\nchi_E|<\sqrt{\eps},
    	\quad
    	\text{for}\ a=1,\ldots,k-1.
    	\end{equation}
    \end{itemize}
     We can find a set $G'$ satisfying the measure estimate and (i) applying \Cref{cor:scale invariant version of propagation}. Hence it is enough to find a set $G''$ satisfying the measure estimate and (ii) and to take $G:=G'\cap G''$.\\ 
     To do so we apply a standard maximal argument. Let us fix $a=1,\ldots,k-1$ and set
     \begin{equation*}
     	M(x):=\sup_{0<r<s}
     	\frac{r}{\mm(B_r(x))}
     \int_{B_r(x)} \abs{\nu\cdot\nabla u_a}\,\d|D\nchi_E|.
     \end{equation*}
     We claim that $G'':=\left\lbrace x\in B_s(p)\,:\,M(x)<\sqrt{\eps}\right\rbrace $ has the sought properties.\\     
     Indeed, for any $x\in B_s(p)\setminus G''$, there exists $\rho_x\in (0,s)$ such that
     \begin{equation}\label{z10}
     	\frac{\rho_x}{\mm(B_{\rho_x}(x))} \int_{B_{\rho_x}(x)} \abs{\nu\cdot\nabla u_a}\,\d|D\nchi_E| \ge\sqrt{\eps}.
     \end{equation}
     Applying Vitali lemma to the family $\left\lbrace B_{\rho_x}(x)\right\rbrace _{x\in B_s(p)\setminus G''}$ we find a disjoint subfamily $\left\lbrace B_{r_i}(x_i)\right\rbrace _{i\in\N}$ such that $B_s(p)\setminus G''\subset \cup_i B_{5r_i}(x_i)$. Taking into account the disjointedness of the covering, \eqref{z10}, \eqref{eq:ort} and the Bishop-Gromov inequality, we can compute
     \begin{align*}
     \haus_{5}^h(B_s(p)\setminus G'')\le & \sum_{i\in\N} h(B_{5 r_i}(x_i))
     =  \sum_{i\in\N} \frac{\mm(B_{5r_i}(x_i))}{5r_i}\\
     \le & C_N \sum_{i\in\N} \frac{\mm(B_{r_i}(x_i))}{r_i}
     \le  C_N \sum_{i\in\N}\eps^{-1/2}\int_{B_{r_i}(x_i)}\abs{\nu\cdot \nabla u_a}\,\d|D\nchi_E|\\
     \le& C_N \eps^{-1/2}\int_{B_{2s}(p)}\abs{\nu\cdot\nabla u_a}\,\d|D\nchi_E|
     \le C_N\sqrt{\eps}\,\frac{\mm(B_{2s}(p))}{s}.
     \end{align*}   
     \textbf{Step 3.} We claim now that for any $\eta>0$ there exists $\eps=\eps_{\eta,N}>0$ small enough such that for any $0<r<s$ and $x\in G\cap (\mathcal{F}_kE)_{2s,\eps}$ the map
     \begin{equation}\label{eq:claimconcl}
     	u=(u_1,\ldots,u_{k-1}):\supp |D\nchi_E|\cap B_r(x) \to\R^{k-1}
     	\qquad\text{is an $r\eta$-GH isometry.}
     \end{equation}
     The claim is a consequence of Step 1. Indeed, for any $x \in G\cap (\mathcal{F}_kE)_{2s,\eps}$ and any $r\in (0,s)$, the conditions (i) and (ii) of Step 1 are satisfied by definition of $(\mathcal{F}_kE)_{2s,\eps}$. Moreover $u$ is a $C_N \eps^{1/4}$-splitting map on $B_r(x)$ satisfying \eqref{eq:ort1}, hence also the assumption (iii) of Step 1 is satisfied for $\eps$ small enough.
     \smallskip
     
     In order to conclude the proof we have just to check the conclusion (i) in the statement of \Cref{prop:rectifiability}, since the conclusion (ii) follows from Step 2 choosing $\eps$ small enough so that $\sqrt{\eps}<\eta$. To this aim, take $x,y\in G\cap (\mathcal{F}_kE)_{2s,\eps}$ and choose $r:=\sfd(x,y)$. Our claim \eqref{eq:claimconcl} ensures that
     \begin{equation*}
     	\big||u(x)-u(z)|-\sfd(x,z)\big|\le r\eta 
     	\qquad\text{for any}\ z\in \supp |D\nchi_E|\cap B_r(x),
     \end{equation*}
     therefore we can take $z=y$ and conclude. 
\def\cprime{$'$} \def\cprime{$'$}

\end{document}